\DeclareMathAlphabet{\mathscr}{LS1}{stixscr}{m}{n}
\Crefname{equation}{}{}
\tikzstyle{dot}=[circle,fill,black,inner sep=1pt]
\newcommand{\myquad}[1][1]{\hspace*{#1em}\ignorespaces}
\DeclareMathAlphabet{\mathpzc}{OT1}{pzc}{m}{it}
\theoremstyle{definition}
\newtheorem{definition}{Definition}
\newtheorem{theorem}{Theorem}[section]
\newtheorem{proposition}[theorem]{Proposition}
\newtheorem{lemma}[theorem]{Lemma}
\newtheorem{corollary}[theorem]{Corollary}
\newtheorem{remark}{Remark}
\renewcommand{\root}{\textrm{\o}}
\newcommand{\expect}{\mathbb{E}}
\newcommand{\prob}{\mathbb{P}}
\newcommand{\bs}{\boldsymbol}
\newcommand{\widesim}[2][1.5]{\mathrel{\overset{#2}{\scalebox{#1}[1]{$\sim$}}}}
\newcommand{\G}{G_\circ}
\newcommand{\N}{N_\circ}
\newcommand{\rtree}{\mathbb{T}_\circ}
\newcommand{\rtreealt}{\mathrm{T}_\circ}
\newcommand{\Topt}{{\mathcal{T}}}
\newcommand{\pd}{{\mathscr{P}}}
\newcommand{\Apset}{{\mathcal{A}}}
\newcommand{\Bpset}{{\mathcal{B}}}
\newcommand{\thresh}{{\mathpzc{T}}} 
\newcommand{\potdeg}{{\mathpzc{P}}} 
\newcommand{\eexp}{{\rm e}}
\newcommand{\At}{{\mathpzc{A}}}
\newcommand{\Bt}{{\mathpzc{B}}}
\newcommand{\Ct}{{\mathpzc{C}}}
\newcommand{\Dt}{{\mathpzc{D}}}
\newcommand{\Rt}{{\mathpzc{R}}}
\newcommand{\It}{{\mathpzc{I}}}
\newcommand{\Ut}{{\mathpzc{U}}}
\newcommand{\St}{{\mathpzc{S}}}
\newcommand{\permt}{\mathpzc{m}}
\newcommand{\erlangdist}{\mathrm{Erlang}}
\newcommand{\bindist}{\mathrm{Bin}}
\newcommand{\expdist}{\mathrm{Exp}}
\newcommand{\geomdist}{\mathrm{Geom}}
\newcommand{\poissdist}{\mathrm{Poiss}}
\newcommand{\ertreedist}{\mathrm{Er}}
\newcommand{\onefunc}{\mathbbm{1}}
\newcommand\numberthis{\addtocounter{equation}{1}\tag{\theequation}}
\newcommand*\xbar[1]{\hbox{\vbox{
			\hrule height 0.5pt 
			\kern0.5ex
			\hbox{\kern-0.1em
				\ensuremath{#1}%
				\kern-0.1em
			}}}
}
\DeclareFontFamily{U}{BOONDOX-calo}{\skewchar\font=45 }
\DeclareFontShape{U}{BOONDOX-calo}{m}{n}{
	<-> s*[1.05] BOONDOX-r-calo}{}
\DeclareFontShape{U}{BOONDOX-calo}{b}{n}{
	<-> s*[1.05] BOONDOX-b-calo}{}
\DeclareMathAlphabet{\mathcalboondox}{U}{BOONDOX-calo}{m}{n}
\SetMathAlphabet{\mathcalboondox}{bold}{U}{BOONDOX-calo}{b}{n}
\DeclareMathAlphabet{\mathbcalboondox}{U}{BOONDOX-calo}{b}{n}
\def\keywords{\xdef\@thefnmark{}\@footnotetext}
\begin{document}
\title{The Erlang Weighted Tree, A New Branching Process}
\author[1]{M. Moharrami}
\author[2]{V. Subramanian}
\author[2]{M. Liu}
\author[3]{R. Sundaresan}
\affil[1]{Coordinated Science Lab, University of Illinois at Urbana-Champaign}
\affil[2]{Electrical and Computer Engineering, University of Michigan}
\affil[3]{Indian Institute of Science and Indian Statistical Institute}
\date{}

\maketitle

\begin{abstract}
	In this paper, we study a new discrete tree and the resulting branching process, which we call the \textbf{E}rlang \textbf{W}eighted \textbf{T}ree(\textbf{EWT}). The EWT appears as the local weak limit of a random graph model proposed in~\cite{La2015}. In contrast to the local weak limit of well-known random graph models, the EWT has an interdependent structure. In particular, its vertices encode a multi-type branching process with uncountably many types.

	We derive the main properties of the EWT, such as the probability of extinction, growth rate, etc. We show that the probability of extinction is the smallest fixed point of an operator. We then take a point process perspective and analyze the growth rate operator. We derive the Krein--Rutman eigenvalue $\beta_0$ and the corresponding eigenfunctions of the growth operator, and show that the probability of extinction equals one if and only if $\beta_0 \leq 1$.
\end{abstract}

\keywords{2010 \emph{Mathematics Subject Classification.} Primary 60J80; Secondary 05C80, 68Q87, 60K35}%
\keywords{\emph{Key words and phrases.} Branching Processes, Random Graphs, Unimodular Processes, Local Weak Convergence, Point Processes, Spectral Theorem}%
%
%

\section*{Basic Notation}
Bold symbols are used for sequences, while random variables are denoted by capital letters and their realization by small letters. We use $\mathbb{C}$ to denote the set of complex numbers, and $\mathbb{R}_+$ to denote the set of non-negative real numbers. Similarly, $\mathbb{Z}_+$ denotes the set of non-negative integers. The set of positive integers is denoted by $\mathbb{N}$. The set of all finite sequences of $\mathbb{N}$ is denoted by $\mathbb{N}^f = \cup_{i=0}^{\infty} \mathbb{N}^i$ with the convention $\mathbb{N}^0 = \{\root\}$. The set of positive integers less than or equal to $n$ is denoted by $[n]$, i.e., $[n]=\{1,2,\dots,n\}$. 
Let $L(\mathbb{R}_+;[0,1])$ be the set of Lebesgue measurable functions from $\mathbb{R}_+$ to $[0,1]$. Let $C^1(\mathbb{R}_+;[0,1])$ be the set of continuously differentiable functions from $\mathbb{R}_+$ to $[0,1]$. The Erlang distribution with parameters $k\in \mathbb{N}$ and $\lambda > 0$ is denoted by $\erlangdist(k,\lambda)$, where its commutative distribution function is given by $F(x;k,\lambda) = 1 - \sum_{n=0}^{k-1} \frac{1}{n!}\eexp^{-\lambda x} (\lambda x)^n$. The probability mass function of the Binomial distribution with parameters $n\in\mathbb{N}$ and $p\in[0,1]$ is denoted by $\bindist(\cdot\,;n,p)$, the Poisson distribution with parameter $\lambda$ is denoted by $\poissdist(\lambda)$, the geometric distribution with parameter $p$ and support on $\mathbb{N}$ is denoted by $\geomdist(p)$, and the exponential distribution with mean $n$ is denoted by $\expdist(1/n)$. The abbreviation {\em i.i.d.} stands for independent and identically distributed. A real-valued random variable $X$ is said to have a moment generating function at $\theta\in\mathbb{R}$, if $\expect[\eexp^{\theta X}]<\infty$. For a set $S$, $\mathcal{P}(S)$ is the set of all Borel probability measures defined on $S$.

\section{Introduction}\label{sec:intro}
This paper studies a random tree object called the Erlang Weighted Tree (EWT) which arises as the local weak limit of a sequence of finite random graphs, each of which is obtained using a certain bilateral agreement procedure~\cite{La2015}. We begin with a description of the EWT and then follow it up with a description of the random graph family and the bilateral agreement procedure.

The construction of the EWT begins with the construction of a so-called ``backbone tree'' whose edges will be pruned to obtain the EWT. 
The vertices of the backbone tree are endowed with labels from the set $\mathbb{N}^f$.
Each label $\boldsymbol{i}\in\mathbb{N}^f$ is associated with three types of random variables: $1)$ $n_{\boldsymbol{i}}$ which is the potential number of descendants of the vertex $\boldsymbol{i}$, $2)$ $v_{\boldsymbol{i}}$ which is the value associated with the vertex $\boldsymbol{i}$, and, $3)$ $\{\zeta_{({\boldsymbol{i}},j)}\}_{j=1}^{n_{\boldsymbol{i}}}$ which represents the cost of the potential edges $\left\{{\boldsymbol{i}},({\boldsymbol{i}},j)\right\}$ for $j\in\{1,2,\dots,n_{\boldsymbol{i}}\}$.
The probability distribution of $n_{\root}$ is given by $P\in \mathcal{P}(\mathbb{N})$ which is assumed to have a finite mean and $P(1)<1$. The probability distribution of $n_{\boldsymbol{i}}$ for ${\boldsymbol{i}}\in \mathbb{N}^f\setminus\mathbb{N}^0$ is given by the shifted distribution $\widehat{P}\in \mathcal{P}(\mathbb{Z}_+)$, that is, for all $k \in\mathbb{Z}_+$ the value of  $\widehat{P}(k)$ is set to be $P(k+1)$.
Conditioned on $n_{\boldsymbol{i}}$, $v_{\boldsymbol{i}}$ is distributed as $\erlangdist(n_{\boldsymbol{i}}+1,\lambda)$ for a positive and fixed real $\lambda$.
Conditioned on $n_{\boldsymbol{i}}$ and $v_{\boldsymbol{i}}$, $\{\zeta_{({\boldsymbol{i}},j)}\}_{j=1}^{n_{\boldsymbol{i}}}$ are $n_{\boldsymbol{i}}$ independent and uniformly distributed random variables over the interval $[0,v_{\boldsymbol{i}}]$. When $n_{\boldsymbol{i}} =0$, there are no potential edges emanating from vertex ${\boldsymbol{i}}$. The backbone tree is the connected component of $\root$ with the potential edges as its edge set.

The edges of the backbone tree are pruned as follows to obtain the EWT: preserve the edge between the vertices $\boldsymbol{i}$ and $({\boldsymbol{i}},j)$ if and only if $\zeta_{({\boldsymbol{i}},j)} < v_{({\boldsymbol{i}},j)}$, and consider the connected component of $\root$. This object is a rooted tree $\rtree=(V,E,\root,w_v,w_e)$ rooted at $\root$ with vertex marks $w_v$ and edge marks $w_e$ defined as follows,
\begin{align}
w_v&:V\to\mathbb{N}\times \mathbb{R}_+,\qquad w_v(\boldsymbol{i}) =
\begin{cases}
(n_{\root},v_{\root})	 & \text{if }\boldsymbol{i} = \root\\
(n_{\boldsymbol{i}}+1,v_{\boldsymbol{i}}) & \text{otherwise}
\end{cases},
\\
w_e&:E\to\mathbb{R_+}, \qquad w_e(\{\boldsymbol{i},(\boldsymbol{i},j)\}) = \zeta_{(\boldsymbol{i},j)}.
\end{align}
Let $[\rtree]$ denote the equivalence class of $\rtree$ up to isomorphisms (over vertex relabelings that preserve the root). Denote by $\ertreedist(P,\lambda)$ the probability distribution of $[\rtree]$ in $G_*$, which denotes the set of rooted marked graphs up to isomorphisms\footnote{See Appendix~\ref{sec:back_randomgrpah} for a formal definition of $G_*$ and related background material.}. We call $\ertreedist(P,\lambda)$ the EWT with \textit{potential degree distribution} $P$. The parameter $\lambda$ in the definition of $\ertreedist(P,\lambda)$ appears only as a scaling factor. Henceforth $\lambda$ is set to $1$, and for ease of notation, $\ertreedist(P)\equiv \ertreedist(P,1)$ and $\erlangdist(k,\lambda)\equiv \erlangdist(k)$.

A non-root vertex $\boldsymbol{i}$ with the mark $(n_{\boldsymbol{i}}+1,v_{\boldsymbol{i}})$ is referred to as a vertex of \textit{type} $(n_{\boldsymbol{i}},v_{\boldsymbol{i}})$ where $n_{\boldsymbol{i}}$ denotes the potential number of descendants of vertex $\boldsymbol{i}$. The generation sizes of EWT then forms a multi-type branching process~\cite[Chapter III]{Harris1963} with mark space $\mathbb{N}\times \mathbb{R}_+$. In particular, a vertex of type $(m,x)$ has a $\bindist(m, p)$ number of descendants, where $p = \prob (x \cdot U \leq Z)$ for
\begin{itemize}
	\item $K-1 \sim \widehat{P}$ and conditional on $K$, we have $Z \sim \erlangdist(K)$,
	\item $U$ is a uniformly distributed random variable over $[0,1]$, independent of $K$ and $Z$.
\end{itemize}
Each of these descendants has a type distributed as $(K-1, Z)$ conditional on $\{ x \cdot U \leq Z\}$, for $K$, $Z$, $U$ as described above, independent of its siblings.

We will show that the EWT appears as the local weak limit of a random graph sequence constructed in La and Kabkab~\cite{La2015}. The graph construction starts with a complete graph $K_n = ([n],E_n)$, a sequence of positive integers $\bs{d}_n=(d_1(n), d_2(n), \cdots, d_n(n))$ and a random cost function $C_n$ that assigns non-negative real values to each edge of $K_n$. The value of $d_i(n)$ indicates the number of neighbors with which vertex $i$ wants a connection. The numbers $d_i(n)$ are chosen as follows: choose an \emph{i.i.d.} sequence of natural numbers $(\widehat{d}_1(n), \widehat{d}_2(n), \cdots, \widehat{d}_n(n))$ with underlying distribution $P$ and then set $d_i(n)=\min(\widehat{d}_i(n),n-1)$ for $i=1,\dotsc,n$. The value assigned to each edge by $C_n$ is an independent exponentially distributed random variable with parameter $1/n$ that represents the cost of the edge; the random cost function $C_n$ results in a random preference order of the edges of $K_n$. Each vertex $i$ then selects the $d_i(n)$ lowest cost incident edges and declares them to be its preferred edges. The random graph $G_n = ([n],\widetilde{E}_n)$ is then constructed by keeping only those edges of $E_n$ that are preferred by both end-vertices. Notice that an edge is preserved only if there is a bilateral agreement between the end-vertices.

\paragraph{Main Results:} In this work, we derive the following properties of the EWT:
\begin{enumerate}[label=(\roman*)]
	\item Theorem~\ref{thm:weaklimit}: EWT is the local weak limit of the random graph family $[G_n]$ indexed by $n$.
	\item Theorem~\ref{thm:ConDegree}: The degree distribution of the root is given by
	\begin{align}
	&\prob(D_{\root}=d) = \sum_{m=1}^\infty P(m) \int_{0}^{\infty} \frac{ \eexp^{- x}x^m}{m!} Bi\left(d;m,\int_{0}^{x} \frac{1}{x}\sum_{k=1}^{\infty}P(k)\bar{F}_{k}(y)\,dy\right)\, dx\allowdisplaybreaks\\
	&\expect[D_{\root}] = \sum_{m=1}^\infty\sum_{k=1}^{\infty} P(m)P(k) \int_{0}^{\infty} \bar{F}_{k}(y) \bar{F}_{m}(y)\, dy,
	\end{align}
	where $\bar{F}_k{(\cdot)}$ denote the complementary cumulative distribution function of $\erlangdist(k)$. 
	\item Theorem~\ref{thm:probext} and Lemma~\ref{lem:uniq}: The probability of extinction is given by
	\begin{align}
	\prob(\{\text{extinction}\})= \sum_{m=1}^{\infty} P(m)\int_{x=0}^{\infty} \frac{\eexp^{-x}x^m}{m!} \left(q(x)\right)^m \,dx,
	\end{align}
	where $q(\cdot)\in C^1(\mathbb{R}_+;[0,1])$ is the smallest fixed point (point-wise smaller than all the other fixed points) of the operator $T:L(\mathbb{R}_+;[0,1]) \to C^1(\mathbb{R}_+;[0,1])$, which is defined as
	\begin{align}
	T(f)(x) \coloneqq
	\begin{dcases*}
	\begin{aligned}
	&\frac{1}{x}\sum_{k=1}^{\infty}P(k)\int_{y=0}^{x} \Bigg(\int_{z=0}^{y}\frac{\eexp^{-z}z^{k-1}}{(k-1)!}\,dz + \\ &\myquad[10]\int_{z=y}^{\infty}\frac{\eexp^{-z}z^{k-1}}{(k-1)!}\left(f(z)\right)^{k-1}\,dz\Bigg)\,dy,
	\end{aligned} & $x >0$, \\
	\sum_{k=1}^{\infty}P(k) \int_{z=0}^{\infty}\frac{\eexp^{-z}z^{k-1}}{(k-1)!}\left(f(z)\right)^{k-1}\,dz, & $x=0$.
	\end{dcases*}
	\end{align}
	This fixed point is also the pointwise limit of $T^l(\boldsymbol{0})(\cdot)$ as $l$ goes to infinity where for all $x\in \mathbb{R}_+$ the value of  $\boldsymbol{0}(x)$ is set to be $0$ (the all-zero function).
    Moreover, for any function $f(\cdot)\in L(\mathbb{R}_+;[0,1])$ such that the Lebesgue measure of the set $\{x\in\mathbb{R}_+: f(x) < 1\}$ is positive, $T^l(f)(\cdot) \to q(\cdot)$ pointwise. If the probability of extinction equals 1, then the function $\mathbf{1}(x) \equiv 1$ for all $x\in\mathbb{R}_+$ (the all-one function) is the unique fixed point of $T$ (upto sets of measure $0$). If the probability of extinction is smaller than 1, then assuming that the moment generating function of $n_{\root}$ exists at some $\theta> 0$, the operator $T$ has exactly two fixed points: $q(\cdot)$ and $\boldsymbol{1}(\cdot)$.
	\item Theorem~\ref{thm:pfeigenval} and Corollary~\ref{cor:EZ_l}: Assume that the moment generating function of $n_{\root}$ exists at some $\theta> 0$.
	Let $g_2(x) = \eexp^{-x} \sum_{k=2}^{\infty} P(k)\frac{x^{k-2}}{(k-2)!}$ and then define functions $\{G_i(\cdot)\}_{i\in\mathbb{Z}_+}$ over $\mathbb{R}_+$ recursively via
	\begin{align}
	G_0(x) &\equiv 1,\allowdisplaybreaks\\
	G_i(x) &= \int_{x}^{\infty}\int_{z=y}^{\infty}g_2(z) G_{i-1}(z) \,dz \,dy, \qquad\forall i>0.
	\end{align}
	Further, for all $x\in\mathbb{R}_+$ and $\beta > 0$ define
	\begin{align}
	L(\beta,x) &= \sum_{i=0}^{\infty} G_i(x) \left(\frac{-1}{\beta} \right)^i.
	\end{align}
	Let $Z_l$ denote the number of vertices in generation $l$ of the EWT. Then we have
	\begin{align}
	\frac{\expect[Z_l]}{{\beta_0}^l} \xrightarrow{l\to\infty} \left(\int_{0}^{\infty} \sum_{k=1}^\infty P(k)\frac{\eexp^{-z}z^{k-1}}{(k-1)!} f_0(z)\,dz\right)^2 <\infty,
	\end{align}
	where $\beta_0$ is the smallest zero of the function $L(\beta,0)$, $f_0(x) = L(\beta_0,x)/\sqrt{C_N}$, and $C_N = (\int_{0}^{\infty} g_2(y) \left(L(\beta_0,y)\right)^2 \,dy)^{-1}$ is the normalization factor so that \label{item:marinres_partiv}
    \begin{align}
        \int_{0}^{\infty} g_2(y) {f_0(y)}^2 dy = 1.
    \end{align}
    \item Theorem~\ref{thm:pfeigenval}: Define the growth operator $M_1: (\mathbb{Z}_+\times \mathbb{R}_+\mapsto \mathbb{R}_+) \mapsto (\mathbb{Z}_+\times \mathbb{R}_+\mapsto \mathbb{R}_+)$ via $M_1((m,x);\mathcal{A}) = \expect[Z_1(\mathcal{A})\,\vert\,(n_\root,v_\root) = (m,x)]$ which is the expected number of children of type belonging to a Borel subset $\mathcal{A}$ of $\mathbb{Z}_+\times\mathbb{R}_+$ in generation~$1$, conditioned on the root vertex being of type $(m,x)\in \mathbb{Z}_+\times\mathbb{R}_+$.
	Then the Krein--Rutman eigenvalue of the growth operator $M_1$ is $\beta_0$, and the corresponding left and right eigenfunctions (see Definition~\ref{def:leftrighteigen}) are given as follows:
	\begin{align}
		&\text{Right eigenfunction: }  \forall(m,x)\in \mathbb{Z}_+ \times \mathbb{R}_+,\qquad \mu(m,x) = \frac{m}{x} f_0(x),\allowdisplaybreaks\\
		&\text{Left eigenfunction: } \forall(k,z)\in \mathbb{N}\times\mathbb{R}_+,\qquad \nu(k-1,z) = P(k)\frac{\eexp^{-z}z^{k-1}}{(k-1)!} f_0(z).
	\end{align}
	When it exists, the Krein--Rutman eigenvalue of an operator is its principal eigenvalue, i.e., its largest eigenvalue in magnitude. It is also simple. See Appendix~\ref{sec:back_opt} for more details, especially the conditions under which such an eigenvalue exists.
	\item Theorem~\ref{thm:L2conv} and Theorem~\ref{thm:growthrate_aux}: Suppose that the assumption of part~\ref{item:marinres_partiv} hold and assume $\beta_0 > 1$. For every Borel subset $\mathcal{A}$ of $\mathbb{Z}_+\times \mathbb{R}_+$, define $Z_l(\mathcal{A})$ to be the number of vertices in generation $l$ with types in $\mathcal{A}$.
	Then, conditioned on the type of the root vertex, for every Borel set $\mathcal{A}$, there is a random variable $W(\mathcal{A})$ such that $Z_l(\mathcal{A})/\beta_0^l$ converges to $W(\mathcal{A})$ almost surely and in $L^2$.
    In particular, $Z_l(\Omega) \sim \beta_0^l W(\Omega)$ where $\Omega = \mathbb{Z}_+\times\mathbb{R}_+$, that is, $\beta_0$ is the growth rate of $Z_l(\Omega)$. Further, we have that $W(\mathcal{A}) = c(\mathcal{A}) \, W(\Omega)$ almost surely, where $c(\cdot)$ is a non-random probability measure depending only on the left eigenfunction, i.e., for any Borel subset $\mathcal{A}$,
    \begin{align}
        c(\mathcal{A}) \propto \underset{(k-1,z)\in\Apset}{\sum \int } \nu(k-1,z) \,dz,\qquad c(\Omega) =1.
    \end{align}
     In other words, the proportion of various types asymptotically collapses to a non-random limit. Notice that the Borel $\sigma$-algebra on $\Omega$ is countably generated, and hence, our result implies that, almost surely, the random measure $Z_l(\cdot)/\beta_0^l$ converges set-wise to $c(\cdot)W(\Omega)$.

	\item Theorem~\ref{thm:asymdegdist}: Let $D_l$ denote the number of descendants of a randomly selected vertex in generation $l$ given the number of vertices in generation $l$ is positive. Suppose that the assumption of part~\ref{item:marinres_partiv} hold and assume $\beta_0 > 1$. The asymptotic distribution of $D_l$ is given by:
	\begin{align}
		\lim_{l \to\infty} \prob(D_l = d\,\vert\,Z_l > 0) = \dfrac{\sum_{k=1}^\infty \int_{0}^\infty  \nu(k-1,z) \times Bi\left(d;k-1,\int_{0}^{z} \frac{1}{z}\sum_{k'=1}^{\infty}P(k')\bar{F}_{k'}(y)\,dy\right)\,dz}{\sum_{k=1}^\infty \int_{0}^\infty\nu(k-1,z) \,dz},
	\end{align}
	where $\nu$ is the left eigenfunction.
	\item Lemma~\ref{lem:transZ} and Corollary~\ref{cor:beta0_probextinc}: Suppose that the assumption of part~\ref{item:marinres_partiv} hold. If $\beta_0 > 1$, the probability of extinction is less than 1; otherwise, it equals $1$. Moreover, if $\beta_0 > 1$, then the number of vertices in generation $n$ goes to either $0$ or $\infty$ as $n\to\infty$.
\end{enumerate}

\paragraph{Literature Review:} Cooper and Frieze in~\cite{Cooper1995} studied the $k$-th nearest neighbor graphs model in which a connection survives as long as at least one individual involved in the connection is interested in it. The La-Kabkab random graph model~\cite{La2015} is more intricate because it requires bilateral agreement for a connection to survive. This makes the analysis challenging.
One way to overcome this challenge is to study the local weak limit of the sequence of graphs and then understand its implications on the pre-limit.
Such an approach has been taken, for example, for the configuration model~\cite{Bordenave2016}: the unimodular \textbf{G}alton-\textbf{W}atson \textbf{T}ree~(\textbf{GWT})~\cite{Bienayme1845,Galton1875} is the local weak limit of sparse random graphs generated by the configuration model. It is well-known~\cite{Molly1998} that the limiting fraction of vertices in the largest connected component of the corresponding random graph is one minus the probability of extinction of the GWT. This connection to the GWT has also been established for the random graph models of Erd\H{o}s and R\'enyi~\cite{Erdos1959} and Gilbert~\cite{Gilbert1959}.


In our results, we establish that the EWT is the local weak limit of the La-Kabkab random graph model, and then we focus on understanding the EWT as a branching process\footnote{Before the formalization of local weak limits, locally tree-like random graphs were studied using branching processes~\cite{Bollobas2008,Durrett2006}.}. Numerically we illustrate the connection between the fraction of vertices in the largest component of the finite graph model and the probability of extinction of the EWT; a rigorous proof of this connection is still open. 

EWT is a multitype branching process in which the mark space is $\mathbb{N}\times \mathbb{R}_+$. Such branching processes were analyzed by Harris in~\cite[Chapter III]{Harris1963} using a point process perspective: a general multitype branching process is considered as a point distribution on $\mathcal{X}\subset\mathbb{R}^d$ that evolves in a Markovian fashion. The analysis involves the study of the growth operator $M_l$ defined by $M_l(x,\mathcal{A}) = \expect[Z_l(\mathcal{A})\,\vert\,x_0 = x]$, the expected number of objects of type $\mathcal{A}\subseteq \mathcal{X}$ in generation $l$ conditioned on the root vertex being of type $x$. Harris assumes the following condition (see~\cite[Condition 10.1, Chapter III]{Harris1963}):
\begin{gather}
	\begin{aligned}
		\textnormal{\textbf{C}: }
		& \text{There exists $n_0 \in\mathbb{N}$ such that $M_{n_0}$ has a density $m_{n_0}$ (with respect to Lebesgue measure)}\\
		& \text{which is uniformly bounded above and uniformly positive: for all $x,y\in\mathcal{X}$ we have }
	\end{aligned}\\
	0<a\leq m_{n_0}(x,y) \leq b .
\end{gather}
Under condition \textbf{C}, Harris proves the following: the existence and uniqueness of the Krein--Rutman eigenvalue, the asymptotic formula for $m_l$, the analysis of correlation measure $\expect[Z_l(\mathcal{A})Z_l(\mathcal{B})\,\vert\,x_0=x]$ (where $\mathcal{A}$, $\mathcal{B}$ are Borel subsets of $\mathcal{X}$), and the connection between the growth rate of $\mathbb{E}[Z_l(\mathcal{X})]$ and the probability of extinction of the branching process.
The Krein--Rutman eigenvalue and the corresponding left and right eigenfunctions are not explicitly identified.

EWT does not satisfy condition \textnormal{\textbf{C}}. Given the root is of type $(m,x)$, the expected number of objects of type $(k-1,z)$, $k\in \mathbb{N}$ and $z\in A\subset\mathbb{R}_+$, in the first generation is given by
\begin{align}
	M_1(m,x;k-1,A) &= \int_{z \in A}\sum_{k\in \mathbb{N}} \frac{m}{x} \min(x,z) P(k) \frac{\eexp^{-z}z^{k-1}}{(k-1)!}\,dz\\
	&= \int_{z \in A}\sum_{k\in \mathbb{N}} m_1(m,x;k-1,z)\,dz,
\end{align}
where $m_1(m,x;k-1,z)=\tfrac{m}{x} \min(x,z) P(k) \tfrac{\eexp^{-z}z^{k-1}}{(k-1)!}$ is the density of $M_1$; notice that $P(k) \tfrac{\eexp^{-z}z^{k-1}}{(k-1)!}\,dz$ is the probability that a vertex $j$ in the first generation of the backbone tree is of type $(k-1,z)$, and ${\min(x,z)}/{x}$ is the probability that the edge $(\root,j)$ in the backbone tree survives given $v_{\root} = x$ and $v_{j} = z$. In Section~\ref{sec:PFeigenvalue}, using the Chapman-Kolmogorov equations, we will argue that for all $l>0$, $m_l(m,x;k-1,\cdot)$, the density of $M_l(m,x;k-1,\cdot)$ (the expected number of objects of type $(k-1,\cdot)$ in the $l^{\mathrm{th}}$ generation given the root is of type $(m,x)$), is continuous and $m_l(m,x;k-1,0) = 0$, and $m_l(m,x;k-1,z)\to 0$ as either $k\to\infty$ or $z\to\infty$. Hence, $m_l$ is not uniformly positive, i.e., bounded away from $0$, for any value of $l$, so condition \textnormal{\textbf{C}} fails, and the results of~\cite[Chapter III]{Harris1963} do not apply. To overcome the difficulty that arises from condition \textnormal{\textbf{C}} failing, we take a different approach by characterizing the Krein--Rutman eigenvalue and the corresponding left and right eigenfunctions explicitly.

The importance of the Krein--Rutman eigenvalue arises from the Chapman-Kolmogorov equations. Notice that for all $l \in\mathbb{N}$, we have
\begin{align}
	m_l(m,x;k-1,z) &= \int_{z^\prime = 0}^{\infty} \sum_{k^\prime=1}^{\infty} m_{l-1}(m,x;k^\prime-1,z^\prime)m_1(k^\prime-1,z^\prime;k-1,z)\,dz^\prime \allowdisplaybreaks\\
					&= \int_{z^\prime = 0}^{\infty} \sum_{k^\prime=1}^{\infty} m_{1}(m,x;k^\prime-1,z^\prime)m_{l-1}(k^\prime-1,z^\prime;k-1,z)\,dz^\prime.
\end{align}
Hence, if there is an asymptotic expression for $m_l$, it should be non-negative and involve both a left and a right eigenfunction of $M_1$ (see Definition~\ref{def:leftrighteigen}). Invoking the Krein--Rutman Theorem~\ref{thm:optth3} (if it applies), $M_1$ has a unique eigenvalue with non-negative left and right eigenfunctions. Moreover, this eigenvalue is simple, real, and larger in magnitude than all the other eigenvalues of $M_1$. Hence, a natural guess for $m_l$ would be
\begin{align}
	m_l \propto (\text{Krein--Rutman eigenvalue})^l \times \text{Left eigenfunction}\times \text{Right eigenfunction},
\end{align}
and in particular, the Krein--Rutman eigenvalue captures the growth/extinction rate of the population. Furthermore, the left eigenfunction captures the asymptotic distribution of the type of vertices, whereas the right eigenfunction captures the influence of the root vertex on the asymptotic size of the generations.

To formalize the above ideas, we first introduce a function $L(\beta,x)$ (see Theorem~\ref{thm:propL}) and show that $\beta_0$, the Krein--Rutman eigenvalue of the $M_1$, is the smallest positive zero of $L(\beta,0)$. This is done via a probabilistic approach using the ergodicity and stability results of Baxendale~\cite{Baxendale2005} for continuous-state Markov chains. We also derive the corresponding Krein--Rutman eigenfunctions in terms of the function $L(\beta_0,x)$. 
Then, we extend some of the results of~\cite[Chapter III]{Harris1963} to the EWT by exploiting its structural properties. As we proceed with the paper, we will highlight the differences between our approach and those presented in~\cite[Chapter III]{Harris1963}. 
Our analysis might apply to a broader class of multitype branching processes, although we do not pursue this in our paper.


\paragraph{Mathematical Background:} The necessary background for the rest of the paper is presented in Appendix~\ref{sec:background}, and an informed reader can skip this material. 
The background on ``random graphs and local weak convergence'' is mostly based on lecture notes by Bordenave~\cite{Bordenave2016} and the work of Aldous and Lyons~\cite{Aldous2007}, and is used in Section~\ref{sec:finitegraph}. 
The background on the ``point process perspective of a branching process'' is based on chapter 3 of Harris's book~\cite{Harris1963}.
We use this background in Sections~\ref{sec:PFeigenvalue}-\ref{sec:PropExtRev}. 
The background on the ``spectral theorem for compact self-adjoint bounded linear operators'' is based on a classic textbook in functional analysis by Lax~\cite{Lax2002} and the work of Toland~\cite{Toland1996}. These results will be used in Section~\ref{sec:PFeigenvalue}; however, we will provide probabilistic proof of the main theorems presented in this section to show explicitly that $\beta_0$ is the Krein-Rutman eigenvalue of the growth operator $M_1$.
We will follow the notation presented in Appendix~\ref{sec:background} in the rest of the paper.

\paragraph{Organization of paper:} The organization of the rest of the paper is as follows. In Section~\ref{sec:finitegraph}, we describe the finite graph model and discuss the local weak convergence of the finite graph model to the EWT. In Section~\ref{sec:numsim}, we present some numerical illustrations of our main results and discuss the connection between the proportion of vertices in the largest connected component of the finite graph model and the probability of extinction of the EWT. In Section~\ref{sec:openprob}, we present some open problems.
In Section~\ref{sec:properties}, we begin with basic properties of the EWT: the degree distribution of the root vertex and the expected number of vertices in generation $l$. Then, we derive the probability of extinction. Finally, we discuss the point process perspective and derive the growth rate of the branching process. Background material, proof of local weak convergence of the finite graph model to EWT, proof of unimodularity of EWT, and some algebraic proofs are presented in the Appendix for convenience.

\section{Finite Graph Model}\label{sec:finitegraph}
\subsection{The La-Kabkab Random Graph Model}
Let $K_n = ([n],E_n)$ denote a complete graph, that is, $E_n = \{\{i,j\}:i,j\in[n],i\neq j \}$. Consider some probability mass function $P(\cdot)$ defined over $\mathbb{N}$. Let $\boldsymbol{d}_n=(d_1(n),d_2(n),\dots,d_n(n))\in\mathbb{N}^n$ denote the sequence of potential degrees such that $d_i(n)\leq n-2$ and
assume that, as $n\to \infty$, its empirical distribution converges to $P(\cdot)$, i.e.,
\begin{align}
	&\forall k\in \mathbb{N},\qquad P_{\boldsymbol{d}_n}(k) = \frac{1}{n} \sum_{i=1}^{n}\delta_{d_i(n)}(k) \xrightarrow{n\to\infty}P(k).
\end{align}
Often, we write $d_i$ for $d_i(n)$ when the number of vertices $n$ is clear from the context. Let $C_{n}:E_n\to\mathbb{R}_+$ denote a random function that assigns {\em i.i.d.} random variables distributed as $\expdist(1/n)$ to the edges of $K_n$. The value of an edge corresponds to the cost of the edge.
\begin{remark}\label{rem:naivevalues}
	Without loss of generality, we assume the cost of all the edges in $K_n$ are different.
\end{remark}
For each vertex $i\in[n]$, let $\thresh_i$ and $\potdeg_i$ denote the threshold and the set of potential neighbors of the vertex $i$,
\begin{align}
\thresh_i &= \text{${d_i+1}^{st}$ smallest value in }\left\{C_{n}(\{i,j\}) : j\in[n] \setminus \{i\} \right\} \label{eq:thresh}\\
\potdeg_i &= \{j\in[n]\setminus \{i\}:C_{n}(\{i,j\}) < \thresh_i \}. \label{eq:potneigh}
\end{align}
Vertices of the graph have the following self-optimizing behavior: they are willing to form an edge only if the cost of the edge is less than each of their thresholds in \cref{eq:thresh} and an edge is formed only if both endpoint vertices are willing. Call the resulting random graph the random graph $G_n=([n],\widetilde{E}_n)$ with
\begin{align}
\widetilde{E}_n = \left\{ \{i,j\}\in E_n : i\in \potdeg_j \text{ and }j\in \potdeg_i \right\}.
\end{align}
 The bilateral agreement required for establishing an edge causes an interdependence structure; more precisely, inclusion of an edge into $\widetilde{E}_n$ depends on the preference of both ends, which is in turn dictated by the values of all the incident edges. This makes the analysis of the finite graph intricate; however, it is possible to study the model, using the framework of local weak convergence.
\subsection{Convergence to the EWT}
 Consider the random network $N_n = ([n],\widetilde{E}_n,\widetilde{W}_{v,n},\widetilde{W}_{e,n})$, where the mark functions are defined as follows:
\begin{align}
&\widetilde{W}_{v,n}:[n]\to\mathbb{N}\times\mathbb{R},&&\forall i\in[n],\,\widetilde{W}_{v,n}(i) = (d_i, \thresh_i),\\
&\widetilde{W}_{e,n}:\widetilde{E}_n\to\mathbb{R},&&\forall \{i,j\}\in\widetilde{E}_n,\,\widetilde{W}_{e,n}(\{i,j\}) = C_{n}(\{i,j\}).
\end{align}
Let $\mathcal{N}(n,\boldsymbol{d}_n)$ denote the law of the random network $N_n$. Define the random probability measure $U(N_n)$ over $G_*$ as follows,
\begin{align}
	U(N_n) = \frac{1}{n}\sum_{i\in [n]} \delta_{[N_{n,\circ}(i)]},
\end{align}
where $N_n\sim \mathcal{N}(n,\boldsymbol{d}_n)$ and $N_{n,\circ}(i)$ is the connected component of $i$ in $N_n$ rooted at $i$. Taking expectation with respect to the randomness of the network, for every event $A\in G_*$,
\begin{align}
	\expect U(N_n)(A)\coloneqq\expect\left[U(N_n) (A)\right] &= \frac{1}{n}\sum_{i\in [n]} \expect\left[ \delta_{[N_{n,\circ}(i)]}(A) \right]\\
	&= \frac{1}{n}\sum_{i\in [n]} \prob([N_{n,\circ}(i)]\in A).
\end{align}
Hence, $\expect U(N_n)$ is the law of $[N_{n,\circ}(\root)]$ where $\root\in [n]$ is a random vertex chosen uniformly from $[n]$. Then the primary motivation of our work is the claim that the sequence of random networks $N_n$ converges locally weakly to the EWT, i.e., $\expect U(N_n) \xrightarrow{w} \ertreedist(P)$.

As is suggested by Aldous and Steele in~\cite{Aldous2004}, the first step to establish local weak convergence is to guess the object that the finite graph model converges to. Next, we provide an argument to justify the EWT guess.

Aldous~\cite{Aldous2001} proved that the complete graph $K_n$ with {\em i.i.d.} edge weights distributed as $\expdist(1/n)$ is locally tree-like, and it converges to the Poisson Weighted Infinite Tree(PWIT). The idea is to modify the structure of PWIT to capture the behavior of the finite graph model while preserving unimodularity of the asymptotic object.
In our graph family the root vertex $\root$ is potentially connected to $n_{\root}$ other vertices; hence, the ${n_{\root}+ 1}^{st}$ edge weight in the PWIT is considered as the threshold of the vertex $n_{\root}$. On the other hand, any non-root vertex with label $\boldsymbol{i}$, needs to know the edge weight of its $n_{\boldsymbol{i}}^{\mathrm{th}}$ descendant to decide whether to connect to its ``parent'' or not. Hence, the edge weight of the $n_{\boldsymbol{i}}^{\mathrm{th}}$ descendant in the PWIT is taken to be its real-valued threshold mark if $\boldsymbol{i}$ belongs to the connected component of $\root$.
Moreover, a pruning process is added to include the fact that the survival of an edge is based on the marks at both endpoint vertices. Finally, the labels of the descendants of each vertex are permuted to remove the order. This is an essential step to make the object unimodular.

However, there are quite a few technical issues to resolve to make the above intuitive argument precise. For example, there is interdependence beyond just pairs. The fact that this interdependence can be ignored as was done in the intuitive reasoning that led to the pruned PWIT needs a rigorous proof, as is presented in the Theorem below.

\begin{theorem}\label{thm:weaklimit}
	Suppose that $(\boldsymbol{d}_n)_{n\geq 1}$ is such that $P_{\boldsymbol{d}_n}$ converges weakly to some distribution $P(\cdot)$ and let $N_n \sim \mathcal{N}(n, \boldsymbol{d}_n)$. Then $$\expect U(N_n) \xrightarrow{w} \ertreedist(P).$$
\end{theorem}
\begin{proof}[Sketch of the proof]
	The main body of the proof consists of four steps:
	\begin{enumerate}
		\item Recall that $\expect U(N_n)$ is the law of $[N_{n,\circ}(r)]$ for a uniformly chosen $r\in[n]$. The first step is to redefine the construction of the random network $N_n$, as viewed from $r$.
		\item The random network $N_n$ has an interdependence structure; however as $n$ grows, the dependency weakens. The second step is to exploit this weak dependence and to prove that as $n$ goes to infinity, the connected component of the vertex $r$ becomes locally tree-like.
		\item As the dependency weakens, the local structure of $[N_{n,\circ}(r)]$ gets close to the local structure of a rooted tree distributed under $\ertreedist(P)$. The third step is to prove that for every finite rooted network $\rtree\in G_*$ with depth $t$, the measure assigned to $	A_{\rtree} = \left\{[\N]\in G_*: d_{G_*}([\N],{\rtree})< ({1+t})^{-1} \right\}$ by $\expect U(N_n)$ converges to the measure assigned to $A_{\rtree}$ by $\ertreedist(P)$.
		\item Finally, since $G_*$ is a Polish space, the Portmanteau Theorem is applied to show the desired convergence.
	\end{enumerate}
	The formal proof of the theorem is lengthy and technical. For consistency and to keep the focus of the manuscript on the branching processes itself, the formal proof is given in Appendix~\ref{appendix:pfconv}.
\end{proof}
\subsection{Consequences of the Convergence to EWT}
The local weak convergence result has implications on the global properties of the finite graph model. For example, if $\ertreedist(P)$ assigns probability one to finite rooted networks, then the size of the giant component of the finite graph model is asymptotically $O(1)$ with high probability.

\begin{corollary}\label{cor:sizeofgiant}
	Let $\prob(\{\text{extinction}\})$ denote the probability that the component containing the root of the EWT is finite. Then the expected the proportion of vertices with component of size $O(1)$ converges to $\prob(\{\text{extinction}\})$. In particular, the expected the proportion of vertices in the giant component of the finite graph model is asymptotically bounded by $1-\prob(\{\text{extinction}\})$.
\end{corollary}

\begin{proof}
	For every finite rooted network $\rtree\in G_*$ with depth $t$, define
	\begin{align}
		\tilde{A}_{\rtree} \coloneqq \left\{[\N]\in G_*: d_{G_*}([\N],{\rtree})< ({2+t})^{-1} \right\}.
	\end{align}
	Notice that the depth of all elements of $\tilde{A}_{\rtree}$ is less than $t+1$. Let $\tilde{\mathcal{S}}_{t,d} \subset G_*$ denote the set of all rooted tree networks of depth $t$ and degrees less than $d$, and let ${\mathcal{S}}_{t,d}$ denote a countable dense subset of $\tilde{\mathcal{S}}_{t,d}$. Notice that $\tilde{\mathcal{S}}_{t,d}$ is measurable, since
	\begin{align}
		\tilde{\mathcal{S}}_{t,d} = \bigcup_{{\rtree} \in {\mathcal{S}}_{t,d}} \tilde{A}_{\rtree},
	\end{align}
	and in particular, $\tilde{\mathcal{S}}_{t,d}$ is a continuity set. Hence, by Theorem~\ref{thm:weaklimit}, we have $\lim_{n\to\infty}\expect U(N_n)(\tilde{\mathcal{S}}_{t,d}) = \ertreedist(P)(\tilde{\mathcal{S}}_{t,d})$. Taking $d\to\infty$ and then $t\to\infty$, and using the monotone convergence theorem, we get $\lim_{n\to\infty} \expect U(N_n)(\tilde{\mathcal{S}}) = \ertreedist(P)(\tilde{\mathcal{S}})$, where $\tilde{\mathcal{S}} = \cup_{t,d}\,\tilde{\mathcal{S}}_{t,d}$. Finally, notice that $\ertreedist(P)(\tilde{\mathcal{S}}) = \prob(\{\text{extinction}\})$, and that $\lim_{n\to\infty} \expect U(N_n)(\tilde{\mathcal{S}})$ is the expected proportion of vertices with component of size $O(1)$.
\end{proof}
Usually, there is a stronger relation between $\prob(\{\text{extinction}\})$ and the (limiting) size of the giant component of the finite graph model: we expect the size of the giant component of the finite graph model to be (approximately) $n\times (1 - \prob(\{\text{extinction}\}))$. Such a relation has been established between the configuration model random graphs and the associated unimodular Galton--Watson tree, and also between Erd\H{o}s-R\'enyi random graphs and the Poisson Galton-Watson tree. However, a more detailed analysis is required to prove this relation for the La-Kabkab random graphs and the EWT, and we leave it as an open problem for future work (see open problem~\ref{openprob:sizeofgiant}). We numerically validate this assertion in Section~\ref{sec:numsim}. From general results on local weak convergence, Theorem~\ref{thm:weaklimit} implies that $\ertreedist(P)$ is unimodular. However, unimodularity of $\ertreedist(P)$ can be proved directly too, and our direct proof provides more insight into the structure of the EWT.

\begin{corollary} \label{cor:unimod}
	If $P\in\mathcal{P}(\mathbb{N})$ has a positive and finite mean, then $\ertreedist(P)$ is a unimodular measure in $\mathcal{P}(G_*)$
\end{corollary}
\begin{proof}
	An independent proof is given in Appendix~\ref{appendix:pfunimod}.
\end{proof}

\section{Numerical Example}\label{sec:numsim}
In this section, we present some numerical results when $P$, the distribution of $n_{\root}$, is the geometric distribution. We start by explicitly determining the degree distribution of the root vertex, simplifying the operator $T$ associated with the probability of extinction, and also determining explicitly the Krein--Rutman eigenvalue and the corresponding eigenfunctions of $M_1$. Then, we investigate various properties of the resulting EWT and compare its structural properties with related unimodular GWTs~\cite{Bordenave2016}.
\begin{proposition}\label{prop:geodist}
	Assume $P$ is the geometric distribution with parameter $p$, i.e., for all $k\in\mathbb{N}$, we have $P(k) = (1-p)^{k-1}p$. Then, the following hold:
	\begin{enumerate}[label=(\roman*)]
		\item The probability distribution of the root vertex is given as follows:
		\begin{align}
			\forall d\in\mathbb{Z}_+,\qquad\prob(D_{\root}=d) = \frac{p}{(1-p)^2} \left(1 - \sum_{m=0}^d \frac{\left(\frac{1-p}{p}\right)^m }{m!}\eexp^{-\frac{1-p}{p}}\right) - \frac{p}{1-p}\onefunc\{d=0\}.
		\end{align}
		\label{prop:geodist_1}
		\item The extinction operator $T$ is given as follows:
		\begin{align}
			T(f)(x) \coloneqq
			\begin{dcases*}
				\frac{px-1+\eexp^{-px}}{px} + \frac{p}{x}\int_{z=0}^{\infty} \min(x,z) \exp\left(-z\left(1 - (1-p)f(z) \right)\right)\,dz, & \!$x>0$ \\
				p\int_{z=0}^{\infty}\exp\left(-z\left(1 - (1-p)f(z) \right)\right)\,dz, &\!$x=0$
			\end{dcases*}
		\end{align}
		\label{prop:geodist_2}
		\item The Krein--Rutman eigenvalue and the corresponding eigenfunctions of $M_1$ are given as follows:
		\begin{align}
			&\text{Eigenvalue: }\beta_0 = \frac{4(1-p)}{{r_0}^2\,p},\allowdisplaybreaks\\
			&\text{Right eigenfunction: } \mu(m,x) = \frac{m}{x} \,\frac{J_0\left({r_0}\eexp^{-\frac{p}{2}x}\right)}{\sqrt{\int_{0}^{\infty} p(1-p)\eexp^{-py} \,\left(J_0\left({r_0}\eexp^{-\frac{p}{2}y}\right)\right)^2\,dy}},\allowdisplaybreaks\\
			&\text{Left eigenfunction: } \nu(k-1,z) = P(k)\frac{\eexp^{-z}z^{k-1}}{(k-1)!}\, \frac{J_0\left({r_0}\eexp^{-\frac{p}{2}z}\right)}{\sqrt{\int_{0}^{\infty} p(1-p)\eexp^{-py} \,\left(J_0\left({r_0}\eexp^{-\frac{p}{2}y}\right)\right)^2\,dy}},
		\end{align}
		where $J_0(\cdot)$ is the zeroth-order Bessel function of first kind, i.e., $J_0(x) = \sum_{i=0}^{\infty} \frac{(-1)^i}{i!~i!} \left(\frac{x}{2}\right)^{2i}$, and $r_0 \approx 2.4048$ is the smallest positive zero of $J_0(\cdot)$.
        \label{prop:geodist_3}
	  \item The asymptotic degree distribution is given as follows:
		\begin{align}
    		\lim_{l \to\infty} \prob(D_l = d\,\vert\,Z_l > 0)
            &=\frac{r_0}{2J_1(r_0)} \int_{0}^{1} \frac{ \left(\frac{1-p}{p}\right)^{d} w^d \exp\left(-\frac{1-p}{p}w\right) J_0\left({r_0}\sqrt{1-w}\right)}{d!}\,dw\\
            &=\frac{1}{J_1(r_0)} \left(\frac{2(1-p)}{r_0p}\right)^{d} \sum_{k=1}^\infty \left(-\frac{2(1-p)}{r_0p}\right)^{k}{ d+k \choose k} J_{d+k+1}(r_0),
        \end{align}
        where for non-negative integer $v$, $J_{v}(\cdot)$ is the $v^{\mathrm{th}}$-order Bessel function of first kind, i.e., $J_v(x) = \sum_{i=0}^{\infty} \frac{(-1)^i}{i!~(i+v)!} \left(\frac{x}{2}\right)^{2i+v}$. Here, $D_l$ is the degree of a vertex at generation $l$, chosen uniformly at random.
    	\label{prop:geodist_4}
        \end{enumerate}
\end{proposition}
\begin{proof}
	The proofs of part~\ref{prop:geodist_1},~\ref{prop:geodist_2}, and~\ref{prop:geodist_4} are elementary and are presented in Appendix~\ref{appendix:geodist}.
	\begin{enumerate}[label=(\roman*)]
		\setcounter{enumi}{2}
		\item Recall the definition of $g_2(\cdot)$ and $G_i(\cdot)$ given in Section~\ref{sec:intro}. We have
		\begin{align}
		g_2(x) = \eexp^{-x} \sum_{k=2}^{\infty} (1-p)^{k-1}p\frac{x^{k-2}}{(k-2)!} = p(1-p)\eexp^{-px}.
		\end{align}
		Using the above equality together with a simple induction argument, we get
		\begin{align}
		G_i(x) = \left(\frac{1-p}{p}\right)^i \frac{ \eexp^{-ipx}}{i!\,i!}
		\end{align}
		Substituting the above equality into the definition of $L(\beta,x)$, we have
		\begin{align}
		L(\beta,x) = \sum_{i=0}^\infty \left(\frac{4(1-p)\eexp^{-px}}{2 p \beta }\right)^i \frac{ (-1)^i}{i!\,i!} = J_0\left(\sqrt{\frac{4(1-p)\eexp^{-px}}{p \beta }}\right)
		\end{align}
		Notice that $J_0\left(\sqrt{\frac{4(1-p)\eexp^{-px}}{p \beta }}\right)$ is the solution of the following differential equation:
		\begin{align}
		\beta\, \dv[2]{q(x)}{x} + p(1-p)\eexp^{-px}\, q(x) = 0,
		\end{align}
		 as we will discuss in Theorem~\ref{thm:propL} part~\ref{part:propL_ii} for the function $L(\beta,x)$.

		As we pointed out in Section~\ref{sec:intro}, $\beta_0$ is the smallest root of $L(\cdot,0)$, and $f_0(\cdot)$ is given by $L(\beta_0,\cdot)\sqrt{C_N}$ where $C_N\! =\! (\int_{0}^{\infty} g_2(y) \left(L(\beta_0,y)\right)^2 dy)^{-1}$. Then, by simple algebra
		\begin{align}
		\beta_0 = \frac{4(1-p)}{{r_0}^2\,p}, \qquad \text{ and } \qquad f_0(x) = \frac{J_0\left({r_0}\eexp^{-\frac{p}{2}z}\right)}{\sqrt{\int_{0}^{\infty} p(1-p)\eexp^{-py} \,\left(J_0\left({r_0}\eexp^{-\frac{p}{2}y}\right)\right)^2\,dy}}.
		\end{align}
	\end{enumerate}
This completes the proof.
\end{proof}
The simple form of the geometric distribution makes it easier to study the associated EWT. Next, we numerically compare the degree distribution of EWT with a related unimodular GWT ($\text{GWT}_*$). A $\text{GWT}_*$ with degree distribution $Q\in\mathcal{P}(\bs{N})$ is a rooted tree, rooted at $\root$, such that the number of descendants of the root is distributed as $Q$, and for all the other vertices, the offspring distribution is given by the size-biased distribution $Q_*$:
\begin{align}\label{eq:sizebiased}
Q_*(k-1) = \frac{k\,Q(k)}{\sum_{r}r \,Q(r)}.
\end{align}

In Figure~\ref{fig:DegreeDist}, we compare the degree distribution of the zeroth and the first generation of the EWT with $\text{GWT}_*$. We consider a $\text{GWT}_*$ that has a Poisson degree distribution with parameter $\lambda'$, and a $\text{GWT}_*$ that has a geometric degree distribution with parameter $p'$. Both $p'$ and $\lambda'$ are chosen so that the expected degree of the root vertex is the same as in the EWT with a geometric distribution for $n_\phi$ with parameter $0.08$. We also consider the size-biased distribution of the root vertex of EWT, using \cref{eq:sizebiased} and Theorem~\ref{thm:ConDegree}. 
The degree distributions of the EWT have different behavior compared with $\text{GWT}_*$. Most notably, the number of descendants of a randomly selected vertex in the first generation is not the size-biased distribution of the root vertex, as we will also discuss in Section~\ref{sec:DegreeDist}. Since there is no closed form for the degree distribution of the first generation for an EWT, we numerically derive this distribution by averaging over $2\times 10^6$ rooted trees.

\begin{figure}[h]
	\centering
	\includegraphics[width=0.8\textwidth]{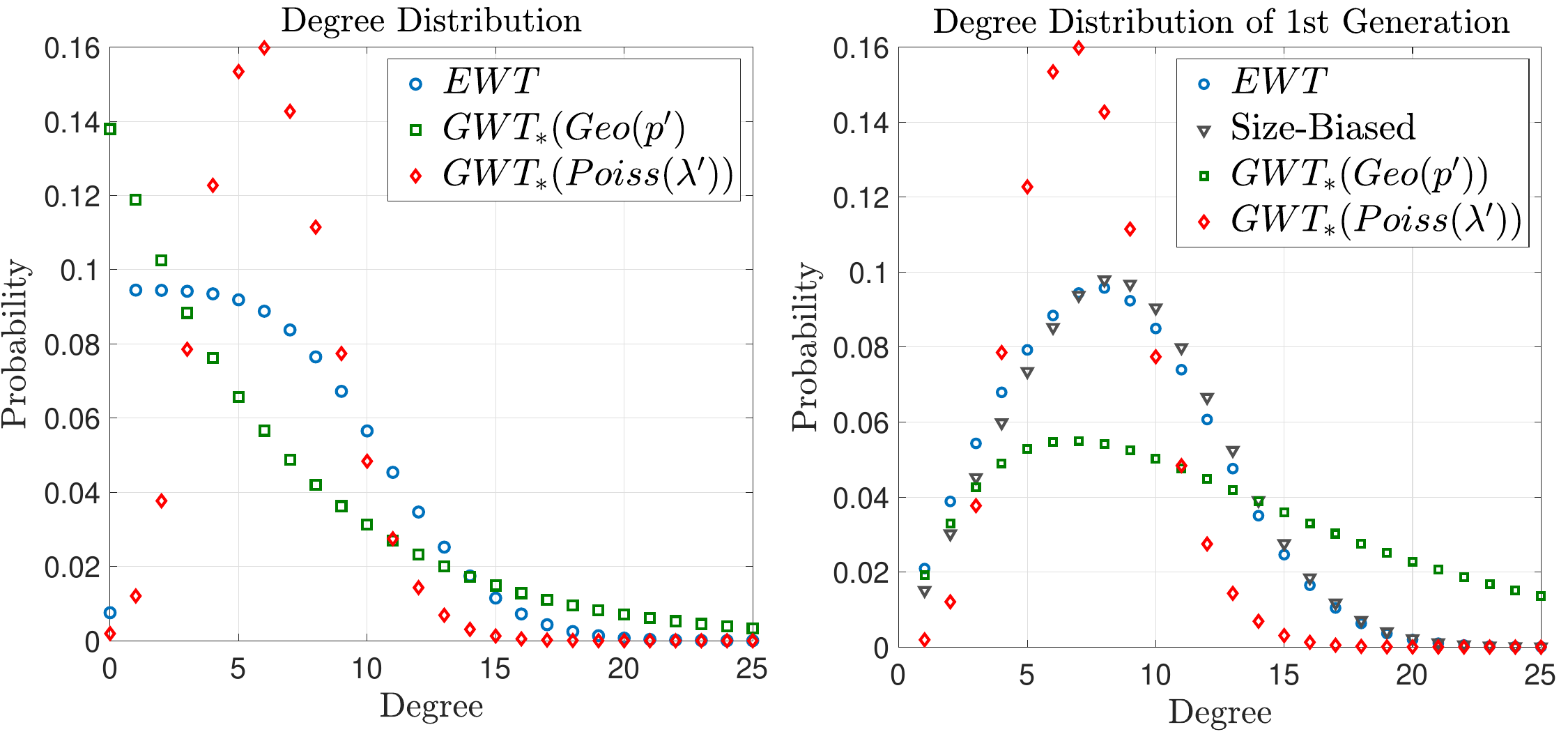}
	\caption{The degree distribution of the root vertex (zeroth generation) and the first generation of the EWT (with potential degree distribution $\geomdist(p)$), unimodular Galton--Watson Trees (with degree distribution $\poissdist(\lambda')$ and $\geomdist(p')$), and the size-biased degree distribution of the root of EWT. We select $p=0.08$, and then the parameters $p'$ and $\lambda'$ are chosen so that the expected degree of the root vertex is the same as in EWT.}
	\label{fig:DegreeDist}
\end{figure}

Next, we compare the degree distributions of different generations of the EWT. In Figure~\ref{fig:DegreeDistGen}, we illustrate the degree distributions of the root node, a node in the first generation, the second generation, and the third generation, and the asymptotic degree distribution of the EWT with potential degree distribution $\geomdist(0.08)$.
Similar to Figure~\ref{fig:DegreeDist}, we numerically derive the degree distribution of the first three generations of the EWT by averaging over $2\times 10^6$ rooted trees.
The error bars are also included in Figure~\ref{fig:DegreeDistGen}. The asymptotic degree distribution is given by Proposition~\ref{prop:geodist}. Notice that the degree distribution of the first generation in Figures~\ref{fig:DegreeDist} and~\ref{fig:DegreeDistGen} are the same. Given the interdependence structure of the EWT, the degree distributions of different generations are not the same. Interestingly, numerically the degree distributions are stochastically ordered as we proceed down the generations. Also, notice that the size-biased distribution of the root vertex is close to the asymptotic degree distribution; however, the two distributions are not the same. This suggests that the growth rate and probability of extinction of the EWT with the potential degree distribution $\geomdist(p)$ should also be close to the growth rate and probability of extinction of $\text{GWT}_*$ with probability distribution given by the degree distribution of the root vertex in EWT; see Figures~\ref{fig:GrowthRate} and \ref{fig:Extinction}.

\begin{figure}[h]
	\centering
	\includegraphics[width=0.85\textwidth]{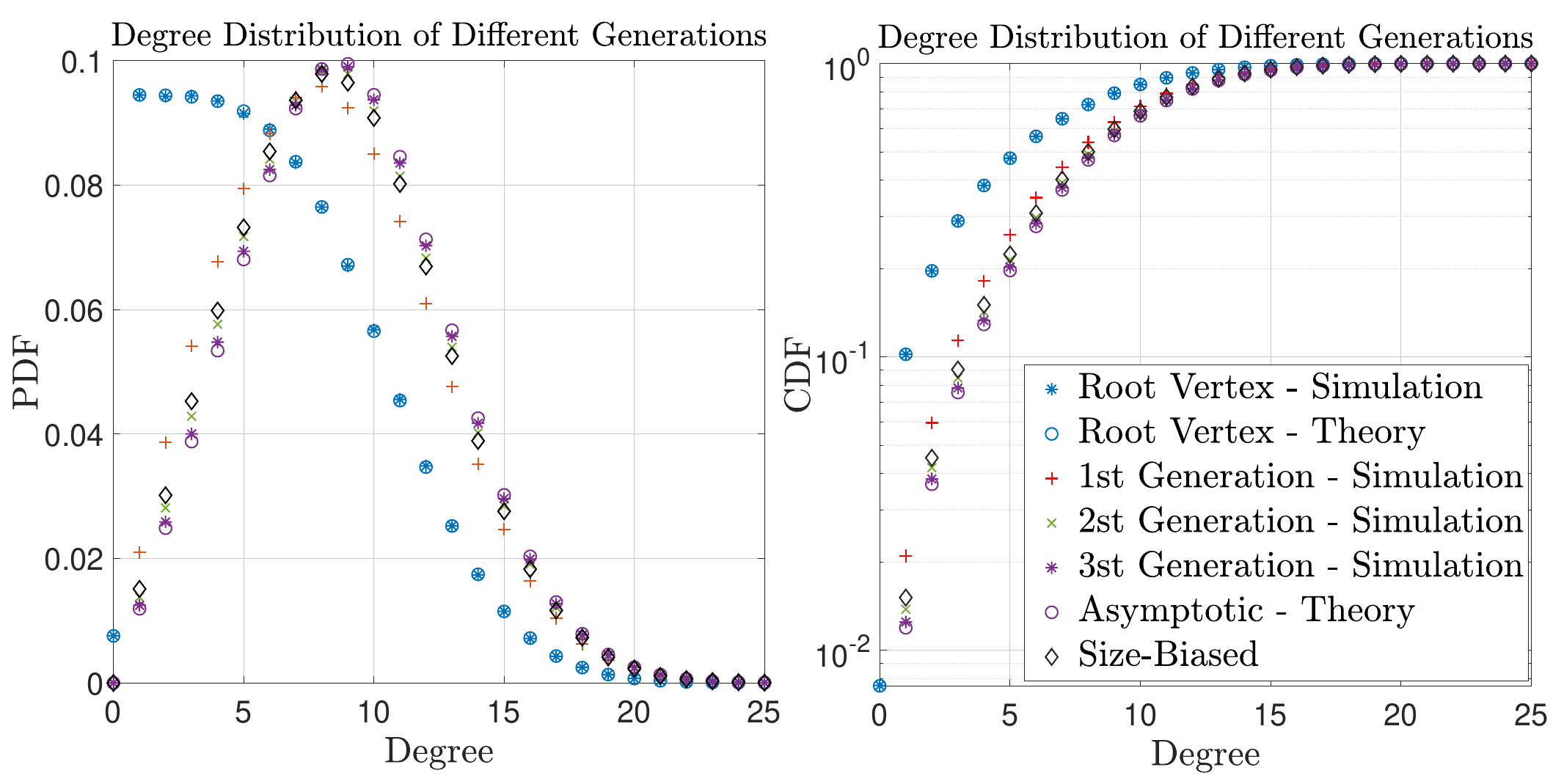}
	\caption{The degree distribution of the root vertex (zeroth generation), the first generation, the second generation, the third generation, the asymptotic degree distribution, and the size-biased distribution of the root vertex of the Erlang Weighted Tree with potential degree distribution $\geomdist(0.08)$. Both plots share the same legend.}
	\label{fig:DegreeDistGen}
\end{figure}

Next, in Figure~\ref{fig:DegreeDistCond} we compare the conditional degree distributions of the first generation, conditioned on the degree of the root vertex. Since the EWT is the random weak limit of the finite graph model, we numerically derive the conditional degree distributions of the EWT by averaging over $1000$ graphs with $10000$ vertices.
As we pointed out earlier, the degree distribution of the first generation, $D_1$, depends on the degree distribution of the root vertex, $D_\root$. In particular, a larger value of $D_\root$ increases the probability of observing larger values of $D_1$.

\begin{figure}[h]
	\centering
	\includegraphics[width=0.7\textwidth]{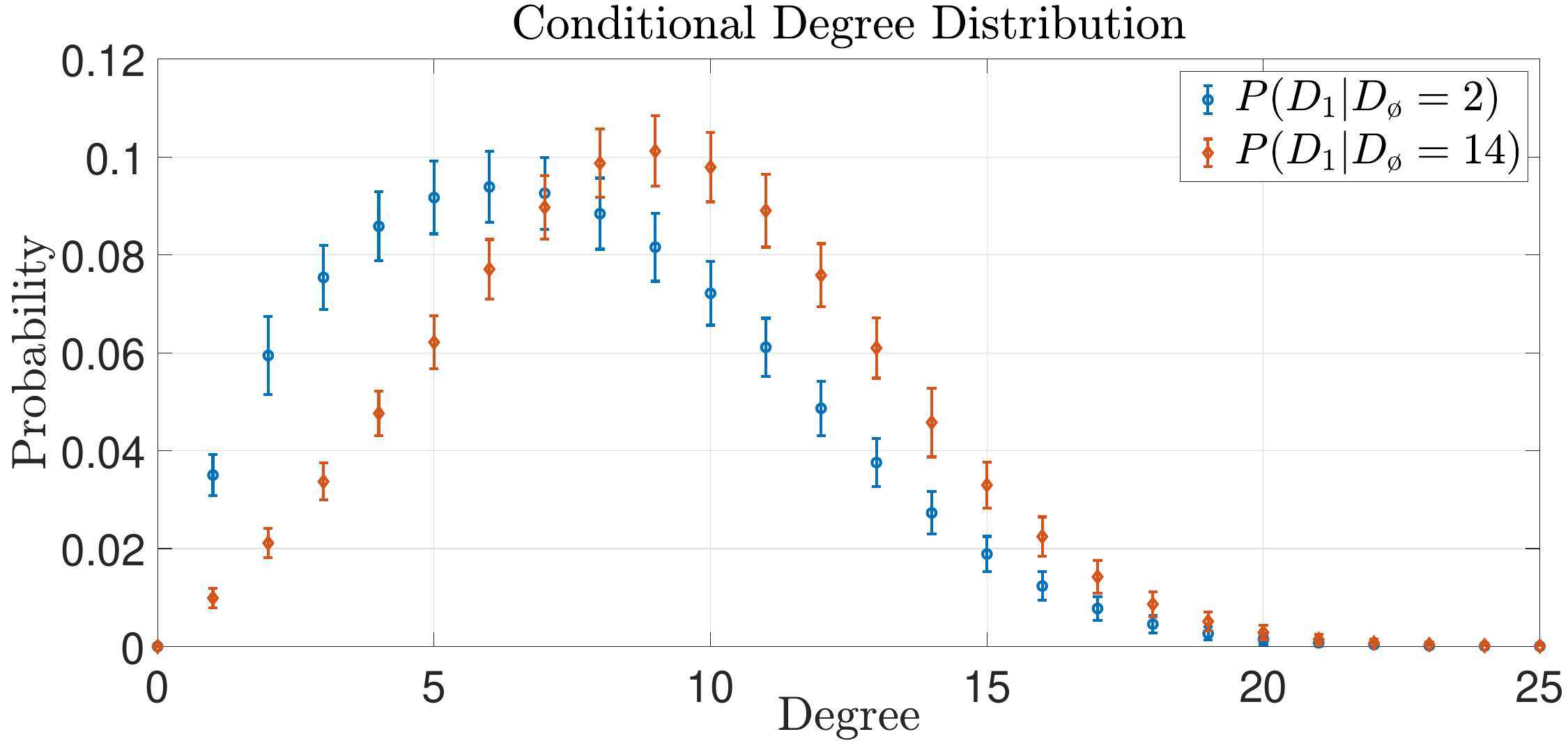}
	\caption{The conditional degree distribution of the first generation, conditioned on the degree of the root vertex of the Erlang Weighted Tree with potential degree distribution $\geomdist(0.08)$.}
	\label{fig:DegreeDistCond}
\end{figure}

In Figure~\ref{fig:GrowthRate} we compare the growth/extinction rate of the EWT with different choices of $\text{GWT}_*$. We consider a $\text{GWT}_*$ that has a Poisson degree distribution with parameter $\lambda'$, a $\text{GWT}_*$ that has a geometric degree distribution with parameter $p'$, and a $\text{GWT}_*$ with degree distribution given by the degree distribution of the root vertex of EWT. As we mentioned earlier, the growth/extinction rate of the EWT is close to the growth/extinction rate of $\text{GWT}_*$ with degree distribution given by the degree distribution of the root vertex of EWT; however, they are not the same.

\begin{figure}[h]
	\centering
	\includegraphics[width=0.7\textwidth]{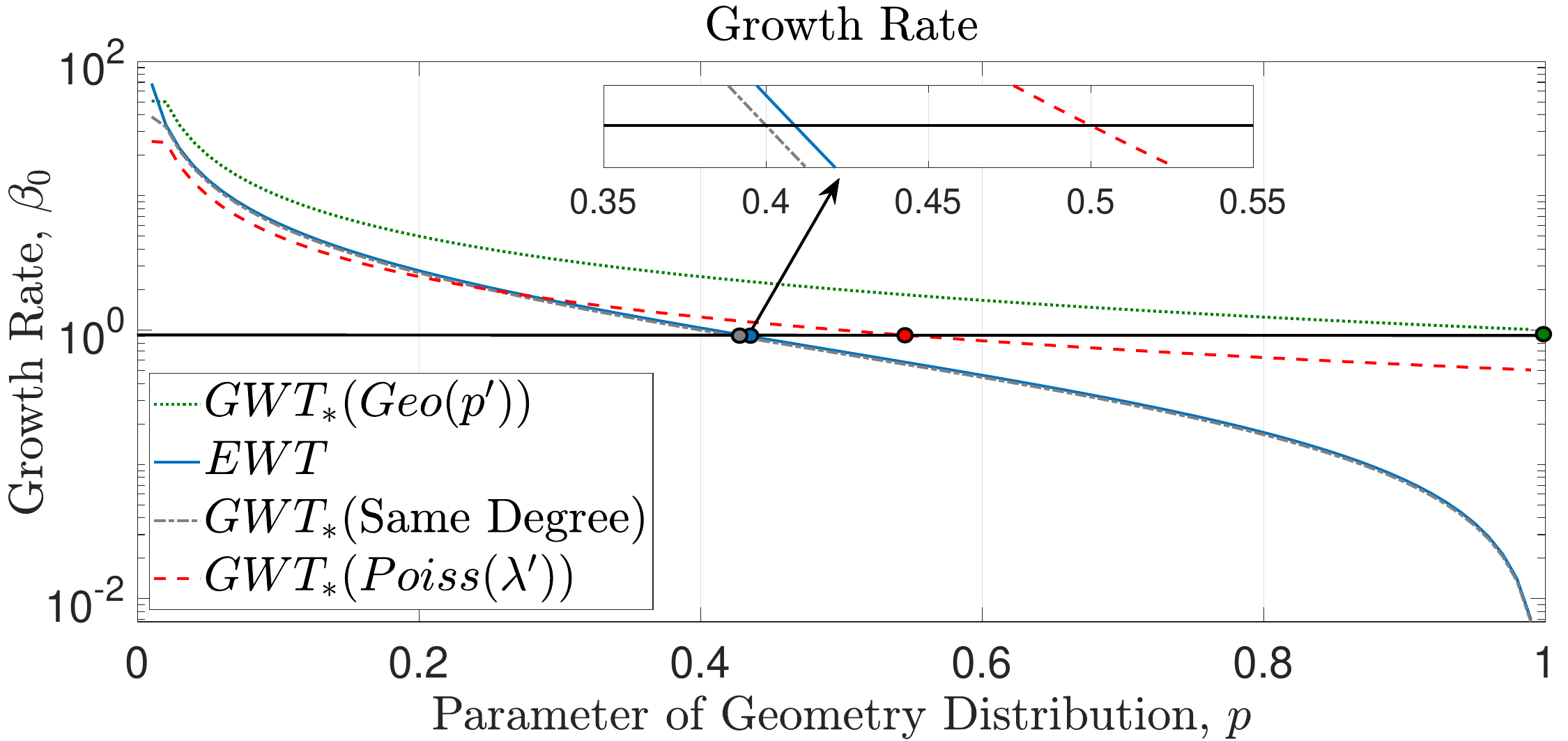}
	\caption{The growth rate of the EWT (with potential degree distribution $\geomdist(p)$) and unimodular Galton--Watson Trees (with degree distribution $\poissdist(\lambda')$, $\geomdist(p')$, and the degree distribution of the root vertex in EWT, respectively). We select $p=0.08$, and the the parameters $p'$ and $\lambda'$ are chosen so that the expected degree of the root vertex is the same as in the EWT.}
	\label{fig:GrowthRate}
\end{figure}

Finally, in Figure~\ref{fig:Extinction} we compare the probability of extinction of the EWT with different $\text{GWT}_*$ choices. We consider the same set of unimodular GWTs as those used in Figure~\ref{fig:GrowthRate}. We also compare the proportion of vertices in the giant component of the finite graph model (with potential degree distribution $\geomdist(p)$) with random graphs generated by the configuration model (using the same degree distribution as in the associated $\text{GWT}_*$), and the Erd\H{o}s-R\'enyi random graph (with parameter $\lambda'/n$, where $n$ is the number of vertices), in Figure~\ref{fig:GianComp}. We derive the size of the giant component of the finite graph model by averaging over $50$ graphs with $50000$ vertices. The error bars are also included. The configuration model generates a random graph by uniformly pairing the half-edges assigned to vertices of the graph, where the number of half-edges assigned to a vertex is given by a fixed degree distribution. The Erd\H{o}s-R\'enyi random graph with parameter $\lambda'/n$ is given by connecting pairs of vertices to each other with probability $\lambda'/n$. For the configuration model and the Erd\H{o}s-R\'enyi random graph model, this ratio equals $1-\prob(\{\text{extinction}\})$, where $\prob(\{\text{extinction}\})$ is the probability of extinction of the associated $\text{GWT}_*$~\cite{Bordenave2016}. Figures~\ref{fig:Extinction} and~\ref{fig:GianComp} suggest that this is also true for the La-Kabkab random graph model and the EWT.

\begin{figure}[h]
	\centering
	\includegraphics[width=0.8\textwidth]{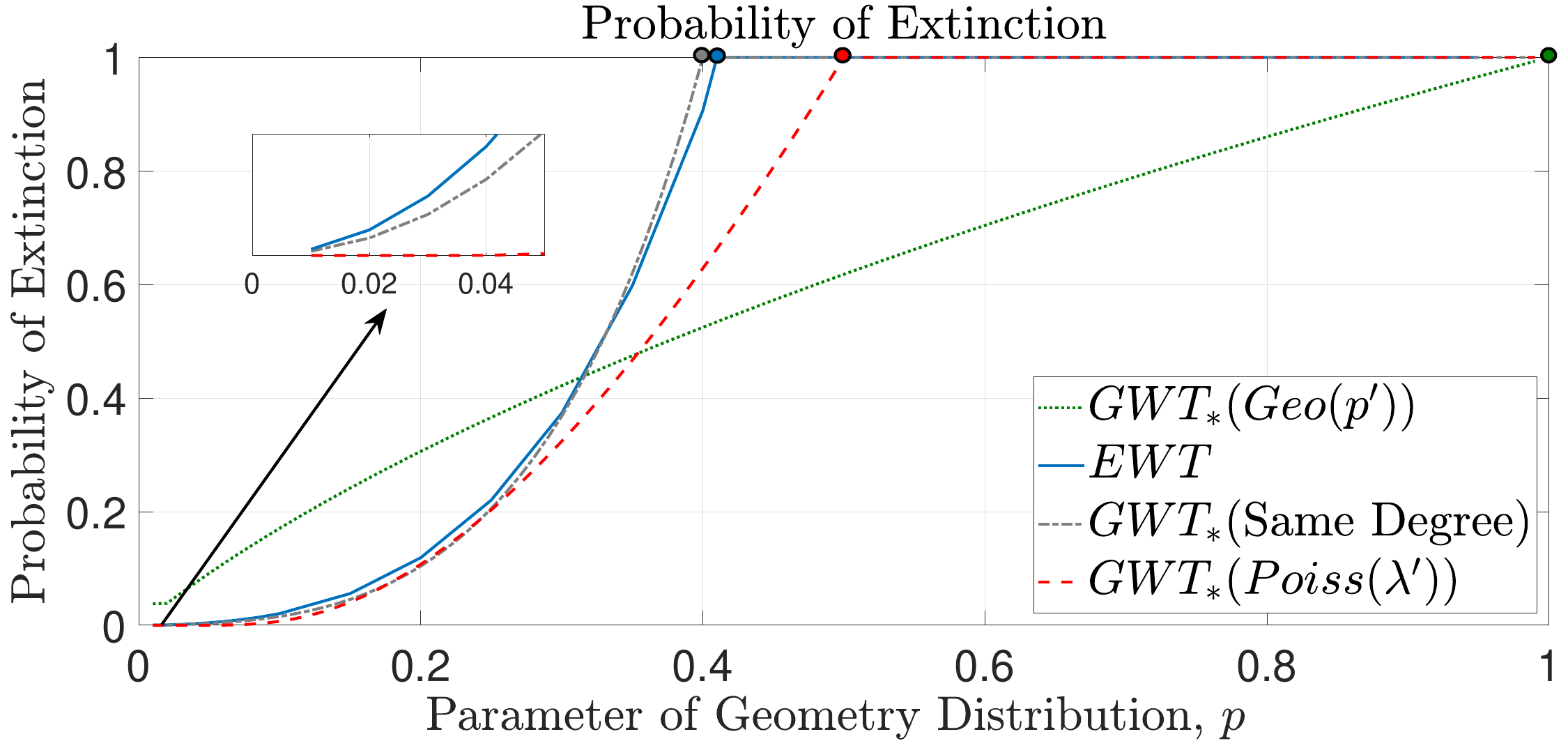}
	\caption{The probability of extinction of Erlang Weighted Tree (with potential degree distribution $\geomdist(p)$) and unimodular Galton--Watson Trees (with degree distribution $\poissdist(\lambda')$, $\geomdist(p')$, and the degree distribution of the root vertex in EWT ). The parameters $p'$ and $\lambda'$ are chosen so that the expected degree of the root vertex is the same as in EWT.}
	\label{fig:Extinction}
\end{figure}

\begin{figure}[h]
	\centering
	\includegraphics[width=0.8\textwidth]{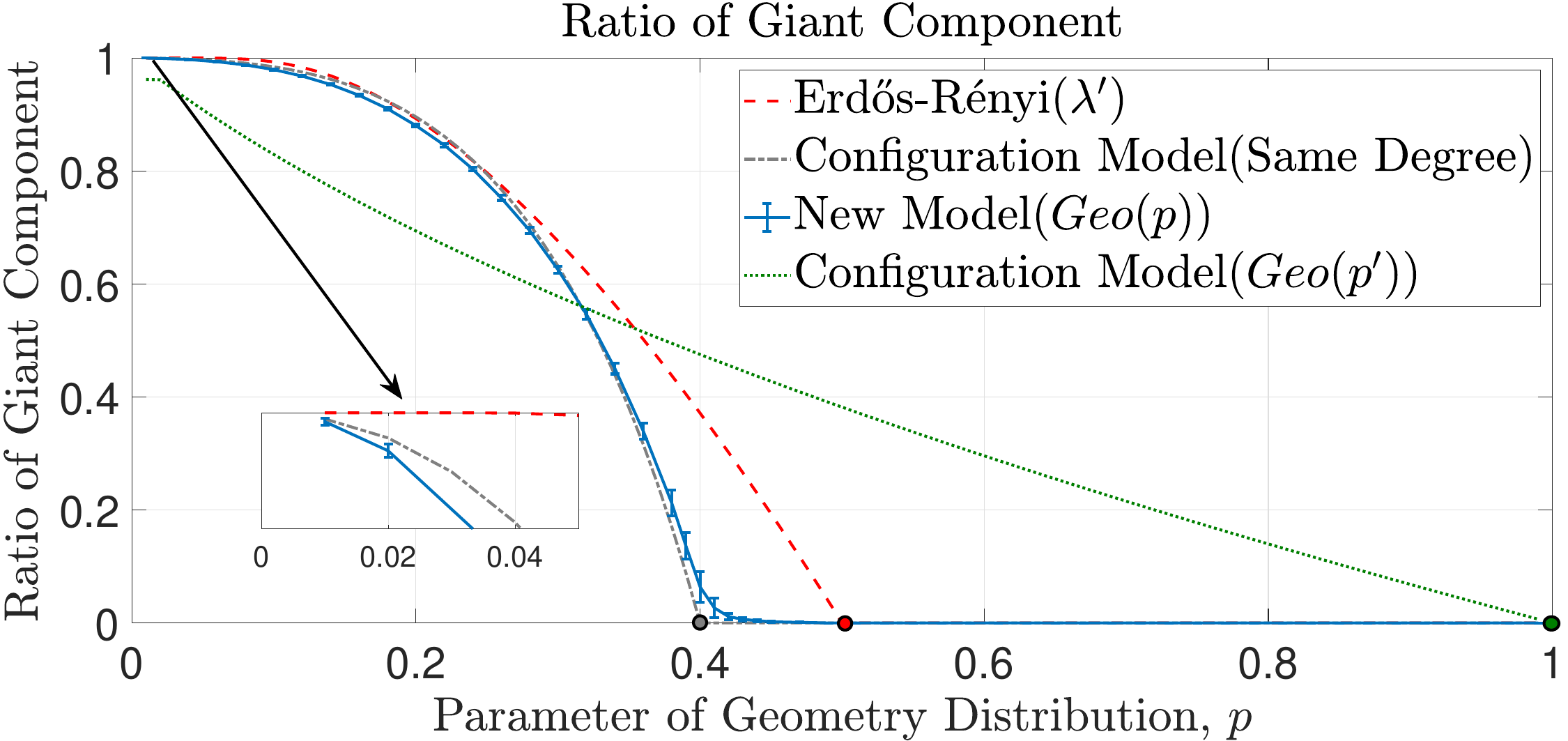}
	\caption{The proportion of vertices in the giant component of the finite graph model (with potential degree distribution $\geomdist(p)$), random graphs generated by the configuration model (with degree distributions $\geomdist(p')$, and the degree distribution of the root vertex in the EWT ), and the Erd\H{o}s-R\'enyi random graph (with parameter $\lambda'/n$). The parameters $p'$ and $\lambda'$ are chosen so that the degree distributions are the same.}
	\label{fig:GianComp}
\end{figure}

\section{Open Problems}\label{sec:openprob}
We conclude the discussion of the main results with some open problems:
\begin{enumerate}[wide]
	\item Conditioned on $Z_l > 0$, for $\beta_0 \leq 1$, what is the asymptotic distribution of $Z_l$ as $l$ grows without bound? And, what is the asymptotic distribution of $Z_l$ conditioned on $Z_{l+m}>0$ as either $l$ or $m$ increase without bound? Starting with Yaglom~\cite{yaglom1947} such limits were studied for the Galton-Watson branching process culminating in the famous $L\log(L)$ criterion~\cite{heathcote1967,Joffe1967,Athreya1972,lyons1995,Pakes1999}: the limiting distributions, called the Yaglom limits, exist if and only if $\mathbb{E}[Z_1 \log^+(Z_1) | Z_0=1] < + \infty$. Similar problems have also been studied for multi-type branching processes~\cite{Penission2016}, continuous-state branching processes~\cite{Lambert2007,li2000,Grey1974} and superprocesses~\cite{Liu2022}. Formally, the question is whether there exists a probability measure $\varrho$ on $\Omega$ such that
	\begin{align}
		\lim_{l\to\infty} \prob(Z_l \in \Apset \,\vert\, Z_l(\Omega) > 0) = \varrho(\Apset),
	\end{align}
	for any Borel $\Apset \subset \Omega$. 
    See \cite{Goldstein1971,Mullikin1963,seneta1968,Labbe2013,liu2021} for further discussion.

	\item In \cref{sec:PFeigenvalue}, we introduce a continuous state Markov process related to the study of the generations of the EWT. The transition probability kernel of this continuous state Markov process is given by:
	\begin{align}
		\forall x,y\in\mathbb{R}_+, \qquad p(x,y) \coloneqq \frac{\min(x,y)g_2(y)f_0(y)}{\beta_0f_0(x)}.
	\end{align}
	Based on the results we obtained for this process, the following question arises: What is the connection between the reversibility of this continuous state Markov process and the unimodularity of the branching process? Exploring this connection can provide a general framework to study unimodular branching process, an important class of branching processes.

	\item What is the connection between the probability of extinction and the proportion of vertices in the giant component in the finite graph model? For other random graph models (e.g. configuration model, Erd\H{o}s-R\'enyi model, etc.), the the proportion of vertices in the giant component converges to $1-\prob(\{\text{extinction}\})$, where $\prob(\{\text{extinction}\})$ is the probability the associated branching process goes extinct eventually. We have observed the same relation via numerical simulation in Figures~\ref{fig:Extinction} and~\ref{fig:GianComp} between the finite graph model and the EWT. Notice that by Corollary~\ref{cor:sizeofgiant}, the the proportion of vertices in the giant component is bounded above by $1-\prob(\{\text{extinction}\})$; however, the reverse inequality needs a separate proof.
    \label{openprob:sizeofgiant}

    \item Our main results on the growth and extinction of the EWT were obtained by assuming that the potential degree distribution $P$ has a finite moment generating function for some $\theta>0$. To what extent can this assumption on the potential degree distribution be relaxed?

	\item What is the local weak limit if vertices in the finite graph model iterate to use all their budget, given by their potential degree? Naturally, one can imagine a scenario in which after the realization of $G_n$, all vertices $i$ with degree less than $d_i(n)$ have a second chance to find more neighbors by announcing an updated set of potential neighbors. Of particular interest is the case when vertices can iterate as many times as possible until they achieve $d_i(n)$ or have checked all other $n-1$ vertices.

	\item To what extent can the methodology we developed in this work be applied to finding the Krein-Rutman eigenvalue and the corresponding eigenfunction for other operators? Is it possible to extend this methodology to more general multi-type branching processes with uncountably many types?

\end{enumerate}

\section{Properties of Erlang Weighted Tree}\label{sec:properties}

\subsection{Degree Distribution}\label{sec:DegreeDist}
We begin the analysis of the EWT by characterizing its degree distribution. The conditional degree distribution of a vertex conditioned on its type and the degree distribution of the root vertex is given as follows.
\begin{theorem}\label{thm:ConDegree}
	Let $D_{\boldsymbol{i}}$ denote the number of descendants of the vertex $\boldsymbol{i} \in T\sim \ertreedist(P)$. The conditional distribution of $D_{\boldsymbol{i}}$, conditioned on the type of the vertex $\boldsymbol{i}$ is given as follows:
	\begin{align}
	\prob\left(D_{\boldsymbol{i}}=d \,\vert\,n_{\boldsymbol{i}} = m, v_{\boldsymbol{i}} = x \right) = Bi\left(d;m,\int_{0}^{x} \frac{1}{x}\sum_{k=1}^{\infty}P(k)\bar{F}_{k}(y)\,dy\right),
	\end{align}
	where $\bar{F}_{k}(\cdot)$ is the complementary cumulative distribution function of $\erlangdist(k)$ and $Bi(d;m,\eta) = C(m,d) \eta^d (1-\eta)^{m-d}$, with $C(m,d) = m! / (d! (m-d)!)$. Consequently, the degree distribution of the root vertex and its mean are given as follows:
	\begin{align}
	&\prob(D_{\root}=d) = \sum_{m=1}^\infty P(m) \int_{0}^{\infty} \frac{ \eexp^{- x}x^m}{m!} Bi\left(d;m,\int_{0}^{x} \frac{1}{x}\sum_{k=1}^{\infty}P(k)\bar{F}_{k}(y)\,dy\right) \,dx,\allowdisplaybreaks\\
	&\expect[D_{\root}] = \sum_{m=1}^\infty\sum_{k=1}^{\infty} P(m)P(k) \int_{0}^{\infty} \bar{F}_{k}(y) \bar{F}_{m}(y) \,dy =\sum_{m=1}^\infty\sum_{k=1}^{\infty} P(m)P(k) \sum_{n=0}^{k-1} \sum_{l=0}^{m-1} {n+l \choose n} 2^{-n-l-1}.
	\end{align}
\end{theorem}
\begin{proof}
The proof is presented in Appendix~\ref{appendix:degreedist}.
\end{proof}
It is easy to derive in closed form the degree distribution of the root vertex. However, the degree distribution of a vertex at depth $l$ is rather complex. To see why, let us focus on the vertices at the first generation, i.e., the neighbors of the root vertex $\root$. For a unimodular measure $\rho$ with support on rooted trees, the following equality holds,
\begin{align} \label{eq:unimodprop}
\expect_{\rho}\left[\sum_{v\sim\root} \boldsymbol{1}_{deg(\root) = k}\right] =
\expect_{\rho}\left[\sum_{v\sim\root} \boldsymbol{1}_{deg(v) = k}\right].
\end{align}
The above relation is obtained by using the following function in the definition of the unimodularity (see Definition~\ref{def:unimod}),
\begin{align}
f_{k}([N_{\circ\circ}(\root,v)])=
\begin{cases}
1 &\text{ if }deg(\root) = k \text{ and } v\sim\root\\
0 &\text{ otherwise}
\end{cases}.
\end{align}
It is easy to check that the function $f_{k}$ is a Borel function from $G_{**}$ to $\mathbb{R}$, where $G_{**}$ is the set of isomorphism classes of connected locally-finite networks with an ordered pair of distinct vertices; see \cref{sec:back_randomgrpah}. Let $D_1$ and $D_{\root}$ denote the number of descendants of a vertex at the first generation and the degree of the root vertex, respectively. Simplifying \cref{eq:unimodprop}, we have
\begin{align}
k\prob(D_{\root} = k) =
\expect_{\rho}\left[D_{\root} \prob(D_1 + 1=k\,\vert\,D_{\root})\right].
\end{align}
From the last display we can check that if $D_1$ and $D_{\root}$ are independent, then $D_1+1$ would have the size-biased distribution corresponding to $D_{\root}$. This is the case for the unimodular GWT~\cite{Bordenave2016}. However, in our setting, $D_1$ and $D_{\root}$ are not independent. Another interesting observation is that the degree distributions of different generations are not the same since the probability of the events $n_{\boldsymbol{i}} = m$ and $v_{\boldsymbol{i}}=x$ depends on the depth of the vertex $\boldsymbol{i}$. Owing to this interdependence structure, at present, we do not have a characterization of the degree distribution at any level other than the root.
We will revisit this problem in Section~\ref{sec:DegreeDistRev}, and characterize the number of descendants of a typical vertex in generation $l$ as $l\to\infty$ (assuming extinction doesn't occur), after presenting the point process perspective.
\subsection{Probability of Extinction}\label{sec:probext}
The next natural quantity to study is the probability that the component containing the root is finite, i.e., the probability of extinction. This is an important quantity associated with the EWT which should be related to the size of the giant component in the finite graph model, as in the unimodular GWT. Let us start with the definition of the probability of extinction.
\begin{definition}\label{def:extinction}
	Let $Z_l$ denote the number of vertices at depth $l$. The probability of extinction is defined as:
	\begin{align}
	\prob(\{\text{extinction}\}) &\coloneqq \prob\left( \bigcup_{l=1}^\infty \{Z_l = 0\}\right)\allowdisplaybreaks.
	\end{align}
\end{definition}
Observe that the event $\{Z_i=0\}$ is a subset of the event $\{Z_j=0\}$ for every $j<i$; hence, the continuity of probability measures implies that
\begin{align}
\prob(\{\text{extinction}\}) &= \lim_{l\to\infty} \prob(\{Z_l=0\}).
\end{align}
Using this, we can characterize the probability of extinction. For this we will define an operator $T$ associated with the EWT that maps Lebesgue measurable functions on $\mathbb{R}_+$ taking values in $[0,1]$ to continuously differentiable functions on $\mathbb{R}_+$ taking values in $[0,1]$. For a valid function $f(\cdot)$, given $x\in \mathbb{R}_+$, the value $f(x)$ can be considered as a candidate for $\mathbb{P}(\{\text{extinction}\}\,\vert\, n_{\root}=1,v_{\root}=x)$. Then $T(f)(x)$ for $x\in\mathbb{R}_+$ will be another candidate for $\mathbb{P}(\{\text{extinction}\}\,\vert\, n_{\root}=1,v_{\root}=x)$ obtained by iterating $f(\cdot)$ on the EWT as detailed in \eqref{eq:defT} below: the first term comes from the first edge (from the root vertex) not forming; and the second contribution comes from the first edge forming and then noticing that the subtrees from vertex $1$ are statistically the same and independent conditioned on the type of vertex $1$, where we apply $f(\cdot)$ using the type of vertex $1$. Then, fixed points of $T$ will help determine the probability of extinction. For specific choices of $f(\cdot)$ we can also use $T$ to define a sequential relationship on probabilities of quantities associated with the EWT. For $x\in\mathbb{R}_+$ and $l\in\mathbb{Z}_+$, set $f_l(x) \coloneqq \prob\left(\{Z_l = 0\}\,\vert\,n_{\root} = 1, v_{\root} = x \right)$. Notice that $Z_l=0$ if for all $i\in[n_\root]$ either the potential edge $\{i,\root\}$ does not survive, or $\{i,\root\}$ survives and the subtree rooted at $i$ goes extinct after at most $l-1$ generations. Also, notice that the survival of different branches of the root vertex, conditioned on its type, are independent of each other. These two observations will be used to prove 
the following recursive relationship between the associated probabilities:
\begin{align}
   \forall l\in\mathbb{N}, x\in\mathbb{R}_+, \qquad f_l(x)  =
    T(f_{l-1})(x).
\end{align}
\begin{theorem} \label{thm:probext}
	Consider the operator $T: L(\mathbb{R}_+;[0,1]) \to C^1(\mathbb{R}_+;[0,1])$ defined as
	\begin{align}
	T(f)(x) \coloneqq
	\begin{dcases*}
	\begin{aligned}
	&\frac{1}{x}\sum_{k=1}^{\infty}P(k)\int_{y=0}^{x} \Bigg(\int_{z=0}^{y}\frac{\eexp^{-z}z^{k-1}}{(k-1)!}\,dz \\
	&\myquad[10]+\int_{z=y}^{\infty}\frac{\eexp^{-z}z^{k-1}}{(k-1)!}\left(f(z)\right)^{k-1}dz\Bigg)dy,
	\end{aligned}
	 & $x>0$ \\
	\sum_{k=1}^{\infty}P(k) \int_{z=0}^{\infty}\frac{\eexp^{-z}z^{k-1}}{(k-1)!}\left(f(z)\right)^{k-1}\,dz, &$x=0$
	\end{dcases*}\numberthis \label{eq:defT}
	\end{align}
	with the convention $0^0 = 1$. Then the probability of extinction is
	\begin{align}
	\prob(\{\text{extinction}\})= \sum_{m=1}^{\infty} P(m)\int_{x=0}^{\infty} \frac{\eexp^{-x}x^m}{m!} \left(q(x)\right)^m \,dx,
	\end{align}
	where the function $q(\cdot)\in C^1(\mathbb{R}_+;[0,1])$ is the smallest fixed point of the operator $T$, that is, for any other fixed point of $T(\cdot)$ say $f(\cdot)\in C^1(\mathbb{R}_+;[0,1])$, and for all $x\in\mathbb{R}_+$, we have $f(x) \geq q(x)$. Equivalently, the function $q(\cdot)$ is the point of convergence of $T^l(\boldsymbol{0})(\cdot)$ as $l$ goes to infinity, where $\boldsymbol{0}(\cdot)$ is the null function, that is, $\boldsymbol{0}(x)\equiv0$ for all $x$.
\end{theorem}
\begin{proof}[Sketch of the proof]\renewcommand{\qedsymbol}{}
The main idea is to find the probability of the event $\{Z_l = 0\}$ and then, let $l$ increase to infinity. This can be done through the following steps.
\begin{enumerate}
	\item Observe that conditioned on the type of the root vertex to be $(m,x)$, there are $m$ potential branches and the probability that the depth of each branch is less than or equal to $l-1$ depends only on the value of $x$.
	\item Starting from the first generation, all the vertices have the same behavior, i.e., for any non-root vertex $\boldsymbol{i}$, the distribution of $n_{\boldsymbol{i}}$ is given by $\widehat{P}$. Hence, it is possible to write the probability that the depth of a branch is less than or equal to $l-1$ via a recursion.
	\item Taking the limit and using monotonicity, the result follows.
\end{enumerate}
\end{proof}
\begin{proof}
	We now fill in the details. The theorem claims that the range of $T$ is $C^1(\mathbb{R}_+;[0,1])$ and that there exists a fixed point $q(\cdot)$ such that for any other fixed point $f(\cdot)$ of $T$,
	\begin{align}
	\forall x\in\mathbb{R}_+,\qquad q(x)=T(q)(x)\leq T(f)(x)=f(x),
	\end{align}
	i.e., it is the smallest fixed point of the operator $T$. The theorem also claims that
	\begin{align}
	q(\cdot) = \lim_{l\to\infty} T^l(\boldsymbol{0}).
	\end{align}
	We start by proving the following important properties of the operator $T$. Let $\boldsymbol{1}(\cdot)$ be the constant function with value $1$ everywhere.
	\begin{lemma}\label{lem:propt}
	The following hold:
		\begin{enumerate}[label=(\roman*)]
			\item For every $f(\cdot)\in L(\mathbb{R}_+;[0,1])$, the function $T(f)(\cdot)$ is non-decreasing and it belongs to $C^1(\mathbb{R}_+;[0,1])$. Moreover, $T(f)(\cdot)\equiv 1$ if and only if $f(x) = 1$ for almost every $x\in\mathbb{R}_+$.\label{part:propt_i}
			\item The largest fixed point of the operator $T$ is the constant function $\boldsymbol{1}(\cdot)$. 
			Moreover, if $f(\cdot)\neq \boldsymbol{1}(\cdot)$ is a fixed point of $T$, then $f(\cdot)$ is strictly increasing.
			\item For every pair of functions $f(\cdot),g(\cdot)\in C^1(\mathbb{R}_+,[0,1])$ with the property that for all $x\in\mathbb{R}_+$ the inequality $f(x)< g(x)$ holds, we have
                \label{part:propt_iii}
			\begin{align}
			\forall x\in\mathbb{R}_+,\qquad T(f)(x)< T(g)(x).
			\end{align}
			\item The function $T^l(\boldsymbol{0})$ converges point-wise to some function $q(\cdot)\in C[0,1]$ as $l$ goes to infinity, which is the smallest fixed point of the operator $T$.
		\end{enumerate}
	\end{lemma}
	\begin{proof}[Proof of Lemma~\ref{lem:propt}.] The proof is algebraic and does not use the connection between the operator $T(\cdot)$ and the probability of extinction. The proof is presented in Appendix~\ref{appendix:lemextinc}.
	\end{proof}
	We now get back to the proof of the main theorem. As we mentioned, the main idea is to characterize the probability of the event $\{Z_l = 0\}$. Define $Z_{l,i}$ to be the number of children at depth $l$ in the subtree connected to the root via vertex $i\in[n_\root]$. Fix an $l > 1$. Notice that $Z_{l} = 0$ if for all $i\in [n_{\root}]$ either $(\text{i})$ $v_i < \zeta_i$, i.e., the $i^{\mathrm{th}}$ edge does not form, or $(\text{ii})$ the $i^{\mathrm{th}}$ edge forms but there are no children at its $l^{\mathrm{th}}$ level, i.e., $Z_{l,i} = 0$. Recall that for $i\in[n_{\root}]$, $\zeta_i$ is the cost of the potential edge $\{\root,i\}$. Hence, for $l\geq 2$ we have
	\begin{align}
	&\prob\left(\{Z_l = 0\}\,\vert\, n_{\root} = m, v_{\root} = x\right) \allowdisplaybreaks\\
	&\qquad =\prod_{i=1}^m \prob\Big(\big\{v_i < \zeta_i\big\}\cup\big\{\{v_i \geq \zeta_i\} \cap \{Z_{l,i} = 0\}\big\}\,\vert\, n_{\root} = m, v_{\root} = x\Big)\allowdisplaybreaks\\
	&\qquad =\Big( \prob\big(\{v_1 < \zeta_1\big\}\,\vert\, n_{\root} = m, v_{\root} = x\big) + \prob\big(\{v_1 \geq \zeta_1\} \cap \{Z_{l,1} = 0\}\,\vert\, n_{\root} = m, v_{\root} = x\big)\Big)^m\allowdisplaybreaks\\
	&\qquad =\Bigg( \sum_{k=1}^\infty \widehat{P}(k-1) \int_{y=0}^{x}\frac{1}{x}\int_{z=0}^{y}\frac{\eexp^{-z}z^{k-1}}{(k-1)!}\,dz\,dy  \allowdisplaybreaks\\
	&\qquad\qquad +\sum_{k=1}^\infty \widehat{P}(k-1) \int_{y=0}^{x}\frac{1}{x}\int_{z=y}^{\infty}\frac{\eexp^{-z}{z}^{k-1}}{(k-1)!} \prob\left(\{Z_{l,1} = 0\}\,\vert\,n_1 = k-1, v_1 = z\right) \,dz\,dy \Bigg)^m.\numberthis\label{eq:ProbZ_l0}
	\end{align}
	Conditioning on the type of the vertex $1$, the probability distribution of $Z_{l,1}$ for $l>1$ is exactly the same as the probability distribution of $Z_{l-1}$ conditioned on the corresponding type of the root vertex; in particular,
	\begin{align}\label{eq:propext_obs}
	\prob\left(\{Z_{l,1} = 0\}\,\vert\,n_1 = k-1, v_1 = z\right) = \prob\left(\{Z_{l-1} = 0\}\,\vert\, n_{\root} = k-1, v_{\root} = z\right).
	\end{align}
	A crucial observation is that $\prob\left(\{Z_l = 0\}\,\vert\, n_{\root} = m, v_{\root} = x\right)$ depends on $m$ only through the exponent, that is,
 \begin{align}
 \prob\left(\{Z_l = 0\}\,\vert\, n_{\root} = m, v_{\root} = x\right) = \left( \prob\left(\{Z_l = 0\}\,\vert\,n_{\root} = 1, v_{\root} = x \right)\right)^m = \left(f_l(x)\right)^m.
 \end{align}
 Using \eqref{eq:ProbZ_l0} we get the following expression
	\begin{align}
	&f_l(x) 
 =\sum_{k=1}^\infty \widehat{P}(k-1) \int_{y=0}^{x}\frac{1}{x}\int_{z=0}^{y}\frac{\eexp^{-z}z^{k-1}}{(k-1)!}\,dz\,dy  \allowdisplaybreaks\\
	&\qquad\qquad\qquad +\sum_{k=1}^\infty \widehat{P}(k-1) \int_{y=0}^{x}\frac{1}{x}\int_{z=y}^{\infty}\frac{\eexp^{-z}{z}^{k-1}}{(k-1)!} \prob\left(\{Z_{l,1} = 0\}\,\vert\,n_1 = k-1, v_1 = z\right) \,dz\,dy.
	\end{align}
	Using \cref{eq:propext_obs} and the definition of the function $f_l(\cdot)$, for every $l>0$, we have
	\begin{align}
	f_l(x) &= \sum_{k=1}^\infty P(k) \int_{y=0}^{x}\frac{1}{x}\left(\int_{z=0}^{y}\frac{\eexp^{-z}z^{k-1}}{(k-1)!}\,dz + \int_{z=y}^{\infty}\frac{\eexp^{-z}{z}^{k-1}}{(k-1)!} \left(f_{l-1}(z)\right)^{k-1} \,dz\right)\,dy \allowdisplaybreaks \\
	&~=T(f_{l-1})(x),
	\end{align}
	where $f_1(\cdot)$ should be taken to be $T(\boldsymbol{0})(\cdot)$ for consistency with \cref{eq:ProbZ_l0} at $l=2$. Lemma~\ref{lem:propt} implies that $f_l(\cdot) = T^l(\boldsymbol{0})(\cdot)$ converges to $q(\cdot)$, the smallest fixed point of $T$, point-wise. Hence,
	\begin{align}
	\prob\left(\{\text{extinction}\}\,\vert\,n_{\root} = m, v_{\root} = x \right) &= \lim_{l\to\infty} \prob\left(\{Z_l = 0\}\,\vert\,n_{\root} = m, v_{\root} = x \right) \allowdisplaybreaks\\
	&= \lim_{l\to\infty} \big(T^l(\boldsymbol{0})(x)\big)^m \allowdisplaybreaks\\
	&=\left(q(x)\right)^m.
	\end{align}
	Taking expectation with respect to $n_{\root}$ and $v_{\root}$ and using the monotone convergence theorem, we have
	\begin{align}
	\prob(\{\text{extinction}\})= \sum_{m=1}^{\infty} P(m)\int_{x=0}^{\infty} \frac{\eexp^{-x}x^m}{m!} \left(q(x)\right)^m \,dx,
	\end{align}
 which completes the proof.
\end{proof}
The above theorem suggests that for all $f(\cdot) \in L(\mathbb{R}_+;[0,1])$, the function $T^l(f)(\cdot)$ converges point-wise to a fixed point of $T$, as $l$ goes to infinity; however, it is not clear how many fixed points the operator $T$ has and, if there is more than one fixed point, to which one does $T^l(f)(\cdot)$ converge.
An immediate corollary is the following.
\begin{corollary}\label{cor:pext=1}
	$\prob(\{\text{extinction}\}) = 1$ if and only if $\boldsymbol{1}(\cdot)$ is the unique fixed point of the operator $T$.
\end{corollary}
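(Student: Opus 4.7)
The plan is to derive both implications directly from Theorem \ref{thm:probext} together with the properties of $T$ already established in Lemma \ref{lem:propt}. Recall that Theorem \ref{thm:probext} gives
\begin{align*}
\prob(\{\text{extinction}\}) = \sum_{m=1}^{\infty} P(m) \int_{0}^{\infty} \frac{\eexp^{-x} x^m}{m!} [q(x)]^m \, dx,
\end{align*}
where $q(\cdot)$ is the smallest fixed point of $T$, and that by part $(\text{ii})$ of Lemma \ref{lem:propt} the constant function $\boldsymbol{1}(\cdot)$ is always the largest fixed point of $T$. Observe also that $\sum_{m=1}^{\infty} P(m) \int_{0}^{\infty} \frac{\eexp^{-x} x^m}{m!} \, dx = \sum_{m=1}^{\infty} P(m) = 1$ since the Erlang densities integrate to one.

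For the ``if'' direction, I would simply substitute $q(\cdot) \equiv 1$ (which holds whenever $\boldsymbol{1}$ is the unique fixed point, as $q$ itself is a fixed point) into the extinction formula; the integral reduces to the above identity and yields $\prob(\{\text{extinction}\}) = 1$.

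For the ``only if'' direction, assume $\prob(\{\text{extinction}\}) = 1$. Subtracting the extinction formula from the identity above gives
\begin{align*}
\sum_{m=1}^{\infty} P(m) \int_{0}^{\infty} \frac{\eexp^{-x} x^m}{m!} \bigl(1 - [q(x)]^m\bigr)\, dx = 0.
\end{align*}
Each summand is non-negative. Since $P$ is a probability distribution on $\mathbb{N}$, there exists $m_0$ with $P(m_0)>0$; combined with the strict positivity of the Erlang density on $(0,\infty)$, this forces $1 - [q(x)]^{m_0} = 0$ for Lebesgue-almost every $x > 0$, hence $q(x)=1$ almost everywhere.

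The one subtlety to handle carefully is upgrading ``almost everywhere'' to ``everywhere.'' Since $q(\cdot) = T(q)(\cdot)$ and the range of $T$ lies in $C^1(\mathbb{R}_+;[0,1])$ by part $(\text{i})$ of Lemma \ref{lem:propt}, $q$ is continuous on $\mathbb{R}_+$, so $q \equiv 1$ on all of $\mathbb{R}_+$. Thus the smallest fixed point coincides with the largest fixed point $\boldsymbol{1}$, and for any fixed point $f$ of $T$ the sandwich $\boldsymbol{1} = q \leq f \leq \boldsymbol{1}$ forces $f \equiv \boldsymbol{1}$. This establishes uniqueness and completes the argument; no additional estimation is required beyond the elementary manipulations above.
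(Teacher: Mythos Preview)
Your proof is correct and is precisely the argument the paper has in mind when it calls this an ``immediate corollary'' of Theorem~\ref{thm:probext}: you combine the extinction formula with the fact that $q$ is the smallest fixed point and $\boldsymbol{1}$ the largest (Lemma~\ref{lem:propt}(ii)), then use the continuity of $q$ from Lemma~\ref{lem:propt}(i) to pass from almost-everywhere to everywhere equality. The paper gives no explicit proof, so there is nothing further to compare.
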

A sufficient condition to check $\prob(\{\text{extinction}\}) < 1$ is to find a test function $f(\cdot) \in L(\mathbb{R}_+;[0,1])$ such that for all $x\in\mathbb{R}_+$ we have $T(f)(x)\leq f(x)$. One natural choice is
\begin{align}
f_{x_0,\epsilon}(x) \coloneqq \begin{cases}
1 - \epsilon, & \text{if } x\leq x_0 \\
1, & \text{otherwise}.
\end{cases}
\end{align}
Choosing $\epsilon > 0$ to be small enough, we get the following corollary.
\begin{corollary}\label{cor:extcheck}
	Assume that there is an $x_0 > 0$ such that for all $x\in[0,x_0]$,
	\begin{align}
	\int_{z=0}^{x_0} z\frac{\min(x,z)}{x} g_2(z) \,dz > 1,
	\end{align}
	where $g_2(z) = \sum_{k=2}^{\infty} P(k) \frac{\eexp^{-z}z^{k-2}}{(k-2)!}$, and $\min(x,z)/x$ for $x=0$ is interpreted as $1$. Then, it follows that $\prob(\{\text{extinction}\}) \allowbreak < 1$.
\end{corollary}
\begin{proof}
	Notice that for $\epsilon\in(0,1)$ we have $(1-\epsilon)^{k-1} \leq (1+(k-1)\epsilon)^{-1}$. Using this, for all $x\leq x_0$ we have
	\begin{align}
	f_{x_0,\epsilon}(x) - T(f_{x_0,\epsilon})(x) &= \frac{1}{x} \sum_{k=1}^{\infty} P(k) \int_{z=0}^{x_0} \frac{\eexp^{-z}z^{k-1}}{(k-1)!} \min(x,z)(1-(1-\epsilon)^{k-1}) \,dz - \epsilon \\
	&\geq \epsilon \left(\sum_{k=1}^{\infty}\frac{(k-1)}{1+(k-1)\epsilon}\times P(k) \int_{z=0}^{x_0} \frac{\eexp^{-z}z^{k-1}}{(k-1)!} \frac{\min(x,z)}{x} \,dz - 1\right)
	\end{align}
	We want to show that the given condition in \cref{cor:extcheck} implies that the inequality $f_{x_0,\epsilon}(x) - T(f_{x_0,\epsilon})(x) \geq 0$ holds for all $x\in\mathbb{R}_+$. It is sufficient to prove that the right-hand side of the above inequality is non-negative for all $x\leq x_0$ when $\epsilon$ is small enough. Equivalently, we want to show the following inequality holds
	\begin{align}
	\lim_{\epsilon\downarrow 0} \sum_{k=1}^{\infty}\frac{(k-1)}{1+(k-1)\epsilon}\times P(k) \int_{z=0}^{x_0} \frac{\eexp^{-z}z^{k-1}}{(k-1)!} \frac{\min(x,z)}{x} \,dz > 1
	\end{align}
	Using the monotone convergence theorem, the result follows by changing the order of summation and the limit.
\end{proof}
The assumption of the corollary is not tight, i.e., there are examples where $\prob(\{\text{extinction}\}) < 1$, but the assumption of the above corollary fails. 
Two natural follow-up questions are: $1)$ Is there a general test function $f(\cdot)$ such that $\prob(\{\text{extinction}\}) < 1$ if and only if $f \geq T(f)$? $2)$ If the answer is yes, what is the closed form of $f$?

The idea of using test functions, as simple as it seems, combined with point process perspective turns out to be a powerful tool for analyzing the branching process. We revisit this idea in Section~\ref{sec:PropExtRev}.
\subsection{Expected Number of Vertices at Depth $l$}
Let $Z_l$ and $W_l$ denote the number of vertices and the number of potential vertices, respectively, at depth $l$. The expected value of $Z_l$ and $W_l$ are related to the growth rate of the EWT. These are also closely related to the probability of extinction via the following claim:
\begin{align}
\expect[Z_l] < \text{Const} \text{ for all $l$ if and only if } \prob(\{\text{extinction}\}) = 1\label{eq:classprop}.
\end{align}
The proof of the claim in \cref{eq:classprop} is based on a classical property of branching processes that $Z_n$ goes to either $0$ or $\infty$. We will revisit this property later on in Section~\ref{sec:PropExtRev}. For now, we state the following.
\begin{theorem}\label{thm:expectedz_l}
	We have
	\begin{align}
	\expect[W_l] = \MoveEqLeft \expect[n_{\root}]\times \left(\expect[n	_{\root}-1]\right)^{l-1}\allowdisplaybreaks\\
	\expect[Z_l] = \MoveEqLeft \sum_{m=1}^\infty  P(m) \sum_{k_1=2}^{\infty} P(k_1)\dots \sum_{k_{l-1}=2}^\infty P(k_{l-1})\sum_{k_l=1}^{\infty}P(k_l)\allowdisplaybreaks\\
	& \int_{y_l=0}^{\infty}\int_{y_{l-1}=0}^{\infty}\dots \int_{y_1=0}^{\infty} \bar{F}_{m}(y_1) \bar{F}_{k_1-1}(\max(y_1,y_2)) \dots \allowdisplaybreaks\\
	& \qquad\qquad\qquad\qquad\qquad\qquad \bar{F}_{k_{l-1}-1}(\max(y_{l-1},y_l)) \bar{F}_{k_l}(y_l) \,dy_1\,dy_2 \dots \,dy_l.
	\end{align}
	where, as before, $\bar{F}_{k}(\cdot)$ is the complementary cumulative distribution function of the $\erlangdist(k)$ distribution.
\end{theorem}
\begin{proof}
	The proof is presented in Appendix~\ref{appendix:EZ}.
\end{proof}
A necessary but not sufficient condition for $\prob(\{\text{extinction}\})$ to be non-zero is stated in the following corollary.
\begin{corollary}
	If the expected number of the potential neighbors of the root vertex, i.e., $\expect[n_{\root}]$, is smaller than $2$, then the population will eventually go extinct.
\end{corollary}
\begin{proof}
	If $\expect[n_{\root}] < 2$, then
	$\expect[Z_l] \leq \expect[W_l] = \expect[n_{\root}]\times \left(\expect[n_{\root}-1]\right)^{l-1} \xrightarrow{l\to\infty} 0$. Hence, by \eqref{eq:classprop} we have $\prob(\{\text{extinction}\}) = 1$.
\end{proof}
Theorem~\ref{thm:expectedz_l} does not provide an easy way to check whether $\expect[Z_l]$ goes to zero or not. There is no recursive representation for the quantities provided by the theorem either; however, using the point process perspective leads to a full characterization of the growth rate and provides a necessary and sufficient condition for the probability of extinction to be less than $1$.
\subsection{Krein--Rutman Eigenvalue and the Corresponding Eigenfunctions}\label{sec:PFeigenvalue}
To obtain the growth rate of the EWT more work needs to be done. We follow the discussion of Chapter 3 of Harris~\cite{Harris1963}. Harris analyzes general branching processes from a point process perspective. Although we use the same idea, our assumptions are different and the results from Harris's book~\cite{Harris1963} do not apply to our setting, and hence, it requires a generalization.

Abusing notation, let $Z_l(k-1,A)$ denote the number of vertices at depth $l$ of type $(k-1,z)$ where $k\in\mathbb{N}$ and $z\in A$ with $A\subset \mathbb{R}_+$ being a Borel set. Let $M_l(m,x;k-1,A)$ denote the expected value of $Z_l(k-1,A)$, conditioned on $n_{\root} = m$ and $v_{\root} = x$, i.e.,
\begin{align}
M_l(m,x;k-1,A) \coloneqq \expect[Z_l(k-1,A)\,\vert\, n_{\root} = m, v_{\root} = x].
\end{align}
Let $m_l(m,x;k-1,z)$ denote the density of $M_l(m,x;k-1,A)$ at $(k-1,z)$:
\begin{align}
M_l(m,x;k-1,A) = \int_{z\in A}m_l(m,x;k-1,z) \,dz.
\end{align}
We will show that $\beta^{-l}M_l(m,x;\mathbb{Z}_+,\mathbb{R}_+)$ converges to some fixed function, for a suitable $\beta$. Moreover, we show that $\beta^{-l}m_l(m,x;k-1,z)$ converges to $\mu(m,x)\nu(k-1,z)$. The quantity $\beta$ is the largest eigenvalue of $M_1$, and the functions $\mu(\cdot\,,\cdot)$ and $\nu(\cdot\,,\cdot)$ are the unique right and left eigenfunctions corresponding to the eigenvalue $\beta$, respectively.
\begin{definition}\label{def:leftrighteigen}
	Let $m_1$ denote the density of $M_1$. If there exists a non-zero function $\mu(\cdot\,,\cdot)$ and a $\beta\in\mathbb{R}$ such that
	\begin{align}\label{eq:righteigenfunction}
	\beta \mu(m,x) = \int_{z=0}^{\infty}\sum_{k=1}^\infty m_1(m,x;k-1,z) \mu(k-1,z) \,dz,
	\end{align}
	then $\mu(\cdot\,,\cdot)$ is called a right eigenfunction of $M_1$ corresponding to the eigenvalue $\beta$. Similarly, a left eigenfunction corresponding to the eigenvalue $\beta$ is defined as follows,
	\begin{align}\label{eq:lefteigenfunction}
	\beta \nu(k-1,z) = \int_{x=0}^{\infty}\sum_{m=0}^\infty \nu(m,x) m_1(m,x;k-1,z) \,dx.
	\end{align}
\end{definition}
The interpretations of the left and right eigenfunctions associated with an eigenvalue $\beta$ are as follows:
	\begin{enumerate}[label = --]
		\item The left eigenfunction represents the population density. If the population density of type $(m,x)$ in the current generation is a constant multiple of $\nu(m,x)$, then the population density of type $(k-1,z)$ among the descendants is a constant multiple of $\beta\nu(m,x)$.

		\item The right eigenfunction can be seen as the score function. Assuming the score of a vertex of type $(k-1,z)$ is a constant multiple of $\mu(k-1,z)$, then the expected score of the descendants of a vertex of type $(m,x)$ is a constant multiple of $\beta\mu(m,x)$.
	\end{enumerate}
	Intuitively, $\mu(\cdot,\cdot)$ captures the influence of the root vertex, while $\nu(\cdot,\cdot)$ captures the distribution of vertex types. These interpretations are sensible given that the eigenfunctions are non-negative. Consequently, a potential candidate for the asymptotic behavior of $m_l(m,x;k-1,z)$ could be $\beta^l \mu(m,x)\nu(k-1,z)$.

The main goal of this section is to prove a result analogous to the Perron--Frobenius theorem. We show that a version of Krein--Rutman Theorem by Toland~\cite{Toland1996} applies to our setting, which proves the existence of a unique eigenvalue for which the left and right eigenfunctions are positive. However, it does not provide an easy way to find the spectral radius. The specific structure of the EWT makes it possible to directly prove the convergence of $\beta^{-l}m_l(m,x;k-1,z)$ to $\mu(m,x)\nu(k-1,z)$ and to show that $\beta^{-l}M_l(m,x;\mathbb{R}_+,\mathbb{Z}_+)$ converges to some function that only depends on $x$ and $m$.

Before presenting the main theorems and their proofs, let us simplify the operator of interest,
\begin{align}
M_1(m,x;k-1,A) &= m\int_{y=0}^{x}\frac{1}{x}\int_{z\geq y, z\in A} \widehat{P}(k-1)f_k(z) \,dz\,dy \allowdisplaybreaks\\
&= \frac{m}{x} \int_{z \in A} \min(x,z) P(k) \frac{\eexp^{-z}z^{k-1}}{(k-1)!}\,dz,
\end{align}
where $f_{k}(\cdot)$ is the probability density function of $\erlangdist(k)$. Hence,
\begin{align}\label{eq:ml}
m_1(m,x;k-1,z) = \frac{m}{x} \min(x,z) P(k)\frac{\eexp^{-z}z^{k-1}}{(k-1)!}.
\end{align}
Let $\beta$ be an arbitrary eigenvalue of $M_1$. By \cref{eq:righteigenfunction} a right eigenfunction of $\beta$ then satisfies the following equation:
\begin{align}
\beta \mu(m,x) = \int_{z=0}^{\infty}\sum_{k=1}^\infty \frac{m}{x} \min(x,z) P(k)\frac{\eexp^{-z}z^{k-1}}{(k-1)!} \mu(k-1,z) \,dz.
\end{align}
Dividing both sides by $m$, the right-hand side is independent of $m$ (notice that $\mu(0,x) = 0$); hence, $\mu(m,x)$ is linear in $m$ and we can write
\begin{align}\label{eq:righteigf}
x\mu(m,x)/m \eqqcolon \widetilde{\mu}(x),
\end{align}
where $\widetilde{\mu}(\cdot)$ is a solution to the following equation
\begin{align}\label{eq:mu_tild}
\beta \widetilde{\mu}(x) &= \int_{z=0}^{\infty}g_2(z) \min(x,z) \widetilde{\mu}(z) \,dz,
\end{align}
and $g_2(x) = \eexp^{-x} \sum_{k=2}^{\infty} P(k)\frac{x^{k-2}}{(k-2)!}$. Notice that if $\widetilde{\mu}(\cdot)$ satisfies the above relation, then a right eigenfunction of $M_1$ corresponding to the eigenvalue $\beta$ is given by $\mu(x,m)\coloneqq m\widetilde{\mu}(x)/x$.
Similarly, for a left eigenfunction, we have
\begin{align}
\beta \nu(k-1,z) &= \int_{x=0}^{\infty}\sum_{m=0}^\infty m_1(m,x;k-1,z) \nu(m,x) \,dx \allowdisplaybreaks\\
&= \int_{x=0}^{\infty}\sum_{m=0}^\infty \frac{m}{x} \min(x,z) P(k)\frac{\eexp^{-z}z^{k-1}}{(k-1)!}\nu(m,x) \,dx \allowdisplaybreaks\\
&= P(k)\frac{\eexp^{-z}z^{k-1}}{(k-1)!} \int_{x=0}^{\infty}\sum_{m=0}^\infty \frac{m}{x} \min(x,z) \nu(m,x) \,dx \allowdisplaybreaks\\
&= P(k)\frac{\eexp^{-z}z^{k-1}}{(k-1)!} \int_{y=0}^z\int_{x=y}^{\infty}\sum_{m=0}^\infty \frac{m}{x} \nu(m,x) \,dx\,dy.
\end{align}
Notice that the dependence of $\nu(k-1,z)$ in $k$ is through the term $P(k)\frac{\eexp^{-z}z^{k-1}}{(k-1)!}$. Hence, we can write
\begin{align}
\nu(k-1,z) = \widetilde{\nu}(z)P(k)\frac{\eexp^{-z}z^{k-1}}{(k-1)!} \label{eq:lefteigf},
\end{align}
for a suitable $\widetilde{\nu}(\cdot)$ that is a solution to the following equation,
\begin{align}
\beta \widetilde{\nu}(z) &= \int_{y=0}^z\int_{x=y}^{\infty}\sum_{m=0}^\infty \frac{m}{x} P(m+1)\frac{\eexp^{-x}x^{m}}{m!} \widetilde{\nu}(x) \,dx\,dy \allowdisplaybreaks\\
&= \int_{y=0}^z\int_{x=y}^{\infty}\sum_{m=2}^\infty P(m)\frac{\eexp^{-x}x^{m-2}}{(m-2)!} \widetilde{\nu}(x) \,dx\,dy \allowdisplaybreaks\\
&= \int_{y=0}^z\int_{x=y}^{\infty}g_2(x) \widetilde{\nu}(x) \,dx\,dy\allowdisplaybreaks\\
&= \int_{x=0}^\infty \min(x,z)g_2(x) \widetilde{\nu}(x) \,dx.
\end{align}
Observe that $\widetilde{\nu}(\cdot)$ satisfies the same equation as $\widetilde{\mu}(\cdot)$ does. To study this equation, we define a new operator and rely on the background materials discussed in Section~\ref{sec:back_opt}.

Let $\mathcal{H} = L^2(\mathbb{R}_{+},\upsilon)$ denote the set of real-valued square integrable functions with respect to a measure $\upsilon$. It is easy to prove that $L^2(\mathbb{R}_{+},\upsilon)$ together with the inner product $\langle f,g\rangle = \int_0^\infty f(x)g(x) d\upsilon(x)$ is a real Hilbert space. Let $H_1\in\mathcal{L}\left(\mathcal{H},\mathcal{H}\right)$ be an integral operator with integrand $\min(\cdot\,,\cdot)\in L^2(\mathbb{R_+}\times\mathbb{R_+},\upsilon\otimes\upsilon)$, i.e.,
\begin{align}
H_1f(x) = \int_{0}^{\infty} \min(x,y) f(y) d\upsilon(y),
\end{align}
where $\upsilon(\cdot)$ is a finite measure with Radon--Nikodym derivative $g_2(\cdot)$ with respect to Lebesgue measure. The integral operator $H_1$ is self-adjoint since its integrand is symmetric. Moreover, $H_1$ is compact since $\mathcal{H}$ is separable (the proof follows by the fact that $\mathcal{H}$ has a countable orthonormal basis). With these facts in hand, $H_1$ is a compact self-adjoint operator.

Let $\mathcal{K}$ denote the set of all non-negative functions in $\mathcal{H}$. The set $\mathcal{K}$ is closed and convex. Moreover, for all $\lambda \in\mathbb{R}_+$, we have $\lambda \mathcal{K}\subset \mathcal{K}$ and $\mathcal{K}\cap (-\mathcal{K}) = \{\boldsymbol{0}\}$; hence, $\mathcal{K}$ is a cone. Actually, it is a total cone, i.e., $\mathcal{H} = \mathcal{K}-\mathcal{K}$. The following theorem is a direct implication of Theorems~\ref{thm:optth1}--\ref{thm:optth3}.
\begin{theorem}\label{thm:opth}
	The largest eigenvalue of $H_1$ in magnitude is,
	\begin{align}
	\mathscr{X}(H_1) = \max_{\substack{f(\cdot)\in\mathcal{H}, \norm{f}_{\mathcal{H}}=1,\\\text{$f(\cdot)$ is non-negative}}} \int_{0}^\infty\int_{0}^{\infty} \min(x,y)f(x)f(y) d\upsilon(y)d\upsilon(x).
	\end{align}
	$\mathscr{X}(H_1) > 0$ is a simple eigenvalue and corresponds to a non-negative eigenfunction. Moreover, all the eigenvalues of $H_1$ are real, and if $\zeta(\cdot)$ is an eigenfunction of $H_1$ with some eigenvalue $\mu\neq \mathscr{X}(H_1)$, we have
	\begin{align}
	\int_{0}^{\infty} f(y)\zeta(y) d\nu(y) = 0.
	\end{align}
\end{theorem}
Having established the existence of the Krein--Rutman eigenvalue of $H_1$ and the corresponding eigenfunction, the following simplification will help in finding them. Changing the order of integration, the operator $H_1$ can be written as follows,
\begin{align}\label{eq:H_1falt}
H_1f(x) = \int_{y=0}^{x}\int_{z=y}^{\infty} f(z) d\upsilon(z)\,dy.
\end{align}
Then, define the operator $\widetilde{H_1}$ as follows,
\begin{align}
	\widetilde{H_1}f(x) = \int_{y=x}^{\infty}\int_{z=y}^{\infty} f(z) d\upsilon(z)\,dy.
\end{align}
Using \cref{eq:H_1falt}, we have
\begin{align}
H_1f(x) + \widetilde{H_1}f(x) &= \int_{0}^{\infty}\int_{y}^{\infty} f(z) d\upsilon(z)\,dy\allowdisplaybreaks\\
&= \int_{0}^{\infty}z f(z) d\upsilon(z) = \langle f,\mathbbm{I}\rangle_{\mathcal{H}},
\end{align}
where $\mathbbm{I}(\cdot)$ is the identity function, i.e., $\forall x\in\mathbb{R}_+,\,\mathbbm{I}(x) = x$. The Krein--Rutman eigenvalue of $H_1$ and the corresponding eigenfunction are related to the operator $\widetilde{H_1}$.
\begin{theorem} \label{thm:propL}
	Consider the function $L(\beta,x)$ for $x\in \mathbb{R}_+$ and $\beta\in \mathbb{C}$ defined as follows,
	\begin{align}
	L(\beta,x) &\coloneqq \sum_{i=0}^{\infty} G_i(x) \left(\frac{-1}{\beta} \right)^i,
	\end{align}
	where the function $G_i(x)$ is defined recursively via
	\begin{align}
	G_0(x) &\coloneqq 1, \allowdisplaybreaks\\
	G_i(x) &\coloneqq \int_{y=x}^{\infty}\int_{z=y}^{\infty}g_2(z) G_{i-1}(z) \,dz \,dy = \widetilde{H}_1 G_{i-1}(x) \qquad\forall i>0,
	\end{align}
	where $$g_2(x) \coloneqq \eexp^{-x} \sum_{k=2}^{\infty} P(k)\frac{x^{k-2}}{(k-2)!}.$$
	Assuming the moment generating function of $n_{\root}$ exists at some $\theta> 0$, the function $L(\beta,x)$ satisfies the following properties,
	\begin{enumerate}[label=(\roman*)]
		\item For all $\beta\in\mathbb{C}$ and $x\in\mathbb{R}_+$, the function $L(\beta,x)$ is well-defined, that is, the series converges absolutely. \label{part:propL_i}
		\item The second partial derivative of $L(\beta,x)$ with respect to $x$, satisfies the following equality, \label{part:propL_ii}
		\begin{align}
		\beta \pdv[2]{L(\beta,x)}{x} = - g_2(x) L(\beta,x).
		\end{align}
		\item For every fixed $x\in \mathbb{R}_+$, all the zeros of the function $L(\beta,x)$ are real-valued. \label{part:propL_iii}
		\item There exists a real value $\beta_0 \in \left(\max\limits_x\left(x\upsilon([x,\infty))\right), \expect[n_{\root}]-1\right)$ such that for every real $\beta > \beta_0$, the function $L(\beta,x)$ is uniformly positive, i.e., $\exists\ \varepsilon_{\beta}>0$ such that $\forall x\in\mathbb{R}_+$, we have $L(\beta,x) > \varepsilon_{\beta}$. Moreover, for all $x\in\mathbb{R}_+$ the function $L(\beta_0,x)$ is non-negative and $L(\beta_0,0) = 0$. Finally, for all $\beta \geq \beta_0$ and all $x_0 \in\mathbb{R}_+$,
		\begin{align}
		\left.\pdv{L(\beta,x)}{x}\right|_{x = x_0} > 0.
		\end{align}
		\label{part:propL_iv}
		\item For all $x\in\mathbb{R}_+$, the function {\Large$\frac{x}{L(\beta_0,x)}$} is well-defined, is strictly positive, and is strictly increasing.\label{part:propL_v}
	\end{enumerate}
\end{theorem}
\begin{proof} In the course of the proof, it will become apparent that $L(\beta,x)$ and the Bessel function of the first kind of zeroth order $J_0(x)$ share similar properties.
	\begin{enumerate}[label=(\roman*)]
		\item Using the Chernoff bound,
		\begin{align}
		\prob(n_{\root}\geq k) \leq \frac{\expect[\eexp^{\theta n_{\root}}]}{\eexp^{\theta k}} < \infty.
		\end{align}
		We then have
		\begin{align}
		g_i(z) &\coloneqq \sum_{k=i}^{\infty} P(k) \frac{\eexp^{-z}z^{k-i}}{(k-i)!}\allowdisplaybreaks\\
		&\leq \sum_{k=i}^{\infty} \prob(n_{\root}\geq k) \frac{\eexp^{-z}z^{k-i}}{(k-i)!} \allowdisplaybreaks\\
		&\leq \sum_{k=i}^{\infty} \frac{\expect[\eexp^{\theta n_{\root}}]}{\eexp^{\theta k}} \frac{\eexp^{-z}z^{k-i}}{(k-i)!}\allowdisplaybreaks\\
		&= \frac{\expect[\eexp^{\theta n_{\root}}]}{\eexp^{\theta i}} \sum_{k=i}^{\infty} \frac{\eexp^{-z}z^{k-i}}{\eexp^{\theta (k-i)}(k-i)!}\allowdisplaybreaks\\
		&= \frac{\expect[\eexp^{\theta n_{\root}}]}{\eexp^{\theta i}} \exp(-z(1-\eexp^{-\theta })) \numberthis \label{lem:ineqgi}.
		\end{align}
		Let $C = \expect[\eexp^{\theta n_{\root}}]/\eexp^{2\theta }$ and $\Upsilon = 1-\eexp^{-\theta }$. It is easy to prove that $G_i(x)$ is upper bounded by $C^{i}\,\eexp^{-i\Upsilon x}/(\Upsilon^{2i}i!i!)$, by induction. Indeed,
		\begin{align}
		G_0(x) &= 1\allowdisplaybreaks\\
		G_{i+1}(x) &= \int_{y=x}^{\infty}\int_{z=y}^{\infty} G_{i}(z) g_2(z)\,dz\,dy\allowdisplaybreaks\\
		&\leq \int_{y=x}^{\infty}\int_{z=y}^{\infty} \frac{C^{i}}{\Upsilon^{2i}} \frac{\eexp^{-i\Upsilon z}}{i!i!} \times C \eexp^{-z\Upsilon} \,dz\,dy\allowdisplaybreaks\\
		&=\frac{C^{i+1}}{\Upsilon^{2(i+1)}} \frac{\eexp^{-(i+1)\Upsilon x}}{(i+1)!(i+1)!},
		\end{align}
		which proves the upper bound by induction. Collectively we then have
		\begin{align}
		\sum_{i=0}^{\infty} G_i(x) \left(\frac{1}{|\beta|} \right)^i \leq \sum_{i=0}^{\infty} \frac{C^{i}}{\Upsilon^{2i}} \frac{\eexp^{-i\Upsilon x}}{i!i!} \left(\frac{1}{|\beta|} \right)^i &= J_0\left(\sqrt{\frac{-4C\eexp^{-\Upsilon x}}{\Upsilon^2 |\beta|}}\right) \allowdisplaybreaks\\
		&= I_0\left(\sqrt{\frac{4C\eexp^{-\Upsilon x}}{\Upsilon^2 |\beta|}}\right)\in (0,\infty),
		\end{align}
		where $J_0(\cdot)$ is the Bessel function of the first kind of order $0$ (which is defined on the complex plane using its power series expansion), and $I_0(\cdot)$ is the modified Bessel function of the first kind of order $0$. This establishes that the series converges absolutely.
		\item Using the definition of $L(\beta,x)$ and part~$\ref{part:propL_i}$,
		\begin{align}
		\beta \pdv[2]{L(\beta,x)}{x} &= \beta \sum_{i=0}^{\infty} \dv[2]{G_i(x)}{x} \left(\frac{-1}{\beta} \right)^i\allowdisplaybreaks\\
		&= \sum_{i=1}^{\infty} -G_{i-1}(x)g_2(x) \left(\frac{-1}{\beta} \right)^{i-1}\allowdisplaybreaks\\
		&= -g_2(x)L(\beta,x).
		\end{align}
		\item Fix some $x\in\mathbb{R}_+$. Consider the function $H_1(\beta,x)$ defined as follows,
		\begin{align}
		H_1(\beta,x) \coloneqq L(\beta,x) \pdv{L(\bar{\beta},x)}{x} - L(\bar{\beta},x) \pdv{L(\beta,x)}{x},
		\end{align}
		where $\bar{\beta}$ is the complex conjugate of $\beta$. The partial derivative of $H_1(\beta,x)$ with respect to $x$, using part~\ref{part:propL_ii}, is given as follows,
		\begin{align}
		\pdv{H_1(\beta,x)}{x} &= \pdv{L(\beta,x)}{x}\pdv{L(\bar{\beta},x)}{x} - {\bar{\beta}}^{-1} L(\beta,x) L(\bar{\beta},x) g_2(x) \allowdisplaybreaks\\
		&\qquad -\pdv{L(\bar{\beta},x)}{x}\pdv{L(\beta,x)}{x} + {\beta}^{-1} L(\bar{\beta},x) L(\beta,x) g_2(x)\allowdisplaybreaks\\
		&=({\beta}^{-1}-{\bar{\beta}}^{-1})|L(\beta,x)|^2 g_2(x),
		\end{align}
		where the last equality is obtained by the fact that $\overline{L(\beta,x)} = L(\bar{\beta},x)$. Notice that,
		\begin{align}
		&\lim_{x\to\infty} L(\beta,x) = 1 \text{ and }\lim_{x\to\infty}\pdv{L(\beta,x)}{x} = 0,
		\end{align}
		since $\lim_{x\to\infty} G_i(x) = \lim_{x\to\infty} \dv{G_i(x)}{x} = 0$ for all $i>0$ and $L(\beta,x)$ is absolutely summable. Hence, $\lim_{x\to\infty} H_1(\beta,x) = 0$. Therefore,
		\begin{align}
		\int_{x}^{\infty}({\beta}^{-1}-{\bar{\beta}}^{-1})|L(\beta,y)|^2 g_2(y)\,dy =- H_1(\beta,x).
		\end{align}
		Since for every fixed $x\in\mathbb{R}_+$ the coefficients of $L(\beta,x)$ are real-valued, $L(\beta,x) = 0$ implies $L(\bar{\beta},x) = 0$. Moreover, if $L(\beta,x) = 0$ for some $x\in\mathbb{R}_+$ and $\beta\in\mathbb{C}$, then $H_1(\beta,x) = 0$; hence,
		\begin{align}
		\int_{x}^{\infty}({\beta}^{-1}-{\bar{\beta}}^{-1})|L(\beta,y)|^2 g_2(y)\,dy = 0,
		\end{align}
		from which we conclude that $\beta = \bar{\beta}$, i.e., $\beta \in \mathbb{R}$.
		\item Pick any real-valued $\beta \geq \expect[n_{\root}] -1$. For all $i\geq 1$, we have,
		\begin{align}
		&G_i(x) \left(\frac{1}{\beta}\right)^i - G_{i+1}(x) \left(\frac{1}{\beta}\right)^{i+1} \allowdisplaybreaks\\
		&\qquad=\int_{y=x}^{\infty}\int_{z=y}^{\infty} g_2(z)\frac{1}{\beta} \left(G_{i-1}(z) \left(\frac{1}{\beta}\right)^{i-1} - G_{i}(z) \left(\frac{1}{\beta}\right)^{i}\right)\,dz\,dy,
		\end{align}
		and for $i=0$,
		\begin{align}
		1 - G_1(x)\frac{1}{\beta} &= 1 - \int_{y=x}^{\infty}\int_{z=y}^{\infty} g_2(z)\frac{1}{\beta}\,dz\,dy.
		\end{align}
		For each $i\in\mathbb{N}$, the function $G_i(x)$ is decreasing; hence, the function $1 - \beta^{-1} G_1(x)$ is increasing and it achieves its minimum at $x=0$, so
		\begin{align}
		1 - G_1(0)\frac{1}{\beta} &= 1 - \int_{y=0}^{\infty}\int_{z=y}^{\infty} g_2(z)\frac{1}{\beta}\,dz\,dy \allowdisplaybreaks\\
		&= 1 - \int_{z=0}^{\infty} z g_2(z)\frac{1}{\beta}\,dz \allowdisplaybreaks\\
		&= 1 - \int_{z=0}^{\infty} z \sum_{k=2}^{\infty} P(k) \frac{\eexp^{-z}z^{k-2}}{(k-2)!}\frac{1}{\beta}\,dz \allowdisplaybreaks\\
		& = 1 - \frac{1}{\beta} \sum_{k=2}^{\infty} (k-1) P(k) \int_{z=0}^{\infty} \frac{\eexp^{-z}z^{k-1}}{(k-1)!}\,dz \allowdisplaybreaks\\
		&= 1- \frac{\expect[n_{\root}]-1}{\beta} \geq 0.
		\end{align}
		By induction, for all $x\in\mathbb{R}_+$ and all $i\in\mathbb{N}$,
		\begin{align}
		G_i(x) \left(\frac{1}{\beta}\right)^i - G_{i+1}(x) \left(\frac{1}{\beta}\right)^{i+1}>0.
		\end{align}
		Hence, for every real $\beta \geq \expect[n_{\root}] -1$, by rewriting $L(\beta,x)$, we get
		\begin{align}
		L(\beta,x) = \sum_{i=0}^{\infty} \left(G_{2i}(x) \left(\frac{1}{\beta}\right)^{2i} - G_{2i+1}(x) \left(\frac{1}{\beta}\right)^{2i+1} \right) > 0.
		\end{align}
		Moreover, if for a fixed real $\beta > 0$ and for all $x\in\mathbb{R}_+$ the function $L(\beta,x)$ is non-negative, then the function $L(\beta,x)$ is strictly increasing:
		\begin{align}
		\pdv{L(\beta,x)}{x} &= \sum_{i=1}^{\infty} \dv{G_i(x)}{x} \left(\frac{-1}{\beta}\right)^{i} \allowdisplaybreaks\\
		&=\sum_{i=1}^{\infty} \int_{y=x}^{\infty}-g_2(y)G_{i-1}(y)\,dy \left(\frac{-1}{\beta}\right)^{i} \allowdisplaybreaks\\
		&=\frac{1}{\beta}\int_{y=x}^{\infty} g_2(y)\sum_{i=1}^{\infty}G_{i-1}(y) \left(\frac{-1}{\beta}\right)^{i-1}\,dy \allowdisplaybreaks\\
		&=\frac{1}{\beta}\int_{y=x}^{\infty} g_2(y)L(\beta,y)\,dy > 0. \numberthis\label{eq:f0prime}
		\end{align}
		Next, we prove that for some $\beta\in \mathbb{R}_+$ and $x\in\mathbb{R}_+$, the function $L(\beta,x)$ is negative. Let us rewrite the function $L(\beta,x)$,
		\begin{align}
			L(\beta,x) &= 1 + \left(\frac{-1}{\beta}\right)\sum_{i=1}^{\infty} G_{i}(x) \left(\frac{-1}{\beta}\right)^{i-1} \allowdisplaybreaks\\
			&= 1 - \frac{1}{\beta} \int_{y=x}^{\infty}\int_{z=y}^{\infty} L(\beta,z) g_2(z) \,dz \,dy\numberthis\label{eq:Lequals}
		\end{align}
		where the last equality is based on the recursive relation between $G_i(x)$ and $G_{i-1}(x)$. Using the above equality we have,
		\begin{align}
			L(\beta,x) - L(\beta,0) &= \frac{1}{\beta} \left(\int_{y=0}^{\infty}\int_{z=y}^{\infty} L(\beta,z) g_2(z) \,dz \,dy - \int_{y=x}^{\infty}\int_{z=y}^{\infty} L(\beta,z) g_2(z) \,dz \,dy\right) \allowdisplaybreaks\\
			&= \frac{1}{\beta} \int_{y=0}^{x}\int_{z=y}^{\infty} L(\beta,z) g_2(z) \,dz \,dy \allowdisplaybreaks\\
			&= \frac{1}{\beta}\int_{0}^{\infty} \min(x,z) L(\beta,z) g_2(z) \,dz\allowdisplaybreaks\\
			&= \frac{1}{\beta}\int_{0}^{x} z L(\beta,z) g_2(z) \,dz + \frac{1}{\beta}\int_{x}^{\infty} x L(\beta,z) g_2(z) \,dz, \allowdisplaybreaks \numberthis \label{eq:Lpositive}
		\end{align}
		where the third equality follows by changing the order of integration.

		Suppose that for all $\beta\in\mathbb{R}_+$ and all $x\in\mathbb{R}_+$, the function $L(\beta,x)$ is non-negative. Hence, for any fixed $\beta\in\mathbb{R}_+$, the function $L(\beta,x)$ is strictly increasing and,
		\begin{align}
		\forall x\in\mathbb{R}_+,\qquad -L(\beta,0) -\frac{1}{\beta}\int_{0}^{x} z L(\beta,z) g_2(z) \,dz \geq L(\beta,x) \left( \frac{x}{\beta} \int_{y=x}^{\infty} g_2(y) \,dy - 1\right).
		\end{align}
		However, the left-hand side of the above equation is negative for all $\beta\in\mathbb{R}_+$ and the right-hand side, for small enough $\beta$, is positive, which is a contradiction. The above argument shows that if there exist some $\hat{x}>0$ such that $\beta \leq \hat{x} \upsilon([\hat{x},\infty])$, then the function $L(\beta,\cdot)$ takes negative values.
		Moreover, for every $\beta \geq \expect[n_{\root}] -1$ the function $L(\beta,x)$ is strictly positive. Combining these together and considering the fact that $L(\beta,x)$ is a continuous function of $x\in\mathbb{R}_+$ and $\beta\in\mathbb{R}_+$, we conclude that there exists a largest $\beta_0 > 0$ such that the function $L(\beta_0,x)$ is non-negative, and $L(\beta_0,x_0) = 0$ for some $x_0\in\mathbb{R}_+$. The already established strictly increasing property of $L(\beta_0,x)$ implies that $x_0 = 0$, and the proof is complete.
		\item Using~\cref{eq:Lpositive}, for all $x>0$, we have $L(\beta_0,x) > 0$. Moreover, using the L'Hospital rule,
		\begin{align}
		\lim_{z\to 0} \frac{z}{L(\beta_0,z)} 
		= \frac{1}{\pdv{L(\beta_0,x)}{x}\big|_{x=0}},
		\end{align}
		which is well-defined since $\pdv{L(\beta_0,x)}{x}\big|_{x=0}$ is strictly positive. Next, taking the derivative of {\Large$\frac{x}{L(\beta_0,x)}$}, we get
		\begin{align}\label{eq:derivativeOfx/L}
		\pdv{\left(x/L(\beta_0,x)\right)}{x} = \frac{L(\beta_0,x) - x\pdv{L(\beta_0,x)}{x}}{\left(L(\beta_0,x)\right)^2},
		\end{align}
		Notice that $L(\beta_0,0) = 0$ and $L(\beta_0,x)$ is a strictly concave function due to parts~\ref{part:propL_ii} and~\ref{part:propL_iv}; therefore,
		\begin{align}
		\forall x>0,\qquad L(\beta_0,0) < L(\beta_0,x) + \pdv{L(\beta_0,x)}{x} (0-x).
		\end{align}
		Hence the expression \cref{eq:derivativeOfx/L} is strictly positive for every $x > 0$, and we have established that the function {\Large$\frac{x}{L(\beta_0,x)}$} is strictly increasing.
		\end{enumerate}
\end{proof}
The following immediate corollary guarantees the existence of an eigenfunction $f(\cdot)$ and an eigenvalue $\beta$ of the operator $H_1$.
\begin{corollary} \label{cor:existf&beta}
	Let $\beta_0$ be the largest zero of $L(\cdot\,,0)$. For all $x\in\mathbb{R}_+$, let $f_0(x) = L(\beta_0,x)$. Then the constant $\beta_0$ and the function $f_0(\cdot)$ satisfy the following fixed point equation,
	\begin{align}\label{eq:fixedpoint}
	\beta_0 f_0(x) &= \int_{y=0}^x\int_{z=y}^{\infty}g_2(z) f_0(z) \,dz \,dy\allowdisplaybreaks .
	\end{align}
\end{corollary}
\begin{proof}
	 Substituting the function $L(\beta_0,x)$ in the above equation, we get,
	 \begin{align}
	 &\int_{y=0}^{x}\int_{z=y}^{\infty}g_2(z)L(\beta_0,z)\,dz\,dy \allowdisplaybreaks\\
	 &\qquad= \sum_{i=0}^{\infty}\int_{y=0}^{x}\int_{z=y}^{\infty}g_2(z)G_i(z)\,dz\,dy \left(\frac{-1}{\beta_0}\right)^i \allowdisplaybreaks\\
	 &\qquad= \sum_{i=0}^{\infty}\left(G_{i+1}(0)-G_{i+1}(x)\right)\left(\frac{-1}{\beta_0}\right)^i\allowdisplaybreaks\\
	 &\qquad= -\beta_0(L(\beta_0,0) - L(\beta_0,x)) = \beta_0 L(\beta_0,x),
	 \end{align}
	 where the last equality follows from part~\ref{part:propL_iv} of Theorem~\ref{thm:propL}, since $L(\beta_0,0) = 0$.
\end{proof}
Using the Corollary~\ref{cor:existf&beta} and the equations \cref{eq:righteigf} and \cref{eq:lefteigf}, a left and a right eigenfunction of $M_1$ for the eigenvalue $\beta_0$ are obtained.

Observe that, from \cref{eq:ml}, $m_l(m,x;k-1,z)$ satisfies the following recursive equation:
\begin{align}
m_l(m,x;k-1,z) &= \int_{z^\prime = 0}^{\infty} \sum_{k^\prime=1}^{\infty} m_{l-1}(m,x;k^\prime-1,z^\prime)m_1(k^\prime-1,z^\prime;k-1,z)\,dz^\prime \allowdisplaybreaks\\
&= \int_{z^\prime = 0}^{\infty} \sum_{k^\prime=2}^{\infty} m_{l-1}(m,x;k^\prime-1,z^\prime)\frac{k^\prime - 1}{z^\prime} \min(z^\prime,z) P(k)\frac{\eexp^{-z}z^{k-1}}{(k-1)!}\,dz^\prime \allowdisplaybreaks\\
&= P(k)\frac{\eexp^{-z}z^{k-1}}{(k-1)!}\int_{z^\prime = 0}^{\infty} \sum_{k^\prime=2}^{\infty} m_{l-1}(m,x;k^\prime-1,z^\prime)\frac{k^\prime - 1}{z^\prime} \min(z^\prime,z) \,dz^\prime.
\end{align}
The terms related to the values of $k$ and $m$ can be factored out. However, to avoid dividing by zero, we consider the function $h_l(\cdot\,,\cdot)$ defined recursively as follows:
\begin{equation}
\begin{aligned}
h_l(x,z) &= \int_{z^\prime = 0}^{\infty} h_{l-1}(x,z^\prime) g_2(z^\prime)h_1(z^\prime,z) \,dz^\prime \quad l \geq 2,\allowdisplaybreaks\\
h_1(x,z) &= \min(x,z).
\end{aligned}\label{eq:hl}
\end{equation}
It is easy to see that the function $m_l$ is related to the function $h_l$ via the following equation; indeed, the relation holds between $m_1$ and $h_1$, which is just \cref{eq:ml}, and for a general $l$ the proof holds via induction:
\begin{align}\label{eq:mlhl}
m_l(m,x;k-1,z) = h_l(x,z) \times \frac{P(k)\eexp^{-z}z^{k-1}}{(k-1)!} \frac{m}{x}.
\end{align}
Recall that the kernel of the operator $H_1$ is symmetric, hence, any right eigenfunction is also a left eigenfunction. Moreover, Corollary~\ref{cor:existf&beta} implies that $f_0(\cdot)$ is an eigenfunction of $H_1$ with eigenvalue $\beta_0$, i.e.,
\begin{align}
\beta_0 f_0(x) &= \int_{z=0}^{\infty} \min(x,y) f_0(y) d\upsilon(y)\label{eq:f0rel}.
\end{align}
Hence, the question of whether or not $\beta_0$ is the Krein--Rutman eigenvalue of $M_1$ with right eigenfunctions $\mu(\cdot\,,\cdot)$ and left eigenfunction $\nu(\cdot\,,\cdot)$, is equivalent to the same question for $H_1$ with right and left eigenfunction $f_0(\cdot)$.

To show that $\beta_0$ is the Krein--Rutman eigenvalue of $H_1$, we define a continuous state Markov chain and prove uniform geometric ergodicity for the chain. Consider a continuous state Markov chain with the following transition probability kernel:
\begin{align}\label{eq:defmchain}
\forall x,y\in\mathbb{R}_+, \qquad p(x,y) \coloneqq \frac{h_1(x,y)g_2(y)f_0(y)}{\beta_0f_0(x)},
\end{align}
where the transition probability at $x=0$ is defined by taking the limit of $p(x,\cdot)$ as $x$ goes to $0$, namely,
\begin{align}
p(0,y) \coloneqq \lim_{x\to 0} \frac{h_1(x,y)g_2(y)f_0(y)}{\beta_0f_0(x)} = \frac{g_2(y)f_0(y)}{\beta_0f_0^\prime(0)}.
\end{align}
By Theorem~\ref{thm:propL} part~\ref{part:propL_iv}, the term $f_0^\prime(0)$ is strictly positive; hence, the function $p(\cdot\,,\cdot)$ is well-defined. Moreover, the function $p(\cdot\,,\cdot)$ is indeed a valid transition probability kernel since
\begin{align}
\int_{z=0}^{\infty}p(x,z)\,dz &= \int_{z=0}^{\infty}\frac{\min(x,z)g_2(z)f_0(z)}{\beta_0f_0(x)}\,dz = \int_{y=0}^{x}\int_{z=y}^{\infty}\frac{g_2(z)f_0(z)}{\beta_0f_0(x)}\,dz =1\allowdisplaybreaks\\
\int_{z=0}^{\infty}p(0,z)\,dz &= \int_{z=0}^{\infty}\frac{g_2(z)f_0(z)}{\beta_0f_0^\prime(0)}\,dz \stackrel{(*)}{=} \frac{1}{\beta_0f'_0(0)} \left.\pdv{\left(\beta_0 f_0(x)\right)}{x} \right|_{x=0} = 1,
\end{align}
where $(*)$ follows from \cref{eq:f0prime}. By induction, it is easy to observe from \cref{eq:hl} that the $l$ step transition probability kernel is related to the function $h_l(\cdot\,,\cdot)$ via the following equation,
\begin{align}\label{eq:pl}
p^{(l)}(x,y) = \int_{z=0}^{\infty} p^{(l-1)}(x,z)p(z,y)\,dz = \frac{h_l(x,y)g_2(y)f_0(y)}{{\beta_0}^l f_0(x)}.
\end{align}
The stationary density of the Markov chain can now be verified to be $\pi(y) = C_N g_2(y)\left(f_0(y)\right)^2$, where $C_N$ is the normalization factor. Indeed, from \cref{eq:f0rel} and \cref{eq:defmchain}, we have
\begin{align}
\int_{x=0}^{\infty} \pi(x) p(x,y)\,dx = C_N\int_{x=0}^{\infty} g_2(x)f_0(x) \frac{\min(x,y)g_2(y)f_0(y)}{\beta_0}dx = \pi(y).
\end{align}
Observe that the stationary distribution equals the product of the left and the right eigenfunctions of $H_1$ upto a normalization factor. Recall that $g_2(\cdot)$ is the Radon--Nikodym derivative of $\upsilon(\cdot)$. Moreover, the Markov chain is reversible with respect to the stationary distribution $\pi(\cdot)$, i.e., $\pi(x)p(x,y) = \pi(y)p(y,x)$.

It is natural to expect $p^{(l)}(x,y)$ to converge point-wise to $\pi(y)$ as $l$ goes to infinity. To prove this, we invoke the following result by Baxendale~\cite{Baxendale2005}.
\begin{theorem}[Baxendale 2005] \label{thm:bax}
	Let $\{X_n: n>0\}$ be a time homogeneous Markov chain on a state space $(\mathcal{S},\mathscr{S})$. For $x\in S$ and $A\in\mathscr{S}$, let $P(x,A)$ denote the transition probability and by abusing notation let $P$ denote the corresponding operator on measurable functions $\mathcal{S}\to\mathbb{R}$. Assume that the following assumptions hold:
	\begin{enumerate}[label = (A\arabic*)]
		\item {\it Minorization condition}: There exists $C\in\mathscr{S}$, $\widetilde{\beta} > 0$ and a probability measure $\nu$ on $(\mathcal{S},\mathscr{S})$ such that for all $x\in C$ and $A\in\mathscr{S}$,
		\begin{align}
		P(x,A) \geq \widetilde{\beta}\nu(A).
		\end{align}\label{assum:Baxendale2005_A1}
		\item {\it Drift condition}: There exist a measurable function $V:\mathcal{S}\to[1,\infty)$ and constants $\lambda < 1$ and $K< \infty$ satisfying,
		\begin{align}
		PV(x)\leq
		\begin{cases}
		\lambda V(x),&\text{if } x\notin C\allowdisplaybreaks\\
		K,&\text{if } x\in C
		\end{cases}.
		\end{align}\label{assum:Baxendale2005_A2}
		\item {\it Strong aperiodicity condition}: There exists $\widehat{\beta} > 0$ such that $\widetilde{\beta}\nu(C)\geq \widehat{\beta}$.\label{assum:Baxendale2005_A3}
	\end{enumerate}
	Then $\{X_n: n>0\}$ has a unique stationary probability measure $\pi$, say, and $\int V\,d\pi < \infty$. Moreover, there exists $\rho < 1$ depending only (and explicitly) on $\widehat{\beta}$, $\widetilde{\beta}$, $\lambda$ and $K$ such that whenever $\rho< \gamma < 1$ there exists $M<\infty$ depending only (and explicitly) on $\gamma$, $\widehat{\beta}$, $\widetilde{\beta}$, $\lambda$ and $K$ such that for all $x\in\mathcal{S}$ and $n\in\mathbb{Z}_+$,
	\begin{align}
	\sup_{|g|\leq V}\left|(P^n g)(x)-\int g \,d\pi\right| \leq M V(x) \gamma^n,
	\end{align}
	where the supremum is taken over all measurable functions $g:\mathcal{S}\to\mathbb{R}$ satisfying $|g(x)| \leq V(x)$. In particular, $P^ng(x)$ and $\int g\,d\pi$ are both well-defined whenever
	\begin{align}
	\norm{g}_V \equiv \sup\{|g(x)|/V(x):x\in \mathcal{S}\} < \infty.
	\end{align}
\end{theorem}
Baxendale~\cite{Baxendale2005} provides explicit values for $\rho$ and $M$ and improves the constants if the corresponding Markov chain is reversible, which holds in our case. In the following lemma, we prove that the Markov chain with transition probability $p(x,y)$ from \cref{eq:defmchain} satisfies assumptions \ref{assum:Baxendale2005_A1}--\ref{assum:Baxendale2005_A3}.
\begin{lemma} \label{lem:bax}
	Assume the moment generating function of $n_{\root}$ exists at some $\theta > 0$. Then, the Markov chain defined by the transition probability kernel $p(x,y)$ on state space $(\mathbb{R}_+,\mathscr{B})$ satisfies the assumptions \ref{assum:Baxendale2005_A1}--\ref{assum:Baxendale2005_A3} of \cref{thm:bax} where the set $C$, the constants $\widetilde{\beta}$, $\lambda$, $K$, $\widehat{\beta}$, the function $V:\mathbb{R}\to[1,\infty)$ and the probability measure $\nu(x)$ are given as follows:
	\begin{gather}
	\begin{aligned}
	C\coloneqq [0,c], && \widetilde{\beta} \coloneqq \int_{0}^{\infty} W(y) \,dy,&&\lambda \coloneqq \frac{1}{2},
	\end{aligned}\allowdisplaybreaks\\
	\begin{aligned}
	K \coloneqq f_0^\prime(0)\frac{c}{f_0(c)} \frac{\expect[\eexp^{\theta n_{\root}}]}{\beta_0 \eexp^{2\theta }} \frac{1}{(1-\eexp^{-\theta } - \eta)^2}, &&\!\! \widehat{\beta} \coloneqq \min\left(\frac{1}{\beta_0 f^\prime (0)},\frac{c/2}{\beta_0 f_0(c)}\right)\!\! \int_{c/2}^{c} g_2(y)f_0(y)\,dy,
	\end{aligned}\allowdisplaybreaks\\
	V(x) \coloneqq f_0^\prime(0) \eexp^{\eta x} \frac{x}{f_0(x)}, \numberthis \label{eq:V}\allowdisplaybreaks\\
	\nu(A) \coloneqq \frac{1}{\widetilde{\beta}} \int_A W(y) \,dy,
	\end{gather}
	where the constants $\eta$ and $c$, and the function $W(y)$ are defined as follows,
	\begin{gather}
	\begin{aligned}
	\eta \coloneqq \frac{1 - \eexp^{-\theta}}{2},&&c \coloneqq \max\left(\frac{1}{\eta}\ln\left(\frac{\expect[\eexp^{\theta n_{\root}}]}{\beta_0 \eexp^{2\theta}} \frac{2}{(1-\eexp^{-\theta} - \eta)^2}\right),1\right),
	\end{aligned}
	\allowdisplaybreaks\\
	W(y) \coloneqq
	\begin{cases}
	\frac{1}{\beta_0 f_0^\prime(0)} f_0(y)g_2(y) &\text{if } y\notin [0,c]\allowdisplaybreaks\\
	\min(\frac{1}{\beta_0 f_0^\prime(0)},\frac{y}{\beta_0 f_0(c)}) f_0(y)g_2(y) &\text{if } y\in [0,c]
	\end{cases}\numberthis \label{eq:W(y)}.
	\end{gather}
\end{lemma}
\begin{proof}
	First, we prove that assumption \ref{assum:Baxendale2005_A2} holds and derive the constants $c$, $\lambda$ and $K$, and the function $V(\cdot)$. Next, we show that assumption \ref{assum:Baxendale2005_A1} holds and derive the probability measure $\nu$ and the constant $\widetilde{\beta}>0$. Finally, we prove that assumption \ref{assum:Baxendale2005_A3} holds and derive the constant $\widehat{\beta}$.
	\begin{itemize}[leftmargin=*]
		\item[]{\it Assumption \ref{assum:Baxendale2005_A2}:} Define the operator $P$ by its action on non-negative measurable functions as follows:
		\begin{align}
		PV(x) &\coloneqq \int_{0}^{\infty} V(y) \frac{\min(x,y)g_2(y)f_0(y)}{\beta_0 f_0(x)} \,dy \allowdisplaybreaks\\
		&\leq \frac{x}{\beta_0 f_0(x)} \int_{0}^{\infty} V(y) g_2(y)f_0(y) \,dy.
		\end{align}
		Assuming the moment generating function of $n_{\root}$ exists at some $\theta > 0$ and using inequality \cref{lem:ineqgi}, we have
		\begin{align}
		PV(x) &\leq \frac{x}{\beta_0 f_0(x)} \int_{0}^{\infty} V(y) \frac{\expect[\eexp^{\theta n_\phi}]}{\eexp^{2\theta }} \exp(-y(1-\eexp^{-\theta })) f_0(y) \,dy. \label{eq:PV_ineq}
		\end{align}
		Let $V(x) = f_0^\prime(0) \eexp^{\eta x} \frac{x}{f_0(x)}$ where the constant $\eta > 0$ is small enough such that $1-\eexp^{-\theta } - \eta > 0$. Part~\ref{part:propL_v} of Theorem~\ref{thm:propL} states that the function $\frac{x}{f_0(x)}$ is strictly increasing. Hence, $V(\cdot)$ is a strictly increasing function and its range is $[1,\infty)$. Substituting the function $V(\cdot)$ into \cref{eq:PV_ineq}, we get
		\begin{align}
		PV(x) &\leq f_0^\prime(0) \frac{x}{f_0(x)} \frac{\expect[\eexp^{\theta n_\phi}]}{\beta_0 \eexp^{2\theta }} \int_{0}^{\infty} y \exp(-y(1-\eexp^{-\theta } - \eta)) \,dy\allowdisplaybreaks\\
		&= f_0^\prime(0) \frac{x}{f_0(x)} \frac{\expect[\eexp^{\theta n_\phi}]}{\beta_0 \eexp^{2\theta }} \frac{1}{(1-\eexp^{-\theta } - \eta)^2}. \numberthis \label{eq:PVineq}
		\end{align}
		Consider the constants $c$, $K$ and $\lambda$ as in the statement of the Theorem.
		For every $x\leq c$, the right-hand side of the \cref{eq:PVineq} is bounded by $K$. Moreover, for every $x>c$, the following inequality holds
		\begin{align}
		\frac{x}{f_0(x)} \frac{\expect[\eexp^{\theta n_{\root}}]}{\beta_0 \eexp^{2\theta }} \frac{1}{(1-\eexp^{-\theta } - \eta)^2} \leq \frac{x}{f_0(x)} \frac{1}{2} \eexp^{\eta c} \leq \frac{1}{2} \frac{x}{f_0(x)} \eexp^{\eta x}.
		\end{align}
		Hence, the assumption \ref{assum:Baxendale2005_A2} is satisfied.

		\item[]{\it Assumption \ref{assum:Baxendale2005_A1}:} Recall that $P(x,A)$ is defined as follows,
		\begin{align}
		P(x,A) = \int_{y\in A}p(x,y)\,dy &= \frac{1}{\beta_0 f_0(x)} \int_{y\in A} f_0(y)g_2(y) \min(x,y)\,dy.
		\end{align}
		For $x\in\mathbb{R}_+$, define the set $A_x = A\cap [0,x]$ and $A_{\bar{x}} = A\cap (x,\infty)$. Using $A=A_x\cup A_{\bar{x}}$, we have,
		\begin{align}
		P(x,A) &= \frac{1}{\beta_0 f_0(x)} \int_{y\in A_x} f_0(y)g_2(y) y\,dy + \frac{x}{\beta_0 f_0(x)} \int_{y\in A_{\bar{x}}} f_0(y)g_2(y) dy.
		\end{align}
		Consider the function $W(y) = \min_{x\in[0,c]} p(x,y)$. Using the fact that $\frac{x}{f_0(x)}$ and $f_0(x)$ are increasing functions, the function $W(\cdot)$ is given as in \cref{eq:W(y)}.
		Notice that $W(\cdot)$ is integrable since it is upper bounded by the integrable function ${\beta_0 f_0^\prime(0)}^{-1} g_2(y)f_0(y)$. Define the probability measure $\nu$ as follows,
		\begin{align}
		\nu(A) = \frac{1}{\widetilde{\beta}}\int_{y\in A} W(y)\,dy,
		\end{align}
		where $\widetilde{\beta}$ is the normalization factor. For all $x\in[0,c]$, the inequality $P(x,A) \geq \widetilde{\beta}\nu(A)$ holds because of the following inequalities:
		\begin{align}
		&
		\begin{aligned}
		\frac{1}{\beta_0 f_0(x)} \int_{y\in A_x} f_0(y)g_2(y) y\,dy &\geq \int_{y\in A_x} \min(\frac{1}{\beta_0 f_0^\prime(0)},\frac{y}{\beta_0 f_0(c)}) f_0(y)g_2(y) dy 	\\
		&= \int_{y\in A_x} W(y) \,dy,
		\end{aligned}\allowdisplaybreaks\\
		&\frac{x}{\beta_0 f_0(x)} \int_{y\in A_{\bar{x}}} f_0(y)g_2(y) \,dy \geq \int_{y\in A_{\bar{x}}} \frac{1}{\beta_0 f_0^\prime(0)} f_0(y)g_2(y)\,dy \geq \int_{y\in A_{\bar{x}}} W(y) \,dy. \allowdisplaybreaks
		\end{align}
		From here, the assumption \ref{assum:Baxendale2005_A1} immediately follows.
		\item[]{\it Assumption \ref{assum:Baxendale2005_A3}:} Using the definition of the probability measure $\nu$, we have,
		\begin{align}
		\widetilde{\beta}\nu([0,c]) = \int_{0}^{c} W(y) \,dy \geq \min\left(\frac{1}{\beta_0 f_0^\prime(0)},\frac{c/2}{\beta_0 f_0(c)}\right)\int_{c/2}^{c} g_2(y)f_0(y) \,dy = \widehat{\beta} > 0.
		\end{align}
	\end{itemize}
\end{proof}
\begin{remark}
	The function $V(\cdot)$ in \cref{eq:V} provides us with more freedom, i.e., it is possible to choose a function $g:\mathbb{R}_+\to\mathbb{R}$ that goes to infinity.
\end{remark}
Lemma~\ref{lem:bax} implies that the Theorem~\ref{thm:bax} holds for the continuous state Markov chain with transition probability $p(x,y)$. The first implication is that the stationary distribution $\pi(x) = C_Ng_2(x) \left(f_0(x)\right)^2$ is unique. Moreover, there exists $M<\infty$ and $0 < \gamma < 1$ such that all the measurable functions $g:\mathbb{R}_+ \to \mathbb{R}$ with the property that $|g(x)| \leq V(x)$ for all $x\in\mathbb{R}_+$, satisfy
\begin{align}
\left|(P^n g)(x)-\int g \,d\pi\right| \leq M V(x) \gamma^n.
\end{align}
Since $V(0)=1$, and $V(x)$ is increasing as can be gleaned from \cref{eq:V} and Theorem~\ref{thm:propL} part~\ref{part:propL_v},
geometric ergodicity follows by restricting the function $g(\cdot)$ to satisfy $|g(x)| \leq 1$ for all $x\in\mathbb{R}_+$, that is
\begin{align}
\norm{P^n(x,\cdot) - \pi}_{TV} \leq M V(x) \gamma^n.
\end{align}
However, it is possible to prove uniform ergodicity by another appropriate choice of function $V(\cdot)$.
\begin{lemma} \label{lem:bddV}
	Let $V(x) = 1 + a \times {\onefunc}_{x > x_0}$. Let
	\begin{align}
	\lambda \coloneqq \frac{3}{4} 	\qquad K \coloneqq 1+a \qquad c= x_0,
	\end{align}
	where the constant $a$ is defined as follows,
	\begin{align}
	&a \coloneqq \frac{8}{\beta_0}\frac{\expect[\eexp^{\theta n_\phi}]}{\eexp^{2\theta }} \times \frac{1}{(1-\eexp^{-\theta })^2},
	\end{align}
	and the constant $x_0$ is large enough such that $f_{0}(x_0) \geq 0.5$ and the following inequality is satisfied for all $x>x_0$:
	\begin{align}
	&\frac{2}{\beta_0}\frac{\expect[\eexp^{\theta n_\phi}]}{\eexp^{2\theta }} \times \frac{(x+1)\eexp^{-x(1-\eexp^{-\theta})}}{(1-\eexp^{-\theta})^2} < \frac{1}{4} .
	\end{align}
	Then, for a suitable $\widetilde{M} > 0$ and $\widetilde{\gamma} < 1$, the following inequality holds for all $x\in\mathbb{R}_+$:
	\begin{align}
	\norm{P^n(x,\cdot) - \pi}_{TV} \leq \widetilde{M} (1+a){\widetilde{\gamma}}^n.
	\end{align}
\end{lemma}
\begin{proof}
	Again, we apply Theorem~\ref{thm:bax} (Baxendale's Theorem), but this time the function $V(\cdot)$ is bounded.
	The only assumption affected by the choice of the function $V(\cdot)$ is assumption \ref{assum:Baxendale2005_A2}. Recall that the transition probability is given by,
	\begin{align}
	p(x,y) = \frac{\min(x,y)g_2(y)f_0(y)}{\beta_0 f_0(x)},
	\end{align}
	Hence, the operator $P$ applied on the measurable function $V(\cdot)$ yields,
	\begin{align}
	PV(x) &= \int_{0}^{\infty} V(y) \frac{\min(x,y)g_2(y)f_0(y)}{\beta_0 f_0(x)} \,dy \allowdisplaybreaks\\
	&= \frac{1}{\beta_0 f_0(x)} \int_{0}^{x} V(y) y g_2(y)f_0(y) \,dy + \frac{x}{\beta_0 f_0(x)} \int_{x}^{\infty} V(y)g_2(y)f_0(y) \,dy.
	\end{align}
	Recall that the function $f_0(x)$ is an increasing function, $f_0(0) =0$ and $\lim_{x\to\infty} f_0(x) = 1$, see for e.g., \cref{eq:Lequals}. Consider the function $V(x) = 1 + a\times{\onefunc}_{\{x>x_0\}}$, where $a$ and $x_0$ are constants to be specified later. Substituting the choice of function $V(\cdot)$, we get
	\begin{align}
	PV(x) &=
	\begin{dcases}
	\begin{aligned}
	&\frac{1}{\beta_0 f_0(x)} \int_{0}^{x}y g_2(y)f_0(y) \,dy + \frac{x}{\beta_0 f_0(x)} \int_{x}^{x_0} g_2(y)f_0(y) dy\\
	&\qquad +\frac{(1+a)x}{\beta_0 f_0(x)} \int_{x_0}^{\infty} g_2(y)f_0(y) \,dy
	\end{aligned}
	& \text{if } x\leq x_0
	\allowdisplaybreaks\\
	\begin{aligned}
	&\frac{1}{\beta_0 f_0(x)} \int_{0}^{x_0}y g_2(y)f_0(y) \,dy + \frac{1+a}{\beta_0 f_0(x)} \int_{x_0}^{x}y g_2(y)f_0(y) \,dy \\
	&\qquad+\frac{(1+a)x}{\beta_0 f_0(x)} \int_{x}^{\infty} g_2(y)f_0(y) \,dy
	\end{aligned}
	& \text{if } x>x_0
	\end{dcases}\allowdisplaybreaks\\
	&\leq
	\begin{dcases}
	\frac{1+a}{\beta_0 f_0(x)}\! \left( \frac{1}{1+a} \int_{0}^{x}y g_2(y) \,dy + x\int_{x}^{x_0} g_2(y) \,dy + x\int_{x_0}^{\infty} g_2(y) \,dy \right)&\!\!\! \text{if } x\leq x_0\allowdisplaybreaks\\
	\frac{1+a}{\beta_0 f_0(x)}\! \left( \frac{1}{1+a} \int_{0}^{x_0}y g_2(y) \,dy + \int_{x_0}^{x}y g_2(y) \,dy + x\int_{x}^{\infty} g_2(y) \,dy \right)&\!\!\! \text{if } x>x_0
	\end{dcases}.
	\end{align}
	Assume $x>x_0$. Using the inequality \cref{lem:ineqgi}, we have,
	\begin{align}
	PV(x) &\leq \frac{(1+a)\expect[\eexp^{\theta n_\phi}]}{\eexp^{2\theta }\beta_0 f_0(x)} \Bigg( \frac{1}{1+a} \int_{0}^{x_0}y \eexp^{-y(1-\eexp^{-\theta })} \,dy + \int_{x_0}^{x}y \eexp^{-y(1-\eexp^{-\theta })} \,dy  \allowdisplaybreaks\\
	&\myquad[20] + x\int_{x}^{\infty} \eexp^{-y(1-\eexp^{-\theta })} \,dy \Bigg) \allowdisplaybreaks\\
	& \begin{aligned}
	&\leq \frac{1+a}{\beta_0 f_0(x)}\frac{\expect[\eexp^{\theta n_\phi}]}{\eexp^{2\theta }} \Bigg( \frac{1}{1+a} \frac{1}{(1-\eexp^{-\theta })^2} + \frac{(x_0 (1-\eexp^{-\theta })+1)\eexp^{-x_0 (1-\eexp^{-\theta })}}{(1-\eexp^{-\theta })^2} \allowdisplaybreaks\\
	&\myquad[25] +x \frac{\eexp^{-x(1-\eexp^{-\theta })}}{1 - \eexp^{-\theta}}\Bigg).
	\end{aligned}\numberthis \label{eq:PV(x)leq}
	\end{align}
	The last inequality follows by evaluating the integrals and removing the negative terms. The constants $a$ and $x_0$ are chosen such that $f_0(x_0) \geq 0.5$, and all the following inequalities are satisfied for all $z>x_0$:
	\begin{align}
	&\frac{2}{\beta_0}\frac{\expect[\eexp^{\theta n_\phi}]}{\eexp^{2\theta }} \times \frac{1}{1+a} \frac{1}{(1-\eexp^{-\theta })^2} < \frac{1}{4} \label{eq:aineq}\allowdisplaybreaks\\
	&\frac{2}{\beta_0}\frac{\expect[\eexp^{\theta n_\phi}]}{\eexp^{2\theta }} \times \frac{(z+1)\eexp^{-z(1-\eexp^{-\theta})}}{(1-\eexp^{-\theta})^2} < \frac{1}{4}.\label{eq:x0ineq}
	\end{align}
	Notice that the left-hand side of \cref{eq:aineq} is decreasing in $a$ and the left-hand side of \cref{eq:x0ineq} can be made arbitrary small by setting $x_0$ to be large enough.
	Noticing that \cref{eq:x0ineq} upper bounds the last two terms in \cref{eq:PV(x)leq}, for all $x > x_0$ we have $PV(x) \leq (3/4)(1+a)$. Since the function $V(\cdot)$ is bounded by $1+a$, for $x \leq x_0$ we have $PV(x) \leq 1+a$. Given the above choice of constants $a$ and $x_0$, for $\lambda = \frac{3}{4}$ and $K = 1+a$, taking $C = \{x : x \leq x_0\}$, assumption \ref{assum:Baxendale2005_A2} is satisfied; i.e.,
	\begin{align}
	PV(x) \leq
	\begin{cases}
	\frac{3}{4} (1+a) &\text{if }x > x_0\allowdisplaybreaks\\
	1+a &\text{if } x \leq x_0.
	\end{cases}
	\end{align}
	An application of Baxendale's Theorem then completes the proof.
\end{proof}
An immediate consequence of uniform ergodicity and Lemma~\ref{lem:bddV} is the following.
\begin{corollary}
	For any $x,y\in\mathbb{R}_+$ and $l>1$, we have
	\begin{align}\label{eq:pointwiseineq}
	\left|p^{(l)}(x,y) - \pi(y)\right| < 2\widetilde{M} (1+a){\widetilde{\gamma}}^{l-1}.
	\end{align}
\end{corollary}
\begin{proof}
	The idea of the proof follows Doob~\cite[pages 216-217]{Doob1953}. Notice that $\pi(\cdot)$ is the unique stationary distribution. Hence, for any
	\begin{align}
	\left|p^{(l)}(x,y) - \pi(y)\right| &= \left|\int_{z = 0}^{\infty} p^{(1)}(z,y)\left(p^{(l-1)}(x,z) - \pi(z)\right)\,dz \right|\allowdisplaybreaks\\
	&\leq \left|\int_{p^{(l-1)}(x,z) > \pi(z)} \left(p^{(l-1)}(x,z) - \pi(z)\right)\,dz \right| \\
	&\qquad+ \left|\int_{p^{(l-1)}(x,z) < \pi(z)} \left(p^{(l-1)}(x,z) - \pi(z)\right)\,dz \right|\allowdisplaybreaks\\
	&\leq 2\widetilde{M} (1+a){\widetilde{\gamma}}^{l-1},
	\end{align}
 which completes the proof.
\end{proof}
To get rid of the constant factor $C_N$, from now on, we assume the function $f_0$ is normalized such that,
\begin{align}
	\int_{0}^{\infty} g_2(y) \left(f_0(y)\right)^2 \,dy = 1.
\end{align}
That is $f_0(y) = L(\beta_0,y)/\sqrt{C_N}$, where $C_N = (\int_{0}^{\infty} g_2(y) \left(L(\beta_0,y)\right)^2 \,dy)^{-1}$.

Then inequality \cref{eq:pointwiseineq} implies that for every $x\in\mathbb{R}_+$ and $y>0$,
\begin{align}
h_l(x,y) = {\beta_0}^l f_0(x) f_0(y) \left(1 + \frac{2\widetilde{M} (1+a)O(\widetilde{\gamma}^{l-1})}{g_2(y)\left(f_0(y)\right)^2}\right)\qquad l\geq 2 \label{eq:hlest}.
\end{align}
Harris~\cite{Harris1951} assumes that the density of the $M_1$ is uniformly positive and bounded, and deduces that the corresponding eigenfunction is uniformly positive as well. However, in our setting $f_0(0) = 0$ and $g_2(y)\to 0$ as $y\to\infty$. As a result, the error term for $h_l(x,y)/\beta_0^l$ explodes as $y$ goes to $0$ or $\infty$. On the other hand, induction using \cref{eq:hl} implies $h_l(x,0) = h_l(0,y) = 0$. Hence, we should expect a uniform bound. The idea is to use the function $V(\cdot)$ in \cref{eq:V} and apply \cref{eq:hl}.
\begin{lemma}\label{lem:hlbound}
	For some constant $\widehat{M}>0$, we have
	\begin{align}
	h_{l}(x,y) = {\beta_0}^{l} f_0(y)f_0(x) \left(1 + \widehat{M} O({\gamma}^{l-2})\frac{x }{{\beta_0}^2f_0(y)f_0(x)}\right)\qquad l\geq 2. \label{eq:hlest2}
	\end{align}
\end{lemma}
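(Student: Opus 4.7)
The plan is to combine a two-sided Chapman--Kolmogorov decomposition of $h_l$ with the $V$-weighted uniform geometric ergodicity from Lemma \ref{lem:bax} and Baxendale's theorem (Theorem \ref{thm:bax}). For $l \geq 3$, iterating the recursion \eqref{eq:hl} yields
\[
h_l(x,y) = \iint_0^\infty h_1(x, z_1)\, g_2(z_1)\, h_{l-2}(z_1, z_2)\, g_2(z_2)\, h_1(z_2, y)\, dz_1\, dz_2.
\]
Using \eqref{eq:pl} to substitute $h_{l-2}(z_1, z_2) = \beta_0^{l-2} f_0(z_1)\, p^{(l-2)}(z_1, z_2) / [g_2(z_2) f_0(z_2)]$ and carrying out the $z_2$ integral first, this can be rewritten as
\[
h_l(x,y) = \beta_0^{l-2} \int_0^\infty h_1(x,z_1)\, g_2(z_1)\, f_0(z_1)\, (P^{l-2} g_y)(z_1)\, dz_1, \qquad g_y(z) := \frac{\min(z,y)}{f_0(z)},
\]
where $P$ is the Markov operator from \eqref{eq:defmchain}.

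The next step is to replace $(P^{l-2} g_y)(z_1)$ by its stationary value. By \eqref{eq:fixedpoint}, $\int g_y\, d\pi = \int_0^\infty \min(z,y)\, g_2(z)\, f_0(z)\, dz = \beta_0 f_0(y)$. The crucial uniformity estimate is $\|g_y\|_V \le 1/{f_0}^\prime(0)$, with $V(z) = {f_0}^\prime(0)\, e^{\eta z}\, z/f_0(z)$ as in \eqref{eq:V}: this is immediate from $\min(z,y) \le z$ and $e^{\eta z} \ge 1$, and it is independent of $y$. Theorem \ref{thm:bax}, whose hypotheses are verified in Lemma \ref{lem:bax}, therefore produces a uniform constant $M$ and $\gamma \in (0,1)$ with
\[
\left|(P^{l-2} g_y)(z_1) - \beta_0 f_0(y)\right| \le \frac{M}{{f_0}^\prime(0)}\, V(z_1)\, \gamma^{l-2}.
\]

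Plugging this back, the leading term collapses to $\beta_0^{l-1} f_0(y) \int \min(x,z_1) g_2(z_1) f_0(z_1)\, dz_1 = \beta_0^l f_0(x) f_0(y)$ by a second application of \eqref{eq:fixedpoint}. For the remainder I substitute the explicit form of $V$ and peel off $x$ using $\min(x,z_1) \le x$:
\[
\left|h_l(x,y) - \beta_0^l f_0(x) f_0(y)\right| \le M\, \beta_0^{l-2}\, \gamma^{l-2}\, x\, \int_0^\infty z\, e^{\eta z}\, g_2(z)\, dz.
\]
The MGF assumption on $n_{\root}$ combined with the decay estimate \eqref{lem:ineqgi} and the choice $\eta < 1 - e^{-\theta}$ from Lemma \ref{lem:bax} ensures this last integral is finite; setting $\widehat{M}$ to be $M$ times this integral yields the bound, which rearranges to the form \eqref{eq:hlest2}. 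The base case $l=2$ is handled directly by $h_2(x,y) = \int \min(x,z) g_2(z) \min(z,y)\, dz$ and $\min(x,z) \le x$, giving an $O(x)$ error that is absorbed into the $O(\gamma^{0})$ notation.

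The main obstacle is precisely the non-uniformity in the pointwise bound \eqref{eq:hlest}: the one-sided representation $h_l = \beta_0^l f_0(x)\, p^{(l)}(x,\cdot)/[g_2\, f_0]$ carries a factor $1/[g_2(y) f_0(y)]$ that explodes as $y\downarrow 0$ or $y \to\infty$, and a naive $V$-weighted estimate used only on the outside would carry a non-integrable $V(y)$. The two-sided factorization is designed to push the troublesome factor $1/[g_2(z_2) f_0(z_2)]$ into the inner Markov-chain integral, where the $V$-weighted ergodic bound absorbs it uniformly in $y$; the outer integral against $\min(x,z_1)$ is then controlled by the exponential moment of $g_2$, which is exactly what the MGF hypothesis supplies.
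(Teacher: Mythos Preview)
Your proof is correct and is essentially the same argument as the paper's: both apply the $V$-weighted Baxendale bound to the test function $g_y(z)=\min(z,y)/f_0(z)$ (bounded uniformly in $y$ by $V/f_0'(0)$), and then smooth the resulting $V(z)$-weighted error with one outer $h_1$-convolution using $\min(x,z)\le x$. The only cosmetic difference is packaging: the paper presents this as two sequential steps (first obtain the intermediate estimate with the factor $x e^{\eta x}$, then convolve once more), whereas your two-sided Chapman--Kolmogorov decomposition carries out both steps at once.
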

\begin{proof}
	Fix $z\in\mathbb{R}_+$ and define the function $g(\cdot)$ as follows,
	\begin{align}
	g(x) =
	\begin{dcases}
	\frac{h_1(x,z)}{f_0(x)} \times f_0^\prime(0)&\text{if $x\neq 0$}\\
	1&\text{if $x= 0$}
	\end{dcases}.
	\end{align}
	The function $g(\cdot)$ is a well-defined continuous function by Theorem~\ref{thm:propL} part~\ref{part:propL_v}. Moreover, for all $x\in\mathbb{R}_+$, we have $|g(x)|\leq V(x)$ where $V(\cdot)$ is given by \cref{eq:V}. Now using Lemma~\ref{lem:bax} and Theorem~\ref{thm:bax} (Baxendale's Theorem), we have
	\begin{align}
	&\left|\int_{0}^{\infty} \frac{h_l(x,y) g_2(y)f_0(y) }{{\beta_0}^l f_0(x)} \times\frac{h_1(y,z)}{f_0(y)}f_0^\prime(0) \,dy - \int_{0}^{\infty} g_2(y) \left(f_0(y)\right)^2 \times \frac{h_1(y,z)}{f_0(y)} f_0^\prime(0) \,dy\right| \allowdisplaybreaks\\
	&\myquad[20]\leq M{\gamma}^l V(x).
	\end{align}
	Using \cref{eq:hl} and \cref{eq:f0rel}, we get
	\begin{align}
	\left|\frac{h_{l+1}(x,z)}{{\beta_0}^l f_0(x)} - \beta_0 f_0(z)\right| \leq M{\gamma}^l \frac{x \eexp^{\eta x}}{f_0(x)},
	\end{align}
	hence,
	\begin{align}
	h_{l+1}(x,y) = {\beta_0}^{l+1} f_0(y)f_0(x) \left(1 + MO({\gamma}^{l})\frac{x \eexp^{\eta x}}{\beta_0f_0(y)f_0(x)}\right).
	\end{align}
	Now using \cref{eq:hl} again, we have
	\begin{align}
	h_{l+2}(x,y) &= \int_{0}^{\infty} h_{1}(x,z)h_{l+1}(z,y) g_2(z)\,dz\allowdisplaybreaks\\
	&= \int_{0}^{\infty} \min(x,z) {\beta_0}^{l+1} f_0(y)f_0(z) \left(1 + MO({\gamma}^{l})\frac{z \eexp^{\eta z}}{\beta_0f_0(y)f_0(z)}\right) \,dz \allowdisplaybreaks\\
	&= \int_{0}^{\infty} \min(x,z) {\beta_0}^{l+1} f_0(y)f_0(z) g_2(z)\,dz + \allowdisplaybreaks\\
	&\myquad[15]MO({\gamma}^{l}) \int_{0}^{\infty} \min(x,z) {\beta_0}^{l} z \eexp^{\eta z} g_2(z) \,dz.
	\end{align}
	Applying inequality \cref{lem:ineqgi}, we get
	\begin{align}
	&\left|h_{l+2}(x,y) - \beta_0^{l+2} f_0(x) f_0(y) \right| \leq \\
	&\myquad[10]M{\gamma}^{l} \times \beta_0^l
	\frac{\expect[\eexp^{\theta n_{\root}}]}{\eexp^{2\theta}}
	\int_0^{\infty} \min(x,z) z \eexp^{\eta z} \exp(-z (1 - \eexp^{-\theta})) \,dz.
	\end{align}
	Now the result follows by the fact that $\min(x,z)\leq x$, and the fact that $\eta < 1-\eexp^{-\theta}$. Notice that
	\begin{align}
	\widehat{M} = M \times
	\frac{\expect[\eexp^{\theta n_{\root}}]}{\eexp^{2\theta}}
	\int_0^{\infty} z \eexp^{\eta z} \exp(-z (1 - \eexp^{-\theta})) \,dz.
	\end{align}
\end{proof}
\begin{remark}
	In the proof of Lemma~\ref{lem:hlbound}, we bound $\min(x,z)$ by $x$ instead of $z$. This gives us a uniform error bound for $m_l$. Specifically, as $x\to 0$ the error term in \cref{eq:mlunifbound} stays bounded.
\end{remark}
Combining \cref{eq:mlhl} and \cref{eq:hlest2}, we get a similar bound for $m_l(m,x;k-1,z)$: for every $x\in\mathbb{R}_+$ and $z>0$, we have
\begin{align}
\begin{aligned}
&m_l(m,x;k-1,z) =\frac{P(k)\eexp^{-z}z^{k-1}}{(k-1)!} \frac{m}{x} \times\allowdisplaybreaks\\
&\myquad[10]{\beta_0}^l f_0(x) f_0(z) \left(1 + \widehat{M} O({\gamma}^{l-2})\frac{x}{{\beta_0}^2f_0(z)f_0(x)}\right) ~ l\geq 2.
\end{aligned}\label{eq:mlunifbound}
\end{align}
Notice that the error term is uniformly bounded for all $x,z\in\mathbb{R}_+$ and $k\in\mathbb{N}$ (naturally, it is not uniform in $m$). Next we prove that $\beta_0$ is the Krein--Rutman eigenvalue of $H_1$ with the eigenfunction given by $f_0(x)$.
\begin{theorem} \label{thm:f&beta}
	Assume that the moment generating function of $n_{\root}$ exists at some $\theta>0$. Then $\beta_0 \in \left(\max\limits_x\left(x\upsilon([x,\infty))\right), \expect[n_{\root}]-1\right)$ is an eigenvalue of $H_1$ larger in magnitude than any other eigenvalue of $H_1$. The corresponding eigenfunction is $f_0(\cdot)$. Moreover, this is the only non-negative eigenfunction of $H_1$ up to a normalization factor.
\end{theorem}
\begin{proof}
	Assume there exists a real-valued function $\zeta(\cdot)$ and $\beta^\prime\neq 0$ such that,
	\begin{align}
	\beta^\prime \zeta(x) &= \int_{z=0}^{\infty}h_1(x,z)g_2(z) \zeta(z) \,dz.
	\end{align}
	Clearly, $\zeta(x)$ satisfies the following inequality:
	\begin{align}
	|\zeta(x)| &\leq \frac{1}{|\beta^\prime|} \int_{z=0}^{\infty}h_1(x,z)g_2(z) |\zeta(z)| \,dz \allowdisplaybreaks\\
	&\leq \frac{x}{|\beta^\prime|} \int_{z=0}^{\infty}g_2(z) |\zeta(z)| \,dz = \texttt{Const}\times x.
	\end{align}
	Moreover, $\zeta(0) =0 $ since $h_1(0,z) = 0$; hence, for all $x\in\mathbb{R}_+$, the function $g(x) = \zeta(x)/f_0(x)$ is well-defined.
	Letting $V(x) = f_0^\prime(0) \eexp^{\eta x}\frac{x}{f_0(x)} \times \max(\frac{\texttt{Const}}{f_0^\prime(0)},1)$ in Lemma~\ref{lem:bax}, for all $x\in\mathbb{R}_+$ we have $|g(x)| \leq V(x)$. Using Theorem~\ref{thm:bax} (Baxendale's Theorem), we have
	\begin{align}
	\left|\int_{0}^{\infty} \frac{h_l(x,y) g_2(y) f_0(y) }{{\beta_0}^lf_0(x)} \frac{\zeta(y)}{f_0(y)}\,dy - \int_{0}^{\infty} g_2(y). \left(f_0(y)\right)^2\frac{\zeta(y)}{f_0(y)}\,dy\right| < M \gamma^l V(x)
	\end{align}
	Hence,
	\begin{align}
	\left|\frac{{\beta^\prime}^l \zeta(x)}{{\beta_0}^lf_0(x)} - \int_{0}^{\infty} g_2(y) {f_0(y)}\zeta(y)\,dy\right| < M \gamma^l V(x).
	\end{align}
	As $l$ goes to infinity, the right-hand side of the above inequality goes to zero. If $|\beta^\prime| > \beta_0$, then the left-hand side explodes. If $|\beta^\prime| = \beta_0$, then the left-hand side does not go to zero for all $x$. Hence, $|\beta^\prime| < \beta_0$ and $\zeta(\cdot)$ and $f_0(\cdot)$ are orthogonal to each other, i.e.,
	\begin{align}
	\int_{0}^{\infty} {f_0(y)}\zeta(y)d\upsilon(y) = 0.
	\end{align}
	The above equality also proves that $f_0(\cdot)$ is the only non-negative eigenfunction.
\end{proof}
We summarize the key conclusions in the following theorem.
\begin{theorem}\label{thm:pfeigenval}
	Assume the moment generating function of $n_{\root}$ exists at some $\theta > 0$. Let $\beta_0$ and $f_0(\cdot)$ to be as in Theorem~\ref{thm:f&beta}. Then $\beta_0 \in \left(\max\limits_x\Big(x\upsilon([x,\infty))\right), \expect[n_{\root}]\allowbreak-1\Big)$ is the largest eigenvalue of $M_1$ in magnitude. The corresponding eigenfunctions are given as follows
	\begin{align}
	&\text{Right eigenfunction: } \mu(m,x) = \frac{m}{x} f_0(x),\allowdisplaybreaks\\
	&\text{Left eigenfunction: } \nu(k-1,z) = P(k)\frac{\eexp^{-z}z^{k-1}}{(k-1)!} f_0(z).
	\end{align}
	These eigenfunctions are the unique non-negative right and left eigenfunctions, respectively. Moreover, there exists $0<\gamma<1$ and a constant $\widehat{M} > 0$ independent of $x$, $m$, $z$ and $k$ such that for all $x\in\mathbb{R}_+$, $y>0$, $k\geq 1$ and $m\geq 0$,
	\begin{align}
	\begin{aligned}
	&m_l(m,x;k-1,z) =\frac{P(k)\eexp^{-z}z^{k-1}}{(k-1)!} \frac{m}{x} \times \allowdisplaybreaks\\
	&\myquad[10]{\beta_0}^l f_0(x) f_0(z) \left(1 + \widehat{M}O({\gamma}^{l-2})\frac{x}{{\beta_0}^2f_0(z)f_0(x)}\right),\myquad[1] l \geq 2.
	\end{aligned} \label{eq:mlaprox}
	\end{align}
	Finally, $m_l(m,x;k-1,z)$ is related to the function $h_l(x,y)$ via the following equation,
	\begin{align}
	m_l(m,x;k-1,z) = h_l(x,z) \times \frac{P(k)\eexp^{-z}z^{k-1}}{(k-1)!} \frac{m}{x},
	\end{align}
	and for all functions $g:\mathbb{R}_+ \to \mathbb{R}$ satisfying $|g(x)| \leq V(x)$ for all $x\in\mathbb{R}_+$, we have
	\begin{align}
	\left|\int_{0}^{\infty} \frac{h_l(x,y) g_2(y)f_0(y) g(y)}{{\beta_0}^l f_0(x)} \,dy - \int_{0}^{\infty} g_2(y) \left(f_0(y)\right)^2 g(y)\,dy\right| \leq M{\gamma}^l V(x),\qquad l \geq 2,
	\end{align}
	where $V(x) = f_0^\prime (0) \exp(\eta x) \frac{x}{f_0(x)}$ and $\eta = (1 - \eexp^{-\theta})/2$. The constants $M$ and $0<\gamma<1$ are independent of $x$ and $l$.
\end{theorem}
Using the above theorem, we get similar bounds for $M_l$ which are useful for large $l$.
\begin{corollary}\label{cor:growthrate}
	The growth rate of $M_l(m,x;\mathbb{R}_+,\mathbb{\mathbb{Z}_+})$ equals $\beta_0$ which is given by Theorem~\ref{thm:pfeigenval}, i.e.,
	\begin{align}
	\left|\frac{M_l(m,x;\mathbb{R}_+,\mathbb{Z}_+)}{{\beta_0}^l} - \frac{m}{x}f_0(x) \int_{0}^{\infty} \sum_{k=1}^\infty P(k)\frac{\eexp^{-z}z^{k-1}}{(k-1)!} f_0(z)\,dz\right| = \frac{m}{{\beta_0}^2}\widehat{M}O({\gamma}^{l-2}), \qquad \!\!\! l\geq 2,
	\end{align}
	where the constant $0<\gamma<1$ is independent of $x$, $m$ and $l$.
\end{corollary}
\begin{proof}
	By Theorem~\ref{thm:pfeigenval}, we have
	\begin{align}
	&M_l(m,x;\mathbb{R}_+,\mathbb{Z}_+)/{\beta_0}^l \allowdisplaybreaks\\
	&\myquad[5]= \int_{0}^{\infty}\sum_{k=1}^{\infty} m_l(m,x;k-1,z)\,dz/{\beta_0}^l\allowdisplaybreaks\\
	&\myquad[5]=\int_{0}^{\infty}\sum_{k=1}^{\infty} \frac{P(k)\eexp^{-z}z^{k-1}}{(k-1)!} \frac{m}{x} \times \\
	&\myquad[10] f_0(x) f_0(z) \left(1 + \widehat{M}O({\gamma}^{l-2})\frac{x}{{\beta_0}^2 f_0(z)f_0(x)}\right)\,dz \allowdisplaybreaks\\
	&\myquad[5]= \frac{m}{x}f_0(x) \int_{0}^{\infty} \sum_{k=1}^\infty P(k)\frac{\eexp^{-z}z^{k-1}}{(k-1)!} f_0(z)\,dz +\frac{m}{{\beta_0}^2} \widehat{M}O({\gamma}^{l-2})\int_{0}^{\infty} g_1(z)dz.
	\end{align}
\end{proof}
Recall that $Z_l$ denotes the number of vertices in generation $l$. As an immediate Corollary, the growth/extinction rate of $\expect[Z_l]$ is $\beta_0$ as well.
\begin{corollary}\label{cor:EZ_l}
	We have
	\begin{align}
	&\expect[Z_l]/{\beta_0}^l \xrightarrow{l\to\infty}\allowdisplaybreaks\\
	 &\myquad[5]\left(\sum_{m=1}^{\infty}P(m) \int_{x=0}^{\infty}\frac{\eexp^{-x} x^{m}}{m!}\times \frac{m}{x}f_0(x)\,dx\right) \left(\int_{0}^{\infty} \sum_{k=1}^\infty P(k)\frac{\eexp^{-z}z^{k-1}}{(k-1)!} f_0(z)\,dz\right).
	\end{align}
\end{corollary}
If $\beta_0 > 1$, the expected number of vertices in generation $l$ explodes as $l$ goes to infinity. If $\beta_0 = 1$, the expected number of vertices in generation $l$ stays bounded. If $\beta_0 < 1$, the expected number of vertices in generation $l$ goes to zero.
\subsection{Analysis of the Second Moments and Asymptotic Results for $\beta_0 > 1$}\label{sec:SecMom}
A follow-up question is the limit of the random variable $Z_l / \beta_0^l$: $1)$ If $\beta_0 < 1$, it is clear that $Z_l \to 0$ almost surely as $l \rightarrow \infty$ since the population will become extinct; however, conditioned on $Z_l > 0$, the distribution of the total number of vertices might be of interest. We leave this problem for future work. $2)$ If $\beta_0 > 1$, one way to study the limit is to analyze the second moment. This methodology was introduced by Harris in~\cite{Harris1948} and was generalized to finite type branching processes in~\cite{Harris1951}. In~\cite{Harris1959}, Harris pointed out that a similar generalization is possible for general branching processes and discussed this further in~\cite[Chapter 3]{Harris1963}. We follow his argument closely in this section. $3)$ The case $\beta = 1$ is tricky and is discussed in Section~\ref{sec:PhTr}. We will prove that $Z_l \to 0$ almost surely as $l \rightarrow \infty$; however, a similar question as in $1)$ is left for future work.

Let $Z_l(\Apset)$ denote the number of vertices at depth $l$ of type $(k-1,\zeta)\in \Apset, \Apset\subset \Omega$. By the discussion of Section~\ref{sec:back_Branch}, $Z_l(\cdot)$ is a set function. For Borel sets $\Apset_1, \Apset_2 \subset \Omega$, define,
\begin{align}
&M_l^{(2)}(m,x;\Apset_1;\Apset_2) \coloneqq \expect[Z_l(\Apset_1)Z_l(\Apset_2)\,\vert\, n_{\root} = m, v_{\root} = x],\qquad\forall l=0,1,\cdots, \allowdisplaybreaks\\
&v(m,x;\Apset_1;\Apset_2) \coloneqq M_1^{(2)}(m,x;\Apset_1;\Apset_2) - M_1(m,x;\Apset_1)M_1(m,x;\Apset_2). \label{eq:vindentity}
\end{align}
Notice that $M_l^{(2)}(m,x;\Apset_1;\Apset_2)$ is the correlation of $Z_l(\Apset_1)$ and $Z_l(\Apset_2)$ given $\{n_{\root} = m, v_{\root} = x\}$, and $v(m,x;\Apset_1;\Apset_2)$ is the covariance of $Z_l(\Apset_1)$ and $Z_l(\Apset_2)$ conditioned on $\{n_{\root} = m, v_{\root} = x\}$.
The conditionally independent structure of the EWT implies
\begin{align}
M_1^{(2)}(m,x;\Apset_1;\Apset_2) =
\small
\begin{dcases*}
\begin{aligned}
&\frac{m(m-1)}{x} \Bigg(\underset{(k-1,\zeta) \in \Apset_1}{\sum \int} P(k) \min(x,\zeta) \frac{\eexp^{-\zeta} {\zeta}^{k-1}}{(k-1)!} \,d\zeta\Bigg)\times\\
&\myquad[6]\Bigg(\underset{(k-1,\zeta) \in \Apset_2}{\sum \int} P(k) \min(x,\zeta) \frac{\eexp^{-\zeta} {\zeta}^{k-1}}{(k-1)!} \,d\zeta\Bigg)\\
&\myquad[2]+ \frac{m}{x} \Bigg(\underset{(k-1,\zeta) \in \Apset_1\cap \Apset_2}{\sum \int} P(k) \min(x,\zeta) \frac{\eexp^{-\zeta} {\zeta}^{k-1}}{(k-1)!} \,d\zeta\Bigg),
\end{aligned}
 & \text{if $x>0$}\\
\begin{aligned}
&m(m-1) \Bigg(\underset{(k-1,\zeta) \in \Apset_1}{\sum \int} P(k) \frac{\eexp^{-\zeta} {\zeta}^{k-1}}{(k-1)!} \,d\zeta\Bigg)\\
&\myquad[6]\Bigg(\underset{(k-1,\zeta) \in \Apset_2}{\sum \int} P(k) \frac{\eexp^{-\zeta} {\zeta}^{k-1}}{(k-1)!} \,d\zeta\Bigg) \\
&\myquad[2]+ m \Bigg(\underset{(k-1,\zeta) \in \Apset_1\cap \Apset_2}{\sum \int}P(k) \frac{\eexp^{-\zeta} {\zeta}^{k-1}}{(k-1)!} \,d\zeta\Bigg),
\end{aligned}& \text{if $x=0$}
\end{dcases*}
\end{align}
where for a measurable and integrable function $f(\cdot,\cdot): \mathbb{N}\times\mathbb{R}_+\mapsto\mathbb{R}$, the shorthand $\underset{(k-1,\zeta) \in \Apset}{\sum \int} f(k,\zeta)\,d\zeta$ stands for $\sum_{k=1}^\infty \int_{\zeta:(k-1,\zeta) \in \Apset} f(k,\zeta)\,d\zeta$. To get the above equality, notice that
\begin{align}
M_1^{(2)}(m,x;\Apset_1;\Apset_2) &=\expect[Z_l(\Apset_1)Z_l(\Apset_2) - Z_l(\Apset_1\cap \Apset_2)\,\vert\, n_{\root} = m, v_{\root} = x]\\
&\qquad\qquad +\expect[Z_l(\Apset_1\cap \Apset_2)\,\vert\, n_{\root} = m, v_{\root} = x],
\end{align}
and also notice that $Z_l(\Apset_1)Z_l(\Apset_2) - Z_l(\Apset_1\cap \Apset_2)$ equals the number of ways to select two different descendants of the root successively: first, a descendant of a type belongs to $\Apset_1$ and then a descendant of a type belongs to $\Apset_2$.

For any fixed $(m,x)$, we can interpret $M_l^{(2)}(m,x;\Apset_1;\Apset_2)$ as the measure of the ``rectangle'' $\Apset_1\times\Apset_2$, i.e., the measure of points $(k_1-1,\zeta_1;k_2-2,\zeta_2)$ such that $(k_1-1,\zeta_1)\in \Apset_1$, $(k_2-1,\zeta_2)\in\Apset_2$, and $(k_i-1,\zeta_i)\in Z_l$ for $i\in\{1,2\}$, where $Z_l$ (abusing notation) is the point distribution of vertices in generation $l$. To make the notions rigorous, we need to define bivariate measures and random double integrals.
\begin{definition}
	A function $F(\Apset,\Bpset)$, where $\Apset$ and $\Bpset$ are subsets of $\Omega$, is called a bivariate measure if it satisfies the following conditions:
	\begin{enumerate}[label=(\alph*)]
		\item it is finite and non-negative;
		\item if ${\Apset}_1, {\Apset}_2,\dots {\Apset}_k$ are disjoint subsets of $\Omega$, then $F\left(\cup_j {\Apset}_j,\Bpset\right) = \sum_{j} F({\Apset}_j,\Bpset)$;
		\item if ${\Bpset}_1, {\Bpset}_2,\dots {\Bpset}_k$ are disjoint subsets of $\Omega$, then $F\left(\Apset,\cup_j {\Bpset}_j \right) = \sum_{j} F(\Apset,{\Bpset}_j)$;
	\end{enumerate}
	$F$ is called a signed bivariate measure if $F=F_1 - F_2$, where $F_1$ and $F_2$ are bivariate measures.
\end{definition}
\begin{definition}
	For a function $f(k_1-1,\zeta_1;k_2-1,\zeta_2)$ defined over $\Omega\times \Omega$, the random double integral is defined as follows:
	\begin{align}
	&\underset{(k_2-1,\zeta_2)\in\Omega}{\sum \int }\,\underset{(k_1-1,\zeta_1)\in\Omega}{\sum \int } f(k_1-1,\zeta_1;k_2-1,\zeta_2) d\omega(\zeta_1,k_1) d\omega(\zeta_2,k_2) \\
	&\myquad[20]= \sum_{i,j} a_i a_j f(m_i,x_i;m_j,x_j)
	\end{align}
	where $\omega$ is the point distribution $( (m_1,x_1),a_1; (m_2,x_2),a_2;\dots;(m_k,x_k),a_k )$.
\end{definition}
By definition, $M_l^{(2)}(m,x;\Apset_1;\Apset_2)$ and $M_1(m,x;\Apset_1)M_2(m,x;\Apset_2)$ are bivariate measures, and $v(m,x;\Apset_1;\Apset_2)$ is a signed bivariate measure.
Define a map $\Topt $ from the set of signed bivariate measures to itself as follows,
\begin{align}
&\Topt F(\Apset_1;\Apset_2) = \\
&\myquad[2]\underset{(k_2-1,\zeta_2)\in\Omega}{\sum \int }\,\underset{(k_1-1,\zeta_1)\in\Omega}{\sum \int } M_1(k_1-1,\zeta_1;\Apset_1)M_1(k_2-1,\zeta_2;\Apset_2) \, dF(k_1-1,\zeta_1;k_2-1,\zeta_2).
\end{align}
To derive a recurrence relation between $M_l^{(2)}$ and $M_{l+1}^{(2)}$, write
\begin{align}
M_{l+1}^{(2)}(m,x;\Apset_1;\Apset_2) = \expect[\, \expect[Z_{l+1}(\Apset_1)Z_{l+1}(\Apset_2)\,\vert\,Z_l = \omega]\,\vert\,n_{\root} = m, v_{\root} = x].
\end{align}
Conditioned on $Z_l = \omega \in\pd$, the expected value of $Z_{l+1}(\Apset_1)Z_{l+1}(\Apset_2)$ is given by the following random integrals,
\begin{align}
&\underset{(k_2-1,\zeta_2)\in\Omega}{\sum \int }\,\underset{(k_1-1,\zeta_1)\in\Omega}{\sum \int } \expect_{k_1-1,\zeta_1}[\widetilde{Z}_1(\Apset_1)]\, \expect_{k_2-1,\zeta_2}[\widetilde{Z}_1(\Apset_2)] \,dZ_l(k_1-1,\zeta_1)\,dZ_l(k_2-1,\zeta_2) \allowdisplaybreaks\\
&\myquad[4]-\underset{(k-1,\zeta)\in\Omega}{\sum \int } \expect_{k-1,\zeta}[\widetilde{Z}_1(\Apset_1)] \,\expect_{k-1,\zeta}[\widetilde{Z}_1(\Apset_2)]\,dZ_l(k-1,\zeta) \allowdisplaybreaks\\
&\myquad[4] +\underset{(k-1,\zeta)\in\Omega}{\sum \int } \expect_{k-1,\zeta}[\widetilde{Z}_1(\Apset_1)\widetilde{Z}_1(\Apset_2)]\,dZ_l(k-1,\zeta),
\end{align}
where $\widetilde{Z}_1$ is an {\em i.i.d.} copy of the point distribution $Z_1$ and $\expect_{m,x}$ is the expected value conditioned on the type of the root to be $(m,x)$. Now, taking expectation of the above random integrals with respect to the point distribution $Z_l$, we derive the following recurrence relation,
\begin{align} \label{eq:M_l2rec}
\begin{aligned}
&M_{l+1}^{(2)}(m,x;\Apset_1;\Apset_2) = \\
&\myquad[4]\Topt M_l^{(2)}(m,x;\Apset_1;\Apset_2) + \underset{(k-1,\zeta)\in\Omega}{\sum \int } v(k-1,\zeta;\Apset_1;\Apset_2) \,d M_l(m,x;k-1,\zeta).
\end{aligned}
\end{align}
Repeatedly using \cref{eq:M_l2rec} and then applying \cref{eq:vindentity}, we get the following relation
\begin{align}
\begin{aligned}
M_{l+1}^{(2)}(m,x;\Apset_1;\Apset_2) &= \Topt ^{l} M_1(m,x;\Apset_1)M_1(m,x;\Apset_2) \\
&\myquad[2]+\sum_{\hat{l}=0}^{l} \Topt ^{l-\hat{l}} \left(\underset{(k-1,\zeta)\in\Omega}{\sum \int } v(k-1,\zeta;\Apset_1;\Apset_2) \,d M_{\hat{l}}(m,x;k-1,\zeta)\right),
\end{aligned}
\label{eq:M_l2T}
\end{align}
where $\Topt^0$ is the identity map. Finally, observe that
\begin{align}\label{eq:relscrT}
\begin{aligned}
&\Topt ^l F(\Apset_1;\Apset_2) = \\
&\myquad[2]\underset{(k_1-1,\zeta_1)\in\Omega}{\sum \int } \, \underset{(k_2-1,\zeta_2)\in\Omega}{\sum \int } M_l(k_1-1,\zeta_1;\Apset_1)M_l(k_2-1,\zeta_2;\Apset_2) \,dF(k_1-1,\zeta_1;k_2-1,\zeta_2),
\end{aligned}
\end{align}
which can be proved by induction and the following equality:
\begin{align}
\begin{aligned}
&d\Topt F(k-1,\zeta;\widetilde{k}-1,\widetilde{\zeta}) = \allowdisplaybreaks\\
&\myquad[5]\underset{(k_2-1,\zeta_2)\in\Omega}{\sum \int }\,\underset{(k_1-1,\zeta_1)\in\Omega}{\sum \int } m_1(k_1-1,\zeta_1;k-1,\zeta)\,d\zeta\\
&\myquad[10] m_1(k_2-1,\zeta_2;\widetilde{k}-1,\widetilde{\zeta}) \,d\widetilde{\zeta}\,dF(k_1-1,\zeta_1;k_2-1,\zeta_2).
\end{aligned}
\end{align}
Now, we can use the analysis of the previous section to approximate $M_{l}^{(2)}(m,x;\Apset_1;\Apset_2)$ for large values of $l$. This is basically the same result as in~\cite[page 72, eqn. (13.5)]{Harris1963}.
\begin{theorem}\label{thm:ml2conv}
	With $\beta_0 > 1$ and $\mu(m,x)$ as specified in Theorem~\ref{thm:pfeigenval}, the growth rate of $M_l^{(2)}(m,x;\allowbreak\Apset_1;\Apset_2)$ equals ${\beta_0}^2$, i.e.,
	\begin{align}\label{eq:ml2conv}
	M_l^{(2)}(m,x;\Apset_1,\Apset_2)/{\beta_0}^{2l} = &U(m,x)
	\left( \underset{(k-1,z)\in\Apset_1}{\sum \int } \nu(k-1,z) \,dz\right)\!\!\left( \underset{(k-1,z)\in\Apset_2}{\sum \int } \nu(k-1,z) dz\right) \allowdisplaybreaks\\
	&\qquad + m^2 \xbar{M}O({\gamma}^{l-2}),\qquad l \geq 2, \nonumber
	\end{align}
	where the constants $\xbar{M} > 0$ and $0<{\gamma}<1$ are independent of $x$, $l$, $\Apset_1$, and $\Apset_2$. The function $U(m,x)$ is defined as follows,
	\begin{align}
	\begin{aligned}
	&U(m,x) \coloneqq \\
	&\myquad[2]\left(\mu(m,x)\right)^2 + \sum_{\hat{l}=1}^\infty {\beta_0}^{-2\hat{l}} \underset{(k-1,z)\in\Omega}{\sum \int } \Bigg(\underset{(k_1-1,z_1)\in\Omega}{\sum \int }~ \underset{(k_2-1,z_2)\in\Omega}{\sum \int } \mu(k_1-1,z_1)\mu(k_2-1,z_2)\\
	&\myquad[10]dv(k-1,z;k_1-1,z_1;k_2-1,z_2)\Bigg) \,d M_{\hat{l}-1}(m,x;k-1,z).
	\end{aligned}\label{eq:U}
	\end{align}
\end{theorem}
	\begin{remark}
		Notice that using Theorem~\ref{thm:pfeigenval}, the summand in the definition of $U(m,x)$ is $O(\beta_0^{-\hat{l}-1})$; hence, the sum is finite and $U(m,x)$ is well-defined.
	\end{remark}
\begin{proof}
	As we pointed out in the proof of Corollary~\ref{cor:growthrate}, using Theorem~\ref{thm:pfeigenval} for any $\Apset\subset \Omega$ we have,
	\begin{align}
	M_l(m,x;\Apset)/{\beta_0}^l =\underset{(k-1,z)\in\Apset}{\sum \int } \mu(m,x)\nu(k-1,z) \,dz + \frac{m}{{\beta_0}^2} \widehat{M}O({\gamma}^{l-2}).
	\end{align}
	Substituting the above equality in \cref{eq:relscrT}, after some simple algebra, we have
	\begin{align}
	&\Topt ^l F(\Apset_1;\Apset_2)/{\beta_0}^{2l} = \allowdisplaybreaks\\
	&\myquad[2]\left(\underset{(k_1-1,z_1)\in\Omega}{\sum \int } \, \underset{(k_2-1,z_2)\in\Omega}{\sum \int } \mu(k_1-1,z_1)\mu(k_2-1,z_2)\,dF(k_1-1,z_1;k_2-1,z_2) \right)\times \allowdisplaybreaks\\
	&\myquad[6]\left( \underset{(k-1,z)\in\Apset_1}{\sum \int } \nu(k-1,z) \,dz\right)\left( \underset{(k-1,z)\in\Apset_2}{\sum \int } \nu(k-1,z) \,dz\right) + C_F \,\widehat{M} O({\gamma}^{l-2}),
	\end{align}
	Now the result follows by combining \cref{eq:M_l2rec}, the above equality, and the following relation
	\begin{align}
	&\Topt ^{l-1} M_1(m,x;\Apset_1)M_1(m,x;\Apset_2)= M_{l}(m,x;\Apset_1)M_{l}(m,x;\Apset_2).
	\end{align}
	which can be proved using induction similar to \cref{eq:relscrT}.
	The constant $C_F$ depends on the choice of the function $F$. It is easy to check that for $v(m,x;\Apset_1,\Apset_2)$, we can replace $C_F \widehat{M}$ with $m^2 \xbar{M}$ for some $\xbar{M} > 0$ independent of $x$, $l$, $\Apset_1$, and $\Apset_2$ (notice that $\min(x,z)/x \leq 1$).
\end{proof}
\begin{remark}\label{rem:l+ltild}
	Fix the value of $\tilde{l} > 0$ and consider $\expect[Z_l(\Apset_1)Z_{l+\tilde{l}}(\Apset_2)\,\vert\, n_{\root} = m, v_{\root} = x]$. Using the same argument as above, the conditional expectation converges to the same value as in \cref{eq:ml2conv} with the error bounded by $m^2 \xbar{M}_{\tilde{l}}\, O({\gamma}^{l-2})$.
\end{remark}
Now, combining the above theorem and remark, we get a similar result as in~\cite[Theorem 14.1, page 72]{Harris1963}.
\begin{theorem}\label{thm:L2conv}
	Suppose that $\beta_0 > 1$. Let $\Apset \subset \Omega$ and set $W_l(\Apset) = Z_l(\Apset)/{\beta_0}^l$.
	Then, conditioned on $n_{\root} = m$ and $v_{\root} = x$, where $x\in\mathbb{R}_+$ and $m\in\mathbb{N}$, there is a random variable $W(\Apset)$ such that $W_l(\Apset)$ converges to $W(\Apset)$ in $L^2$ and almost surely.
	The first and the second moments of $W(\Apset)$ are,
	\begin{align}
	&\expect\left[W(\Apset)\,\vert\,n_{\root} = m,v_{\root} = x\right] = \frac{m}{x}f_0(x) \left(\underset{(k-1,z)\in\Apset}{\sum \int } P(k)\frac{\eexp^{-z}z^{k-1}}{(k-1)!} f_0(z)\,dz\right),\allowdisplaybreaks\\
	&\expect\left[\left(W(\Apset)\right)^2\,\vert\, n_{\root} = m,v_{\root} = x\right] = U(m,x)
	\left( \underset{(k-1,z)\in\Apset}{\sum \int } \nu(k-1,z) \,dz\right)^2,
	\end{align}
	where the function $U(x,m)$ is given by \cref{eq:U}. Furthermore, if $\Apset$ and $\Bpset$ are subsets of $\Omega$ such that $\underset{(k-1,z)\in\Apset}{\sum \int } \nu(k-1,z) \,dz > 0$, then
	\begin{align}
	W(\Bpset) = \frac{\underset{(k-1,z)\in\Bpset}{\sum \int } \nu(k-1,z) \,dz}{\underset{(k-1,z)\in\Apset}{\sum \int } \nu(k-1,z) \,dz}W(\Apset)\qquad \textit{a.s.}
	\end{align}
\end{theorem}
\begin{proof}
	Remark~\ref{rem:l+ltild} and equation \cref{eq:ml2conv} imply that $\expect[(W_l(\Apset) - W_{l+\hat{l}}(\Apset))^2 ] = m^2\allowbreak \xbar{M}_{\tilde{l}}\, O({\gamma}^{l-2})$. Hence, $\{W_l(\Apset)\}_{l}$ satisfies the Cauchy criteria and converges to $W(\Apset)$ in $L^2$. Since for any $\hat{l} > 0$
	\begin{align}
	\sum_{l=1}^\infty \expect[(W_l(\Apset) - W_{l+\hat{l}}(\Apset))^2 ] < \infty,
	\end{align}
	and $\{W_l(\Apset)\}_{l}$ converges to $W(\Apset)$ almost surely as well. Finally, the relation between $W(\Bpset)$ and $W(\Apset)$ follows by the following relation between $W_l(\Apset)$ and $W_l(\Bpset)$:
	\begin{align}
	\expect\left[\,\left(W_l(\Bpset) - \frac{\underset{(k-1,z)\in\Bpset}{\sum \int } \nu(k-1,z) \,dz}{\underset{(k-1,z)\in\Apset}{\sum \int } \nu(k-1,z) \,dz}W_l(\Apset)\right)^2\,\right] = m^2 \xbar{M}\, O({\gamma}^{l-2}),
	\end{align}
	which follows by Theorem~\ref{thm:ml2conv}. Notice that $\expect\left[W_l(\Bpset) W_l(\Apset)\right] = \expect[M_l^{(2)}(n_{\root},v_{\root} ;\Bpset,\Apset)/{\beta_0}^{2l}]$, $\expect\left[W_l(\Apset)^2\right] = \expect[M_l^{(2)}(n_{\root},v_{\root} ;\Apset,\Apset)/{\beta_0}^{2l}]$, and  $\expect\left[W_l(\Bpset)^2\right] = \expect[M_l^{(2)}(n_{\root},v_{\root} ;\Bpset,\Bpset)/{\beta_0}^{2l}]$.
\end{proof}
An immediate corollary of the above theorem and Corollary~\ref{cor:EZ_l} is the following, which connects the growth rate and the probability of extinction.
\begin{corollary}\label{cor:phasetransincomp}
	If $\beta_0 > 1$, then the probability of extinction is less than $1$. If $\beta_0 < 1$, then the probability of extinction equals $1$.
\end{corollary}
\begin{proof}
	By Theorem~\ref{thm:L2conv} if $\beta_0 > 1$, then $W(\Apset)$ is positive with non-zero probability. Hence, the probability of extinction is less than $1$. The second part follows by Markov inequality and Corollary~\ref{cor:EZ_l}.
\end{proof}
\subsection{Transience of $Z_l$}\label{sec:PhTr}
To analyze the case of $\beta_0 = 1$ and to show that $Z_l \sim {\beta_0}^l\, W$ we need to show transience of $Z_n$, i.e., $Z_n$ either goes to zero or infinity. Consider the generalized Markov Chain introduced in Section~\ref{sec:back_Branch}. Recall that $Z_l(\Apset)$ is the number of vertices $(k-1,\zeta)\in\Apset$, and $Z_l(\Omega)$ is the total number of vertices in generation $l$. Notice that for any $\kappa\in\mathbb{N}$, the probability of extinction after $\kappa$ steps conditioned on $n_\root = m$, $v_\root = x$ can be arbitrary small when $m$ is large. As a result, the same proof technique as in~\cite[Theorem 11.2, page 69]{Harris1963} does not work in our problem setting. To show the transience of $Z_l$, more work needs to be done. The following lemma establishes the transience of $Z_l$. We follow the notation introduced in Section~\ref{sec:back_Branch}.
\begin{lemma}\label{lem:transZ}
	For all $k\geq 1$ and for all $\omega \in \pd_{\Omega}$ we have,
	\begin{align}
	\prob(\{0 < Z_l(\Omega) \leq k, \text{ infinitely often}\} ) = 0.
	\end{align}
\end{lemma}
\begin{proof}
	Define $\pd_{\Omega_0}$ to be the set of non-null point distributions with at most $k$ vertices, $$\pd_{\Omega_0} \coloneqq \{\omega\in \pd_{\Omega} \,\vert\,0<\omega(\Omega)\leq k \}.$$
	Let $\pd_{\Omega_0,m}$ be the set of point distributions $\omega\! =\! ( (m_1,x_1),a_1; (m_2,x_2),a_2;\dots;(m_{\widetilde{k}},x_{\widetilde{k}}),a_{\widetilde{k}} ) \in \pd_{\Omega_0}$ such that $m_i \leq m$ for all $i$.
	Recall that $\emptyset$ denotes the null point distribution.

	\textit{Step 1:} Using the same argument as in~\cite[Theorem 11.2, page 69]{Harris1963}, we show that $\prob(Z_l\in\pd_{\Omega_0,m}) = 0$. Define $R_m(\omega)$ for $\omega \in \pd_{\Omega}$ as follows:
	\begin{align}
	R_m(\omega) = \prob(\{ Z_l \in \pd_{\Omega_0,m}, \text{ infinitely often}\} \,\,\vert\, Z_0 = \omega ).
	\end{align}
	For $\pd \subset \pd_{\Omega_0,m}$ let $Q_{m,2}(\omega,\pd)$ be the conditional probability that, conditioned on $Z_0 = \omega$, at least one of the point distributions $Z_2$, $Z_3$, $\cdots$ are in $\pd_{\Omega_0,m}$ and if $Z_l$ is the first one, then $Z_l\in\pd$. Then
	\begin{align}
	R_m(\omega) = \int_{\pd_{\Omega_0,m}} R_m(\omega^\prime) dQ_{m,2}(\omega,\omega^\prime).
	\end{align}
	Let $\xbar{R}_m\coloneqq \sup_{\omega \in \pd_{\Omega_0,m}} R_m(\omega)$. We have
	\begin{align}
	R_m(\omega) \leq \xbar{R}_m \int_{\pd_{\Omega_0,m}} dQ_{m,2}(\omega,\omega^\prime) = \xbar{R}_m\, Q_{m,2}(\omega,\pd_{\Omega_0,m}). \label{eq:R_mstep1}
	\end{align}
	In the proof of Theorem~\ref{thm:probext}, we show that, if $Z_0 = (m,x)$, the probability of extinction after 2 generations is given by $\left(T^2(\boldsymbol{0})(x)\right)^{m}$. Recall that $T^2(\boldsymbol{0})(\cdot)$ is a decreasing and strictly positive function. Hence,
	\begin{align}
	Q_{m,2}(\omega,\pd_{\Omega_0,m}) \leq 1 - \prod_{i=1}^{\widetilde{k}}\left(T^2(\boldsymbol{0})(x_i)\right)^{a_im_i} \leq 1 - \left(T^2(\boldsymbol{0})(0)\right)^{mk} < 1 - \epsilon.\label{eq:Qm2bound}
	\end{align}
	where $\omega = ((x_1,m_1),a_1;(x_2,m_2),a_2;\cdots;(x_{\widetilde{k}},m_{\widetilde{k}}),a_{\widetilde{k}})$ and $\epsilon > 0$ is a constant which depends only on $m$ and $k$. Contradiction follows by taking supremum from both sides of \cref{eq:R_mstep1}.
	\begin{remark}
		In \textit{Step 1}, we proved that the probability of the event $\{ Z_l \in \pd_{\Omega_0,m}, \text{ infinitely }\allowbreak\text{often}\}$ is zero. However (as $\pd_{\Omega_0,m} = \cup _m \pd_{\Omega_0,m}$) this approach cannot rule out the possibility of the event $\{ Z_l \in \pd_{\Omega_0}, \text{ infinitely }\allowbreak\text{often}\}$. As an example, there might be a sequence $\{l_i\}_{i=1}^{\infty}$ such that $Z_{l_i} \in \pd_{\Omega_0,m_{l_i}}$ where $m_{l_1}< m_{l_2}<m_{l_3}<\cdots$. In \textit{Step 2}, we will prove that such sequences are unlikely.
	\end{remark}
	\textit{Step 2}: For the sake of notational simplicity, we prove the result for $k=1$, and then discuss the general case. Fix the value of $m$. Notice that by the first step, the probability of hitting $\pd_{\Omega_0,m}$ infinitely often is zero. Hence, we need to show that the probability of hitting $\widetilde{\pd}_{\Omega_0,m}\coloneqq \pd_{\Omega_0} \setminus \pd_{\Omega_0,m}$ infinitely often is zero to complete the proof.
	\begin{remark}\label{rem:altertrankernal}
		By conditional independence, the transition kernel of the generalized Markov chain from the point distribution $\omega = ( (m_1,x_1),a_1; (m_2,x_2),a_2;\dots;(m_{\widetilde{k}},x_{\widetilde{k}}),a_{\widetilde{k}} ) \in \pd_{\Omega_0}$ is exactly the same as the transition kernel from the point distribution $\omega = ( (1,x_1),m_1\times a_1; (1,x_2),m_2\times a_2;\dots;(1,x_{\widetilde{k}}),m_{\widetilde{k}}\times a_{\widetilde{k}} )$ which may or may not be in $\pd_{\Omega_0}$.
	\end{remark}
	Assume $k = 1$ and let $\kappa \in \mathbb{N}$. Define $\widetilde{Q}_{m,\kappa}(\omega,\pd)$ and $\widetilde{R}_m(\omega)$ similar to $Q_{m,2}(\omega,\pd)$ and $R_m(\omega)$ by considering the set $\widetilde{\pd}_{\Omega_0,m}$ instead of $\pd_{\Omega_0,m}$. Specifically, let $\widetilde{Q}_{m,\kappa}(\omega,\pd)$ be the conditional probability that, conditioned on $Z_0 = \omega$, at least one of the point distributions $Z_\kappa, Z_{\kappa+1},Z_{\kappa+2},\cdots$ are in $\widetilde{\pd}_{\Omega_0,m}$ and if $Z_l$ is the first one, then $Z_l \in \pd$. Similarly, define $\widetilde{R}_m(\omega)$ for $\omega \in \widetilde{\pd}_{\Omega}$ as follows:
	\begin{align}
		\widetilde{R}_m(\omega) = \prob(\{ Z_l \in \widetilde{\pd}_{\Omega_0,m}, \text{ infinitely often}\} \,\,\vert\, Z_0 = \omega ).
	\end{align}
	Assume $Z_0 = \omega = (m_1,x_1)$, where $m_1 \geq m$. Notice that the first time $Z_l \in \widetilde{\pd}_{\Omega_0,m}$ for some $l>0$, $m_1-1$ out of $m_1$ branches of $Z_0$ go extinct. Hence, by Remark~\ref{rem:altertrankernal}
	\begin{align}\label{eq:Zntransk=1}
	\widetilde{R}_m(\omega) = m_1 \left(q(x_1)\right)^{m_1 -1 } \widetilde{R}_m(\widehat{\omega})
	\end{align}
	where $q(\cdot)$ is the smallest fixed point of the operator $T$ defined in Theorem~\ref{thm:probext}, and $\widehat{\omega} = (1,x_1)$ is a point distribution with only one point of type $(1,x_1)$. Next, using the same argument as in \textit{step $1$}, we have
	\begin{align}
	\widetilde{R}_m(\widehat{\omega}) \leq \xbar{\widetilde{R}}_m \, \widetilde{Q}_{m,\kappa}(\widehat{\omega},\widetilde{\pd}_{\Omega_0,m}) \leq \xbar{\widetilde{R}}_m (1 - T^\kappa(\boldsymbol{0})(x_1)),
	\end{align}
	where $\xbar{\widetilde{R}}_m\coloneqq \sup_{\omega \in \pd_{\Omega_0,m}} \widetilde{R}_m(\omega)$. Notice that $\widetilde{R}_m(\widehat{\omega})$ does not depend on $\kappa$. Now, if we take $\kappa$ to infinity, we have
	\begin{align} \label{eq:Zntransk=1_tmp}
	\widetilde{R}_m(\widehat{\omega}) \leq \xbar{\widetilde{R}}_m \, (1-q(x_1)).
	\end{align}
	Combining \cref{eq:Zntransk=1} and \cref{eq:Zntransk=1_tmp}, and taking supremum with respect to $\omega$, the result follows by the fact that $ m_1 \left(q(x_1)\right)^{m_1 -1 }(1-q(x_1)) = Bi(1\,;m_1,1-q(x_1)) \leq 0.5$ for $m_1\geq m >1$.

	Now consider the case $k = 2$ and pick $\omega = ( (m_1,x_1),a_1; (m_2,x_2),a_2)$, where $a_1 + a_2 \leq 2$. Assume $\widetilde{R}_m(\omega) > 0$. Notice that the point distribution $\omega$ has $m_1 a_1 + m_2 a_2$ potential branches dangling from it. Conditioned on $\{ Z_l \in \widetilde{\pd}_{\Omega_0,m}, \text{ infinitely often}\} \cap \{Z_0 =\omega\}$, if two of these potential branches survive, do not go extinct at all, then by Remark~\ref{rem:altertrankernal} we have for some $i$
	\begin{align}
	 \prob(\{ Z_l(\Omega) = 1, \text{ infinitely often}\} \,\,\vert\, Z_0 = (1,x_i) ) > 0,
	\end{align}
	which is a contradiction. That is to say, conditioned on $\{ Z_l \in \widetilde{\pd}_{\Omega_0,m}, \text{ infinitely often}\}\cap \{Z_0 =\omega\}$, if two out of $m_1 a_1 + m_2 a_2$ potential branches survives with positive probability then each one will hit point distributions with only one vertex infinitely often.

	 Hence, only one of these branches can survive. Following a similar logic as before, we have
	\begin{align}\label{eq:Zntransk=2}
	\widetilde{R}_m(\omega) = m_1 \left(q(x_1)\right)^{a_1m_1 -1 } \left(q(x_2)\right)^{a_2 m_2} \widetilde{R}_m(\widehat{\omega}_1) + m_2 \left(q(x_1)\right)^{a_1 m_1 } \left(q(x_2)\right)^{a_2 m_2 - 1} \widetilde{R}_m(\widehat{\omega}_2)
	\end{align}
	where $\widehat{\omega}_1 = (1,x_2)$ and $\widehat{\omega}_2 = (1,x_2)$. The result follows by same argument using \cref{eq:Zntransk=1_tmp}.
	The exact same argument holds for any $k>2$ as well, and we get similar relation as \cref{eq:Zntransk=2}.
\end{proof}
The above lemma, together with Corollary~\ref{cor:phasetransincomp} and Corollary~\ref{cor:EZ_l}, have an important implication that completes the connection between the probability of extinction and the growth rate.
\begin{corollary}\label{cor:beta0_probextinc}
	If $\beta_0 > 1$ then the probability of extinction is less than $1$. If $\beta_0 \leq 1$, then the probability of extinction equals $1$.
\end{corollary}

\subsection{Probability of Extinction Revisited}\label{sec:PropExtRev}
To show that growth rate of $Z_n$ is $\beta_0$ when $\beta_0 > 1$, i.e., $Z_n \sim {\beta_0}^n\, W$, we need to show that $\prob(W = 0 \,\vert\, Z_n \to \infty) =0$. As Harris points out in~\cite[Remark 1, page 28]{Harris1963}, if there is a positive probability that $Z_n \to \infty$ at a rate less than $\beta_0$, then $\prob(W = 0\,\,\vert\,Z_n \to \infty) > 0$. To rule out such a scenario, we need to show that $\prob(W = 0\,\vert\,n_{\root} = m, v_{\root} = x) = \prob(\{\text{extinction}\}\,\vert\,n_{\root} = m, v_{\root} = x) = \left(q(x)\right)^m$, where $q(\cdot)$ is given by Theorem~\ref{thm:probext}. In fact, it is easy to see that $\prob(W = 0\,\vert\,n_{\root} = 1, v_{\root} = x)$ is a fixed point of the operator $T$. However, to complete the argument, we need to show that $T(\cdot)$ does not have any fixed point other than $q(\cdot)$ and $\boldsymbol{1}(\cdot)$.

Using the point process perspective, we can rewrite the operator $T$ as follows:
\begin{align}
T(f)(x) = \int\limits_{ \omega = (m_1,x_1)\in \pd_{\Omega} \text{ or } \omega = \emptyset} \left(f(x_1)\right)^{m_1} dP_{{\omega}_0}^{(1)}(\omega)
\end{align}
where $\omega_0 = (1,x)$ is the type of the root vertex and $P_{{\omega}_0}^{(1)}$ is the one step transition probability defined in Section~\ref{sec:back_Branch}. For ease of representation, we define
\begin{align}\label{eq:omegaemptydef}
\int\limits_{ \omega = \emptyset} \left(f(x_1)\right)^{m_1} dP_{{\omega}_0}^{(1)}(\omega) \coloneqq \int\limits_{ \omega = \emptyset} dP_{{\omega}_0}^{(1)}(\omega) = \prob(Z_1(\Omega) = 0\,\vert\,Z_0 = (1,x)).
\end{align}
Inductively, using the same argument as in Remark~\ref{rem:altertrankernal} we have
\begin{align}\label{eq:Trewrite}
T^l(f)(x) = \int\limits_{ \omega = ((m_1,x_1),a_1;\cdots;(m_k,x_k),a_k)\in \pd_{\Omega}, \,k\in\mathbb{Z}_+} \left(f(x_1)\right)^{m_1 a_1}\cdots \left(f(x_k)\right)^{m_k a_k} dP_{{\omega}_0}^{(l)}(\omega),
\end{align}
where by $k=0$ we mean $\omega = \emptyset$ which follows the same definition as in \cref{eq:omegaemptydef}.
The above equality, combined with an appropriate test function, becomes a powerful tool to study the properties of the operator $T$ and the branching process in general. Recall that $q(\cdot)$ is the smallest fixed point of the operator $T$ (Theorem~\ref{thm:probext}).
\begin{lemma}\label{lem:uniq}
	If $\beta > 1$, then the operator $T$ has two fixed points, one of which is $q(\cdot)$, and the other one is $\boldsymbol{1}(\cdot)$. Moreover, for any function $f\in L(\mathbb{R}_+;[0,1])$ such that the Lebesgue measure of the set $\{x\in\mathbb{R}_+: f(x) < 1\}$ is positive, for all $x\in\mathbb{R}_+$ we have $T^l(f)(x) \to q(x)$.
\end{lemma}
\begin{proof}
	Consider the function $\xbar{f}_{x_0,\epsilon}(\cdot)$ defined as follows
	\begin{align}
	\xbar{f}_{x_0,\epsilon}(x) \coloneqq \begin{cases}
	\epsilon & \text{if } x\leq x_0 \\
	1 & \text{otherwise}
	\end{cases}.
	\end{align}
	The goal is to show that for every large enough $x_0$, there is an $\epsilon > 0$ such that for all $x \in \mathbb{R}_+$, $T\left(\xbar{f}_{x_0,\epsilon}\right)(x) \leq q(x)$, where $q(\cdot)$ is the smallest fixed point of the operator $T$. One important implication of this inequality is:
	\begin{align}\label{eq:Tfbar}
	\forall x\in\mathbb{R}_+, \qquad\lim_{l\to\infty} T^l\left(\xbar{f}_{x_0,\epsilon}\right)(x) = q(x).
	\end{align}
	Notice that for $x>0$,
	\begin{align}
	q(x) - T\left(\xbar{f}_{x_0,\epsilon}\right)(x)& = \frac{1}{x} \Bigg(\sum_{k=1}^{\infty} P(k) \int_{z=0}^{x_0} \frac{\eexp^{-z}z^{k-1}}{(k-1)!}\min(x,z)(\left(q(z)\right)^{k-1} - \epsilon^{k-1})\,dz\allowdisplaybreaks\\
	& \qquad\qquad-\sum_{k=1}^{\infty} P(k)\int_{z=x_0}^{\infty} \frac{\eexp^{-z}z^{k-1}}{(k-1)!}\min(x,z)(1 - \left(q(z)\right)^{k-1} )\,dz \Bigg) \allowdisplaybreaks\\
	&\geq \frac{1}{x}\Bigg(\sum_{k=1}^{\infty} P(k) (\left(q(0)\right)^{k-1} - \epsilon^{k-1})\int_{z=0}^{x_0} \frac{\eexp^{-z}z^{k-1}}{(k-1)!}\min(x,z)\,dz\allowdisplaybreaks\\
	& \qquad\qquad-\sum_{k=1}^{\infty} P(k)\int_{z=x_0}^{\infty} \frac{\eexp^{-z}z^{k-1}}{(k-1)!}\min(x,z)\,dz \Bigg)
	\end{align}
	By choosing $x_0$ to be large enough, we can make the second term in the parenthesis arbitrarily small. Fixing $x_0$, we can choose $\epsilon > 0$ to be small enough such that for all $x\in\mathbb{R}_+$ the inequality $q(x) - T\left(\xbar{f}_{x_0,\epsilon}\right)(x) > 0$ holds. Notice that $q(0) > 0$ and $q(x)$ is a strictly increasing function.

	Given \cref{eq:Tfbar}, we next use the alternative representation of $T^l\left(\xbar{f}_{x_0,\epsilon}\right)$ as in \cref{eq:Trewrite} to prove the lemma. Define the set $\pd_{M}$ as
	\begin{align}
		\pd_M \coloneqq \left\{w = ((m_1,x_1),a_1;(m_2,x_2),a_2;\cdots;(m_k,x_k),a_k),\, k\in\mathbb{Z}_+ \,\Bigg\vert\, \sum_{i \text{ s.t. } x_i \leq x_0} m_i a_i < M \right\}.
	\end{align}
	Now, we have
	\begin{align}
	T^l\left(\xbar{f}_{x_0,\epsilon}\right)(x)&= \prob(Z_l = 0 \,\vert\, Z_0 = (1,x) ) \\
	&\qquad + \int\limits_{\omega\in\pd_M} {\epsilon}^{\sum m_i a_i} dP_{{\omega}_0}^{(l)}(\omega_1)\allowdisplaybreaks\\
	&\qquad + \int\limits_{\omega\in \pd_{\Omega} \setminus \pd_M} \left(f(x_1)\right)^{m_1 a_1} \left(f(x_2)\right)^{m_2 a_2} \cdots \left(f(x_k)\right)^{m_k a_k}dP_{{\omega}_0}^{(l)}(\omega_1).
	\end{align}
	Notice that,
	\begin{align}
	\int\limits_{\omega\in\pd_M} {\epsilon}^{\sum m_i a_i} dP_{{\omega}_0}^{(l)}(\omega_1) \geq \epsilon^M \prob(Z_l \in \pd_M\,\vert\,Z_0 = (1,x))
	\end{align}
	However, by \cref{eq:Tfbar}, the left-hand side of the above inequality goes to $0$ as $l$ goes to infinity. Hence, $$\prob(Z_l \in \pd_M\,\vert\,Z_0 = (1,x)) \to 0\text{ as }l\to \infty.$$

	For the sake of contradiction, assume that $T$ has another fixed point $\widetilde{q}(\cdot)$. By Lemma~\ref{lem:propt}, we already know that $\forall x\in\mathbb{R}_+,\, q(x) < \widetilde{q}(x) < 1$ and that $\widetilde{q}(\cdot)$ is strictly increasing. Notice that,
	\begin{align}
	\widetilde{q}(x) = T^l(\widetilde{q})(x) &= \prob(Z_l = 0 \,\vert\, Z_0 = (1,x) ) \\
	&\qquad + \int\limits_{\omega\in\pd_M} \left(\widetilde{q}(x_1)\right)^{m_1 a_1} \left(\widetilde{q}(x_2)\right)^{m_2 a_2} \cdots \left(\widetilde{q}(x_k)\right)^{m_k a_k} dP_{{\omega}_0}^{(l)}(\omega_1)\allowdisplaybreaks\\
	&\qquad + \int\limits_{\omega\in \pd_{\Omega} \setminus \pd_M} \left(\widetilde{q}(x_1)\right)^{m_1 a_1} \left(\widetilde{q}(x_2)\right)^{m_2 a_2} \cdots \left(\widetilde{q}(x_k)\right)^{m_k a_k}dP_{{\omega}_0}^{(l)}(\omega_1).
	\end{align}
	As $l$ goes to infinity, the first term converges to $q(x)$. Using the analysis of $\xbar{f}_{x_0,\epsilon}$, the second term goes to 0 since
	\begin{align}
	\int\limits_{\omega\in\pd_M} \left(\widetilde{q}(x_1)\right)^{m_1 a_1} \left(\widetilde{q}(x_2)\right)^{m_2 a_2} \cdots \left(\widetilde{q}(x_k)\right)^{m_k a_k} dP_{{\omega}_0}^{(l)}(\omega_1) \leq \prob(Z_l \in \pd_M\,\vert\,Z_0 = (1,x)) \xrightarrow{l\to \infty}0.
	\end{align}
	Finally, we can bound the third term as follows,
	\begin{align}
	&\int\limits_{\omega\in \pd_{\Omega} \setminus \pd_M} \left(\widetilde{q}(x_1)\right)^{m_1 a_1} \left(\widetilde{q}(x_2)\right)^{m_2 a_2} \cdots \left(\widetilde{q}(x_k)\right)^{m_k a_k}dP_{{\omega}_0}^{(l)}(\omega_1) \\
	&\myquad[8]\leq \left(\widetilde{q}(x_0)\right)^{M}\prob(Z_l \in \pd_M\,\vert\,Z_0 = (1,x)) \leq \left(\widetilde{q}(x_0)\right)^{M}
	\end{align}
	since $\widetilde{q}(\cdot)$ is non-decreasing. Combining these inequalities, we have,
	\begin{align}
	\widetilde{q}(x) = \lim_{l\to\infty}T^l(\widetilde{q})(x) \leq q(x) + \left(\widetilde{q}(x_0)\right)^{M}
	\end{align}
	The result follows by letting $M$ increase to infinity.

	Finally, if $f\in L(\mathbb{R}_+;[0,1])$ such that the Lebesgue measure of the set $\{x\in\mathbb{R}_+: f(x) < 1\}$ is positive, then by same analysis and the fact that $\forall x\in\mathbb{R}_+,\, T(f)(x) < 1$, we have
	\begin{align}
	\forall x\in\mathbb{R}_+,\qquad\lim_{l\to\infty}T^l(f)(x) = q(x).
	\end{align}
 This completes the proof.
\end{proof}
As we pointed out in Section~\ref{sec:SecMom}, one implication of the above lemma is $Z_n\sim {\beta_0}^n W$.
\begin{theorem}\label{thm:growthrate_aux}
	If $\beta_0 > 1$, then the growth rate of $Z_n$ is $\beta_0$, i.e., $\prob(W=0\,\vert\, Z_n\to\infty) = 0$. Moreover, conditioned on $Z_n\to\infty$, the proportions of different types converges to a constant.
\end{theorem}
\begin{proof}
	Let $f(x) = \prob(W = 0\,\vert\,Z_0 = (1,x))$. Notice that,
	\begin{align}
		\prob(W = 0\,\vert\,Z_0 = (m,x)) = \left(f(x)\right)^m,
	\end{align}
	and
	\begin{align}
	\prob(W = 0\,\vert\,Z_0 = (1,x)) = \int\limits_{ \omega = (m_1,x_1)\in \pd_{\Omega}\text{ or } \omega = \emptyset} \left(f(x_1)\right)^{m_1} dP_{{\omega}_0}^{(1)}(\omega).
	\end{align}
	Hence, $f(x)$ is a fixed point of the operator $T$. On the other hand, by Theorem~\ref{thm:L2conv}, for all $x\in\mathbb{R}_+$, we have $\prob(W = 0\,\vert\,Z_0 = (1,x)) < 1$. Hence, by Lemma~\ref{lem:uniq}, $f(\cdot) = q(\cdot)$, and in particular $\prob(W = 0) = \prob(\{\text{extinction}\})$. Now, the result follows by the law of total probability. The second part is a simple corollary of the first part and Theorem~\ref{thm:L2conv}.
\end{proof}

\subsection{Asymptotic Degree Distribution for $\beta_0 > 1$}\label{sec:DegreeDistRev}
Let $D_l$ denote the number of descendants of a randomly selected vertex in generation $l$ given the number of vertices in generation $l$ is positive. In terms of expectation, for any $d \in\mathbb{Z}_+$, we have
\begin{align}
	\prob(D_l = d\,\vert\,Z_l(\Omega) > 0) &= \expect\left[ \frac{\sum_{\boldsymbol{i} = (i_1,i_2,\cdots,i_l) \in \mathbb{N}^l}\onefunc\left( \{\boldsymbol{i} \in \rtree\} \cap \{D_{\boldsymbol{i}} = d\}\right)}{Z_l(\Omega)}\,\bigg\vert\,Z_l(\Omega) > 0\right]\\
	&=\sum_{\boldsymbol{i} = (i_1,i_2,\cdots,i_l) \in \mathbb{N}^l} \expect\left[ \frac{\onefunc\left( \{\boldsymbol{i} \in \rtree\} \cap \{D_{\boldsymbol{i}} = d\}\right)}{Z_l(\Omega)}\,\bigg\vert\,Z_l(\Omega) > 0\right]
\end{align}
where $\rtree$ is the EWT as defined in Section~\ref{sec:intro}, and $Z_l(\Omega)$ is the total number of vertices in generation $l$ of $\rtree$, following the notation of Section \ref{sec:SecMom}. Notice that conditioned on the type of vertex $\boldsymbol{i}\in\mathbb{N}^l$, the event $\{D_{\boldsymbol{i}} = d\}$ is independent of the event $\{\boldsymbol{i} \in \rtree\}$ and the value of $Z_l(\Omega)$. Hence, for any $\boldsymbol{i}\in\mathbb{N}^l$, we have
\begin{align}
	&\expect\left[ \frac{\onefunc\left( \{\boldsymbol{i} \in \rtree\} \cap \{D_{\boldsymbol{i}} = d\}\right)}{Z_l(\Omega)} \,\bigg\vert\,Z_l(\Omega) > 0\right] \\
    &\myquad[8]= \expect\left[ \expect\left[\frac{\onefunc\left( \{\boldsymbol{i} \in \rtree\} \cap \{D_{\boldsymbol{i}} = d\}\right)}{Z_l(\Omega)}\,\vert\,n_{\boldsymbol{i}}, v_{\boldsymbol{i}} \right] \,\bigg\vert\,Z_l(\Omega) > 0\right] \\
	&\myquad[8]= \expect\left[ \frac{\onefunc\left( \{\boldsymbol{i} \in \rtree\}\right)}{Z_l(\Omega)}\times Bi\left(d;n_{\boldsymbol{i}},\int_{0}^{v_{\boldsymbol{i}}} \frac{1}{v_i}\sum_{k'=1}^{\infty}P(k')\bar{F}_{k'}(y)\,dy\right) \,\bigg\vert\,Z_l(\Omega) > 0\right]
\end{align}
where the last equality follows by Theorem~\ref{thm:ConDegree}. Fix $(k-1,z) \in \Omega$ and let $\delta > 0$ to be small enough. Suppose that $\beta_0 > 1$. By the above equality and simple algebra, we have
\begin{align}
	&\expect\left[ \frac{\sum_{\boldsymbol{i} = (i_1,i_2,\cdots,i_l) \in \mathbb{N}^l} \onefunc\left( \{\boldsymbol{i} \in \rtree\} \cap \{D_{\boldsymbol{i}} = d\} \cap \{n_{\boldsymbol{i}} = k-1, v_{\boldsymbol{i}}\in (z,z+\delta] \} \right) }{Z_l(\Omega)} \,\bigg\vert\,Z_l(\Omega) > 0\right] \allowdisplaybreaks\\
	&\myquad[4] = \expect\left[ \frac{ \sum_{\boldsymbol{i} = (i_1,i_2,\cdots,i_l) \in \mathbb{N}^l} \onefunc\left( \{\boldsymbol{i} \in \rtree\} \cap \{n_{\boldsymbol{i}} = k-1, v_{\boldsymbol{i}}\in (z,z+\delta] \} \right)}{Z_l(\Omega)} \,\bigg\vert\,Z_l(\Omega) > 0\right]\times \\
	&\myquad[20]\left(Bi\left(d;k-1,\int_{0}^{z} \frac{1}{z}\sum_{k'=1}^{\infty}P(k')\bar{F}_{k'}(y)\,dy\right) + O(\delta)\right)\allowdisplaybreaks\\
	&\myquad[4] = \expect\left[ \frac{Z_l\left((k-1,(z,z+\delta])\right)}{Z_l(\Omega)} \,\bigg\vert\,Z_l(\Omega) > 0 \right]\times \allowdisplaybreaks\\
    &\myquad[10]\left(Bi\left(d;k-1,\int_{0}^{z} \frac{1}{z}\sum_{k'=1}^{\infty}P(k')\bar{F}_{k'}(y)\,dy\right) + O(\delta)\right)\allowdisplaybreaks\\
	&\myquad[4] \xrightarrow{\text{as $l\to\infty$}} \frac{\int_z^{z+\delta} \nu(k-1,z') \,dz'}{\underset{(k'-1,z')\in\Omega}{\sum \int } \nu(k'-1,z') \,dz'}\times \left(Bi\left(d;k-1,\int_{0}^{z} \frac{1}{z}\sum_{k'=1}^{\infty}P(k')\bar{F}_{k'}(y)\,dy\right) + O(\delta)\right)
\end{align}
where the convergence follows by Theorem~\ref{thm:L2conv} and~\ref{thm:growthrate_aux}. Using the above analysis and following simple algebraic manipulation, we get the following characterization of the asymptotic degree distribution.
\begin{theorem}\label{thm:asymdegdist}
	Suppose that $\beta_0 > 1$. Let $D_l$ denote the number of descendants of a randomly selected vertex in generation $l$ given the number of vertices in generation $l$ is positive. For any $d\in\mathbb{Z}_+$, we have
	\begin{align}
		\lim_{l \to\infty} \prob(D_l = d \,\vert\, Z_l(\Omega) > 0 ) = \dfrac{\underset{(k-1,z)\in\Omega}{\sum \int } \nu(k-1,z) \times Bi\left(d;k-1,\int_{0}^{z} \frac{1}{z}\sum_{k'=1}^{\infty}P(k')\bar{F}_{k'}(y)\,dy\right)\,dz}{\underset{(k-1,z)\in\Omega}{\sum \int } \nu(k-1,z) \,dz}.
	\end{align}
\end{theorem}

\section*{Acknowledgements}
We are very grateful to Charles Bordenave, Remco van Der Hofstad, Richard La, and Marc Lelarge for helpful conversations. Mehrdad Moharrami acknowledges support from AST-1516075, CNS-1616575, CNS-1739517 and Rackham Graduate Predoctoral Fellowship. The majority of the work was done while the first author was at the University of Michigan.
Vijay Subramanian acknowledges support from NSF via grants AST-1343381, AST-1516075, IIS-1538827, ECCS-1608361, EECS 2038416, CNS 1955777 and CCF 2008130.
Rajesh Sundaresan acknowledges support from the Cisco-IISc Centre for Networked Intelligence, Indian Institute of Science.
Mingyan Liu acknowledges support from NSF via grants CNS-1616575, CNS-1739517, CNS-1939006, and ARO W911NF1810208.
\bibliographystyle{plain}
\bibliography{references}

\appendix
\section*{Appendix}
\section{Background Material}\label{sec:background}
\subsection{Random Graphs and Local Weak Convergence}\label{sec:back_randomgrpah}
We start with a few graph terminologies that are used in the chapter. Let $G=(V,E)$ denote an undirected graph, where $V$ is the set of vertices (finite or countably infinite), and $E$ is the set of edges. A rooted graph $\G=(V,E,\root)$ is a graph with a distinguished vertex $\root\in V$. Vertices $v_1,v_2\in V$ are said to be neighbors, if $\{v_1,v_2\}\in E$. The degree of a vertex $v\in V$, denoted by $d_v$, is the number of its neighbors. A graph $G$ is said to be locally-finite if the degree of each vertex is finite. A path $p$ of length $n-1$ is an ordered sequence of vertices $(v_1,v_2,\dots,v_n)$ where $\{v_i,v_{i+1}\}\in E,~\forall i<n$. A graph $G$ is said to be connected if there is a path between every pair of vertices.

Two graphs $G=(V,E)$ and $G^\prime=(V^\prime,E^\prime)$ are said to be isomorphic if there is a bijection $\sigma$ from $V$ to $V^\prime$ such that $\{v_1,v_2\}\in E$ if and only if $\sigma(\{v_1,v_2\})\coloneqq\{\sigma(v_1),\sigma(v_2)\}\in E^\prime$. The function $\sigma$ is called an isomorphism from $G$ to $G^\prime$. A rooted-isomorphism between two rooted graphs is an isomorphism that maps the root vertices to each other.

A network $N=(V,E,w_v,w_e)$ is a graph $(V,E)$ with mark functions $w_v:V\to \Omega_1$ and $w_e:E\to \Omega_2$, where $\Omega_1$ and $\Omega_2$ are the mark spaces. A rooted network is a network with a distinguished vertex as the root vertex. In this chapter, the mark spaces are assumed to be $\Omega_1 = \mathbb{N}\times \mathbb{R}_+$ and $\Omega_2 = \mathbb{R}_+$, which are complete separable metric spaces equipped with the following metrics,
\begin{align}
	&\forall m,n\in \mathbb{N},~\forall x,y\in \mathbb{R}_+,&&d_{\Omega_1}\left((m,x),(n,y)\right) = \sqrt{(m-n)^2+(x-y)^2},\\
	&\forall x,y\in \mathbb{R}_+,&&d_{\Omega_2}\left(x,y\right) = |x-y|.
\end{align}

Two networks $N$ and $N^\prime$ are said to be isomorphic if there is a bijection map from $V$ to $V^\prime$ that preserve the edges as well as the marks. A rooted-isomorphism between two rooted networks $\N$ and $\N^\prime$ is an isomorphism that maps the root of one network to the other. For a rooted network $\N=(V,E,\root,w_v,w_e)$, $[\N]$ denotes the class of rooted networks that are isomorphic to $\N$. Let $G_*(\Omega_1,\Omega_2)$ denote the set of all isomorphism classes $[\N]$, where $\N$ ranges over all connected locally-finite rooted networks with mark spaces $\Omega_1$ and $\Omega_2$. For notational simplicity, we use $G_*$ instead of $G_*(\Omega_1,\Omega_2)$.

There is a natural way to define a metric on $G_*$. Consider a connected rooted network $\N=(V,E,\root,w_v,w_e)$\footnote{Strictly speaking, $\N$ is a member of the equivalence class $[\N]$.} and the corresponding rooted graph $\G = (V,E,\root)$. The depth of a vertex $v\in V$ is defined to be the infimum length of the paths from $v$ to the root vertex. Let $(\G)_t = (V_t,E_t,\root)$ denote the subgraph of $\G$ where $V_t$ is the set of vertices in $V$ at a depth less than or equal to $t$ from $\phi$, and $E_t$ is the set of edges in $E$ between the vertices in $V_t$. For any $[\N],[\N^\prime]\in G_*$, a natural way to define a distance is given by
\begin{align}
d_{G_*}([\N],[\N^\prime]) = \frac{1}{R+1},
\end{align}
where
\begin{align}
	R = \sup\left\{t\geq 0: \begin{minipage}{0.75 \textwidth}
	there exists a rooted-isomorphism $\sigma$ from $(\G)_t$ to $(\G^\prime)_t$ such that $\forall \mathcalboondox{v}\in V_t \text{ and } \forall \mathcalboondox{e}\in E_t$, $d_{\Omega_1}(w_v(\mathcalboondox{v}),w_v^\prime(\sigma(\mathcalboondox{v}))) < t^{-1}$ and $d_{\Omega_2}(w_e(\mathcalboondox{e}),w_e^\prime(\sigma(\mathcalboondox{e}))) < t^{-1}$
	\end{minipage}\right\}.
\end{align}
Notice that in the definition of $R$, the isomorphism is between the rooted graphs and not the corresponding rooted networks.
The space $G_*$ equipped with $d_{G_*}$ is a complete separable metric (Polish) space~\cite{Bordenave2016}. Define $\mathcal{P}(G_*)$ as the set of all probability measures on $G_*$ and endow this space with the topology of weak convergence. Since $G_*$ is a Polish space, the space $\mathcal{P}(G_*)$ is a Polish space as well~\cite{Bordenave2016} with the L\'evy-Prokhorov metric.

The members of $G_*$ are unlabeled connected locally-finite rooted networks; however, there is a way to generalize the framework to unrooted, not necessarily connected, finite networks. Consider a finite network $N=(V,E,w_v,w_e)$. For every vertex $v\in V$, define $N(v)$ to be the connected component of the vertex $v$ in the network $N$. Let $\N(v)$ denote the rooted version of $N(v)$, rooted at $v$, and define $\delta_{[\N(v)]} \in \mathcal{P}(G_*)$ to be the Dirac measure that assigns $1$ to $[\N(v)]$ and $0$ to any other member of $G_*$. Define $U(N)\in \mathcal{P}(G_*)$ as follows,
\begin{align}\label{eq:unifmeasure}
	U(N) = \frac{1}{|V|}\sum_{v\in V} \delta_{[\N(v)]}.
\end{align}
The probability measure $U(N)$ is the law of $[\N(\root)]$, where $\root\in V$ is picked uniformly at random. This probability measure captures the local structure of $N$ as viewed from a randomly chosen vertex. The notion of local weak convergence studies the weak limit of $\{U(N_n)\}_{n\in\mathbb{N}}$ for a sequence of finite networks $\{N_n\}_{n\in\mathbb{N}}$.
\begin{definition}({\bf Local Weak Limit})
	 A sequence of finite networks $\{N_n\}_{n\in\mathbb{N}}$ has a {\em local weak limit} $\rho\in \mathcal{P}(G_*)$ if $U(N_n) \xrightarrow{w} \rho$. 
\end{definition}
A necessary condition for a probability measure $\rho \in \mathcal{P}(G_*)$ to be a local weak limit is {\it unimodularity}~\cite{Aldous2007}, which is defined next.
Let $G_{**}(\Omega_1,\Omega_2)$, or more simply $G_{**}$ denote the set of isomorphism classes of connected locally-finite networks with an ordered pair of distinct vertices. Let $N_{\circ\circ}(\root,v)$ denote a network in $G_{**}$. Equip $G_{**}$ with the natural metric $d_{G_{**}}$ which is defined in the same way as $d_{G_*}$.
\begin{definition}({\bf Unimodularity})\label{def:unimod}
	A measure $\rho\in \mathcal{P}(G_*)$ is said to be {\it unimodular} if for all Borel functions $f:G_{**}\to \mathbb{R}_+$,
	\begin{align}\label{eq:unimod}
	\int \sum_{v\in V} f([N_{\circ\circ}(\root,v)])\, d\rho([\N(\root)]) = \int \sum_{v\in V} f([N_{\circ\circ}(v,\root)])\, d\rho([\N(\root)]).
	\end{align}
\end{definition}
The function $f$ in the definition of unimodularity ranges over all Borel functions from $G_{**}$ to $\mathbb{R}_+$; however, it is sufficient to consider Borel functions $f:G_{**}\to\mathbb{R}$ that assign a non-zero value to a doubly rooted network only if the roots are adjacent. This property is known as involution invariance~\cite{Aldous2007}.
\begin{lemma}({\bf Involution Invariance})\label{lem:involution}
	A measure $\rho\in\mathcal{P}(G_*)$ is unimodular if and only if the equality \cref{def:unimod} holds for all Borel functions $f:G_{**}\to \mathbb{R}_+$ such that $f([N_{\circ\circ}(\root,v)]) = 0$ unless $\{\root,v\}\in E$.
\end{lemma}
It is easy to show that the class of local weak limits are unimodular. The question of whether the class of unimodular measures and local weak limits coincide or not is still an open problem.
\subsection{Point Process Perspective of a Branching Process}\label{sec:back_Branch}
Let $\Omega = \mathbb{Z}_{+}\times \mathbb{R}_+$ denote the type space. A point distribution $\omega = ( (m_1,x_1),a_1; (m_2,x_2),a_2;\dots;\allowbreak (m_k,x_k),a_k )$ on type space $\Omega$ is a finite set of vertices that consists of $a_j$ vertices of type $(m_j,x_j)$ for $k \in \mathbb{Z}_+ \setminus \{ 0 \}$, and $k=0$ corresponds to null point distribution. Let $\pd_{\Omega}$ denote the set of all point distributions. A point distribution $\omega\in\pd_{\Omega}$ defines a natural set function $\widetilde{\omega}(\cdot)$ over all subsets of $\Omega$,
\begin{align}
\forall {\Apset}\subset\Omega,\qquad \widetilde{\omega}({\Apset}) \coloneqq \sum_{(m_j,x_j)\in {\Apset}}{a_j}.
\end{align}
It is easy to see that there is a one-to-one correspondence between point distributions and set functions satisfying the following conditions:
\begin{enumerate}[label=(\alph*)]
	\item for any ${\Apset}\subset \Omega$, $\widetilde{\omega}({\Apset})$ is a non-negative integer.
	\item if ${\Apset}_1, {\Apset}_2,\dots {\Apset}_k$ are disjoint subsets of $\Omega$, then $\widetilde{\omega}\left(\cup_j {\Apset}_j\right) = \sum_{j} \widetilde{\omega}({\Apset}_j)$.
	\item if ${\Apset}_1\supset {\Apset}_2 \supset\dots $ are subsets of $\Omega$ and $\cap_j {\Apset}_j = \emptyset$, then $\widetilde{\omega}({\Apset}_j) = 0$ for all sufficiently large $j$.
\end{enumerate}
Abusing notation, we write $\omega(\cdot)$ as the set function generated by the point distribution $\omega\in\pd_{\Omega}$. We now define a $\sigma$-algebra on $\pd_{\Omega}$.

A rational interval is a subset of $\Omega$ with elements of the form $(m,x)$ such that $\underline{q}_1\leq m<\overline{q}_1$ and $\underline{q}_2\leq x < \overline{q}_2$, where $\underline{q}_1$ and $\overline{q}_1$ are non-negative integers, $\underline{q}_2$ and $\overline{q}_2$ are non-negative rational numbers, and $\overline{q}_1$ and $\overline{q}_2$ are allowed to be $\infty$. A basic set is a finite union of rational intervals or the empty set. Given a collection of basic sets ${\Apset}_1, {\Apset}_2, \cdots, {\Apset}_k$ and a set of non-negative integers $r_1, r_2,\cdots, r_k$, a cylinder set in $\pd_{\Omega}$ is defined as follows:
\begin{align}
\mathcal{C}({\Apset}_1, {\Apset}_2, \cdots, {\Apset}_k;r_1, r_2,\cdots, r_k) = \{\omega\in\pd_{\Omega}: \omega({\Apset}_j) = r_j,~ \forall j\in[k]\}.
\end{align}
Let $\mathscr{A}$ denote the $\sigma$-algebra generated by the cylinder sets. The following theorem defines a probability measure on $(\pd_{\Omega},\mathscr{A})$ using a set of probability distributions defined over basic sets. The proof is based on the Kolmogorov extension theorem~\cite{Harris1963}.
\begin{theorem}\label{thm:extprob}
	Let functions $p({\Apset}_1,{\Apset}_2,\cdots,{\Apset}_k;r_1,r_2,\cdots,r_k)$ be given, defined for any collection of basic sets and non-negative integers, satisfying the following.
	\begin{enumerate}[label=(\alph*)]
		\item $p({\Apset}_1,{\Apset}_2,\cdots,{\Apset}_k;r_1,r_2,\cdots,r_k)$ is a probability distribution on $k$-tuples of non-negative integers $r_1, r_2,\cdots,r_k$.
		\item $p({\Apset}_1,{\Apset}_2,\cdots,{\Apset}_k;r_1,r_2,\cdots,r_k)$ is permutation invariant, that is to say $\forall\sigma \in S_k$
		\begin{align}
		p({\Apset}_1,{\Apset}_2,\cdots,{\Apset}_k;r_1,r_2,\cdots,r_k) = p({\Apset}_{\sigma(1)},{\Apset}_{\sigma(2)},\cdots,{\Apset}_{\sigma(k)};r_{\sigma(1)},r_{\sigma(2)},\cdots,r_{\sigma(k)}).
		\end{align}
		\item The functions $p$ are consistent,
		\begin{align}
		p({\Apset}_1,{\Apset}_2,\cdots,{\Apset}_k;r_1,r_2,\cdots,r_k) = \sum_{r_{k+1}=0}^\infty p({\Apset}_1,{\Apset}_2,\cdots,{\Apset}_k,{\Apset}_{k+1};r_1,r_2,\cdots,r_k,r_{k+1}).
		\end{align}
		\item If ${\Apset}_1,{\Apset}_2,\cdots,{\Apset}_k$ are disjoint sets and ${\Apset} = \cup_{j=1}^{k} {\Apset}_j$, then $p({\Apset},{\Apset}_1,{\Apset}_2,\cdots,{\Apset}_k;r,r_1,r_2,\allowbreak\cdots,r_k) = 0$ unless $r=\sum_{j=1}^{k} r_j$.
		\item If ${\Apset}_1\supset {\Apset}_2 \supset \cdots$ and $\cap_{j=1}^\infty {\Apset}_j = \emptyset$, then $\lim_{j\to \infty}p({\Apset}_j;0) = 1$.
	\end{enumerate}
	Then there exists a unique probability measure $P$ on $\mathscr{A}$ that coincides with the functions $p$ whenever ${\Apset}_j$'s are basic sets,
	\begin{align}
	P(\omega({\Apset}_1) = r_1, \omega({\Apset}_2) = r_2,\cdots, \omega({\Apset}_k) = r_k) = p({\Apset}_1,{\Apset}_2,\cdots,{\Apset}_k;r_1,r_2,\cdots,r_k).
	\end{align}
\end{theorem}
For a point distribution $\omega = ( (m_1,x_1),a_1; (m_2,x_2),a_2;\dots;(m_k,x_k),a_k )\in\pd_{\Omega}$ and a function $h:\Omega\to\mathbb{R}$, the random integral $\int hd\omega$ is defined as $\sum_{j=1}^{k} a_j \times h(m_j,x_j)$. The term ``random'' refers to the randomness of $\omega$. Given a probability distribution $P$ on $(\pd_{\Omega},\mathscr{A})$, the Moment Generating Functional (MGF) of $P$ is defined as follows:
\begin{align}
\Phi(s) = \expect \eexp^{-\int s \, d\omega} = \int_{\pd_{\Omega}} \eexp^{-\int s \,d\omega}\, dP(\omega),
\end{align}
where $s:\Omega \to \mathbb{R}_+$ is a non-negative function. Similarly, given some conditions on a functional $\Phi$ defined over non-negative functions $s:\Omega \to \mathbb{R}_+$, there exists a unique probability measure $P$ on $(\pd_{\Omega},\mathscr{A})$ with MGF $\Phi$~\cite{Harris1963}. This correspondence implies the following theorem:
\begin{theorem}\label{thm:mgf}
	Let $\Phi_1, \Phi_2, \cdots, \Phi_k$ be MGF's on $(\pd_{\Omega},\mathscr{A})$. Then the functional $\Phi(s) = \Phi_1(s)\Phi_2(s)\allowbreak\cdots\Phi_k(s)$ defines an MGF on $(\pd_{\Omega},\mathscr{A})$.
\end{theorem}
Now, we revisit the EWT from point processes perspective. For any collection of basic sets $\{{\Apset}_1,{\Apset}_2,\cdots,{\Apset}_k\}$ and non-negative integers $\{r_1,r_2,\cdots,r_k\}$ define $p_{(m,x)} ({\Apset}_1,{\Apset}_2,\cdots,{\Apset}_k;\allowbreak r_1,r_2,\cdots, r_k)$ to be the probability that a vertex of type $(m,x)$ has $r_j$ children of type ${\Apset}_j$ for $j\in[k]$. Then, the functions $p_{(m,x)}$ determines a unique probability measure $P_{(m,x)}^{(1)}$ on $(\pd_{\Omega},\mathscr{A})$ (Theorem~\ref{thm:extprob}). The probability measure $P_{(m,x)}^{(1)}$ determines, in turn, an MGF $\Phi^{(1)}_{(m,x)}$. Notice that $p_{(m,x)}$, for any fixed set of arguments ${\Apset}_i$s and $r_i$s, is a Borel-measurable function of $(m,x)\in \Omega$ where $\Omega$ is equipped with the same metric as $\Omega_1$. Using the Theorem~\ref{thm:mgf}, for any point distribution $\omega = ( (m_1,x_1),a_1; (m_2,x_2),a_2;\dots;\allowbreak(m_k,x_k),a_k )\in\pd_{\Omega}$ the functional $\Phi^{(1)}_{\omega}$
\begin{align}
\Phi^{(1)}_{\omega}(s) = \left(\Phi^{(1)}_{(m_1,x_1)}(s)\right)^{a_1} \left(\Phi^{(1)}_{(m_2,x_2)}(s)\right)^{a_2} \cdots \left(\Phi^{(1)}_{(m_1,x_1)}(s)\right)^{a_k},
\end{align}
is an MGF and induces a probability measure $P_{\omega}^{(1)}$ on $(\pd_{\Omega},\mathscr{A})$. The probability measure $P_{\omega}^{(1)}$ is the transition probability function of a generalized Markov chain defined by the branching process,
\begin{align}
\forall \mathcal{{\Apset}} \in \mathscr{A},\qquad P_{\omega}^{(1)}({\Apset}) = \prob(Z_{l+1} \in {\Apset}\,\vert\,Z_l = \omega),
\end{align}
where $Z_l$ is the point distribution of vertices at depth $l$ (abusing the notation). As in regular Markov chains, the $m+n$-step transition probability function satisfies the following Chapman–Kolmogorov recurrence relation,
\begin{align}
\forall {\Apset} \in \mathscr{A},\qquad P_{\omega}^{(m+n)}({\Apset}) = \int_{\pd_{\Omega}} P^{(n)}_{{\omega}^\prime}({\Apset})\, d P^{(m)}_{\omega}({\omega}^\prime).
\end{align}
The MGF of $P_{\omega}^{(n)}$ is denoted by $\Phi_{\omega}^{(n)}$ which satisfies the following recurrence relation,
\begin{align}
\Phi_{\omega}^{(m+n)} = \Phi_{\omega}^{(n)}(-\log \Phi_{\cdot}^{(m)}).
\end{align}
\subsection{Spectral Theorem for Compact Self-adjoint Bounded Linear Operators}\label{sec:back_opt}
A linear space ${\mathcal{X}}$ equipped with a norm $\norm{\cdot}_{\mathcal{X}}$ is called normed linear space. A complete normed linear space is called Banach space. Every Banach space is a metric space. A metric space $({\mathcal{X}},d)$ is called separable if it has a countable dense subset, i.e., a set $\{x_1,x_2,x_3,\cdots \}$ with the property that for all $\epsilon >0$ there exists $x_n$ such that $d(x_n,x)<\epsilon$. A linear space equipped with an inner-product is called an inner-product space. We say $S = \{e_\alpha\}_{\alpha\in I}$ is an orthonormal basis of an inner-product space ${\mathcal{X}}$, if $\forall x\in {\mathcal{X}}$ we have $x = \sum_{\alpha\in I} \langle x,e_{\alpha}\rangle $ and $\langle e_{\alpha},e_{\beta} \rangle = 0$ when $\alpha \neq \beta$ and $\langle e_{\alpha},e_{\alpha} \rangle = 1$. A Banach space with a norm induced by an inner-product is called Hilbert space. It is easy to prove that a Hilbert space is separable if and only if it has a countable orthonormal basis.

Let ${\mathcal{X}}$ and ${\mathcal{U}}$ be normed linear spaces over $\mathbb{C}$ with norms $\norm{\cdot}_{\mathcal{X}}$ and $\norm{\cdot}_{\mathcal{U}}$, respectively. A map $M:{\mathcal{X}}\to {\mathcal{U}}$ is called a bounded linear map if it is linear and there exists $b > 0$ such that $\forall x\in {\mathcal{X}},\,\norm{Tx}_{{\mathcal{U}}}\leq b \norm{x}_{{\mathcal{X}}}$. Let ${\mathcal{L}}({\mathcal{X}},{\mathcal{U}})$ denote the set of all bounded linear maps from ${\mathcal{X}}$ to ${\mathcal{U}}$ and equip this space with the natural norm $\norm{M}_{\mathcal{L}} = \sup_{x\in {\mathcal{X}}, \norm{x}_{\mathcal{X}} =1 } \norm{Mx}_{\mathcal{U}}$. Then $({\mathcal{L}}({\mathcal{X}},{\mathcal{U}}),\norm{\cdot}_{\mathcal{L}})$ is a normed linear space. It is easy to check that if ${\mathcal{U}}$ is a Banach space, then ${\mathcal{L}}({\mathcal{X}},{\mathcal{U}})$ is also a Banach space.

Consider ${\mathcal{L}}({\mathcal{X}},{\mathcal{X}})$ together with its natural binary map, i.e., if $N,M\in {\mathcal{L}}({\mathcal{X}},{\mathcal{X}})$ then {for all $x\in {\mathcal{X}}$ define $N \cdot M(x) \coloneqq N(M(x))$}. This forms an algebra over $\mathbb{C}$ which is called a normed algebra. A complete normed algebra is called Banach algebra. A operator $M$ in a Banach algebra is called invertible if $\exists N \in {\mathcal{L}}({\mathcal{X}},{\mathcal{X}})$ such that $N\cdot M = M\cdot N = I$, where $I \in \mathcal{L}(\mathcal{X}, \mathcal{X})$ is the identity map.

Let ${\mathcal{L}}({\mathcal{X}},{\mathcal{X}})$ be a Banach algebra over $\mathbb{C}$ and let $M\in {\mathcal{L}}({\mathcal{X}},{\mathcal{X}})$. The resolvent set of $M$ is given by
\begin{align}
\rho(M) = \{\lambda\in\mathbb{C}: \lambda I - M \text{ is invertible } \}.
\end{align}
The set $\sigma(M) = \mathbb{C}\setminus \rho(M)$ is called the spectrum of $M$. If $\lambda \in \sigma(M)$ then, $1)$ if $\lambda I - M$ is not one-to-one then $\lambda$ is called an eigenvalue of $M$, $2)$ if $\lambda I - M$ is one-to-one, but $\overline{R(\lambda I - M)}\neq {\mathcal{X}}$, where $R(N)$ is the range of $N$, then $\lambda$ is called a residual of $\sigma(M)$, and $3)$ if $\lambda$ is neither an eigenvalue nor a residual, then it is said to be in the continuous spectrum of $M$. The eigenvalues of $M$ are denoted by $\sigma_p(M)$, the residual spectrum of $M$ is denoted by $\sigma_r(M)$, and the continuous spectrum of $M$ is denoted by $\sigma_c(M)$. The spectrum of $M$ is nonempty, bounded, and closed in $\mathbb{C}$. The spectral radius of an operator $M$ is defined as $|\sigma(M)| \coloneqq \max_{\lambda\in\sigma(M)} |\lambda|$.
\begin{theorem}
	We have $|\sigma(M)| = \lim_{n\to\infty} \left(\norm{M^n}_{\mathcal{L}}\right)^{\frac{1}{n}}$
\end{theorem}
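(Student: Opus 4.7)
The plan is to establish the spectral radius formula through three pieces: existence of the limit, the upper bound $|\sigma(M)|\le \lim \|M^n\|_{\mathcal{L}}^{1/n}$, and the matching lower bound.

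First I would prove existence of the limit by applying Fekete's subadditive lemma to the sequence $a_n := \log \|M^n\|_{\mathcal{L}}$. Submultiplicativity of the operator norm ($\|M^{m+n}\|_{\mathcal{L}}\le \|M^m\|_{\mathcal{L}}\cdot \|M^n\|_{\mathcal{L}}$) makes $a_n$ subadditive, so $a_n/n$ converges to $\inf_{n\ge 1} a_n/n$ in $[-\infty,\infty)$. Exponentiating gives that $r := \lim_{n\to\infty} \|M^n\|_{\mathcal{L}}^{1/n}$ exists and equals $\inf_{n\ge 1} \|M^n\|_{\mathcal{L}}^{1/n}$.

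Next, for the upper bound $|\sigma(M)|\le r$, I would use the polynomial identity
\begin{align*}
\lambda^n I - M^n = (\lambda I - M)\left(\lambda^{n-1}I + \lambda^{n-2}M + \cdots + M^{n-1}\right),
\end{align*}
where both factors commute with $\lambda I - M$. If $\lambda\in\sigma(M)$ and $\lambda^n I - M^n$ were invertible, composing the inverse with the polynomial factor would yield a two-sided inverse for $\lambda I - M$, a contradiction. Hence $\lambda^n\in \sigma(M^n)$, so $|\lambda|^n \le \|M^n\|_{\mathcal{L}}$ for every $n\ge 1$ (using that the spectrum lies in the closed disk of radius $\|M^n\|_{\mathcal{L}}$, a direct Neumann-series argument). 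Taking $n$-th roots and passing to the limit gives $|\lambda|\le r$, so $|\sigma(M)|\le r$.

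The harder direction is $r \le |\sigma(M)|$, where I would exploit holomorphy of the resolvent $R(\lambda):=(\lambda I - M)^{-1}$ on the open set $\rho(M)$. For $|\lambda|>\|M\|_{\mathcal{L}}$, the Neumann series $R(\lambda) = \sum_{n=0}^\infty M^n/\lambda^{n+1}$ converges in operator norm. To push this beyond $\|M\|_{\mathcal{L}}$, compose with an arbitrary continuous linear functional $\ell\in \mathcal{L}(\mathcal{X},\mathcal{X})^*$ to obtain a scalar-valued holomorphic function $\ell\circ R$ on $\rho(M)\supset\{|\lambda|>|\sigma(M)|\}$. The scalar Laurent expansion at infinity must agree with $\sum_n \ell(M^n)/\lambda^{n+1}$, and by the Cauchy--Hadamard formula this series converges for all $|\lambda|>|\sigma(M)|$. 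Since this holds for every $\ell$, the uniform boundedness principle applied to $\{M^n/\lambda^n\}_{n\ge 0}$ (which is weakly bounded on $|\lambda|>|\sigma(M)|$) forces norm boundedness, hence $\limsup_n \|M^n\|_{\mathcal{L}}^{1/n}\le |\lambda|$ for every such $\lambda$, yielding $r\le |\sigma(M)|$.

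The main obstacle is the reverse inequality: one needs vector-valued complex analysis, in particular that weak analyticity (via duality) implies norm analyticity, or equivalently that weakly bounded operator families are norm bounded. This is the non-trivial ingredient; everything else is subadditivity and elementary algebra.
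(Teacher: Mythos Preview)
Your argument is correct and is the standard proof of Gelfand's spectral radius formula: Fekete for existence of the limit, the polynomial factorization $\lambda^n I - M^n = (\lambda I - M)\sum_{k=0}^{n-1}\lambda^{n-1-k}M^k$ for the easy inequality, and holomorphy of the resolvent on $\{|\lambda|>|\sigma(M)|\}$ combined with the uniform boundedness principle for the reverse inequality. One small clarification worth making explicit in your write-up: in the last step you are applying Banach--Steinhaus not to operators on $\mathcal{X}$ but to the sequence $\{M^n/\lambda^n\}$ viewed as elements of the Banach space $\mathcal{L}(\mathcal{X},\mathcal{X})$, embedded canonically in its bidual; weak boundedness for every $\ell\in\mathcal{L}(\mathcal{X},\mathcal{X})^*$ then gives norm boundedness.

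There is no comparison to make with the paper: the paper states this theorem without proof, as background material in Section~\ref{sec:back_opt}, citing Lax's functional analysis text. Your proof is exactly the classical one found there.
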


Let ${\mathcal{X}}$ and ${\mathcal{U}}$ be Banach spaces. A set $S\subset {\mathcal{X}}$ is called precompact if $\overline{S}$ is compact. A map $M\in {\mathcal{L}}({\mathcal{X}},{\mathcal{U}})$ is a compact operator if $M(B)$, where $B$ is the ball of radius $1$ in ${\mathcal{X}}$, is precompact in ${\mathcal{U}}$. The following theorem is the Riesz-Schauder Theorem which is a spectral theorem for compact operators.
\begin{theorem}\label{thm:optth1}
	Let ${\mathcal{X}}$ be a Banach space and let $M\in {\mathcal{L}}({\mathcal{X}},{\mathcal{X}})$ be a compact operator. Then the spectrum of $M$ satisfies the following:
	\begin{enumerate}[label=(\roman*)]
		\item $0$ is in the spectrum of $M$ unless the dimension of ${\mathcal{X}}$ is finite.
		\item All non-zero elements of $\sigma(M)$ are in $\sigma_p(M)$.
		\item If $\lambda$ is a non-zero eigenvalue of $M$, then $\lambda$ has finite multiplicity, i.e., the dimension of the null space of $\lambda I - M$ is finite.
		\item If $\lambda_0$ is an accumulation point of $\sigma(M)$ then $\lambda_0 = 0$.
	\end{enumerate}
\end{theorem}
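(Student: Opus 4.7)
The single structural fact I will use repeatedly is that $M$ sends bounded sets to precompact sets, combined with \emph{Riesz's lemma}: in any proper closed subspace $Y$ of a normed space ${\mathcal{X}}$ and any $\epsilon>0$ there is a unit vector $y$ with $d(y,Y)\geq 1-\epsilon$. The two interact by ruling out any infinite ``almost-orthogonal'' structure on which $M$ acts with factors bounded away from zero.

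Parts (i) and (iii) follow directly from this template. If $0\notin\sigma(M)$ then $M$ is invertible with bounded inverse $M^{-1}$, so $I = M\cdot M^{-1}$ is compact (compact composed with bounded is compact); but Riesz's lemma applied iteratively produces, in any infinite-dimensional ${\mathcal{X}}$, a unit sequence with pairwise distance at least $1/2$, showing the unit ball is not precompact, so $\dim{\mathcal{X}}<\infty$. For (iii), given a non-zero eigenvalue $\lambda$, if $N(\lambda I-M)$ were infinite-dimensional Riesz's lemma yields a sequence $\{x_n\}\subset N(\lambda I-M)$ of unit vectors with $\norm{x_n-x_m}\geq 1/2$, and then $\norm{Mx_n-Mx_m}=|\lambda|\norm{x_n-x_m}\geq |\lambda|/2>0$ contradicts precompactness of $\{Mx_n\}$.

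For (ii), I need to upgrade ``$\lambda\in\sigma(M)\setminus\{0\}$'' to ``$\lambda I-M$ is not injective''. First I show $R(\lambda I-M)$ is closed: given $x_n$ with $(\lambda I-M)x_n\to y$, replace $x_n$ by a representative modulo the finite-dimensional kernel of minimal norm; if $\norm{x_n}$ were unbounded, normalize $\widetilde{x}_n = x_n/\norm{x_n}$, extract a convergent subsequence of $M\widetilde{x}_n$ by compactness, and use
\begin{align*}
\lambda\widetilde{x}_n \;=\; M\widetilde{x}_n \;+\; \frac{(\lambda I-M)x_n}{\norm{x_n}}
\end{align*}
to conclude $\widetilde{x}_n$ converges to a unit kernel vector, contradicting the minimum-norm choice. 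Hence $\norm{x_n}$ is bounded and the same compactness trick gives $x_n\to x$ with $(\lambda I-M)x=y$. A Fredholm-style argument on the strictly nested ranges $R((\lambda I-M)^k)$ then shows injectivity implies surjectivity: otherwise Riesz gives unit $y_k\in R((\lambda I-M)^k)$ with $d(y_k, R((\lambda I-M)^{k+1}))\geq 1/2$, violating precompactness of $\{My_k\}$. Therefore $\lambda\in\sigma(M)\setminus\{0\}$ implies $\lambda\in\sigma_p(M)$.

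Finally, for (iv), suppose $\lambda_n\to\lambda_0\neq 0$ with distinct $\lambda_n\in\sigma(M)$. Part (ii) makes each $\lambda_n$ an eigenvalue with eigenvector $x_n$, and eigenvectors for distinct eigenvalues are linearly independent, so $X_n = \mathrm{span}\{x_1,\dots,x_n\}$ strictly increases. Riesz's lemma gives $y_n\in X_n$ with $\norm{y_n}=1$ and $d(y_n,X_{n-1})\geq 1/2$. Since $(\lambda_n I - M)X_n\subseteq X_{n-1}$ and $MX_{n-1}\subseteq X_{n-1}$, a direct computation for $m<n$ gives $My_n/\lambda_n - My_m/\lambda_m = y_n - u$ with $u\in X_{n-1}$, so its norm is at least $1/2$; as $\lambda_n\to\lambda_0\neq 0$ the prefactors are bounded, forcing a uniform lower bound on $\norm{My_n-My_m}$ that contradicts compactness. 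The main obstacle is part (ii): closedness of $R(\lambda I-M)$ together with the Fredholm-alternative step require the most delicate interplay between compactness of $M$, finite-dimensionality of the kernel from (iii), and Riesz's lemma on the quotient ${\mathcal{X}}/N(\lambda I-M)$; the other three parts all fit the same simple template of extracting an almost-orthogonal sequence and lower-bounding $\norm{Mx_n-Mx_m}$.
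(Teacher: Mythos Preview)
The paper does not prove this theorem: it is stated in Section~\ref{sec:back_opt} as standard background material (the Riesz--Schauder theorem), with a reference to Lax's textbook, and no argument is given. Your proposal is therefore not comparable to a proof in the paper, but on its own terms it is the standard textbook argument and is correct; the steps for (i), (iii), and (iv) are routine, and your handling of (ii) --- closedness of $R(\lambda I - M)$ via minimum-norm representatives modulo the finite-dimensional kernel, followed by the Riesz-lemma contradiction on the nested ranges $R((\lambda I - M)^k)$ to get the Fredholm alternative --- is exactly the classical route found in, e.g., Lax or Brezis.
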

Let $\mathcal{H}$ denote a Hilbert space and let $A\in {\mathcal{L}}(H,H)$. The adjoint of $A$, written as $A^*$, is defined by $\langle x,A^*y\rangle _{\mathcal{H}} \coloneqq \langle Ax,y\rangle _{\mathcal{H}}$ for all $x,y\in {\mathcal{H}}$. If $A^* = A$ or equivalently $\forall x,y\in {\mathcal{H}},\,\langle Ax,y\rangle = \langle x,Ay\rangle$, we say $A$ is symmetric or self-adjoint. 
The spectral theorem for compact symmetric operators on a Hilbert space $\mathcal{H}$ is given as follows.
\begin{theorem}\label{thm:optth2}
	Let ${\mathcal{H}}$ be a Hilbert space and let $A\in {\mathcal{L}}({\mathcal{H}},{\mathcal{H}})$ be a compact symmetric operator. Then the spectrum of $A$ satisfies the following properties.
	\begin{enumerate}[label=(\roman*)]
		\item The spectrum of $A$ is a subset of $\mathbb{R}$.
		\item If $\lambda,\lambda^\prime \in \sigma_p(A)$ and $\lambda \neq \lambda^\prime$ then the null space of $\lambda I-A$ is orthogonal to the null space of $\lambda^\prime I - A$.
		\item There exists $x_0\in {\mathcal{H}}$ with $\norm{x_0}_{\mathcal{H}} = 1$ such that $|\langle Ax_0,x_0\rangle _{\mathcal{H}}| = \sup_{\norm{x}_{\mathcal{H}} = 1} |\langle Ax,x\rangle _{\mathcal{H}}| = \norm{A}_{\mathcal{L}}$, and moreover, $x_0$ is an eigenvector of $A$, i.e., $Ax_0 = \lambda x_0$ for some $\lambda \in \mathbb{R}$. The corresponding eigenvalue $\lambda$ is the largest eigenvalue of $A$ in magnitude.
		\item (Hilbert-Schmidt) There exists an orthonormal basis of ${\mathcal{H}}$ consisting of the eigenvectors of $A$.
	\end{enumerate}
\end{theorem}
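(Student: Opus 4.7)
The plan is to verify the four parts in order, using Theorem \ref{thm:optth1} as a black box for part (i), standard symmetry manipulations for (ii), a compactness/polarization argument for (iii), and then an inductive exhaustion for (iv).

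For \textbf{part (i)}, I would first observe that by Theorem \ref{thm:optth1} every nonzero element of $\sigma(A)$ lies in $\sigma_p(A)$, so it suffices to show eigenvalues are real. If $Ax=\lambda x$ with $\|x\|_{\mathcal{H}}=1$, then $\lambda=\langle Ax,x\rangle_{\mathcal{H}}=\langle x,Ax\rangle_{\mathcal{H}}=\overline{\lambda}$, forcing $\lambda\in\mathbb{R}$; the point $0$ is real trivially. For \textbf{part (ii)}, with $Ax=\lambda x$, $Ay=\lambda'y$, and using $\lambda'\in\mathbb{R}$, compute $\lambda\langle x,y\rangle_{\mathcal{H}}=\langle Ax,y\rangle_{\mathcal{H}}=\langle x,Ay\rangle_{\mathcal{H}}=\lambda'\langle x,y\rangle_{\mathcal{H}}$, so $(\lambda-\lambda')\langle x,y\rangle_{\mathcal{H}}=0$ and hence $\langle x,y\rangle_{\mathcal{H}}=0$.

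For \textbf{part (iii)}, which I expect to be the main obstacle, set $M:=\sup_{\|x\|_{\mathcal{H}}=1}|\langle Ax,x\rangle_{\mathcal{H}}|$. The bound $M\le \|A\|_{\mathcal{L}}$ is Cauchy--Schwarz. For the reverse, I would use the polarization identity for a symmetric operator: a short computation gives
\begin{align*}
4\,\mathrm{Re}\,\langle Ax,y\rangle_{\mathcal{H}} \;=\; \langle A(x+y),x+y\rangle_{\mathcal{H}} - \langle A(x-y),x-y\rangle_{\mathcal{H}},
\end{align*}
and combining this with the parallelogram law on unit vectors yields $|\langle Ax,y\rangle_{\mathcal{H}}|\le M$ whenever $\|x\|_{\mathcal{H}}=\|y\|_{\mathcal{H}}=1$, giving $\|A\|_{\mathcal{L}}\le M$. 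Now pick $\|x_n\|_{\mathcal{H}}=1$ with $\langle Ax_n,x_n\rangle_{\mathcal{H}}\to\lambda$ where $|\lambda|=M=\|A\|_{\mathcal{L}}$ (possible by passing to a subsequence since $\langle Ax_n,x_n\rangle_{\mathcal{H}}$ is real). Compactness of $A$ yields a sub-subsequence (still called $x_n$) with $Ax_n\to y$ in $\mathcal{H}$. The key identity
\begin{align*}
\|Ax_n-\lambda x_n\|_{\mathcal{H}}^2 \;=\; \|Ax_n\|_{\mathcal{H}}^2 - 2\lambda\langle Ax_n,x_n\rangle_{\mathcal{H}} + \lambda^2 \;\le\; \|A\|_{\mathcal{L}}^2 - 2\lambda\langle Ax_n,x_n\rangle_{\mathcal{H}} + \lambda^2
\end{align*}
tends to $\|A\|_{\mathcal{L}}^2-\lambda^2=0$. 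If $\lambda\ne 0$, this forces $x_n\to x_0:=\lambda^{-1}y$, which is a unit vector with $Ax_0=\lambda x_0$; if $\lambda=0$, then $\|A\|_{\mathcal{L}}=0$ so $A\equiv 0$ and any unit vector is an eigenvector with eigenvalue $0$.

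For \textbf{part (iv)}, I would iterate (iii) on a decreasing chain of invariant subspaces. Set $\mathcal{H}_1:=\mathcal{H}$, choose $x_1$ and $\lambda_1$ from (iii), and note that $\{x_1\}^\perp$ is $A$-invariant since $y\perp x_1$ implies $\langle Ay,x_1\rangle_{\mathcal{H}}=\langle y,Ax_1\rangle_{\mathcal{H}}=\lambda_1\langle y,x_1\rangle_{\mathcal{H}}=0$. The restriction of $A$ to $\{x_1\}^\perp$ remains compact and symmetric, so (iii) applies again to produce $x_2,\lambda_2$ with $|\lambda_2|\le|\lambda_1|$, and so on. This yields an orthonormal sequence $\{x_n\}$ of eigenvectors whose closed span $V$ is $A$-invariant, and by construction the restriction of $A$ to $V^\perp$ has operator norm $\lim_n|\lambda_n|$; if this limit is nonzero, compactness of $A$ would contradict the orthonormality of $\{x_n\}$ (a bounded sequence whose images lack a convergent subsequence), so $\|A|_{V^\perp}\|_{\mathcal{L}}=0$ and every vector in $V^\perp$ is an eigenvector with eigenvalue $0$. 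Taking any orthonormal basis of $V^\perp$ (which exists by separability arguments, or, if $\mathcal{H}$ is non-separable, by Zorn's lemma) and unioning it with $\{x_n\}$ produces the desired orthonormal basis of eigenvectors. The subtlest step is the contradiction argument ensuring $\lambda_n\to 0$, which directly uses the defining property of compactness.
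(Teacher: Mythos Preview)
Your proof is correct and follows the standard textbook approach (polarization to identify $\|A\|_{\mathcal{L}}$ with the numerical radius, compactness to extract the extremal eigenvector, and iterated restriction to orthogonal complements for the Hilbert--Schmidt decomposition). Note, however, that the paper does not actually supply its own proof of this theorem: it is stated in the background section as a classical result drawn from Lax's text, and the paper later remarks that the relevant spectral facts for its specific operator $H_1$ are rederived probabilistically via the Markov chain and geometric ergodicity arguments of Section~\ref{sec:PFeigenvalue}. So there is no proof in the paper to compare against; your argument is essentially the one found in the cited reference.
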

Let ${\mathcal{H}}$ be a Hilbert space. A cone $\mathcal{K}\subset {\mathcal{H}}$ is a closed convex subset of ${\mathcal{H}}$ such that for all $\lambda \in\mathbb{R}_+$, we have $\lambda \mathcal{K} \subset \mathcal{K}$ and $\mathcal{K}\cap (-\mathcal{K}) = \{\boldsymbol{0}\}$ where $(-1)\mathcal{K}$ is denoted as $-\mathcal{K}$. A closed subset $\mathcal{S}$ of $\mathcal{H}$ is said to be invariant under $A\in {\mathcal{L}}(\mathcal{H},\mathcal{H})$ if $A\mathcal{S} \subseteq\mathcal{S}$. The following theorem by Toland~\cite{Toland1996} is a version of the Krein--Rutman Theorem~\cite{Krein1948} for compact self-adjoint operators.
\begin{theorem}\label{thm:optth3}
Suppose $\mathcal{K}\subseteq \mathcal{H}$ is a closed cone such that $\mathcal{K}^{\bot} \coloneqq \{x\in\mathcal{H}:\langle x,y \rangle = 0,\, \forall y\in\mathcal{K}\}= \{\boldsymbol{0}\}$. Let $A\in {\mathcal{L}}(H,H)$ be a compact self-adjoint operator such that $A:\mathcal{K}\to\mathcal{K}$. Define $\mathscr{X}(A)\coloneqq \sup\{\langle Aw,w\rangle _{\mathcal{H}} : \norm{w}_{\mathcal{H}} =1, w\in\mathcal{K} \}$. We have the following.
\begin{enumerate}[label=(\roman*)]
	\item $\mathscr{X}(A)>0$ is the largest eigenvalue of $A$ in magnitude and $\mathscr{X}(A)$ has an eigenvector in $\mathcal{K}$.
	\item $\mathscr{X}(A)>0$ is a simple eigenvalue of $A$.
\end{enumerate}
\end{theorem}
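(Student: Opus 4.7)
The plan is to reduce the problem to the compact, self-adjoint, and positive operator $B := A^2$, which still maps $\mathcal{K}$ into $\mathcal{K}$ and satisfies $\|B\|_{\mathcal{L}} = \|A\|_{\mathcal{L}}^2$ together with $\langle Bx, x\rangle = \|Ax\|^2 \geq 0$. I would locate a top eigenvector of $B$ inside $\mathcal{K}$ and then split it along the $A$-eigenspaces for $\pm\|A\|_{\mathcal{L}}$, pulling out an eigenvector of $A$ itself in $\mathcal{K}$ at the eigenvalue $\|A\|_{\mathcal{L}}$, which is the largest eigenvalue of $A$ in magnitude by Theorem \ref{thm:optth2}(iii). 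I assume throughout that $A \neq 0$, since the claim $\mathscr{X}(A) > 0$ is vacuous otherwise.

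The key tool is a power-iteration argument. Let $\mu^{\ast} = \|A\|_{\mathcal{L}}^2 > 0$ be the largest eigenvalue of $B$ and let $P^{\ast}$ denote the orthogonal projection onto the corresponding eigenspace, which is nontrivial by Theorem \ref{thm:optth2}(iii). Since $0$ is the only possible accumulation point of $\sigma(B)$ by Theorem \ref{thm:optth1}(iv), the spectral decomposition of $B$ gives $B^n w / (\mu^{\ast})^n \to P^{\ast} w$ in norm for every $w \in \mathcal{H}$. The reproducing hypothesis $\mathcal{K}^{\bot} = \{0\}$ excludes $P^{\ast} w = 0$ for all $w \in \mathcal{K}$, for otherwise the range of $P^{\ast}$ would lie inside $\mathcal{K}^{\bot} = \{0\}$ and the top eigenspace would collapse. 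Picking $w \in \mathcal{K}$ with $P^{\ast} w \neq 0$ and using $B(\mathcal{K}) \subseteq \mathcal{K}$ together with closedness of $\mathcal{K}$, the unit vectors $B^n w / \|B^n w\|$ stay in $\mathcal{K}$, so their limit $u^{\ast} := P^{\ast} w / \|P^{\ast} w\|$ also lies in $\mathcal{K}$, with $B u^{\ast} = \mu^{\ast} u^{\ast}$ and $\|u^{\ast}\| = 1$.

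With $u^{\ast}$ in hand, I would decompose $u^{\ast} = u_+ + u_-$ along the $A$-eigenspaces for $\pm \|A\|_{\mathcal{L}}$ (the only $A$-eigenspaces whose squares equal $\mu^{\ast}$). Cone invariance gives $A u^{\ast} = \|A\|_{\mathcal{L}}(u_+ - u_-) \in \mathcal{K}$, so both $u_+ + u_- = u^{\ast}$ and $u_+ - u_-$ belong to $\mathcal{K}$, and summing yields $u_+ \in \mathcal{K}$. If $u_+ = 0$ then $u^{\ast} = u_-$ and $A u^{\ast} = -\|A\|_{\mathcal{L}} u^{\ast} \in \mathcal{K}$, putting $u^{\ast} \in \mathcal{K} \cap (-\mathcal{K}) = \{0\}$ and contradicting $\|u^{\ast}\| = 1$. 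So $u_+ \neq 0$ is an element of $\mathcal{K}$ with $A u_+ = \|A\|_{\mathcal{L}} u_+$; after normalization, $\mathscr{X}(A) \geq \|A\|_{\mathcal{L}}$, while Cauchy--Schwarz gives the reverse inequality. This proves part (i).

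For (ii), the plan is an extremal-ray argument: given two linearly independent unit eigenvectors $u_+, u_+' \in \mathcal{K}$ at $\mathscr{X}(A)$, the scalar $c^{\ast} := \sup\{c \geq 0 : u_+ - c u_+' \in \mathcal{K}\}$ is finite and positive by the pointedness $\mathcal{K} \cap (-\mathcal{K}) = \{0\}$, and $u_+ - c^{\ast} u_+'$ is a nonzero boundary eigenvector in $\mathcal{K}$; iterating this construction together with $\mathcal{K}^{\bot} = \{0\}$ should force the eigenspace to collapse to a single ray. The hardest step overall is the power-iteration step of (i), where both the compactness of $A$ (for geometric decay of the subdominant spectrum) and the reproducing hypothesis (to secure a starting vector with nonzero top component) must enter in tandem. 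For (ii) in full generality one genuinely needs some strict-positivity input on $A$, since simplicity can otherwise fail (e.g.\ $A = I$ on $\mathbb{R}^2$ with first-quadrant $\mathcal{K}$), which I would draw from the specific structure of the EWT transfer operator studied in Section \ref{sec:PFeigenvalue}.
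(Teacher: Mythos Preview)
The paper does not prove Theorem~\ref{thm:optth3}; it is quoted as a background result from Toland~\cite{Toland1996}, and the paper explicitly announces that it will instead ``rederive the main theorems presented in this section using a probabilistic approach'' for the concrete integral operator $H_1$ of Section~\ref{sec:PFeigenvalue}. There the paper builds a Markov kernel $p(x,y)=h_1(x,y)g_2(y)f_0(y)/(\beta_0 f_0(x))$ from a candidate eigenfunction $f_0$, checks Baxendale's drift/minorization conditions, and reads off from geometric ergodicity that $\beta_0$ dominates every other eigenvalue in magnitude and that $f_0$ is the unique non-negative eigenfunction (Theorem~\ref{thm:f&beta}).

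Your argument for (i) is correct and is a genuinely different, purely spectral route: power-iterate $B=A^2$ to land a top eigenvector inside $\mathcal{K}$ (the hypothesis $\mathcal{K}^\bot=\{0\}$ is exactly what rules out $P^\ast\mathcal{K}=\{0\}$), then split along the $\pm\norm{A}_{\mathcal L}$ eigenspaces of $A$ and use $\mathcal{K}\cap(-\mathcal{K})=\{0\}$ to force the positive piece $u_+$ to be nonzero. This is cleaner and works in the stated generality, whereas the paper's Markov-chain machinery is tailored to the specific kernel but delivers the quantitative rates it needs downstream.

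On (ii) you have correctly put your finger on a real gap in the \emph{statement} as transcribed in the paper: your example $A=I$ on $\mathbb{R}^2$ with $\mathcal{K}$ the closed first quadrant satisfies every listed hypothesis yet has $\mathscr{X}(A)=1$ of multiplicity two, so simplicity cannot follow without an additional irreducibility or strict-positivity assumption (which Toland's original formulation presumably carries). Your extremal-ray sketch is therefore incomplete in the stated generality, but for the right reason. The paper's own simplicity argument never passes through the abstract theorem; it exploits the concrete kernel $\min(x,z)g_2(z)$, whose strict positivity on $(0,\infty)^2$ feeds the ergodicity argument in Theorem~\ref{thm:f&beta} and supplies precisely the missing ingredient you flagged.
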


\section{EWT is a Weak Limit, Proof of Theorem~\ref{thm:weaklimit}} \label{appendix:pfconv}
Before presenting the proof, we revisit some basic properties of the order statistics of $n$ independent and identically distributed random variables.
\begin{lemma} \label{lem:orderstat}
	Let $\{X_i\}_{i=1}^m$ denote a set of {\em i.i.d.} random variables. Let $F(\cdot)$ and $f(\cdot)$ represent the cumulative distribution function and probability density function of $X_1$, respectively. Consider the order statistics of $\{X_i\}_{i=1}^m$ and denote it by $\{X^{(i)}\}_{i=1}^m$. For every $x_1\leq x_2\leq \dots\leq x_n$ and $l\leq m$, we have
	\begin{align}
	&f_{X^{(1)},X^{(2)},\dots,X^{(l)}}(x_1,x_2,\dots,x_l) = l! {m \choose l} \times \prod_{i=1}^{l} f(x_i) \times \left(1-F(x_l)\right)^{m-l}, \allowdisplaybreaks\\
	&f_{X^{(l)},X^{(l+1)},\dots,X^{(m)}}(x_{l},x_{l+1},\dots,x_m) = (m-l+1)! {m \choose m-l+1} \times \prod_{i=l}^{m} f(x_i) \times \left(F(x_l)\right)^{l-1}, \allowdisplaybreaks\\
	&f_{X^{(l)}}(x_l) = l{m \choose l} \times f(x_l) \times \left(F(x_l)\right)^{l-1} \times \left(1-F(x_l)\right)^{m-l}, \allowdisplaybreaks\\
	&f_{X^{(1)},X^{(2)},\dots,X^{(l-1)}|X^{(l)}}(x_1,x_2,\dots,x_{l-1}|x_l) = (l-1)! \frac{\prod_{i=1}^{l-1}f(x_i)}{\left(F(x_l)\right)^{l-1}},\allowdisplaybreaks\\
	&f_{X^{(l+1)},X^{(l+2)},\dots,X^{(m)}|X^{(l)}}(x_{l+1},x_{l+2},\dots,x_m|x_l) = (m-l)! \frac{\prod_{i=l+1}^{m}f(x_i)}{\left(1-F(x_l)\right)^{m-l}}.\allowdisplaybreaks\\
	\end{align}
\end{lemma}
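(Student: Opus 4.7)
All five identities follow from the single master formula for the joint density of the full vector of order statistics,
\begin{align*}
f_{X^{(1)},\ldots,X^{(m)}}(x_1,\ldots,x_m) = m!\prod_{i=1}^m f(x_i)\qquad (x_1\le x_2\le\cdots\le x_m),
\end{align*}
which I would establish first by the standard permutation argument: the unordered sample $(X_1,\ldots,X_m)$ has joint density $\prod_i f(x_i)$ on all of $\mathbb{R}^m$, and the map to the ordered tuple is $m!$-to-one on the open simplex (ties occur with probability $0$ since $F$ is absolutely continuous). The factor $m!$ is exactly the Jacobian of this quotient.

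With this in hand, the two joint-marginal formulas are obtained by integrating out the complementary coordinates. For the first identity, I would integrate the master formula over $x_{l+1}\le x_{l+2}\le\cdots\le x_m$ with $x_{l+1}\ge x_l$, and use the elementary identity
\begin{align*}
\int_{x_l\le y_1\le\cdots\le y_{m-l}} \prod_{j=1}^{m-l} f(y_j)\,dy_1\cdots dy_{m-l}= \frac{[1-F(x_l)]^{m-l}}{(m-l)!},
\end{align*}
which itself follows from the fact that the ordered simplex occupies a $1/(m-l)!$ fraction of the product region. Combined with $m!/(m-l)! = l!\binom{m}{l}$, this gives the first formula. The second formula is symmetric: integrate over $x_1\le\cdots\le x_{l-1}\le x_l$ and use $\int_{y_1\le\cdots\le y_{l-1}\le x_l}\prod_j f(y_j)\,dy = [F(x_l)]^{l-1}/(l-1)!$, yielding the prefactor $(m-l+1)!\binom{m}{m-l+1}$.

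The marginal density of $X^{(l)}$ is then obtained by applying both integration identities simultaneously, isolating the single coordinate $x_l$: the left tail contributes $[F(x_l)]^{l-1}/(l-1)!$, the right tail contributes $[1-F(x_l)]^{m-l}/(m-l)!$, and multiplying by $m!\,f(x_l)$ gives $l\binom{m}{l}f(x_l)[F(x_l)]^{l-1}[1-F(x_l)]^{m-l}$. Finally, the two conditional densities follow immediately from the definition $f_{A\mid X^{(l)}}(a\mid x_l) = f_{A,X^{(l)}}(a,x_l)/f_{X^{(l)}}(x_l)$, by dividing the first (resp.\ second) joint-density formula by the marginal and canceling the factors $f(x_l)$, $\binom{m}{l}$ (resp.\ $\binom{m}{m-l+1}$), and one of the tail probabilities. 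There is no real obstacle here — the whole lemma is bookkeeping around the single permutation identity — so the proposal is simply to state the master formula, verify it by the change-of-variables/permutation argument, and then execute the four integrations and two divisions described above.
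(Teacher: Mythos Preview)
Your proposal is correct and is the standard derivation. The paper itself does not supply a proof of this lemma: it is stated in the appendix as a basic fact about order statistics and then used without further justification. Your permutation/master-formula approach, followed by integrating over the appropriate ordered simplices and dividing for the conditionals, is exactly the textbook route and would serve perfectly well here.
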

\begin{corollary}\label{cor:randorderstat}
	Let $\{Y_i\}_{i=1}^{l-1}$ denote a random permutation of $\{X^{(i)}\}_{i=1}^{l-1}$, i.e., pick a permutation $\sigma\in S_{l-1}$ uniformly at random and set $Y_i = X^{(\sigma(i))}$ for all $1\leq i\leq l-1$. Similarly, let $\{Z_i\}_{i=l+1}^{m}$ denote a random permutation of $\{X^{(i)}\}_{i=l+1}^{m}$. Then we have
	\begin{align}
	&f_{Y_1,Y_2,\dots,Y_{l-1}|X^{(l)}}(y_1,y_2,\dots,y_{l-1}|x_l) = \frac{\prod_{i=1}^{l-1}f(y_i)}{\left(F(x_l)\right)^{l-1}},\allowdisplaybreaks\\
	&f_{Z_{l+1},Z_{l+2},\dots,Z_m|X^{(l)}}(z_{l+1},z_{l+2},\dots,z_m|x_l) = \frac{\prod_{i=l+1}^{m}f(z_i)}{\left(1-F(x_l)\right)^{m-l}}.\allowdisplaybreaks\\
	\end{align}
	Moreover, $\{Y_i\}_{i=1}^{l-1}$ are identically distributed and conditioned on $X^{(l)}$, they are independent. Same holds for $\{Z_i\}_{i=l+1}^{m}$:
	\begin{align}
	&f_{Y_i|X^{(l)}}(y_i|x_l) = \frac{f(y_i)}{F(x_l)}\qquad\qquad&\forall i \leq l-1\allowdisplaybreaks\\
	&f_{Z_i|X^{(l)}}(y_i|x_l) = \frac{f(z_i)}{1-F(x_l)}\qquad\qquad&\forall i \geq l+1\allowdisplaybreaks\\
	\end{align}
\end{corollary}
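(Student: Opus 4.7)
The approach is an elementary change-of-variables argument that averages over the uniform permutation $\sigma$ applied to the lower order statistics, and then does the symmetric thing for the upper order statistics. Since Lemma~\ref{lem:orderstat} already supplies the joint conditional density of $(X^{(1)},\dots,X^{(l-1)})$ given $X^{(l)}=x_l$, the only genuinely new content here is accounting for $\sigma$ correctly. The plan is to write down the joint conditional density of $(Y_1,\dots,Y_{l-1},\sigma)$ given $X^{(l)}=x_l$, marginalize out $\sigma$, and observe that the resulting density factorizes, which simultaneously establishes the joint formula, conditional independence, and the common marginal.

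First I would fix $x_l$ and a generic test point $(y_1,\dots,y_{l-1})$ whose coordinates are all distinct and bounded by $x_l$ (which is the almost-sure case since $F$ has a density). Because $\sigma$ is drawn uniformly on $S_{l-1}$ independently of $(X^{(1)},\dots,X^{(l-1)})$, and because the map $(x_1,\dots,x_{l-1})\mapsto(x_{\sigma(1)},\dots,x_{\sigma(l-1)})$ is a measure-preserving bijection, the joint conditional density of $(Y_1,\dots,Y_{l-1},\sigma)$ given $X^{(l)}=x_l$ is
\[
\tfrac{1}{(l-1)!}\, f_{X^{(1)},\dots,X^{(l-1)}\mid X^{(l)}}\!\bigl(y_{\sigma^{-1}(1)},\dots,y_{\sigma^{-1}(l-1)}\mid x_l\bigr).
\]
Using Lemma~\ref{lem:orderstat}, this equals $\prod_{i=1}^{l-1} f(y_i)/[F(x_l)]^{l-1}$ whenever $y_{\sigma^{-1}(1)}\le\dots\le y_{\sigma^{-1}(l-1)}$ and zero otherwise, because the numerator $\prod_i f(y_i)$ is already symmetric.

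Next I would marginalize over $\sigma$. For the given $(y_1,\dots,y_{l-1})$ there is exactly one $\sigma\in S_{l-1}$ that makes $\bigl(y_{\sigma^{-1}(1)},\dots,y_{\sigma^{-1}(l-1)}\bigr)$ increasing (namely the unique permutation whose inverse is the sorting permutation of $y$). Hence summing over $S_{l-1}$ picks up a single nonzero term, giving
\[
f_{Y_1,\dots,Y_{l-1}\mid X^{(l)}}(y_1,\dots,y_{l-1}\mid x_l)=\frac{1}{(l-1)!}\cdot(l-1)!\cdot\frac{\prod_{i=1}^{l-1} f(y_i)}{[F(x_l)]^{l-1}}=\frac{\prod_{i=1}^{l-1} f(y_i)}{[F(x_l)]^{l-1}},
\]
which is the desired joint conditional density. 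The product structure then yields both conditional independence of the $Y_i$'s given $X^{(l)}$ and their common marginal $f(y_i)/F(x_l)$ on $(-\infty,x_l]$.

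An identical argument, but starting from the second identity in Lemma~\ref{lem:orderstat} for $(X^{(l+1)},\dots,X^{(m)})$ given $X^{(l)}=x_l$ and using $[1-F(x_l)]^{m-l}$ in the denominator, delivers the companion statement for $(Z_{l+1},\dots,Z_m)$. There is no substantive obstacle: the one subtle point to spell out is that the density $\prod f(y_i)$ is symmetric in its arguments, which is precisely what allows the $(l-1)!$ from the order-statistics formula to cancel against the $1/(l-1)!$ coming from the uniform law on $S_{l-1}$.
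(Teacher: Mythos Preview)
The paper does not supply a proof of this corollary at all; it is stated as an immediate consequence of Lemma~\ref{lem:orderstat} and left to the reader. Your argument is correct and is exactly the standard derivation one would give: average the conditional order-statistics density over the uniform permutation, use symmetry of $\prod_i f(y_i)$, and observe that for distinct $y_i$'s exactly one $\sigma$ contributes, so the $(l-1)!$ from Lemma~\ref{lem:orderstat} cancels against the $1/(l-1)!$ from the uniform law on $S_{l-1}$.
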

\begin{corollary}\label{cor:exprrderstat}
	Let $\{X_i\}_{i=1}^m$ be independent exponentially distributed random variables with mean $n$. Consider the random variables $\{Y_i\}_{i<l}$ and $\{Z_i\}_{i>l}$ as are defined in Corollary~\ref{cor:randorderstat}. Then, the conditional distribution of these random variables are given as follows:
	\begin{align}
	&f_{X^{(i)}}(x_i) = i{n \choose i} \times (1 - \eexp^{-\frac{x_i}{n}})^{i-1} \times \frac{1}{n}\eexp^{-(n-i+1)\frac{x_i}{n}} \xrightarrow{n\to\infty} \frac{\eexp^{-x_i}{x_i}^{i-1}}{(i-1)!}\qquad&\forall i\in[n]\allowdisplaybreaks\\
	&f_{Y_i|X^{(l)}}(y_i|x_l) = \frac{\frac{1}{n}\eexp^{-y_i/n}}{1-\eexp^{-x_l/n}}\xrightarrow{n\to\infty} \frac{1}{x_l}\qquad\qquad&\forall i\leq l-1\allowdisplaybreaks\\
	&f_{Z_{i}|X^{(l)}}(z_i|x_l) = \frac{\frac{1}{n}\eexp^{-z_i/n}}{\eexp^{-x_l/n}}=\frac{1}{n}\eexp^{-(z_i-x_l)/n}\qquad\qquad&\forall i\geq l+1
	\end{align}
	Most notably, the conditional distribution of $Y_i$ for $i<l$ conditioned on $X^{(l)}=x_l$ converges to the uniform distribution over $[0,x_l]$, as $n$ goes to infinity. Moreover, the distribution of $X^{(i)}$ converges to $\erlangdist(i)$.
\end{corollary}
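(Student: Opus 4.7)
The plan is to obtain all three displayed identities by direct substitution into the general formulas from Lemma~\ref{lem:orderstat} and Corollary~\ref{cor:randorderstat}, specialized to $F(x) = 1 - \eexp^{-x/n}$ and $f(x) = \frac{1}{n}\eexp^{-x/n}$ (and with the sample size $m$ renamed to $n$), and then to pass to the limit $n\to\infty$ using elementary asymptotic expansions.

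First, for the density of $X^{(i)}$, I would plug the exponential CDF and PDF into the single-order-statistic formula $f_{X^{(i)}}(x) = i\binom{n}{i} f(x) F(x)^{i-1}(1-F(x))^{n-i}$. This yields the closed-form expression stated in the corollary without further work. To verify the limit, I would rewrite $i\binom{n}{i} = n!/((i-1)!(n-i)!)$ and observe that as $n\to\infty$ with $i$ fixed, $n!/(n-i)! \sim n^i$, $(1-\eexp^{-x/n})^{i-1} \sim (x/n)^{i-1}$, and $\eexp^{-(n-i+1)x/n} \to \eexp^{-x}$; multiplying these together the powers of $n$ cancel exactly and the limit is $\eexp^{-x}x^{i-1}/(i-1)!$, the $Erlang(i)$ density with rate $1$. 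A short argument showing uniformity on compacts (via dominated convergence or explicit two-sided bounds on $1-\eexp^{-x/n}$) confirms that this is a genuine convergence of densities.

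Next, for the conditional densities of $Y_i$ and $Z_i$, I would invoke Corollary~\ref{cor:randorderstat}, which gives $f_{Y_i\mid X^{(l)}}(y\mid x_l) = f(y)/F(x_l)$ for $y\le x_l$ and $f_{Z_i\mid X^{(l)}}(z\mid x_l) = f(z)/(1-F(x_l))$ for $z\ge x_l$. Substituting the exponential PDF and CDF gives the closed-form expressions in the corollary immediately. For the limit of $f_{Y_i\mid X^{(l)}}$, I would use $1-\eexp^{-x_l/n} = (x_l/n) + O(1/n^2)$ and $\eexp^{-y_i/n} \to 1$, so the ratio tends to $1/x_l$, the $\text{Uniform}[0,x_l]$ density. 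For $Z_i$, the identity $f(z)/(1-F(x_l)) = \frac{1}{n}\eexp^{-(z-x_l)/n}$ is just the memoryless property of the exponential distribution, and no further limit is needed.

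The only thing that requires care is the passage to the limit in the joint setting implied by the ``most notably'' comment: I would note that because the $Y_i$ are conditionally independent given $X^{(l)}$ (by Corollary~\ref{cor:randorderstat}), the convergence of the one-dimensional marginals extends to joint convergence of the conditional law to $\text{Uniform}[0,x_l]^{\otimes(l-1)}$, and similarly the $Z_i$ become conditionally i.i.d.\ $\text{Exp}(1/n)$-distributed on $[x_l,\infty)$ by the memoryless property. There is no conceptual obstacle here; the entire proof is a bookkeeping exercise combining the plug-in step with standard expansions of $\eexp^{-x/n}$ and of $n!/(n-i)!$, which is likely why the paper presents it as a corollary rather than a theorem.
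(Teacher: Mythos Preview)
Your proposal is correct and matches the paper's treatment: the paper presents this result as a corollary with no proof, since it follows by direct substitution of the exponential density and CDF into the formulas of Lemma~\ref{lem:orderstat} and Corollary~\ref{cor:randorderstat}, together with the elementary asymptotics you describe. Your observation about the implicit identification of the sample size $m$ with the exponential parameter $n$ is also apt, as the paper's statement conflates the two.
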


As we mentioned, $\expect U(N_n)$ is the law of $[N_{n,\circ}(r)]$ for a uniformly chosen $r\in[n]$. The idea is to first define an exploration process over $K_n$ that realizes the connected component of the vertex $r$ in $N_n$. Then, we show that the connected component is locally tree-like, and the distribution of the connected component up to any finite time step of the exploration process converges to $\ertreedist(P)$. Finally, using the Portmanteau Theorem, we prove $\expect U(N_n) \xrightarrow{w} \ertreedist(P)$.\\
\smallskip\\
{\bf Step 1: Exploration Process}\\
The first step is to define a process that explores $K_n$ and realizes the connected component of a randomly selected vertex $r\in[n]$ in $N_n$. Let $E_n = \left\{\{i,j\}:i\neq j\in[n] \right\}$ denote the set of all edges in $K_n$. In order to track the process, we also construct a map $\phi$ from $\mathbb{N}^f$ to the connected component of $r$ and a fictitious vertex. In particular, $\phi$ maps $\mathbb{N}^f\setminus \{\phi^{-1}(v):\text{ $v$ in connected component of $r$ }\}$ to the fictitious vertex. The exploration is on $E_n$ and the cost of edges in $E_n$; at each step of the exploration process, $E_n$ is partitioned into five sets, defined as follows:
\begin{align}
&\At_t =\left\{\left(\{i,j\},C_n(\{i,j\})\right):\text{$\{i,j\}$ is active at time $t$}\right\}\allowdisplaybreaks\\
&\Ct_t=\left\{\left(\{i,j\},C_n(\{i,j\})\right):\text{$\{i,j\}$ belongs to the connected component at time $t$}\right\} \allowdisplaybreaks\\
&\Dt_t=\left\{\left(\{i,j\},C_n(\{i,j\})\right):\text{$\{i,j\}$ does not belong to the connected component at time $t$}\right\} \allowdisplaybreaks\\
&\Rt_t=\left\{\left(\{i,j\},C_n(\{i,j\})\right):\text{the cost of the non-active edge $\{i,j\}$ has been realized by time $t$}\right\} \allowdisplaybreaks\\
&\Ut_t = \left\{\{i,j\}:\text{the cost of the edge $\{i,j\}$ has not been realized by time $t$}\right\} \allowdisplaybreaks
\end{align}
\begin{remark}
	During the proof, we may abuse the notation by saying $\{i,j\}\in \At_t$ without including the cost. Even though $\At_t$ is a set of edges and their costs, we say $i\in \At_t$ (notationally), if there is a vertex $j\in[n]$ such that $\{i,j\}\in \At_t$. Finally, we say a vertex $i\in[n]$ has been explored by time step $t$, if both the threshold of $i$, $\thresh_i$ given by \cref{eq:thresh}, and the set of potential neighbors of $i$, $\potdeg_i$ given by \cref{eq:potneigh}, have been realized.
\end{remark}
\begin{remark}\label{rem:propset}
	The partition of $E_n$ at time $t$ satisfies the following properties:
	\begin{enumerate} []
		\item {$\At_t$}: The set of active edges, $\At_t$, consists of all the edges $\{v,z\}$ such that: $i)$ The cost of $\{v,z\}$ has been realized; $ii)$ Exactly one of $v$ or $z$ (but not both) belongs to the connected component at time $t$; and $iii)$ If $\potdeg_v$ has been realized, then $z\in \potdeg_v$. If $\potdeg_z$ has been realized, then $v\in \potdeg_z$.
		\item {$\Ct_t$}: The set of voted-in edges, $\Ct_t$, consists of all the edges $\{v,z\}$ such that: $i)$ The cost of $\{v,z\}$ has been realized; $ii)$ The vertices $v$ and $z$ belong to the connected component at time $t$; and $iii)$ Each vertex is a potential neighbor of the other, i.e., $z\in \potdeg_v$ and $v\in \potdeg_z$.
		\item {$\Dt_t$}: The set of erased edges, $\Dt_t$, consists of all the edges $\{v,z\}$ such that: $i)$ The cost of $\{v,z\}$ has been realized; and $ii)$ If only $\potdeg_v$ ($\potdeg_z$) has been realized, then $z\notin \potdeg_v$ ($v\notin \potdeg_z$); if $\potdeg_v$ and $\potdeg_z$ have been realized, then either $z\notin \potdeg_v$ or $v\notin \potdeg_z$ (or both).
		\item {$\Rt_t$}: The set of realized edges, $\Rt_t$, consists of all the edges $\{v,z\}$ such that: $i)$ The cost of $\{v,z\}$ has been realized; $ii)$ Neither $v$ nor $z$ belongs to the connected component at time $t$; and $iii)$ If $\potdeg_v$ has been realized, then $z\in \potdeg_v$; if $\potdeg_z$ has been realized, then $v\in \potdeg_z$.
		\item {$\Ut_t$}: The set of unrealized edges, $\Ut_t$, consists of all the edges $\{v,z\}$ such that the cost of $\{v,z\}$ has not been realized.
	\end{enumerate}
\end{remark}
\begin{remark}
	At each step of the exploration process, we may add at most one vertex to the connected component of $r$. Moreover, if the vertex $v$ is added to the connected component at time $t+1$, i.e., $v\in \Ct_{t+1}$, then $v$ is active at time $t$, i.e., $v\in \At_t$ and the exploration process explores an edge $\{j,v\}$ such that $j\in \Ct_t$.
\end{remark}
\noindent \textbf{\textit{Exploration process details and an alternative viewpoint}}: The exploration process starts by realizing the sets for $t=0$. Set $\phi(r) = \root$ and define $v_0 \coloneqq r$ and $k \coloneqq d_r(n)$. Let $\thresh_0$ and $\potdeg_0$ denote the threshold and the set of potential neighbors of $v_0$, respectively. By definition, $\thresh_0$ and $\potdeg_0$ are given by
\begin{align}
\thresh_0 &= \text{${k+1}^{\mathrm{th}}$ smallest value in }\left\{C_{n}(\{v_0,j\}) : j\in[n] \setminus \{v_0\} \right\},\\
\potdeg_0 &= \{j\in[n]\setminus \{v_0\}:C_{n}(\{v_0,j\}) < \thresh_0 \}.
\end{align}
Next, we present an alternative way to realize $\thresh_0$ and $\potdeg_0$ without realizing the cost of $\{v_0,j\}$ for all $j\in[n] \setminus \{v_0\}$. This alternative construction of the finite graph is an essential part of the proof of the weak convergence result, and is used at all time steps $t\geq 0$ as well.\newline
Pick a vertex $z_0\in[n]\setminus \{v_0\}$ uniformly at random and assume the threshold of the vertex $v_0$ is equal to the cost of the edge $\{v_0,z_0\}$, i.e., $\thresh_0 = C_{n}(\{v_0,z_0\})$. Realize the value of $C_{n}(\{v_0,z_0\})$; according to Lemma~\ref{lem:orderstat}, the density function of $C_{n}(\{v_0,z_0\})$ is given by
\begin{align}
&f_{C_{n}(\{v_0,z_0\})}(w) = (k+1){n-1 \choose k+1} \times \frac{1}{n}\eexp^{-w(n-k-1)/n} \times (1-\eexp^{-w/n})^{k}.
\end{align}
Next, pick $\It_0=\{z_1,z_2,\dots,z_{k}\}$, a subset of size $k$, from $[n]\setminus(\{z_0\}\cup \{v_0\})$ uniformly at random and assume $\It_0$ is the set of potential neighbors of $v_0$, i.e., $\potdeg_0 = \It_0$. Pick a permutation $\permt_0$ over $[|\It_0|]$ uniformly at random and for all $i\in[k]$ define $\phi(z_i) = \permt_0(i)$. Realize the values of $\left\{C_{n}(\{v_0,z_i\})\right\}_{i=1}^k$; by Corollary~\ref{cor:randorderstat}, the conditional joint density function of these random variables is given by
\begin{align}
&f_{C_{n}(\{v_0,z_{1}\}),C_{n}(\{v_0,z_{2}\}),\dots,C_{n}(\{v_0,z_k\})|\thresh_0}(w_1,w_2,\dots,w_k|w_0) = \frac{\prod_{i=1}^{k}\frac{1}{n}\eexp^{-w_i/n}}{(1-\eexp^{-w_0/n})^k}.\allowdisplaybreaks
\end{align}
Start the exploration process with
\begin{subequations}
\begin{align}
\At_0 &= \left\{\left(\{v_0,j\},c_n\left(\{v_0,j\}\right)\right):j\in \potdeg_0 \right\}\label{eq:prepA0},\allowdisplaybreaks\\
\Ct_0 &= \{\}\label{eq:prepC0},\allowdisplaybreaks\\
\Dt_0 &= \{\left(\{v_0,z_0\},c_n\left(\{v_0,z_0\}\right)\right)\}\label{eq:prepD0},\allowdisplaybreaks\\
\Rt_0 &= \{\} \label{eq:prepR0},\allowdisplaybreaks\\
\Ut_0 &= E_n \setminus \left\{\{v_0,j\}:j\in \It_0 \cup \{z_0\}\right\}.\label{eq:prepU0}
\end{align}
\end{subequations}
The description of the above equations is as follows:
\begin{enumerate}[]
	\item Equation \cref{eq:prepA0}: The vertex $v_0$ is the root of the connected component. All the potential neighbors of $v_0$ are included in $\At_0$.
	\item Equation \cref{eq:prepC0}: Although the vertex $v_0$ is the root of the connected component, there is no edge in the connected component yet; hence, the set $\Ct_0$ is set to be empty at the initial stage.
	\item Equation \cref{eq:prepD0}: The connection $\{v_0,z_0\}$ determines the threshold of the vertex $v_0$; hence, the vertex $z_0\notin \potdeg_0$ and the edge $\{v_0,z_0\}$ does not survive.
	\item Equation \cref{eq:prepR0}: The vertex $v_0$ is the root of the connected component; hence, none of the edges of form $\{v_0,z\}$ belongs to $\Rt_0$. The set $\Rt_0$ is set to be empty at the initial stage.
	\item Equation \cref{eq:prepU0}: All the edges $\{v_{0},j\}$ such that $C_n{\{v_{0},j\}}$ has been realized are removed from $E_n$ to construct $\Ut_0$.
\end{enumerate}
Figure~\ref{fig:Prep} depicts the preparation step for the exploration process. Define $\widehat{\thresh}_0$ to be equal to $\thresh_0$. These two values might be different for $t>0$. The definition and the role of $\widehat{\thresh}_t$ will become clear later on. \newline
\begin{figure}
	\centering
	\begin{subfigure}[t]{.3\linewidth}
	\centering
	\includegraphics*[width = \textwidth]{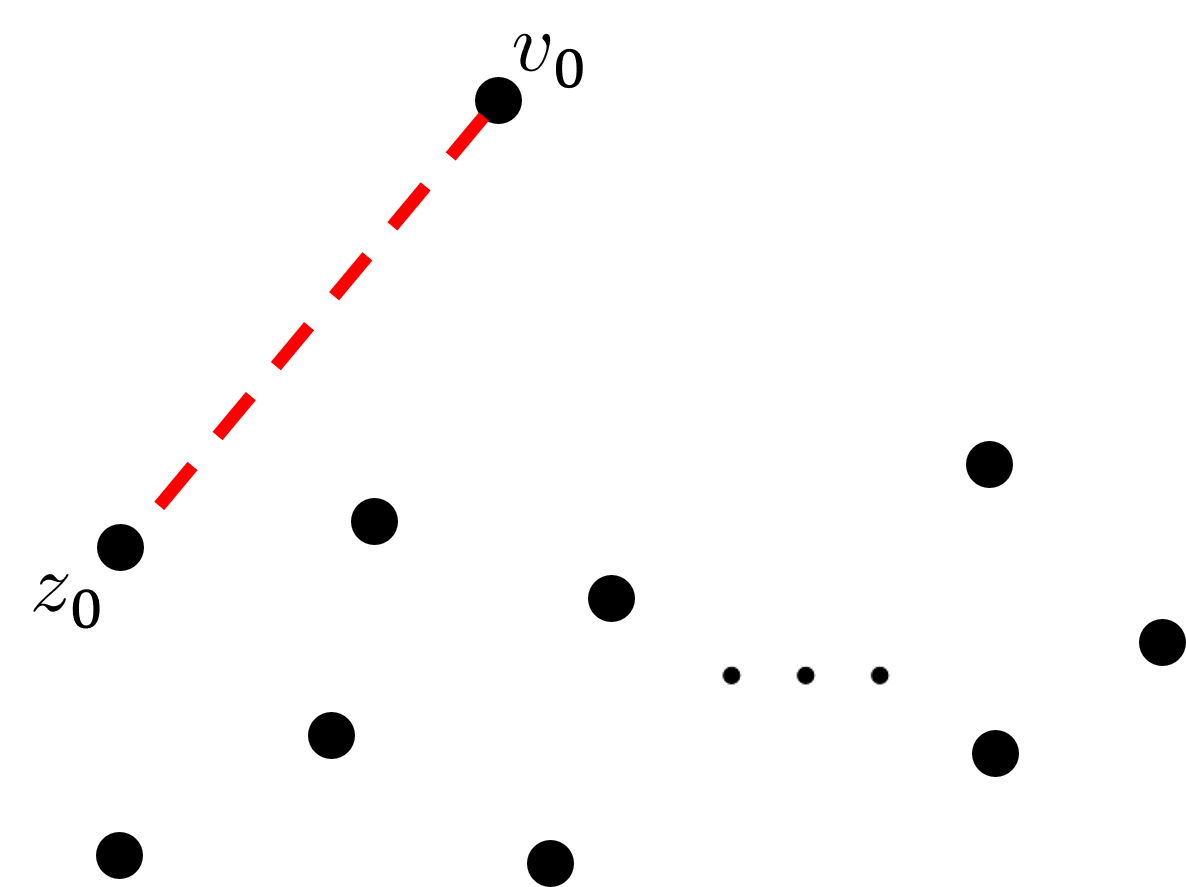}
	\caption{Set $\phi(v_0) = \root$, pick $z_0$ uniformly at random, realize cost of $\{v_0,z_0\}$ such that $\thresh_0 = C_n(\{v_0,z_0\})$}
\end{subfigure}\hfill
\begin{subfigure}[t]{.3\linewidth}
	\centering
	\includegraphics*[width = \textwidth]{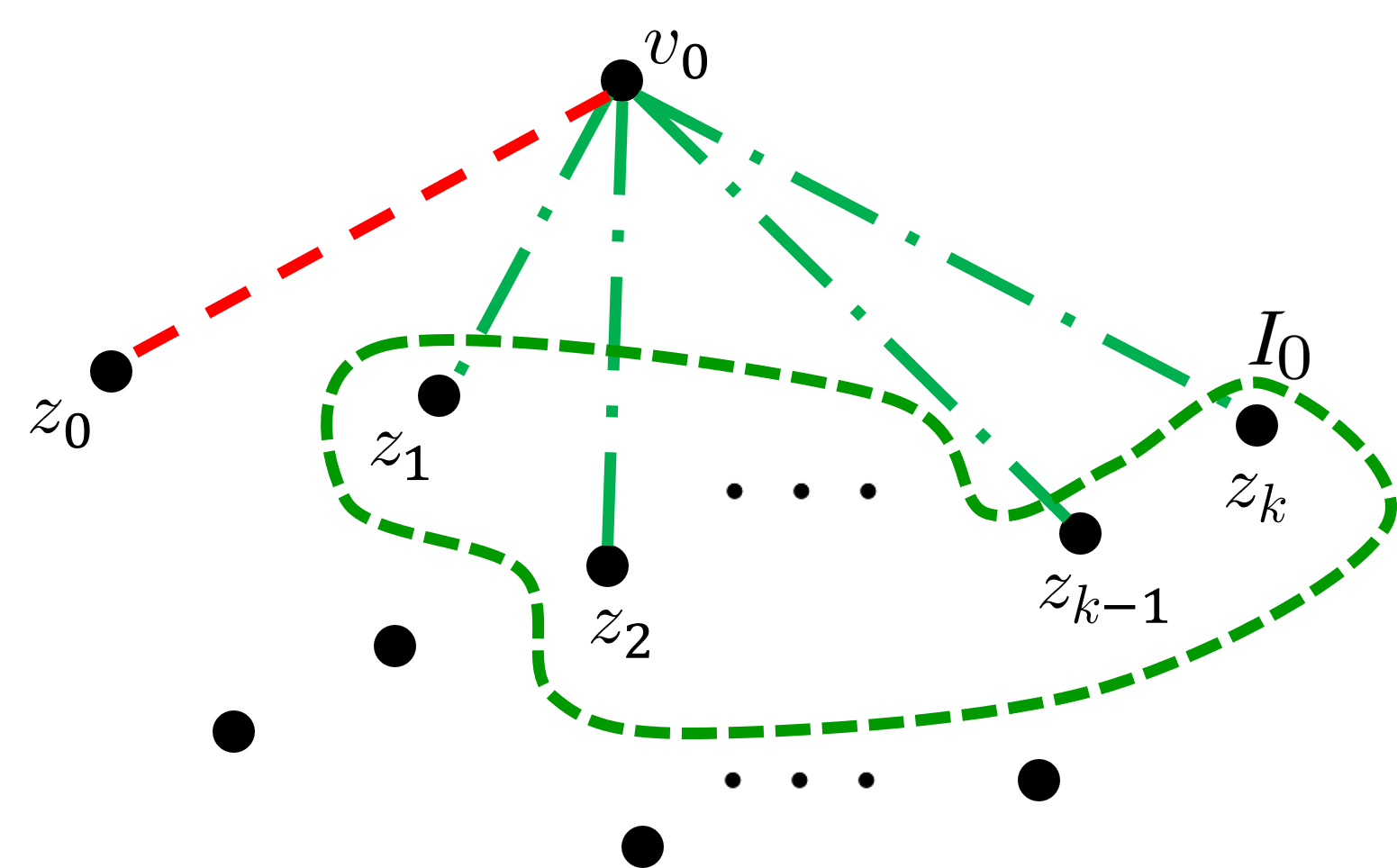}
	\caption{Pick $\It_0 = \{z_1,z_2,\dots,z_k\}$ uniformly at random. For all $i\in[k]$, set $\phi(z_i) = \permt_0(i)$ and realize the cost of $\{v_0,z_i\}$ such that $\potdeg_0=\It_0$}
	\end{subfigure}\hfill
	\begin{subfigure}[t]{.3\linewidth}
		\centering
		\includegraphics*[width = \textwidth]{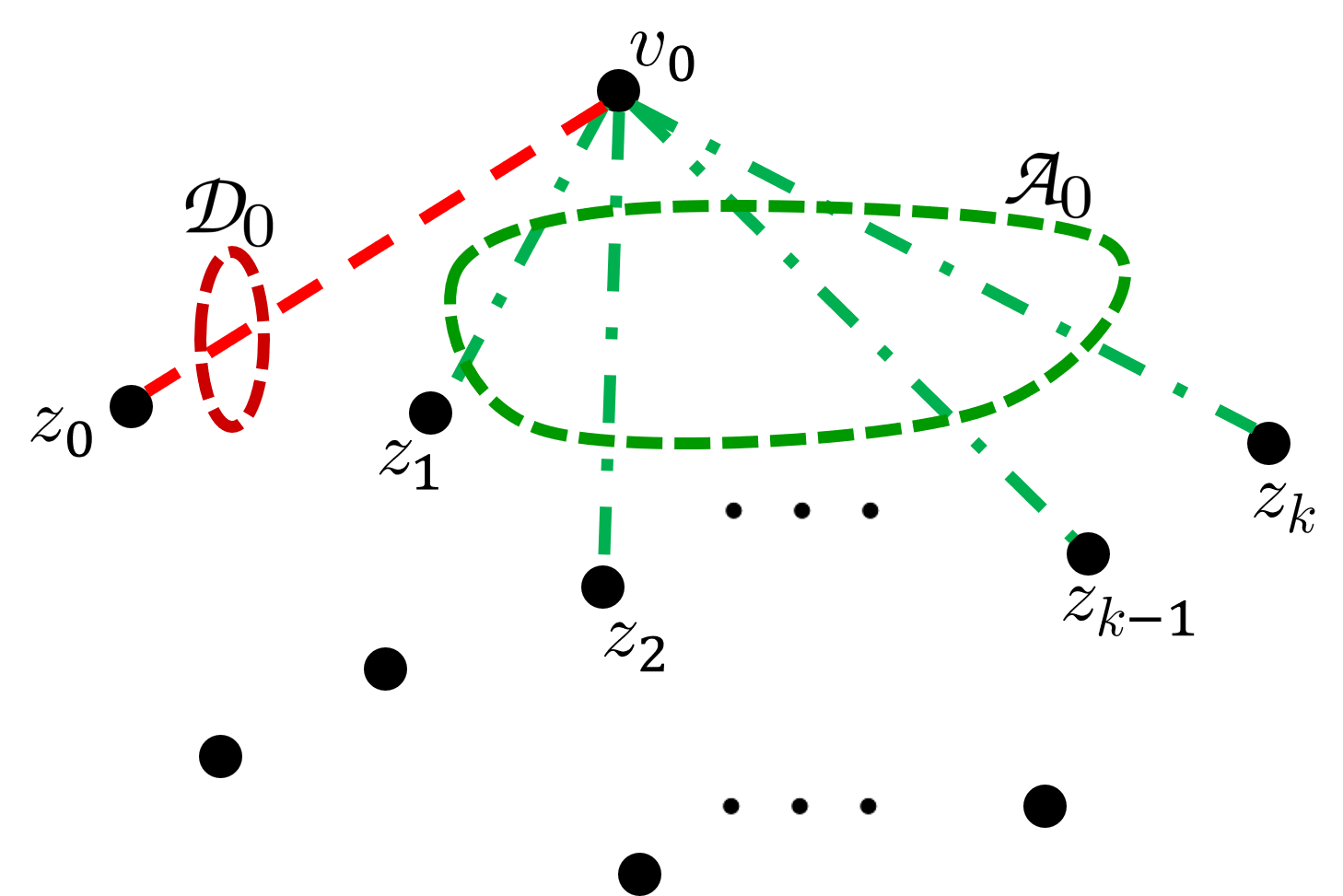}
		\caption{Initialize the sets $\At_0$ and $\Dt_0$. Set $\Ut_0 = E_n \setminus (\At_0\cup \Dt_0)$ and define $\Ct_0 =\Rt_0= \{\}$}
	\end{subfigure}
	\caption{Preparation step for the exploration process}
	\label{fig:Prep}
\end{figure}
Before proceeding with the exploration process, we need to define an order on $\mathbb{N}^f$: for two sequences $\boldsymbol{i} = (i_1,i_2,\dots,i_l)$ and $\boldsymbol{j} = (j_1,j_2,\dots,j_{l^\prime})$, we say $\boldsymbol{i} \prec \boldsymbol{j}$ if $l < l^\prime$ or $l = l^\prime$ and there exist some $g \in \mathbb{Z}_+$ such that $(i_1,i_2,\dots,i_{g-1})=(j_1,j_2,\dots,j_{g-1})$ and $i_g < j_g$.
\begin{remark}
	For the sake of notational simplicity, we denote the set of potential neighbors and the threshold of the vertex $v_t$ by $\potdeg_t$ and $\thresh_t$ instead of $\potdeg_{v_t}$ and $\thresh_{v_t}$. We may also use $\potdeg_j$ as the set of potential neighbors of the vertex $j$. The distinction is clear from the context.
\end{remark}
The exploration process for $t\geq 0$ is as follows; let $e_{t+1} = \{\phi^{-1}(\boldsymbol{i}),\phi^{-1}(\boldsymbol{j})\}\in \At_t$ such that $\boldsymbol{i}$ is minimal among $\{\phi(v):v\in \At_t\}$ and $\boldsymbol{j}$ is minimal among $\{\phi(z):\{\phi^{-1}(\boldsymbol{i}),z\}\in \At_t\}$. The choice of $e_{t+1}$ corresponds to the breadth-first search algorithm. As an example for $t=0$, the set $\{\phi(v):v\in \At_0\}$ equals to $\{\root,1,2,\dots,k\}$; hence $\boldsymbol{i} = \root$ and $\phi^{-1}(\root) = r$. Moreover, the set $\{\phi(z):\{\phi^{-1}(\root),z\}\in \At_t\}$ equals $\{1,2,\dots,k\}$; hence $\boldsymbol{j}=1$ and $\phi^{-1}(1) = z_{\permt_0^{-1}(1)}$. Hence, $e_{1} = \{\phi^{-1}(\root),\phi^{-1}(1)\} = \{r,z_{\permt_0^{-1}(1)}\}$.
\begin{remark}\label{rem:pari}
	Let $\phi(v) = (i_1,i_2,\dots,i_g)$ and define $par(v) \coloneqq \phi^{-1}(i_1,i_2,\dots,i_{g-1})$. The exploration process ensures that $par(v)$ belongs to the connected component of $r$; moreover, $par(v)$ is the first vertex in the connected component such that $v$ belongs to the set of the potential neighbors of $par(v)$, i.e., for every $z$ in the connected component if $v\in \potdeg_z$ then $par(v)$ is attached to the connected component before $z$. However, it is possible to have $\{par(v),v\}\in \Dt_t$ for some $t>0$, which is the case if $par(v)\notin \potdeg_v$ and the vertex $v$ has been explored by time step $t$. Still, $v$ may connect to the connected component through some other vertex $v^\prime$, i.e., $\{v^\prime,v\}\in \Ct_{t^\prime}$ for some $t^\prime > t$. Figure~\ref{fig:counterexp} illustrates such a situation, where $par(b) = r$, $\{par(b),b\}\in \Dt_2$ and $\{d,b\}\in \Ct_5$. Notice that the labeling is based on being a ``potential neighbor'' rather than being an actual neighbor.
\end{remark}
\begin{figure}[!ht]
	\centering
	\begin{subfigure}[t]{.3\linewidth}
	\centering
	\includegraphics*[width = \textwidth]{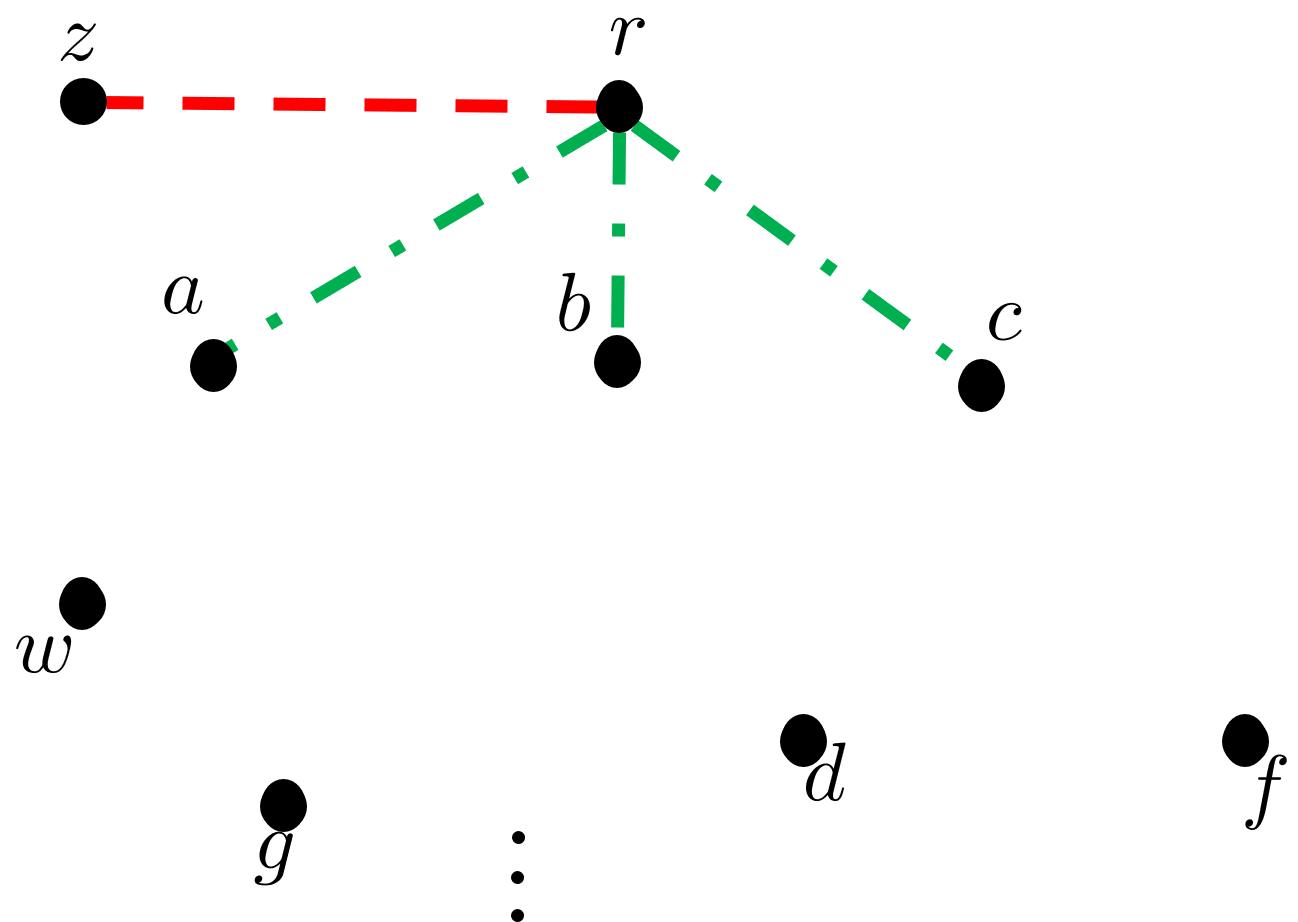}
	\caption*{(Prep): Set $\phi(r)=\root$. Realize $\potdeg_0 = \{a,b,c\}$ and $\thresh_0 = c_n(\{r,z\})$. Set $\phi(a) = (1)$, $\phi(b) = (2)$, $\phi(c) = (3)$.}
	\end{subfigure}\hfill
	\begin{subfigure}[t]{.3\linewidth}
		\centering
		\includegraphics*[width = \textwidth]{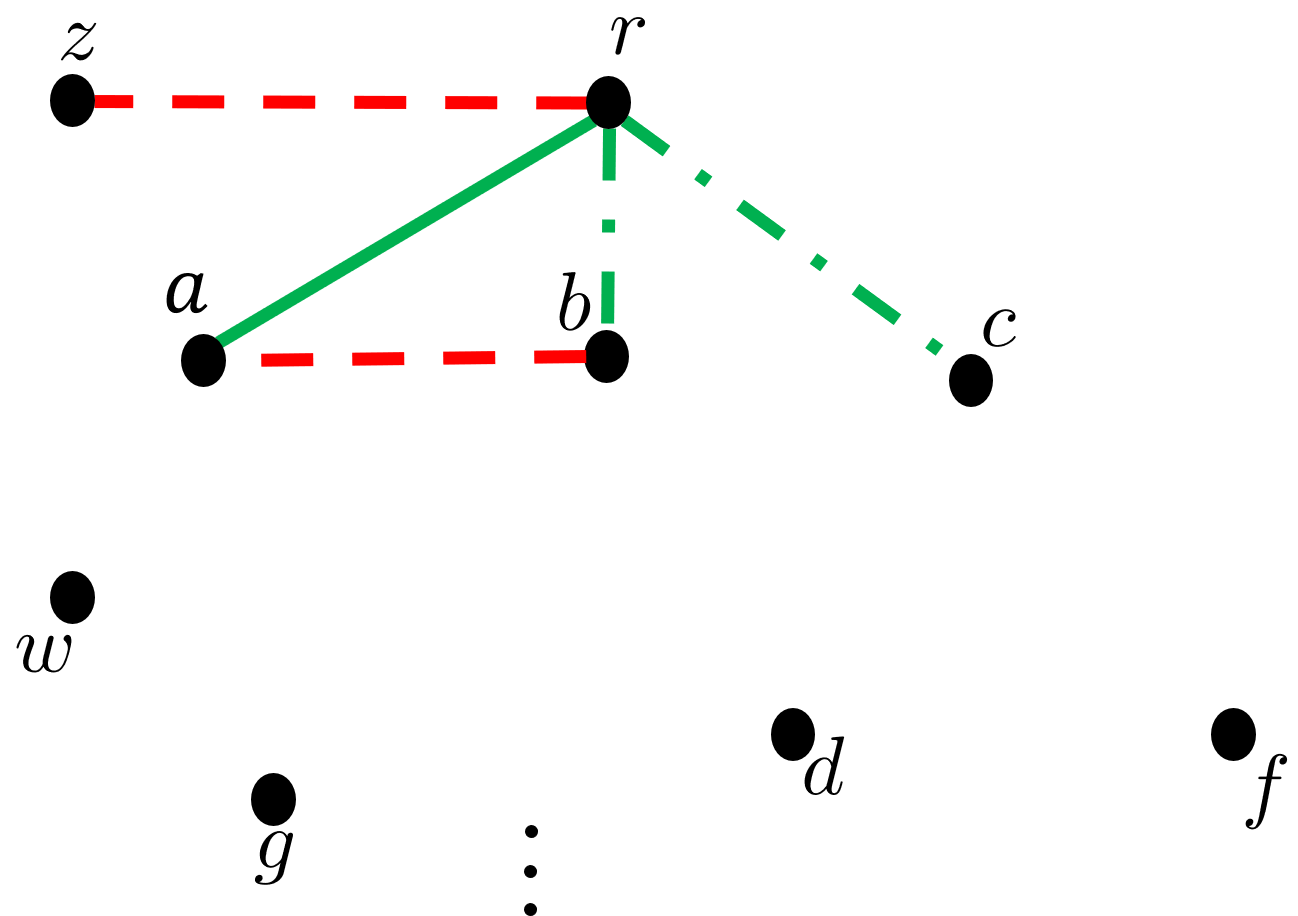}
		\caption*{($t=0$): Pick $e_1 = \{r,a\}$. Realize $\potdeg_1 = \{r\}$ and $\thresh_1 = c_n(\{a,b\})$.}
	\end{subfigure}\hfill
	\begin{subfigure}[t]{.3\linewidth}
		\centering
		\includegraphics*[width = \textwidth]{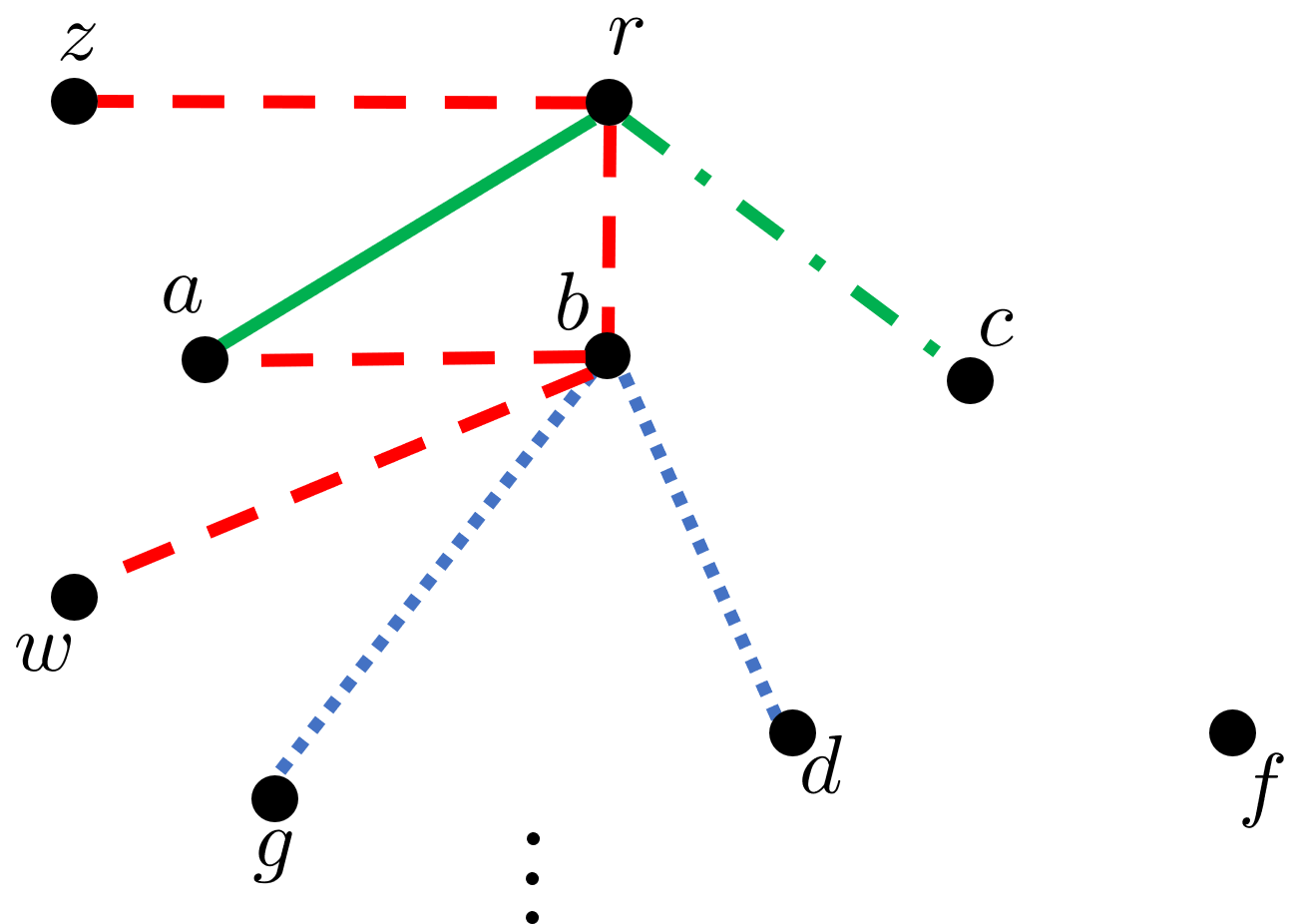}
		\caption*{($t=1$): Pick $e_2 = \{r,b\}$. Realize $\potdeg_2 = \{g,d\}$ and $\thresh_2 = c_n(\{b,w\})$.}
	\end{subfigure}
	\medskip
	\begin{subfigure}[t]{.3\linewidth}
		\centering
		\includegraphics*[width = \textwidth]{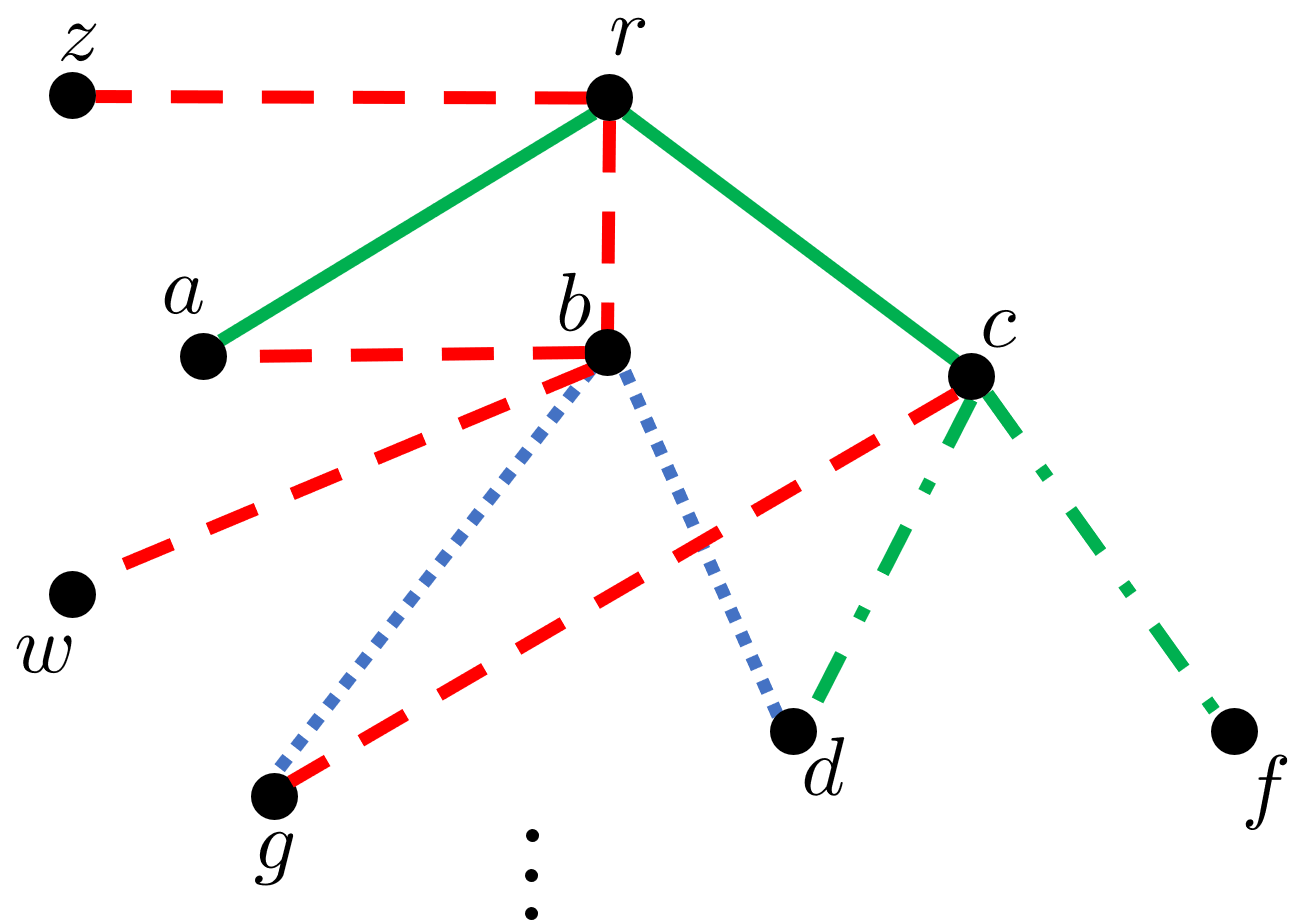}
		\caption*{($t=2$): Pick $e_3 = \{r,c\}$. Realize $\potdeg_3 = \{r,d,f\}$ and $\thresh_3 = c_n(\{c,g\})$. Set $\phi(d) = (3,1)$, $\phi(f) = (3,2)$.}
	\end{subfigure}\hfill
	\begin{subfigure}[t]{.3\linewidth}
		\centering
		\includegraphics*[width = \textwidth]{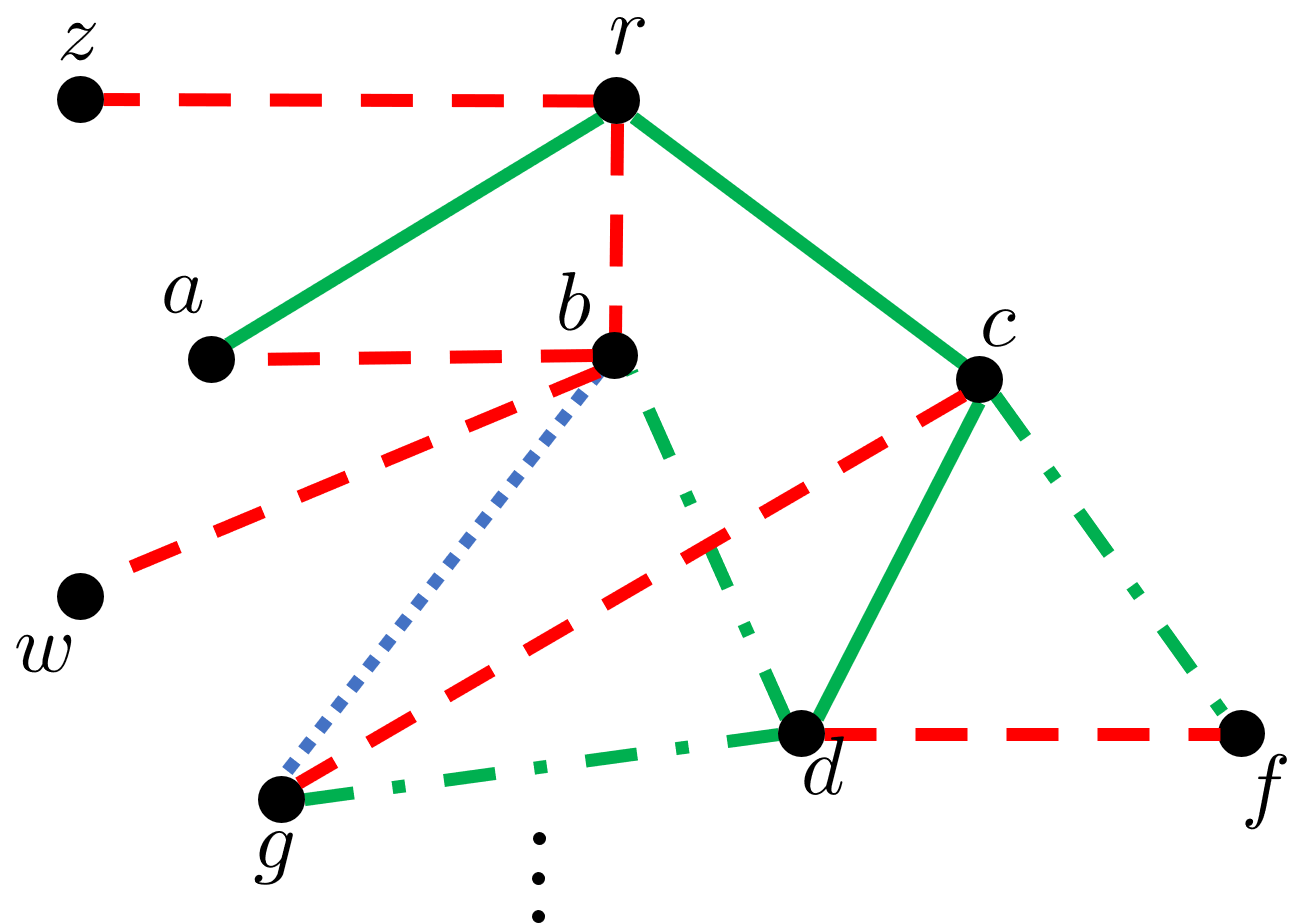}
		\caption*{($t=3$): Pick $e_4 = \{c,d\}$. Realize $\potdeg_4 = \{b,c,g\}$ and $\thresh_4 = c_n(\{d,f\})$. Set $\phi(g) = (3,1,1)$.}
	\end{subfigure}\hfill
	\begin{subfigure}[t]{.3\linewidth}
		\centering
		\includegraphics*[width = \textwidth]{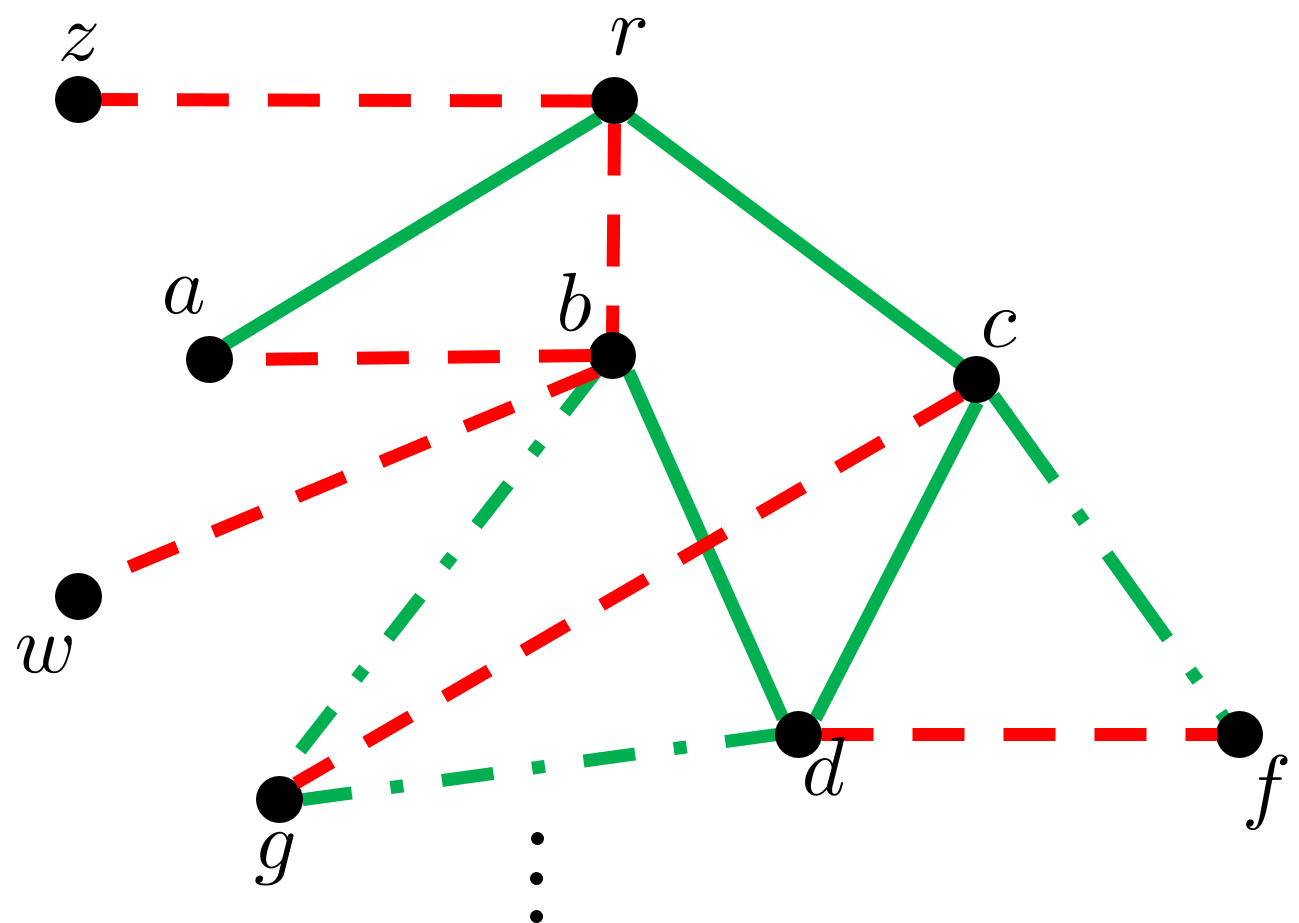}
		\caption*{($t=4$): Pick $e_5 = \{b,d\}$. We know $\potdeg_5 = \{g,d\}$ and $\thresh_5 = c_n(\{b,w\})$.}
	\end{subfigure}
	\caption{A realization of the exploration process up to $t=5$. Cost of the edges, threshold of the vertices, potential degree of the vertices, the permutation $\permt_t$, and the sets $\At_t$, $\Ut_t$, $\Ct_t$, and $\Dt_t$ are not mentioned. Solid green edges belong to $\Ct_t$, dashed red edges belong to $\Dt_t$, dashed dotted green edges belong to $\At_t$, and dotted blue edges belong to $\Rt_t$. Notice that $par(b)$ is defined to be $r$ although $b$ is connected to the root via $d$ at time $t=5$. Moreover, $par(g)$ is $d$ since the vertex $d$ is the first vertex in the connected component such that $g\in \potdeg_d$; although $g\in \potdeg_b$, the vertex $b$ is connected to the connected component after the vertex $d$. Based on the exploration process, $\{b,g\}\in \At_5$ and $e_6 = \{b,g\}$.}
	\label{fig:counterexp}
\end{figure}
\begin{remark}\label{rem:propExp_t}
	A vertex $v\neq r$ belongs to the connected component of $r$ by time step $t$ if and only if $v\in \Ct_t$. A vertex $v\in[n]\setminus \{r\}$ has been explored by time step $t$ if and only if $v$ belongs to the connected component, or there is a vertex $v^\prime\in \Ct_t$ such that $\{v^\prime,v\}\in \Dt_t\cup \Ct_t$ and $v$ belongs to the set of potential neighbors of $v^\prime$, i.e., $v\in \potdeg_{v^\prime}$. Notice that in the later case, the vertex $v^\prime$ may not be the vertex $par(v)$; As an example, in Figure~\ref{fig:counterexp} the vertex $g$ is explored at time step $t=5$ (since $e_6 = \{b,g\}$), but $\{par(g),g\} = \{d,g\}\in \At_6$.
\end{remark}
\begin{remark}\label{rem:propA_t}
	An important observation is that for every $\{v,z\}\in \At_t$ exactly one of $v$ or $z$ (but not both) belongs to the connected component of the vertex $r$ at time $t$. Moreover, at least one of the vertices $v$ or $z$ has been explored; hence, at each time step we may explore at most one vertex.
\end{remark}
\noindent Based on the exploration strategy the vertex $\phi^{-1}(\boldsymbol{i})$ has been explored, but it may not belong to the connected component. More explicitly, $par(\phi^{-1}(\boldsymbol{i}))$ belongs to the connected component (Remark~\ref{rem:pari}), and $\phi(par(\phi^{-1}(\boldsymbol{i}))) \prec \boldsymbol{j}$; hence, the edge $\{par(\phi^{-1}(\boldsymbol{i})),\phi^{-1}(\boldsymbol{i})\}\in \Ct_t\cup \Dt_t$ or equivalently, $\phi^{-1}(\boldsymbol{i})$ has been explored by time $t$ (Remark~\ref{rem:propExp_t}).
However, the vertex $\phi^{-1}(\boldsymbol{j})$ has two different possibilities,
\begin{itemize}
	\item \textbf{\textit{Subcase I}}, where $\phi^{-1}(\boldsymbol{j})$ has not been explored: in this case, the vertex $\phi^{-1}(\boldsymbol{i})$ belongs to the connected component. Let $v_{t+1} = \phi^{-1}(\boldsymbol{j})$. Let $m\leq t+1$ denote the number of explored vertices by time step $t$. Notice that at time $t=0$, the root vertex has already been explored and for each $t>0$, we may explore at most one vertex at each time step (Remark~\ref{rem:propA_t}). Define $k^\prime \coloneqq \min(n-m-2,d_{v_{t+1}}(n))$. If $n-m-2 < 0$, which may happen if the graph is fully connected and the process is reaching to its end, then let $k^\prime = 0$. In order to explore $v_{t+1}$, the first step is to choose $\Bt_{t+1} = \{z_1,z_2,\dots,z_{k^\prime}\}$, a subset of size $k^\prime$, uniformly at random from the set of unexplored vertices (there are $n-m-1$ unexplored vertices other than $v_{t+1}$). Next, pick a vertex $z_0$ out of remaining unexplored vertices uniformly at random (there are $n-m-1-k^\prime$ option for $z_0$). Assume that the cost of the edges $\{v_{t+1},z_i\}_{i=1}^{k^\prime}$ are the least ${k^\prime}$ values in $\left\{C_n(\{v_{t+1},z\}):z\text{ is not explored} \right\}$ and the cost of $\{v_{t+1},z_0\}$ is exactly the ${{k^\prime}+1}^{\mathrm{th}}$ smallest one. As in $t=0$, we do not realize the cost of $\{v_{t+1},z\}$ for all unexplored vertices $z\in[n]$. Using Lemma~\ref{lem:orderstat} and Corollary~\ref{cor:exprrderstat}, the joint density function of $\left\{C_n(\{v_{t+1},z\})\right\}_{i=0}^{k^\prime}$ is given by,
	\begin{align}
	&f_{C_n(\{v_{t+1},z_0\}),C_n(\{v_{t+1},z_1\}),\dots,C_n(\{v_{t+1},z_{k^\prime}\})} (w_0,w_1,\dots,w_{k^\prime})= \allowdisplaybreaks\\
	&\qquad \qquad ({k^\prime}+1) {n-m-1 \choose {k^\prime}+1} \times \prod_{i=0}^{k^\prime} \frac{1}{n}\eexp^{-w_i/n} \times \eexp^{-w_{0}(n-m-1-({k^\prime}+1))/n}
	\end{align}
	where $w_i\leq w_{0}$ for all $i\in[k']$. Notice that for every vertex $v\notin \Bt_{t+1}\cup\{z_0\}$ such that $v$ has not been explored and the cost of $\{v_{t+1},v\}$ has not been realized, the value of $C_n(\{v_{t+1},v\})$ is greater than $c_{n}(\{v_{t+1},z_0\})$. Define $\widehat{\thresh}_{t+1}$ to be $c_{n}(\{v_{t+1},z_0\})$,
	\begin{align}
	\widehat{\thresh}_{t+1} \coloneqq c_{n}(\{v_{t+1},z_0\}).
	\end{align}
	\begin{remark}\label{rem:T_hat}
		If, after realizing $\Bt_{t+1}\cup\{z_0\}$, the set of unexplored vertices $v$ such that $\{v_{t+1},v\}$ has not been realized is non-empty, then $d_{v_{t+1}}(n) < n-m-2$ and $\thresh_{t+1} \leq \widehat{\thresh}_{t+1}$.
	\end{remark}

	The second step to explore $v_{t+1}$ is to realize the cost of all the edges between $v_{t+1}$ and the explored vertices; by Corollary~\ref{cor:exprrderstat}, for every explored vertex $v$ such that $\{v_{t+1},v\}\in \Ut_t$, the density of $C_{n}(\{v_{t+1},v\})$ conditioned on $\widehat{\thresh}_v = w_v$ is given by
	\begin{align}
	f_{C_{n}(\{v_{t+1},v\})|\widehat{\thresh}_v}(w|w_v) = \frac{1}{n}\eexp^{-(w-w_v)/n}
	\end{align}
	\begin{remark}
		Assume the vertex $v$ has been explored but the value of $C_{n}(\{v_{t+1},v\})$ has not been realized. Since $v$ has been explored, we already know that $v_{t+1}\notin \potdeg_v$ and $C_{n}(\{v_{t+1},v\})\allowbreak> \thresh_v$. However, by the first step of the exploration process for the vertex $v$ we have $C_{n}(\{v_{t+1},v\}) > \widehat{\thresh}_v$. Moreover, Remark~\ref{rem:T_hat} suggests $\widehat{\thresh}_v\geq \thresh_v$ since $\{v_{t+1},v\}\in \Ut_t$.
	\end{remark}

	Notice that the potential neighbors of $v_{t+1}$ are either explored or belongs to $\Bt_{t+1}\cup\{z_0\}$. Define $k\coloneqq d_{v_{t+1}}(n)$ and set the threshold and the set of potential neighbors of $v_{t+1}$,
	\begin{align}
	\thresh_{t+1} &= \text{${k+1}^{\mathrm{th}}$ smallest value in }\left\{c_{n}(\{v_{t+1},j\}) : j\in[n]\text{ is explored or } j\in \Bt_{t+1}\cup\{z_0\} \right\}\allowdisplaybreaks\\
	\potdeg_{t+1} &= \left\{j\in [n]: c_{n}(\{v_{t+1},j\}) < \thresh_{t+1} \text{ and }j\in[n]\text{ is explored or } j\in \Bt_{t+1}\cup\{z_0\}\right\}
	\end{align}
	\begin{remark}
		The value of $k^\prime$ is less than or equal to $k$. As the process reaches to its end or if $d_{v_{t+1}}(n) > n-m-2$, we have $k^\prime < k$; hence, it is possible to have $z_0 \in \potdeg_{t+1}$.
	\end{remark}
	\textbf{\textit{Sub-subcase I.1}}: If $c_n(e_{t+1}) \geq \thresh_{t+1}$, then the connection $e_{t+1}$ does not survive; however, all the potential neighbors of $v_{t+1}$ has been realized and the vertex $v_{t+1}$ has been explored. In this case, update the sets as follows:
	\begin{subequations}
	\begin{align}
	&\At_{t+1} = \At_t \setminus \left\{\left(\{v_{t+1},j\},c_n\left(\{v_{t+1},j\}\right)\right): j\notin \potdeg_{t+1} \text{ and } \{v_{t+1},j\}\in \At_t \right\} \label{eq:1.1 At}\allowdisplaybreaks\\
	&\Ct_{t+1} = \Ct_t \label{eq:1.1 Ct}\allowdisplaybreaks\\
	&\begin{aligned}
	\Dt_{t+1} &= \Dt_t \cup \left\{\left(\{v_{t+1},j\},c_n\left(\{v_{t+1},j\}\right)\right): j\notin \potdeg_{t+1} \text{ and } C_n(\{v_{t+1},j\}) \text{ is realized} \right\}\allowdisplaybreaks\\
	&\myquad[2] \cup \left\{\left(\{v_{t+1},j\},c_n\left(\{v_{t+1},j\}\right)\right):\text{ $j$ has been explored and } v_{t+1}\notin \potdeg_j \right\}
	\end{aligned}\label{eq:1.1 Dt}\allowdisplaybreaks\\
	&\begin{aligned}
	\Rt_{t+1} = \left(\Rt_t \cup \left\{\left(\{v_{t+1},j\},c_n\left(\{v_{t+1},j\}\right)\right): j\in \potdeg_{t+1} \text{ and $j$ has not been explored}\right\}\right) \allowdisplaybreaks\\
	\qquad \setminus \left\{\left(\{v_{t+1},j\},c_n\left(\{v_{t+1},j\}\right)\right): j\notin \potdeg_{t+1} \text{ and } \{v_{t+1},j\}\in \Rt_t \right\}
	\end{aligned} \label{eq:1.1 Rt} \allowdisplaybreaks\\
	&\Ut_{t+1} = \Ut_t \setminus \{\{v_{t+1},j\}: C_n(\{v_{t+1},j\}) \text{ is realized}\} \label{eq:1.1 Ut}
	\end{align}
	\end{subequations}
	The description of the above equations is as follows:
	\begin{enumerate}[]
		\item Equation \cref{eq:1.1 At}: All the active edges $\{v_{t+1},j\}$ in $\At_t$ such that $j\notin \potdeg_{t+1}$ are removed, including $e_{t+1}$. Notice that if $\{v_{t+1},j\}\in \At_t$, then $v_{t+1}\in \potdeg_j$ (Remark~\ref{rem:propA_t}); however, after exploring the vertex $v_{t+1}$, it is clear whether $j$ is a potential neighbor of $v_{t+1}$ or not. If $j\notin \potdeg_{t+1}$ then the edge $\{v_{t+1},j\}$ is moved to $\Dt_{t+1}$. On the other hand, if $j\in \potdeg_{t+1}$, then $\{v_{t+1},j\}$ survives; however, this edge needs to be revisited at a later time in order to add new members to the set of active edges.
		\item Equation \cref{eq:1.1 Ct}: The vertex $v_{t+1}$ is not connected to the connected component through the edge $e_{t+1}$. Notice that there might be some other vertex $j$ such that $\{v_{t+1},j\}\in \At_t$ and $j \in \potdeg_{t+1}$, i.e., $\{v_{t+1},j\}$ survives (Remark~\ref{rem:propA_t}); however, the exploration of the edge $\{v_{t+1},j\}$ is postponed to some $t^\prime > t$.
		\item Equation \cref{eq:1.1 Dt}: All the edges $\{v_{t+1},j\}$ such that $C_n{\{v_{t+1},j\}}$ has been realized and $j\notin \potdeg_{t+1}$ do not survive. Moreover, for all explored vertices $j$ such that $v_{t+1} \notin \potdeg_j$, the edge $\{v_{t+1},j\}$ does not survive as well.
		\item Equation \cref{eq:1.1 Rt}: For all $j\in \potdeg_{t+1}$ such that the vertex $j$ has not been explored, $\{v_{t+1},j\}$ is added to $\Rt_{t+1}$. Notice that the cost of $\{v_{t+1},j\}$ has been realized and neither $v_{t+1}$ nor $j$ belong to the connected component. Moreover, for each explored vertex $j$, if $\{v_{t+1},j\}\notin \Rt_{t}$ then either $v_{t+1}\notin \potdeg_j$ or $j$ belongs to the connected component; hence, $\{v_{t+1},j\}$ need not be included in $\Rt_{t+1}$. Finally, for all edges $\{v_{t+1},j\}\in \Rt_{t}$, the vertex $v_{t+1}$ is a potential neighbor of the vertex $j$; however, if $j\notin \potdeg_{t+1}$ then $\{v_{t+1},j\}$ does not survive.
		\item Equation \cref{eq:1.1 Ut}: All the edges $\{v_{t+1},j\}$ such that $C_n{\{v_{t+1},j\}}$ has been realized are removed from $\Ut_{t+1}$.
	\end{enumerate}
	\begin{remark}
	Consider an edge $e = \{v_{t+1},j\}$ such that the cost of $e$ has been realized. If the vertex $j\notin \potdeg_{t+1}$, then the edge $e$ does not survive and it belongs to $\Dt_{t+1}$. Now assume $j\in \potdeg_{t+1}$. If the vertex $j$ has not been explored, then $e$ belongs to $\Rt_{t+1}$. If the vertex $j$ has been explored and $v_{t+1}\notin \potdeg_j$, then the edge $e$ does not survive and it belongs to $\Dt_{t+1}$. Assume $j$ has been explored and $v_{t+1}\in \potdeg_j$. If $j$ belongs to the connected component, then $e\in \At_t$. If $j$ does not belong to the connected component, then $e\in \Rt_t$. In either case, $e$ needs no update, and it is included in the corresponding set at time step $t+1$.
	\end{remark}
	\textbf{\textit{Sub-subcase I.2}}: If $c_n(e_{t+1}) < \thresh_{t+1}$, then the connection $e_{t+1}$ survives and $v_{t+1}$ belongs to the connected component. Define $\It_{t+1} = \{p\in \potdeg_{t+1}: \phi(p) \text{ is not defined}\}$. Let $\It_{t+1} = \{p_1,p_2,\dots,p_{|\It_{t+1}|}\}$. Pick a permutation $\permt_{t+1}$ over $[|\It_{t+1}|]$ uniformly at random and set $\phi(p_l) = (\boldsymbol{j},\permt_{t+1}(l))$ for all $l\in\left[|\It_{t+1}|\right]$, where $\boldsymbol{j} = \phi(v_{t+1})$. Update the sets as follows,
	\begin{subequations}
	\begin{align}
	&\begin{aligned}
	\At_{t+1} &= \big( \At_t \cup \left\{\left(\{v_{t+1},j\},c_n\left(\{v_{t+1},j\}\right)\right):j\in \potdeg_{t+1} \text{ and $j$ has not been explored}\right\}\\
	&\myquad[1]\cup \!\left\{\left(\{v_{t+1},j\},c_n\left(\{v_{t+1},j\}\right)\right):\text{$j$ has been explored and } j\in \potdeg_{t+1}, v_{t+1}\in \potdeg_j\right\}\!\big)\\
	&\myquad[1]\setminus \left\{\left(\{v_{t+1},j\},c_n\left(\{v_{t+1},j\}\right)\right):\text{$j$ belongs to the connected component} \right\}
	\end{aligned}\label{eq:1.2 At}\allowdisplaybreaks\\
	&\Ct_{t+1} = \Ct_t\cup \left\{\left(\{v_{t+1},j\},c_n\left(\{v_{t+1},j\}\right)\right):j\in \potdeg_{t+1} \text{ and }\{v_{t+1},j\}\in \At_t \right\}\label{eq:1.2 Ct}\\
	&\begin{aligned}
	\Dt_{t+1} &= \Dt_t \cup \left\{\left(\{v_{t+1},j\},c_n\left(\{v_{t+1},j\}\right)\right):j\notin \potdeg_{t+1} \text{ and } C_n(\{v_{t+1},j\}) \text{ is realized} \right\}\\
	&\qquad\cup \left\{\left(\{v_{t+1},j\},c_n\left(\{v_{t+1},j\}\right)\right):\text{$j$ has been explored and } v_{t+1}\notin \potdeg_j \right\}
	\end{aligned}\label{eq:1.2 Dt}\allowdisplaybreaks\\
	&\Rt_{t+1} = \Rt_t \setminus \left\{\left(\{v_{t+1},j\},c_n\left(\{v_{t+1},j\}\right)\right):\{v_{t+1},j\}\in \Rt_t \right\} \label{eq:1.2 Rt}\allowdisplaybreaks\\
	&\Ut_{t+1} = \Ut_t \setminus \{\{v_{t+1},j\}: C_n(\{v_{t+1},j\}) \text{ is realized}\}\label{eq:1.2 Ut}
	\end{align}
	\end{subequations}
	The description of the above equations is as follows:
	\begin{enumerate}[]
		\item Equation \cref{eq:1.2 At}: All the edges $\{v_{t+1},j\}$ such that $j\in \potdeg_{t+1}$ and $j$ has not been explored are added to $\At_t$. Moreover, all the edge $\{v_{t+1},j\}$ such that $j$ has been explored, $j$ do not belongs to the connected component, $j\in \potdeg_{t+1}$ and $v_{t+1}\in \potdeg_j$ are also included in $\At_{t+1}$.
		\item Equation \cref{eq:1.2 Ct}: The vertex $v_{t+1}$ is connected to the connected component through the edge $e_{t+1}$; however, all the edges $\{v_{t+1},j\}\in \At_t$ such that $j \in \potdeg_{t+1}$ are also included in $\Ct_{t+1}$; since for each edge $\{v_{t+1},j\}\in \At_t$ the vertex $j$ belongs to the connected component and $v_{t+1}\in \potdeg_j$.
		\item Equation \cref{eq:1.2 Dt}: All the edges $\{v_{t+1},j\}$ such that $C_n{\{v_{t+1},j\}}$ has been realized and $j\notin \potdeg_{t+1}$ do not survive. Moreover, for all explored vertex $j$ such that $v_{t+1} \notin \potdeg_j$, the edge $\{v_{t+1},j\}$ does not survive as well.
		\item Equation \cref{eq:1.2 Rt}: Since $v_{t+1}$ is connected to the connected component, no edge needs to be added to $\Rt_{t}$; however, all the edges $\{v_{t+1},j\}\in \Rt_t$ is removed from $\Rt_t$, since one end of such an edge belongs to the connected component.
		\item Equation \cref{eq:1.2 Ut}: All the edges $\{v_{t+1},j\}$ such that $C_n{\{v_{t+1},j\}}$ has been realized is removed from $\Ut_{t+1}$.
	\end{enumerate}
	\begin{remark}
		Consider an edges $e = \{v_{t+1},j\}$ such that the cost of $e$ has been realized. If the vertex $j\notin \potdeg_{t+1}$, then the edge $e$ does not survive and it belongs to $\Dt_{t+1}$. Assume $j\in \potdeg_{t+1}$. If the vertex $j$ has not been explored, then $e$ belongs to $\At_{t+1}$. If the vertex $j$ has been explored and $v_{t+1}\notin \potdeg_j$, then the edge $e$ does not survive, and it belongs to $\Dt_{t+1}$. Assume $j$ has been explored and $v_{t+1}\in \potdeg_j$. If $j$ belongs to the connected component, then $e\in \At_t$ and $e$ is moved to $\Ct_{t+1}$. If $j$ does not belong to the connected component, then $e\in \Rt_t$ and $e$ is moved to $\At_{t+1}$.
	\end{remark}
	Figure~\ref{fig:case1} illustrates the update process for the case where only $\phi^{-1}(\boldsymbol{i})$ has been explored.
	\begin{figure}
		\centering
		\begin{subfigure}[t]{.48\linewidth}
			\centering
			\includegraphics*[width = 0.8\textwidth]{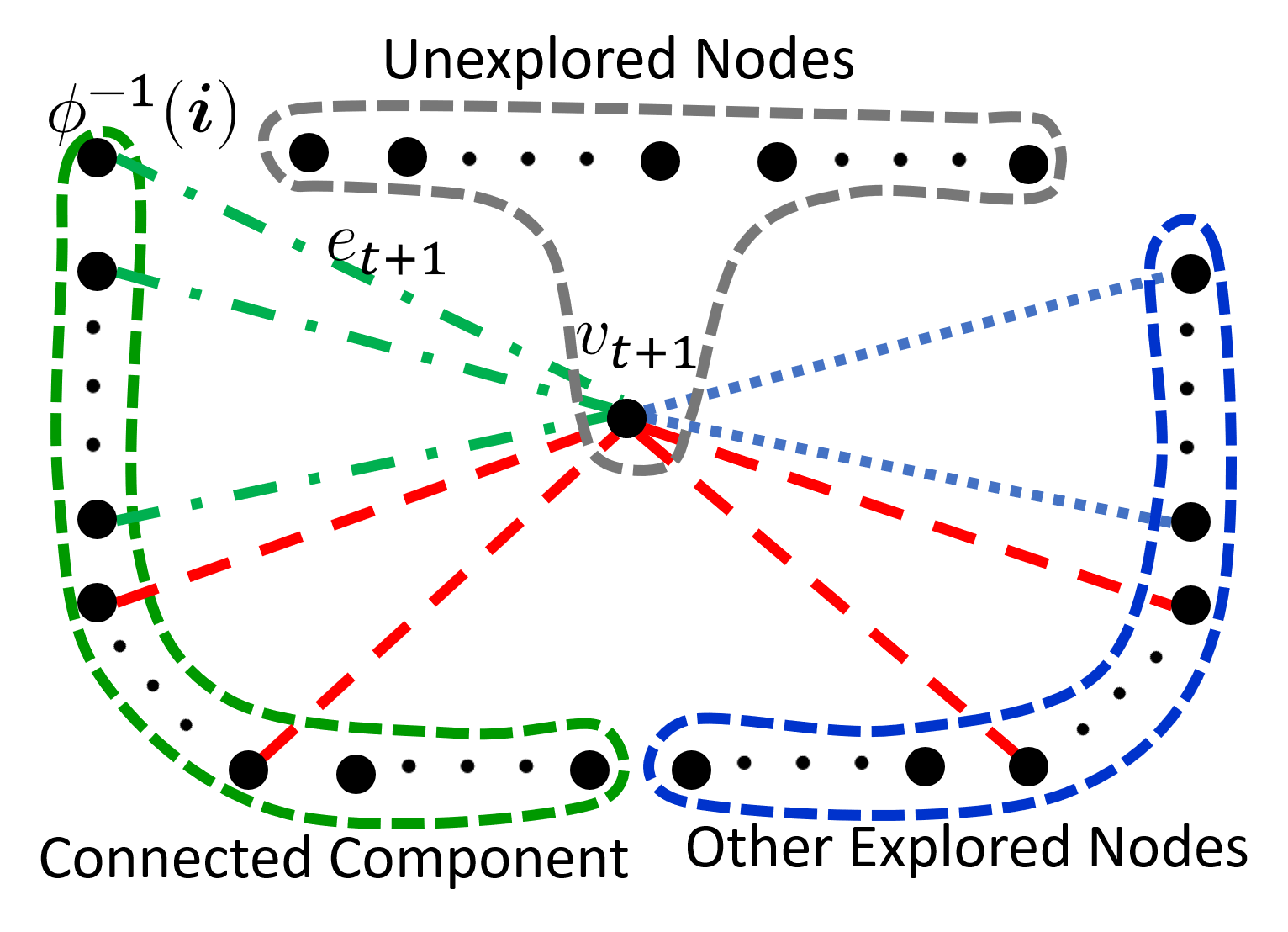}
			\caption{Let $e_{t+1} = \{\phi^{-1}(\boldsymbol{i}),\phi^{-1}(\boldsymbol{j})\}$ and $v_{t+1} = \phi^{-1}(\boldsymbol{j})$. Dashed red edges belong to $\Dt_t$, dashed dotted green edges belong to $\At_t$, and dotted blue edges belong to $\Rt_t$.}
		\end{subfigure}\hfill
		\begin{subfigure}[t]{.48\linewidth}
			\centering
			\includegraphics*[width = 0.8\textwidth]{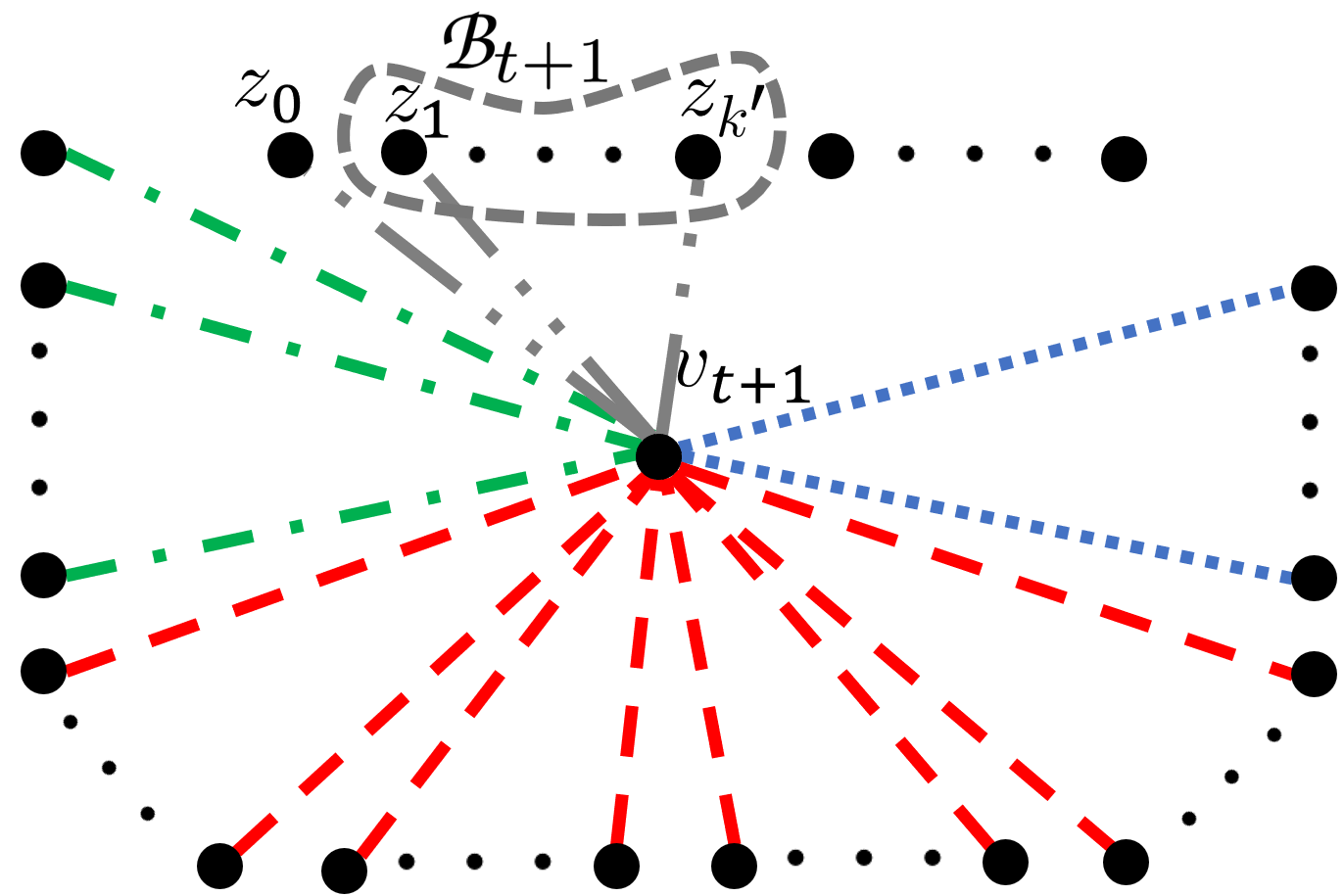}
			\caption{Realize $\Bt_{t+1}$, $z_0$ and the corresponding edge costs. Realize the cost of all edges $\{v_{t+1},j\}$ for explored vertices $j$ as well. For each explored vertex $j$ such that $\{v_{t+1},j\}\in \Ut_t$, we have $\{v_{t+1},j\} \in \Dt_{t+1}$.}
		\end{subfigure}\hfill
		\medskip
		\begin{subfigure}[t]{.48\linewidth}
			\centering
			\includegraphics*[width = 0.8\textwidth]{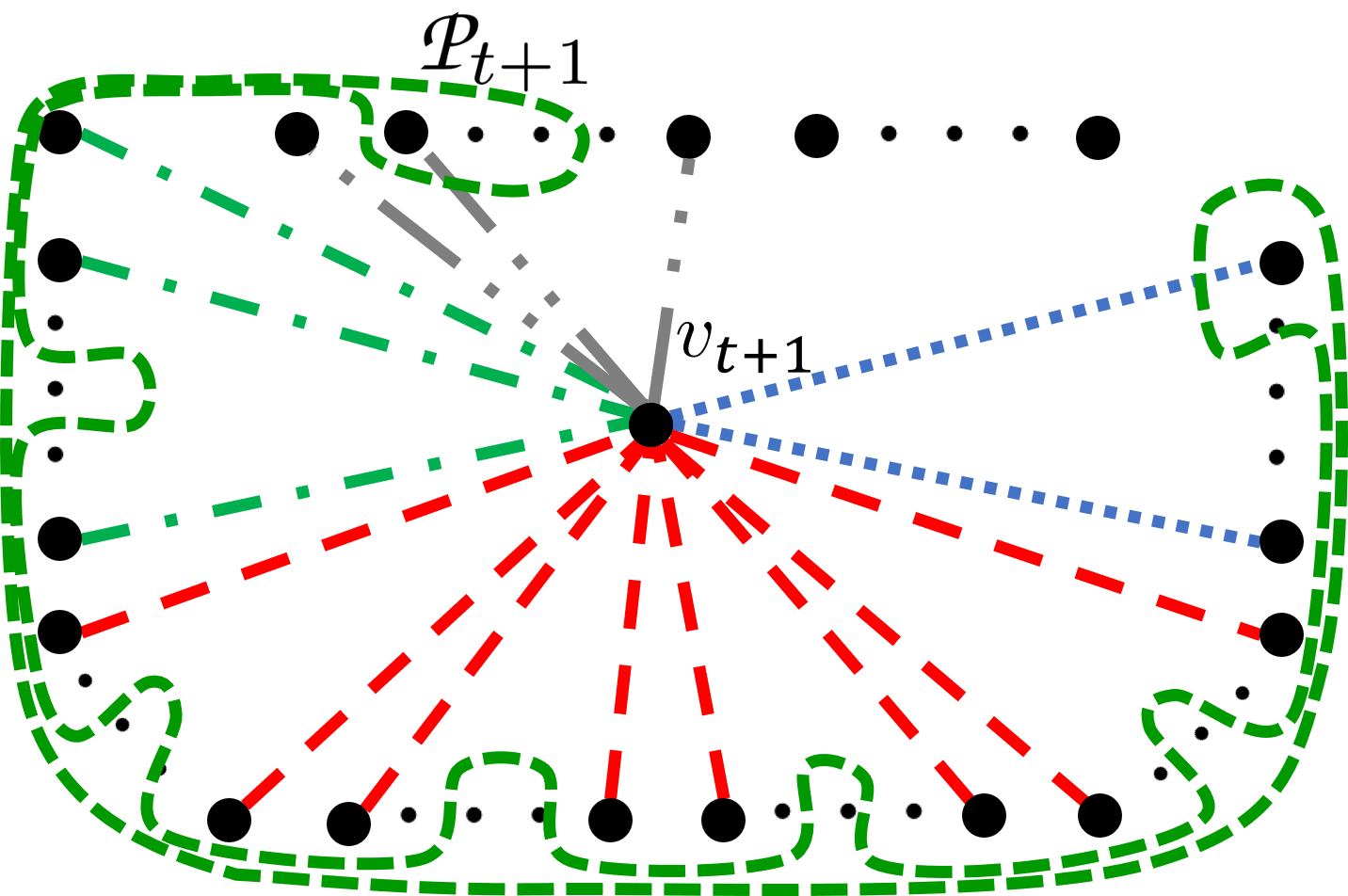}
			\caption*{(c1) Realize $\thresh_{t+1}$ and $\potdeg_{t+1}$. Consider the case where $e_{t+1}$ does not survive, i.e., $c_n(e_{t+1}) \geq \thresh_{t+1}$.}
		\end{subfigure}\hfill
		\begin{subfigure}[t]{.48\linewidth}
			\centering
			\includegraphics*[width = 0.8\textwidth]{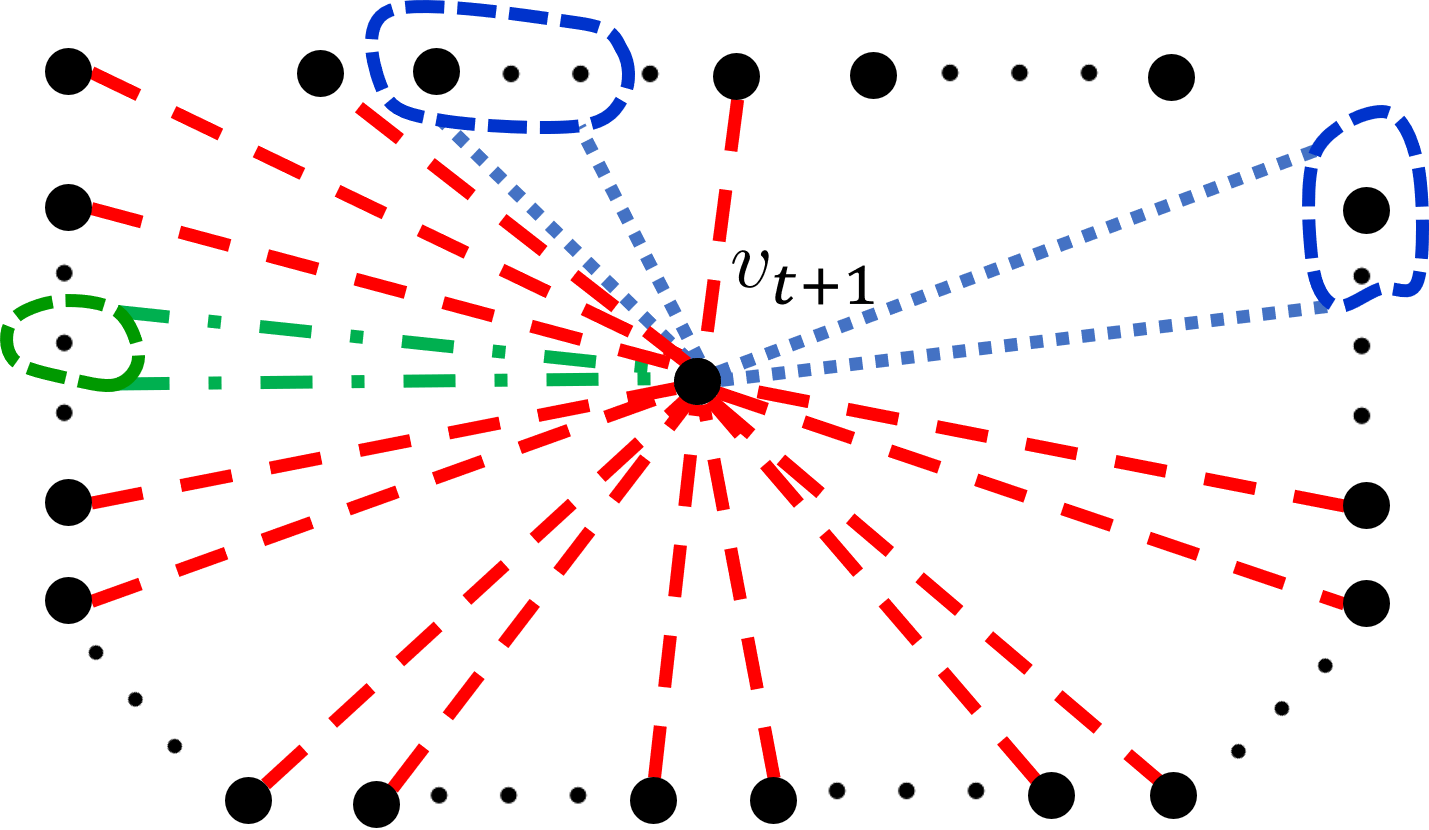}
			\caption*{(d1) Update the sets for time step $t+1$. Dashed red edges belong to $\Dt_{t+1}$, dashed dotted green edges belong to $\At_{t+1}$, and dotted blue edges belong to $\Rt_{t+1}$.} \label{subfig:case1step2}
		\end{subfigure}\hfill
		\medskip
		\begin{subfigure}[t]{.48\linewidth}
			\centering
			\includegraphics*[width = 0.8\textwidth]{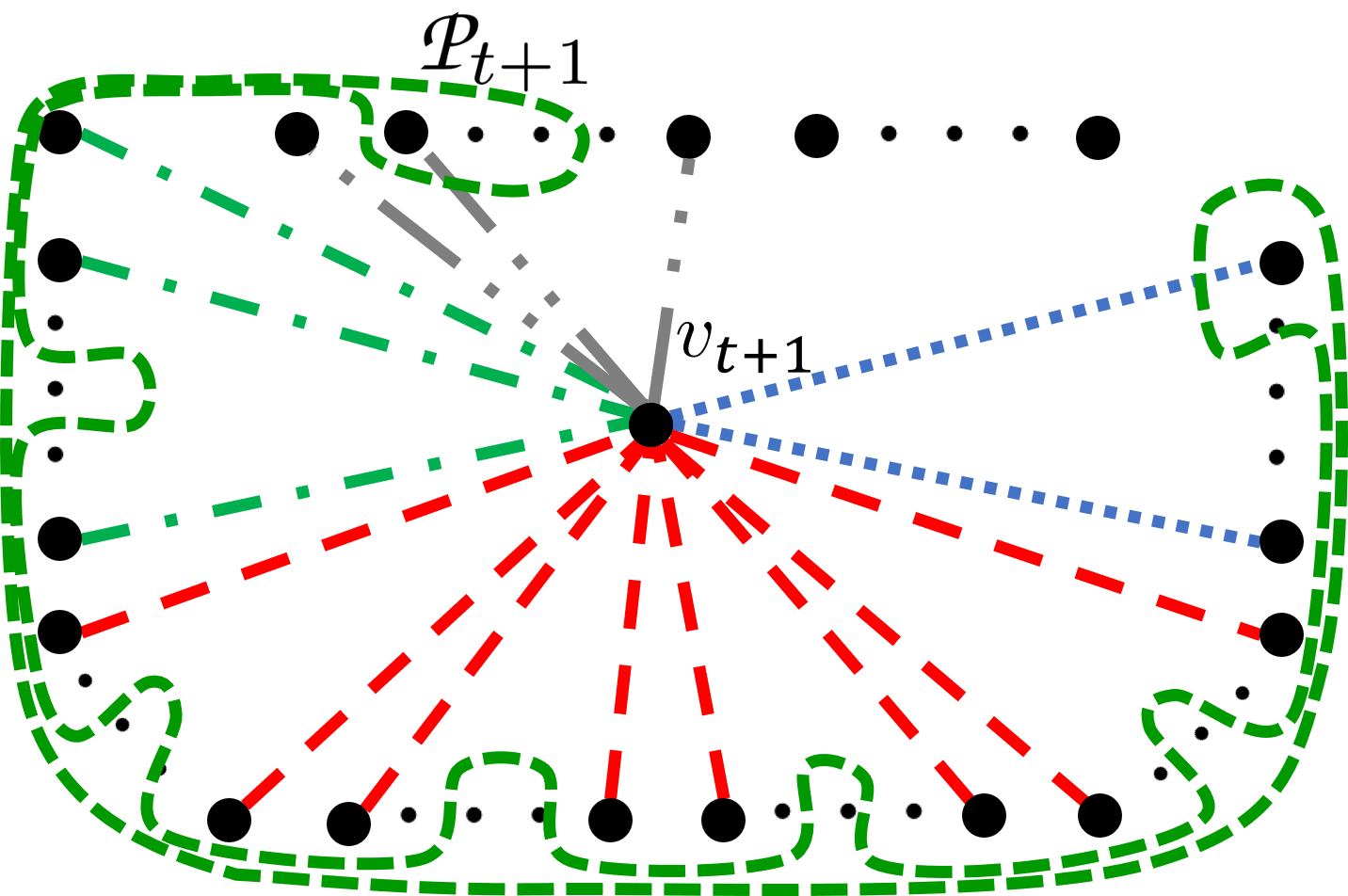}
			\caption*{(c2) Realize $\thresh_{t+1}$ and $\potdeg_{t+1}$. Consider the case where $e_{t+1}$ survives, i.e., $c_n(e_{t+1}) < \thresh_{t+1}$. Define $\It_{t+1} = \{p_1,p_2,\dots,p_{|\It_{t+1}|}\}$ such that $\phi(p)$ is not defined for all $p\in \It_{t+1}$. Set $\phi(p_l) = (\boldsymbol{j},\permt_{t+1}(l))$ for all $l\in\left[|\It_{t+1}|\right]$.}
		\end{subfigure}\hfill
		\begin{subfigure}[t]{.48\linewidth}
			\centering
			\includegraphics*[width = 0.8\textwidth]{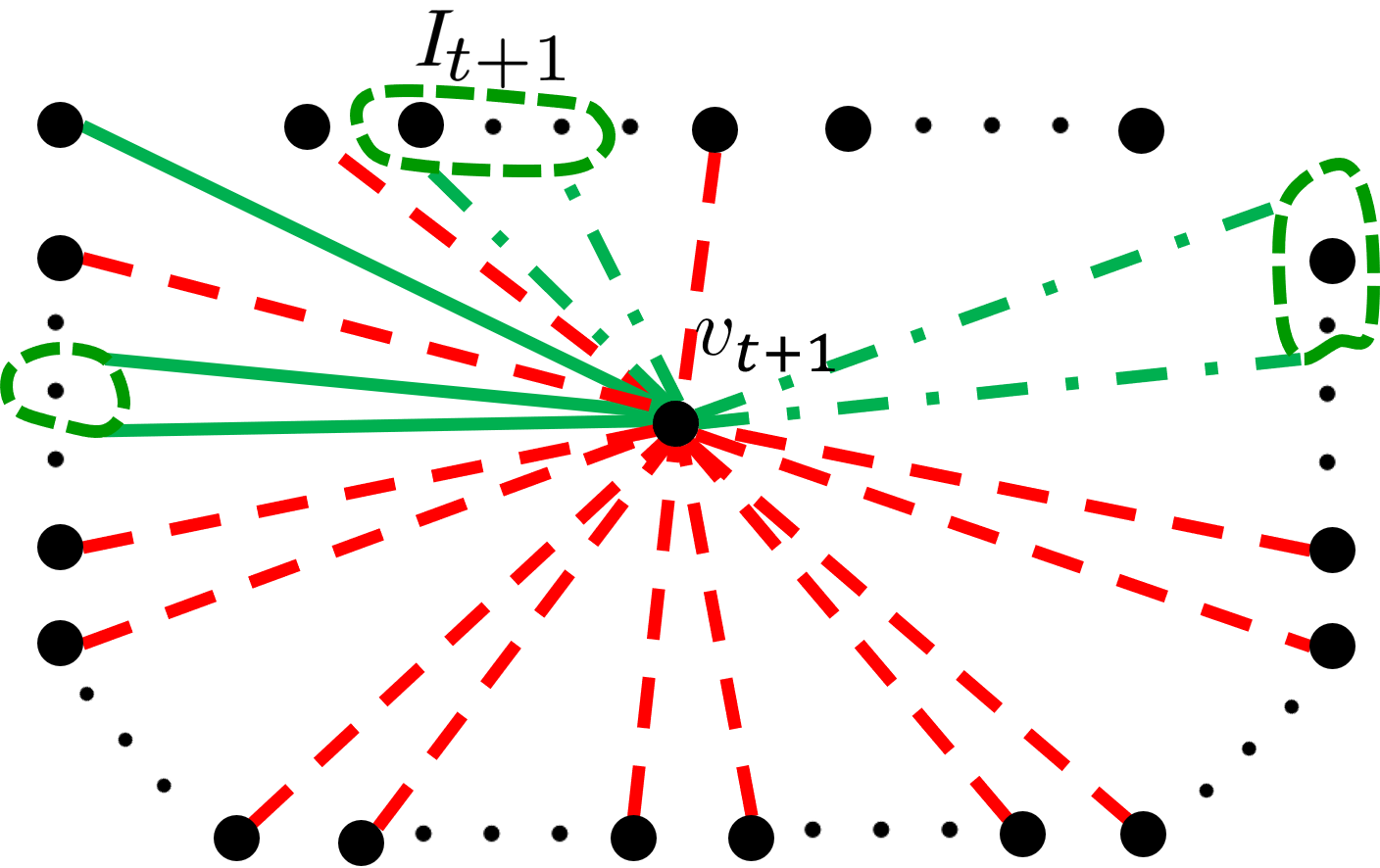}
			\caption*{(d2) Update the sets for time step $t+1$. Solid green edges belong to $\Ct_{t+1}$, dashed red edges belong to $\Dt_{t+1}$, and dashed dotted green edges belong to $\At_{t+1}$.}
		\end{subfigure}
		\caption{The exploration process at time step $t$, when the vertex $\phi^{-1}(\boldsymbol{j})$ has not been explored. (c1) and (d1) illustrate the case when $c_n(e_{t+1}) \geq \thresh_{t+1}$ while (c2) and (d2) illustrate the case when $c_n(e_{t+1}) < \thresh_{t+1}$.}
		\label{fig:case1}
	\end{figure}
	\item \textbf{\textit{Subcase II}}, $\phi^{-1}(\boldsymbol{j})$ has been explored: Let $v_{t+1}$ denote the one, amongst $\phi^{-1}(\boldsymbol{j})$ and $\phi^{-1}(\boldsymbol{i})$, which is not connected to the connected component. Since $v_{t+1}$ has already been explored, all the potential neighbors of the vertex $v_{t+1}$ has been realized.
	\begin{remark} Since the vertex $v_{t+1}$ has been explored and it does not belong to the connected component by time $t$, there is a vertex $v\in[n]$, which belongs to the connected component of $r$ by time $t$ and $v_{t+1}\in \potdeg_v$ and $\{v,v_{t+1}\}\in \Dt_t$. Notice that $v$ may or may not be $par(v_{t+1})$. To clarify the reason, consider the following cases,
		\begin{enumerate}
			\item Consider the case where $\phi^{-1}(\boldsymbol{j})$ belongs to the connected component. As is mentioned in Remark~\ref{rem:pari}, the vertex $par(\phi^{-1}(\boldsymbol{i}))$ has been explored; hence, $\{par(\phi^{-1}(\boldsymbol{i})),\phi^{-1}(\boldsymbol{i})\}\allowbreak\in \Dt_t$. In Figure~\ref{fig:counterexp}, at $t=4$, we have $\boldsymbol{i} = (2)$ and $\phi^{-1}(2) = b$, and $\boldsymbol{j}=(3,1)$ and $\phi^{-1}(\boldsymbol{j}) = d$; however, $d$ belongs to the connected component and $b$ does not and the edge $\{par(b),b\}=\{r,b\}\in \Dt_4$.
			\item Consider the case where $\phi^{-1}(\boldsymbol{i})$ belongs to the connected component. In this case, the edge $\{par(\phi^{-1}(\boldsymbol{j})),\phi^{-1}(\boldsymbol{j})\}$ may belong to $\At_t$. In Figure~\ref{fig:counterexp}, at $t=5$, we have $\boldsymbol{i} = (2)$ and $\phi^{-1}(2) = b$, and $\boldsymbol{j}=(3,1,1)$ and $\phi^{-1}(\boldsymbol{j}) = g$; assuming $b\notin \potdeg_g$ and $d\in \potdeg_g$, the connection $e_6$ does not survive but the vertex $g$ is explored and $\{par(g),g\}=\{d,g\}\in \At_6$.
		\end{enumerate}
	\end{remark}
	Without loss of generality, assume $\phi^{-1}(\boldsymbol{i})$ belongs to the connected component; hence, $v_{t+1} = \phi^{-1}(\boldsymbol{j})$. Define $k\coloneqq d_{v_{t+1}}(n)$ and set the threshold and the set of potential neighbors of $v_{t+1}$,
	\begin{align}
	\thresh_{t+1} &= \text{${k+1}^{\mathrm{th}}$ smallest value in }\left\{c_{n}(\{v_{t+1},j\}) : j\in[n]\text{ and } \{v_{t+1},j\}\in \Rt_t\cup \At_t\cup \Dt_t \right\}\allowdisplaybreaks\\
	\potdeg_{t+1} &= \{j\in [n]:\{v_{t+1},j\}\in \Rt_t \cup \At_t\cup \Dt_t \text{ and } c_{n}(\{v_{t+1},j\}) < \thresh_{t+1} \}
	\end{align}
	\begin{remark}\label{rem:survcase2}
	Given that both $\phi^{-1}(\boldsymbol{i})$ and $\phi^{-1}(\boldsymbol{j})$ have been explored and one of them does not belong to the connected component, the survival of $\{\phi^{-1}(\boldsymbol{i}),\phi^{-1}(\boldsymbol{j})\}$ should have been determined, i.e., it survives. The edge $\{\phi^{-1}(\boldsymbol{i}),\phi^{-1}(\boldsymbol{j})\}$ has been added to the set of active edges to revisit the vertex $v_{t+1}$ and add new potential edges to $\At_t$.
	\end{remark}
	As is mentioned in Remark~\ref{rem:survcase2}, the connection $e_{t+1}$ survives and $v_{t+1}$ belongs to the connected component. Define $\It_{t+1} = \{z\in \potdeg_{t+1}: \phi(z) \text{ is not defined}\}$. Let $\It_{t+1} = \{z_1,z_2,\dots,z_{|\It_{t+1}|}\}$. Pick a permutation $\permt_{t+1}$ over $[|\It_{t+1}|]$ uniformly at random and set $\phi(z_l) = (\boldsymbol{j},\permt_{t+1}(l))$ for all $l\in[|\It_{t+1}|]$, where $\boldsymbol{j}= \phi(v_{t+1})$. Update the sets as follows,
	\begin{subequations}
	\begin{align}
	&
	\begin{aligned}
	\At_{t+1}&= \left( \At_t \cup \left\{\left(\{v_{t+1},j\},c_n\left(\{v_{t+1},j\}\right)\right): \{v_{t+1},j\}\in \Rt_t \right\}\right) \label{eq:2 At}\allowdisplaybreaks\\
			&\qquad\qquad\setminus \left\{\left(\{v_{t+1},j\},c_n\left(\{v_{t+1},j\}\right)\right):\{v_{t+1},j\}\in \At_t \right\}
	\end{aligned}\allowdisplaybreaks\\
	&\Ct_{t+1} = \Ct_t\cup \left\{\left(\{v_{t+1},j\},c_n\left(\{v_{t+1},j\}\right)\right):\{v_{t+1},j\}\in \At_t\right\}\label{eq:2 Ct}\allowdisplaybreaks\\
	&\Dt_{t+1}= \Dt_t \label{eq:2 Dt}\allowdisplaybreaks\\
	&\Rt_{t+1}= \Rt_t \setminus \left\{\left(\{v_{t+1},j\},c_n\left(\{v_{t+1},j\}\right)\right):\{v_{t+1},j\}\in \Rt_t \right\} \label{eq:2 Rt}\allowdisplaybreaks\\
	&\Ut_{t+1}= \Ut_t \label{eq:2 Ut}
	\end{align}
	\end{subequations}
	The description of the above equations is as follows:
	\begin{enumerate}[]
		\item Equation \cref{eq:2 At}: All the edges $\{v_{t+1},j\}\in \Rt_t$ is added to $\At_t$; since, for every $\{v_{t+1},j\}\in \Rt_t$, the vertex $j$ is a potential neighbor of $v_{t+1}$ and if $j$ has been explored, then $v_{t+1}\in \potdeg_j$ as well. In addition, all the edges $\{v_{t+1},j\}\in \At_t$ are removed from $\At_t$; since, $j$ belongs to the connected component at time $t$ (Remark~\ref{rem:propA_t}), the edge $\{v_{t+1},j\}$ survives (Remark~\ref{rem:survcase2}) and we do not need to revisit the vertex $v_{t+1}$ at a later time.
		\item Equation \cref{eq:2 Ct}: All the edges $\{v_{t+1},j\}\in \At_t$ are moved to $\Ct_{t+1}$; since, if $\{v_{t+1},j\}\in \At_t$ then $j\in \potdeg_{t+1}$, $v_{t+1}\in \potdeg_j$ and the vertex $j$ belongs to the connected component (Remark~\ref{rem:propA_t} and Remark~\ref{rem:survcase2}).
		\item Equation \cref{eq:2 Dt}: Notice that both $\phi^{-1}(\boldsymbol{i})$ and $\phi^{-1}(\boldsymbol{j})$ have been explored; hence, the cost of none of the edges in $\Ut_t$ is realized and the set $\Dt_t$ needs no update.
		\item Equation \cref{eq:2 Rt}: All the edges $\{v_{t+1},j\}\in \Rt_t$ are removed from $\Rt_t$, since exactly one end of such an edge belongs to the connected component. All of these edges are moved to $\At_{t+1}$.
		\item Equation \cref{eq:2 Ut}: The cost of none of the edges in $\Ut_t$ is realized; hence, $\Ut_t$ needs no update.
	\end{enumerate}
	\begin{remark}
	Consider an edges $e = \{v_{t+1},j\}$ with realized cost. If $e\in \At_t$, then $j$ belongs to the connected component, $v_{t+1}\in \potdeg_j$ (Remark~\ref{rem:propA_t}) and $j\in \potdeg_{t+1}$ (vertex $v_{t+1}$ has been explored); hence, $e$ is moved to $\Ct_{t+1}$. If the edge $e\in \Dt_t$, then $e$ needs no update. If the edge $e\in \Rt_t$, then $e$ is moved to $\At_{t+1}$ since $v_{t+1}$ belongs to the connected component. Finally, $e$ does not belong to $\Ut_t$ nor $\Ct_t$.
	\end{remark}
	\begin{remark}
		Recall that for any $\{v,z\}\in \Rt_t$, if $v$ has been explored then $z\in \potdeg_v$. Moreover, neither $z$ nor $v$ belongs to the connected component of $r$ by time $t$.
	\end{remark}
	Figure~\ref{fig:case2} illustrates the updating process for the case where both $\phi^{-1}(\boldsymbol{i})$ and $\phi^{-1}(\boldsymbol{j})$ have been explored.
	\begin{figure}[h]
		\centering
		\begin{subfigure}[t]{.48\linewidth}
			\centering
			\includegraphics*[width=0.8\textwidth]{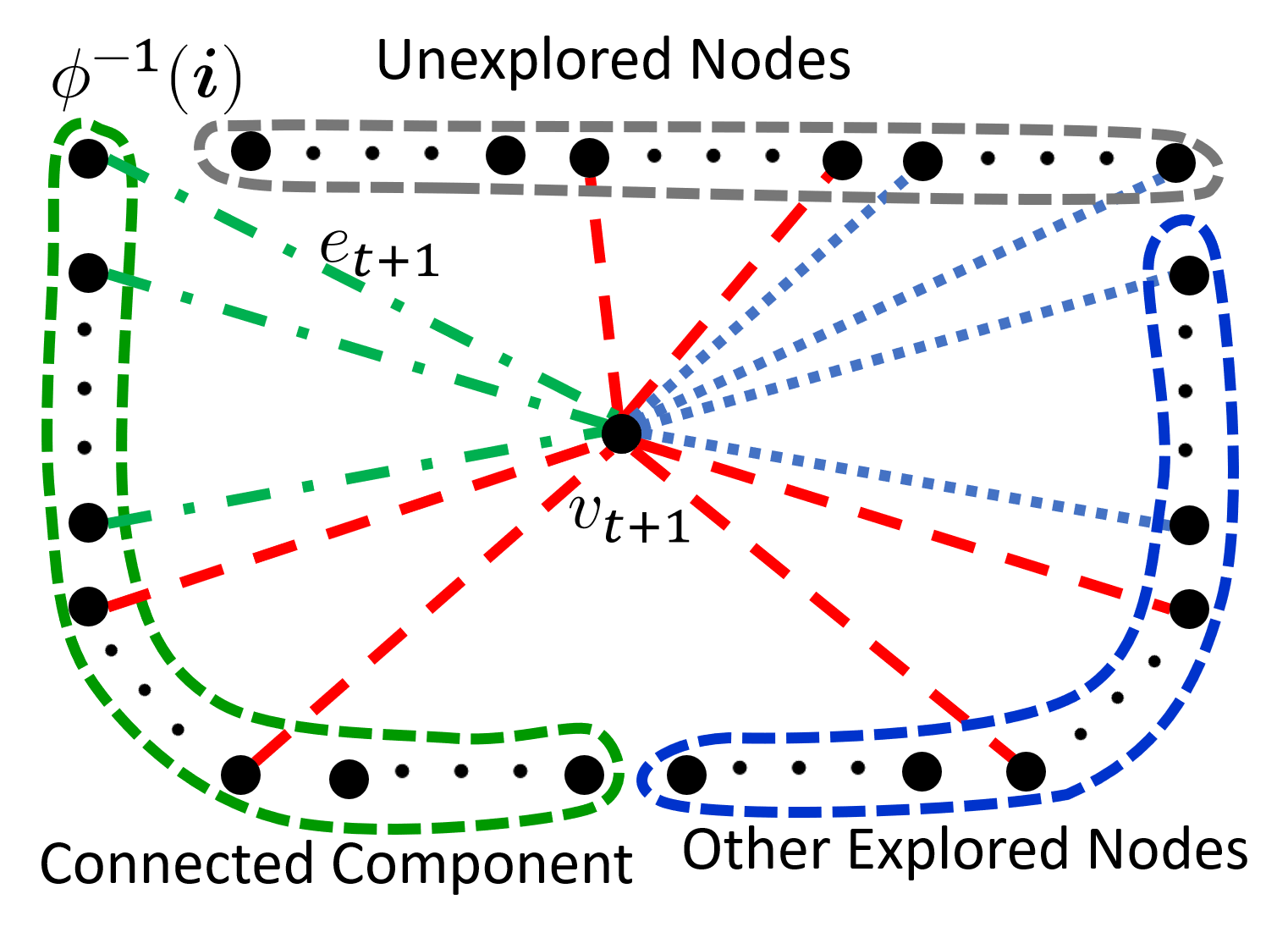}
			\caption{Let $e_{t+1} = \{\phi^{-1}(\boldsymbol{i}),\phi^{-1}(\boldsymbol{j})\}$ and $v_{t+1} = \phi^{-1}(\boldsymbol{j})$. Dashed red edges belong to $\Dt_t$, dashed dotted green edges belong to $\At_t$, and dotted blue edges belong to $\Rt_t$.}
		\end{subfigure}\hfill
		\begin{subfigure}[t]{.48\linewidth}
			\centering
			\includegraphics*[width=0.8\textwidth]{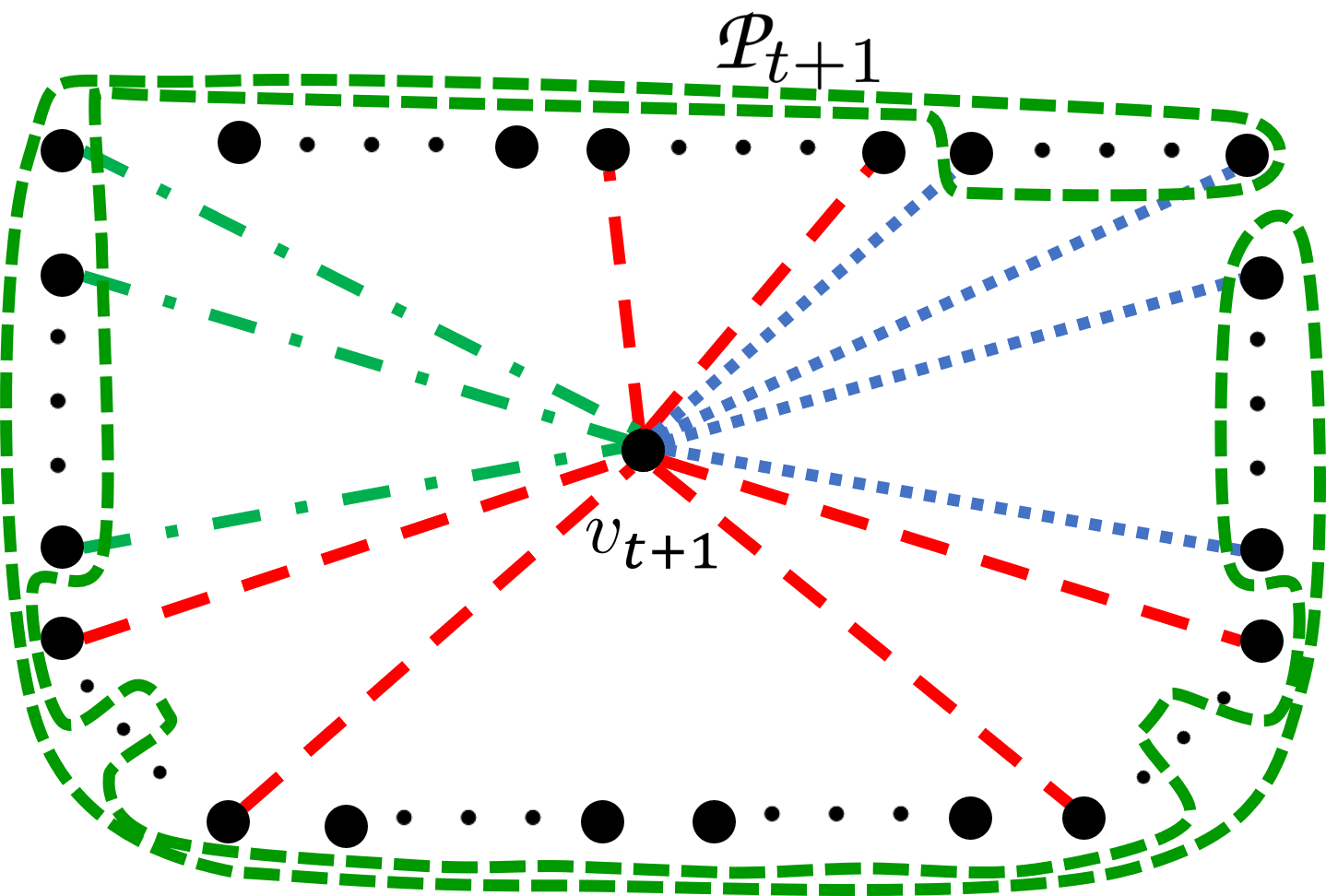}
			\caption{Determine the set of potential neighbors of $v_{t+1}$, i.e., $\potdeg_{t+1}$. Define $\It_{t+1} = \{z_1,z_2,\dots,z_{|\It_{t+1}|}\}$ such that $\phi(z)$ is not defined for all $z\in \It_{t+1}$. Set $\phi(z_l) = (\boldsymbol{j},\permt_{t+1}(l))$ for all $l\in\left[|\It_{t+1}|\right]$.}
		\end{subfigure}
		\medskip
		\begin{subfigure}[t]{.48\linewidth}
			\centering
			\includegraphics*[width=0.8\textwidth]{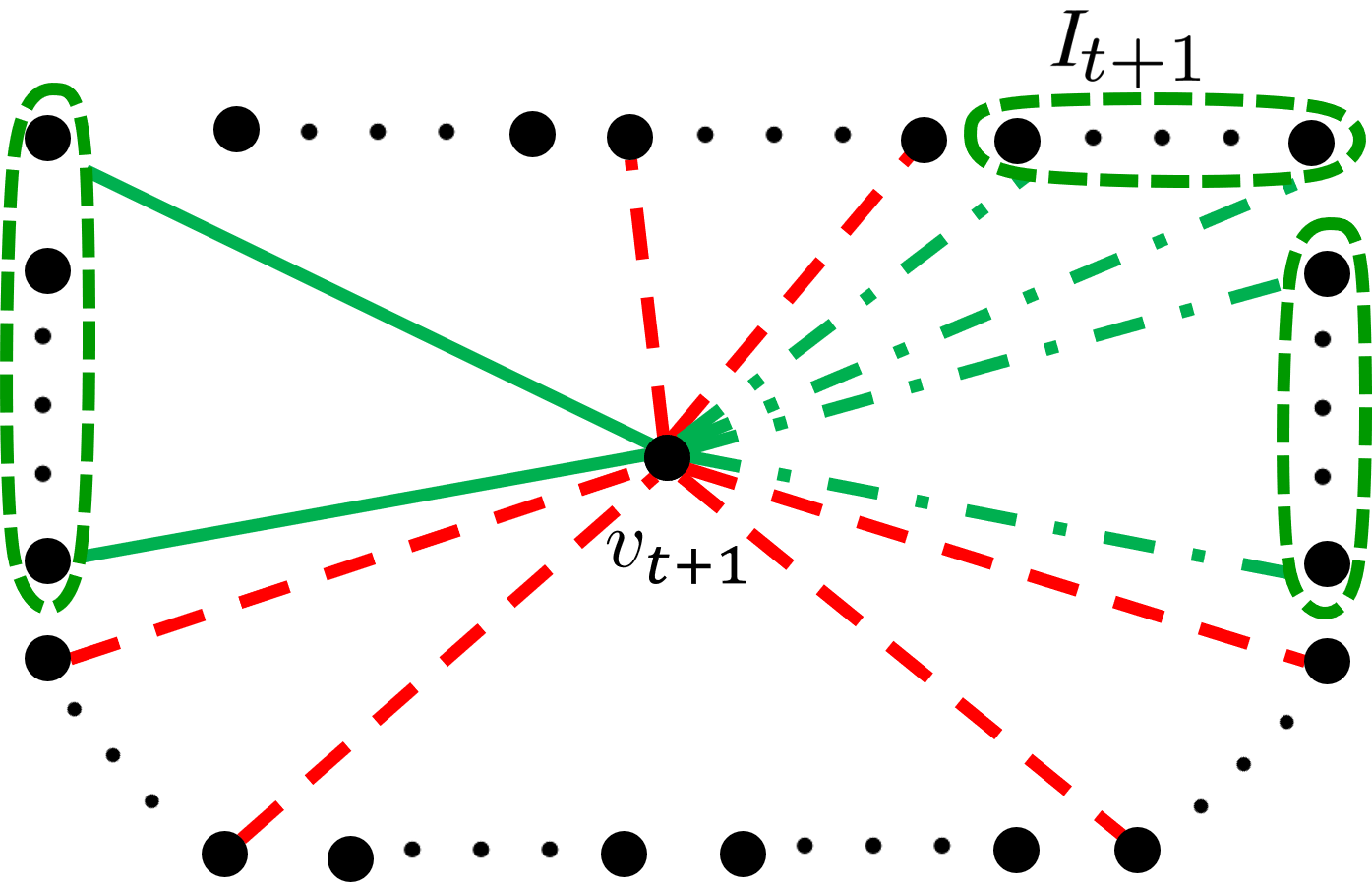}
			\caption{Update the sets for time step $t+1$. Solid green edges belong to $\Ct_{t+1}$, dashed red edges belong to $\Dt_{t+1}$, and dashed dotted green edges belong to $\At_{t+1}$.}
		\end{subfigure}
		\caption{The exploration process at time step $t$, when both the vertices $\phi^{-1}(\boldsymbol{j})$ and $\phi^{-1}(\boldsymbol{i})$ have been explored.}
		\label{fig:case2}
	\end{figure}
\end{itemize}
\textbf{\textit{Exploration phase}}: The exploration terminates when $\At_{t} = \emptyset$. Consider the following filtration,
\begin{align}
\mathcal{F}_t = \sigma((\At_0,\Ct_0,\Dt_0,\Rt_0,\Ut_0),\dots,(\At_t,\Ct_t,\Dt_t,\Rt_t,\Ut_t))
\end{align}
Let $\tau$ denote the time that the algorithm terminates. Indeed, $\tau$ is a stopping time of the filtration where
$\tau = \inf\{t\geq 1: \At_t = \emptyset \}.$\\
\smallskip\\
{\bf Step 2: Locally tree-like property}\\
In the second step, the goal is to show that the rooted graph induced by $\Ct_{t\wedge\tau}$ for any fixed $t$ becomes a tree as the number of vertices, $n$, goes to infinity. This implies that the graph $G_n$, induced by the network $N_n$ after removing the marks, is asymptotically locally tree-like. In fact, a stronger property holds: for every fixed $t>0$, the probability that the vertex $v_l$, for all $l\in[t\wedge \tau]$, has been {\it touched} twice during the exploration process prior to time step $l$ goes to zero as $n\to\infty$. The term ``touching'' is defined as follows,
\begin{definition}
A vertex $v$ is said to be touched at time $t^\prime\leq\tau$ if the cost of $\{v_{t^\prime},v\}$ is realized at time $t^\prime$, $i.e.,$ $\{v_{t^\prime},v\}\in \Ut_{t^\prime-1}\setminus \Ut_{t^\prime}$. The vertex $v_{t^\prime}$ is chosen according to the exploration process. Notice that the vertex $v$ may have or may not have been explored.
\end{definition}
If for every $l\in[t\wedge \tau]$, the vertex $v_l$ has been touched only once before the time step $l$, then $e_{l} = \{par(v_l),v_l\}$; moreover, for every $l^\prime < l$ such that $v_{l^\prime}\neq par(v_l)$, the vertex $v_l$ is not the potential neighbor of the vertex $v_{l^\prime}$. This implies that the rooted graph induced by $\Ct_{t\wedge\tau}$ is a tree. A stronger condition is proved in the following lemma: with high probability, for all $l\in[t\wedge\tau]$ the potential neighbors of the vertex $v_l$ are touched for the first time, except maybe $par(v_l)$.
\begin{lemma}{\bf Locally tree-like property}\label{lem:localtree}
	For $t^\prime>0$, let $J_{t^\prime}$ denote the set of vertices $j$ such that $C_n\left(\{v_{t^\prime},j\}\right) \allowbreak\leq \thresh_{t^\prime}$ and $j$ has been touched at least twice during the exploration process up to time $t^\prime$, once at time step $t^\prime$ and at least once at some time step $\tilde{t}<t^\prime$, i.e.,
	\begin{align}
	&J_{t^\prime} = \{j\in[n]: C_n\left(\{v_{t^\prime},j\}\right)\leq \thresh_{t^\prime}, \{v_{t^\prime},j\}\in \Ut_{t^\prime-1}\setminus \Ut_{t^\prime},\\
	&\myquad[20]\text{ and } \exists \tilde{v}\neq v_{t^\prime} \text{ such that } \{\tilde{v},j\}\notin \Ut_{t^\prime-1}\}\qquad
	\end{align}
	Consider a fixed value $t>0$, then we have,
	\begin{align}\label{eq:localtree}
	\lim_{n\to\infty} \prob\left( \exists l\in[t\wedge\tau] \text{ such that } |J_l|\neq 0 \right) = 0.
	\end{align}
\end{lemma}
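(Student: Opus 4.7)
The proof strategy is a first-moment/union bound that exploits the fact that at each exploration step in Subcase I the random ``probe'' sets are drawn uniformly from an unexplored pool of size $\Theta(n)$, while the number of vertices touched up to any fixed step $t$ stays stochastically bounded in $n$.

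First, I would observe that in Subcase II of the exploration we have $\Ut_l=\Ut_{l-1}$, so nothing is touched at step $l$ and hence $J_l=\emptyset$ automatically. Only Subcase I steps can possibly create a collision, and there are at most $t$ of them during the window $[t\wedge\tau]$.

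Next, for a step $l$ that falls in Subcase I, I would split the event $\{|J_l|\neq 0\}$ according to whether the colliding vertex $j$ is already explored by time $l-1$ or is still unexplored. In the unexplored case, the only way a still-unexplored $j$ has been touched before step $l$ is for it to belong to $\bigcup_{l'<l}(\Bt_{l'}\cup\{z_0^{(l')}\})$, a set of cardinality bounded by $\sum_{l'<l}(d_{v_{l'}}(n)+1)$. Since $\Bt_l\cup\{z_0\}$ is drawn uniformly from at least $n-t-1$ unexplored vertices, the conditional probability of a collision from this source is at most $(d_{v_l}(n)+1)\sum_{l'<l}(d_{v_{l'}}(n)+1)/(n-t-1)$. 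In the explored case, if $j$ was explored at some $l'<l$ and $\{v_l,j\}\in\Ut_{l-1}$, then by the construction in Subcase I the cost $C_n(\{v_l,j\})$ has conditional density $(1/n)\eexp^{-(w-\widehat{\thresh}_j)/n}\onefunc\{w\geq\widehat{\thresh}_j\}$, so the conditional probability that this cost is at most $\thresh_l$ is bounded by $\thresh_l/n$; summing over the at most $t$ previously explored vertices bounds this contribution by $t\,\thresh_l/n$.

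Finally, I would turn these conditional bounds into an unconditional statement via a double truncation on the degrees $\{d_{v_l}(n)\}$ and on the thresholds $\{\thresh_l\}$, then union-bound over $l\in[t]$. Since $P_{\bs{d}_n}$ converges weakly to $P$ (finite mean), for every $\epsilon>0$ one can choose $M<\infty$ so that the event $A_M=\{d_{v_l}(n)\leq M\ \forall l\in[t]\}$ has probability $\geq 1-\epsilon/3$ uniformly in $n$. On $A_M$ each $\thresh_l$ is the $(d_{v_l}(n)+1)$-th order statistic of at least $n-t-1$ i.i.d.\ $\text{Exp}(1/n)$ variables with $d_{v_l}(n)+1\leq M+1$, hence stochastically bounded by an $Erlang(M+1,c)$ variable for some $c>0$; choosing $T$ large makes $\prob(\max_{l\leq t}\thresh_l\leq T)\geq 1-\epsilon/3$ uniformly in $n$. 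On the intersection of these good events the two conditional bounds above sum to $O(t^2M^2/n)+O(t^2T/n)\to 0$, so $\limsup_n \prob(\exists l\in[t\wedge\tau]:|J_l|\neq 0)\leq \epsilon$, and since $\epsilon$ is arbitrary the limit is $0$. The main obstacle, I expect, will be the bookkeeping of conditional distributions: at step $l$ of Subcase I the edges incident to $v_l$ are realized in two batches (the uniform ``probe'' batch of size $d_{v_l}(n)+1$ and the edges to already-explored vertices, whose cost distributions are shifted by the previously realized $\widehat{\thresh}$'s). Separating $J_l$ into the two subcases above and invoking the correct conditional density for each batch is the delicate part; everything else reduces to elementary order-statistic estimates and the weak convergence hypothesis.
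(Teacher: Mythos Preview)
Your proposal is correct and reaches the same conclusion, but the route differs from the paper's in one essential respect. You split $J_l$ into two sources---collisions with previously touched \emph{unexplored} vertices (handled by the uniform sampling of $\Bt_l\cup\{z_0\}$) and collisions with previously \emph{explored} vertices (handled by the shifted-exponential tail bound $\prob(C_n(\{v_l,j\})\leq T)\leq T/n$)---and then need a second truncation on $\max_l\thresh_l$ to close the second case. The paper instead unifies both sources with a single coupling: it replaces each cost $C_n(\{v_l,j\})$ to an explored $j$ by $C_n(\{v_l,j\})-\widehat{\thresh}_j$ (the set $\widehat{H}_2$), observes that any collision in the original set $H_1\cup H_2$ forces a collision in $H_1\cup\widehat{H}_2$, and since the latter is a family of i.i.d.\ $\mathrm{Exp}(1/n)$ variables the whole event reduces to a single ratio of binomial coefficients, $\binom{n-|\delta_l|-1}{\widetilde{k}+1}/\binom{n-|\epsilon_l|-1}{\widetilde{k}+1}$. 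This avoids any truncation on the thresholds. Your approach is more elementary and perhaps easier to verify line by line; the paper's shifting trick is slicker and yields a cleaner bound depending only on $\sum_{i\leq l}d_{v_i}(n)$.

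One small imprecision to fix: you write that on $A_M$ ``each $\thresh_l$ is the $(d_{v_l}(n)+1)$-th order statistic of at least $n-t-1$ i.i.d.\ $\mathrm{Exp}(1/n)$ variables.'' Given $\mathcal{F}_{l-1}$ not all $n-1$ costs from $v_l$ are i.i.d.\ $\mathrm{Exp}(1/n)$ (some are already realized, some are shifted). What is true---and sufficient for your purpose---is that $\thresh_l$ is stochastically \emph{dominated} by the $(d_{v_l}(n)+1)$-th order statistic of the $\geq n-t-1$ costs to unexplored vertices with $\{v_l,j\}\in\Ut_{l-1}$, and those are i.i.d.\ $\mathrm{Exp}(1/n)$.
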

\begin{remark}
	Consider the event $\{J_l = \emptyset\}$ for all $l\in[t\wedge\tau]$. This implies that for every vertex $j$ such that $C_n\left(\{v_{l},j\}\right)\leq \thresh_{l}$, either $j$ is touched for the first time at time step $l$ or the value of $C_n\left(\{v_{l},j\}\right)$ has been realized by time step $l-1$. However, if $j \neq par(v_{l})$, then the later case is impossible; otherwise, the vertex $v_l$ should have been touched at least twice during the exploration process up to time $l-1$: once when we realized $C_n\left(\{v_{l},j\}\right)$ and once when we realized $C_n\left(\{par(v_{l}),v_{l}\}\right)$.
\end{remark}
\begin{remark}\label{rem:localtree}
	Even if the rooted graph induced by $\Ct_{t\wedge\tau}$ is a tree, it does not mean that the exploration process satisfies the property which is mentioned in Lemma~\ref{lem:localtree}. In Figure~\ref{fig:counterexp}, vertex $b$ has been touched twice during the exploration process up to time step $t=1$: at time steps $t=0$ (by the vertex $a$) and $t=1$; however, $\Ct_1$ is a tree.
\end{remark}
\begin{proof}
	Observe that $J_0 = \emptyset$. Fix $t>0$. An obvious upper-bound for the left-hand side of \cref{eq:localtree} is given by applying the union bound:
	\begin{align}
		\prob\left( \exists l\in[t\wedge \tau] \text{ such that } |J_l|\neq 0 \right) \nonumber
		&= \prob\left(\bigcup_{l=1}^{t\wedge \tau} \{|J_l|\neq 0\}\right)\allowdisplaybreaks\nonumber\\
		&= \prob\left(\bigcup_{l=1}^{t} \left(\{l\leq\tau\}\cap \{|J_l|\neq 0\}\right)\right)\allowdisplaybreaks\nonumber\\
		&\leq \sum_{l=1}^{t} \prob\left(\{l\leq\tau\}\cap \{|J_l|\neq 0\}\right)\allowdisplaybreaks\nonumber\\
		&= \sum_{l=1}^{t} \expect\left[{\onefunc}\{l\leq\tau\}\prob\left( \{|J_l|\neq 0\}\left|\mathcal{F}_{l-1} \right.\right)\right]. \label{eq:slocaltree}
	\end{align}
	We provide an upper-bound for each term on the right-hand side. If the vertex $v_l$ has been explored by time step $l-1$, then we do not need to touch any vertex at time $l$ and $J_l = \emptyset$. In Figure~\ref{fig:counterexp}, the vertex $b$ has already been explored at time step $t=2$ and $J_5 = \emptyset$. Hence, we only need to consider sample paths where $v_l$ has not been explored. Thus,
	\begin{align}
	\prob(|J_l|= 0\,\vert\,\mathcal{F}_{l-1}) &={\onefunc}{\{\text{$v_l$ has been explored}\}} + {\onefunc}{\{\text{$v_l$ has not been explored}\}} \prob(|J_l| = 0\,\vert\,\mathcal{F}_{l-1}).
	\end{align}
	Consider the sets $\delta_l$, $\varepsilon_l$, $\epsilon_l$ $\in\mathcal{F}_{l-1}$ defined as follows,
	\begin{enumerate}[]
		\item $\delta_l$: set of vertices $j\neq v_l$ with at least one incident edge such that the cost of the edge has been realized prior to time $l$. Equivalently, $\delta_l$ is the set of all vertices except $v_l$ that have been touched prior to time $l$,
		\begin{align}
		\delta_l &= \left\{j\in[n]\setminus\{v_l\}: \exists i \neq v_l \text{ such that, }\{i,j\} \notin \Ut_{l-1} \right\}.
		\end{align}
		\item $\varepsilon_l$: set of all vertices except $v_l$ that have been explored prior to time $l$,
		\begin{align}
		\varepsilon_l &= \left\{j\in[n]\setminus\{v_l\}: j \text{ has been explored by }l-1 \right\}.
		\end{align}
		\item $\epsilon_l$: set of vertices $j\neq v_l$ such that the cost of $\{v_l,j\}$ has been realized prior to time $l$,
		\begin{align}
		\epsilon_l &= \left\{j\in[n]\setminus\{v_l\}: \{v_l,j\} \notin \Ut_{l-1} \right\}.
		\end{align}
		\end{enumerate}
	Observe that $|\epsilon_l|\geq 1$ since $v_l\in \At_l$. Moreover, at each step of the time we may explore at most one vertex (there might be cases in which we revisit an explored vertex); hence, $|\varepsilon_l| \leq l$. Furthermore, for all sample paths in $\mathcal{F}_{l-1}$ in which $v_l$ has not been explored, $\epsilon_l \subseteq \varepsilon_l$ since if $\{v_l,j\}$ has been realized and $v_l$ has not been explored, then $j$ has been explored. Finally, at each time step $l^\prime$, we may touch at most $d_{v_{l^\prime}}(n) + 1$ new vertices; hence, $|\delta_l| \leq 1+ \sum_{i=0}^{l-1}\left(d_{v_i(n) }+ 1\right)$

	Let $k\coloneqq d_{v_l}(n)$ denote the potential degree of the vertex $v_l$. Let $\widetilde{k}\coloneqq \min(k,n-|\epsilon_l|-2)$, where $n-|\epsilon_l|-1$ equals to the number of vertices $j$ such that $\{v_l,j\}\in \Ut_{l-1}$. Notice that $n-|\epsilon_l|-1 > 0$ if $v_l$ has not been explored and $n>l$. Define $\widetilde{\thresh}_l$ and $\widetilde{\potdeg}_l$ to be modified versions of $\thresh_l$ and $\potdeg_l$, i.e.,
	\begin{align}
	\widetilde{\thresh}_{l} &= \text{${\widetilde{k}+1}^{\mathrm{th}}$ smallest value in }\left\{C_{n}(\{v_{l},j\}) : j\in[n]\text{ and } \{v_{l},j\}\in \Ut_{l-1} \right\}\allowdisplaybreaks\\
	\widetilde{\potdeg}_{l} &= \{j\in [n]:\{v_{l},j\}\in \Ut_{l-1} \text{ and } C_{n}(\{v_{l},j\}) < \widetilde{\thresh}_l \}
	\end{align}
	Recall that $\thresh_l$ and $\potdeg_l$ are defined as follows:
	\begin{align}
	\thresh_{l} &= \text{${k+1}^{\mathrm{th}}$ smallest value in }\left\{C_{n}(\{v_{l},j\}) : j\in[n] \right\}\allowdisplaybreaks\\
	\potdeg_{l} &= \{j\in [n]:C_{n}(\{v_{l},j\}) < \thresh_l \}.
	\end{align}
	In the definition of $\thresh_l$, all possible vertices are considered; however, the definition of $\widetilde{\thresh}_{l}$ skips all the vertices $j$, such that $\{v_l,j\}$ has been realized prior to time step $l$. Hence, if $\widetilde{k} = k$, then $\thresh_l \leq \widetilde{\thresh}_{l}$. Moreover, for every vertex $j\in \potdeg_l$ such that the cost of $\{v_l,j\}$ is realized at time $l$, i.e., $\{v_l,j\}\in \Ut_{l-1}$, we have $j\in\widetilde{\potdeg}_l$. To see this, consider the two cases: $1)$ If $\widetilde{k} = k$, then $j\in \potdeg_l$ implies $C_n(\{v_l,j\})<\thresh_l\leq\widetilde{\thresh}_l$. $2)$ If $\widetilde{k} < k$, then $\widetilde{\potdeg}_l$ contains all the vertices $j$ such that $\{v_l,j\}\in \Ut_{l-1}$.

	To realize $\widetilde{\thresh}_l$ and $\widetilde{\potdeg}_l$, we need to pick the $\widetilde{k}+1$ closest vertices to $v_l$, based on the cost of the connection. For an unexplored vertex $j$, the cost of $\{v_l,j\}$ is an exponentially distributed random variable with parameter $\frac{1}{n}$. For an explored vertex $j$ such that $\{v_l.j\}\in \Ut_{l-1}$, the cost of $\{v_l,j\}$ conditioned on $\widehat{\thresh}_j$ is a shifted exponentially distributed random variable with parameter $\frac{1}{n}$ (Corollary~\ref{cor:exprrderstat}); that is $C_{n}(\{v_{l},j\}) \equiv \widehat{\thresh}_j + \expdist(\frac{1}{n})$, where $\widehat{\thresh}_j$ is defined before Remark~\ref{rem:T_hat}. Hence, we need to pick the $\widetilde{k}+1$ smallest value in $H_1\cup H_2$ where,
	\begin{align}
		H_1& = \left\{C_{n}(\{v_{l},j\}) : j\in[n], j\text{ has not been explored and }\{v_{l},j\}\in \Ut_{l-1} \right\} \allowdisplaybreaks\\
		& \equiv \left\{Y_1,Y_2,\dots,Y_{n-|\varepsilon_l\cup \epsilon_l|-1}:Y_i \widesim[2.5]{i.i.d.} \expdist(\frac{1}{n}) \right\}
	\end{align}
	and,
	\begin{align}
		H_2& = \left\{C_{n}(\{v_{l},j\}) : j\in[n], j\text{ has been explored and}\{v_{l},j\}\in \Ut_{l-1} \right\} \allowdisplaybreaks\\
		& \equiv \left\{\expdist(\frac{1}{n}) + \widehat{\thresh}_j: j\in[n], j\text{ has been explored and}\{v_{l},j\}\in \Ut_{l-1} \right\}
	\end{align}
	Instead of $H_2$ we consider $\widehat{H}_2$, defined as follows,
	\begin{align}
		\widehat{H}_2& = \left\{C_{n}(\{v_{l},j\})-\widehat{\thresh}_j : j\in[n], j\text{ has been explored and}\{v_{l},j\}\in \Ut_{l-1} \right\} \allowdisplaybreaks\\
		& \equiv \left\{Y_1^{\prime},Y_2^{\prime},\dots,Y_{|\varepsilon_l\setminus\epsilon_l|}^{\prime}: Y_i^{\prime} \widesim[2.5]{i.i.d.} \expdist(\frac{1}{n}) \right\}
	\end{align}
	In fact, $\widehat{H}_2$ is obtained by replacing $C_{n}(\{v_{l},j\})$ with $C_{n}(\{v_{l},j\})-\widehat{\thresh}_j$ for all explored vertices $j$ such that $\{v_l,j\}\in \Ut_{l-1}$. Notice that if $\widetilde{k}+1$ smallest values in $H_1\cup H_2$ correspond to $\{u_0,u_1,\dots,u_{\widetilde{k}}\}$, then the $\widetilde{k}+1$ smallest values in $H_1\cup \widehat{H}_2$ correspond to $\{\widehat{u}_0,\widehat{u}_1,\dots,\widehat{u}_{\widetilde{k}}\}$ where $\widehat{u}_i$ is either $u_i$ or $u_i - \thresh_j$ for some explored vertex $j\in[n]$. Notice that if a member of $H_2$ is amongst $\widetilde{k}+1$ smallest values in $H_1 \cup H_2$, then the corresponding element is also amongst $\widetilde{k}+1$ smallest values in $H_1 \cup \widehat{H}_2$.
	Collecting everything together, we have
	\begin{align}
	\prob(|J_l| = 0 |\mathcal{F}_{l-1}) &= \prob\left(\left.\left\{
	\begin{minipage}{0.5\textwidth}
		$\forall \{v_l,u\}\in \Ut_{l-1}$ such that $C_n(\{v_l,u\})\leq \thresh_l$, the vertex $u$ is touched for the first time at time step $l$
	\end{minipage}
	\right\}
	\right\vert \mathcal{F}_{l-1}\right) \allowdisplaybreaks\\
	& \geq \prob\left(\left.\left\{
	\begin{minipage}{0.5\textwidth}
	$\forall \{v_l,u\}\in \Ut_{l-1}$ such that $C_n(\{v_l,u\})\leq \widetilde{\thresh}_l$, the vertex $u$ is touched for the first time at time step $l$
	\end{minipage}
	\right\}
	\right\vert \mathcal{F}_{l-1}\right) \allowdisplaybreaks\\
	& = \prob\left(\left.\left\{
	\begin{minipage}{0.5\textwidth}
	The $\widetilde{k}+1$ smallest values in $H_1\cup H_2$ correspond to the vertices that are touched for the first time at time step $l$
	\end{minipage}
	\right\}
	\right\vert\mathcal{F}_{l-1}\right) \allowdisplaybreaks\\
	& \geq \prob\left(\left.\left\{
	\begin{minipage}{0.5\textwidth}
	The $\widetilde{k}+1$ smallest values in $H_1\cup \widehat{H}_2$ correspond to the vertices that are touched for the first time at time step $l$
	\end{minipage}
	\right\}
	\right\vert \mathcal{F}_{l-1}\right),
	\end{align}
	where the last inequality follows from the fact that members of $H_2$ correspond to the vertices that have been touched before time step $l$ (notice that some members of $H_1$ may also correspond to the vertices that have already been touched.).
	However, all the values in $H_1\cup\widehat{H}_2$ are independent and exponentially distributed with parameter $\frac{1}{n}$. There are $n-|\epsilon_l|-1$ vertices $j\neq v_l$ such that $\{v_l,j\}\in \Ut_{l-1}$ and the number of the vertices $j$ that has not been touched prior to time step $l$ is $n - |\delta_l| - 1$; hence,
	\begin{align}
	&\prob\left(\left.\left\{
	\begin{minipage}{0.5\textwidth}
	The $\widetilde{k}+1$ smallest values in $H_1\cup \widehat{H}_2$ correspond to the vertices that are touched for the first time at time step $l$
	\end{minipage}
	\right\}
	\right\vert \mathcal{F}_{l-1}\right) \allowdisplaybreaks\\
	&\qquad = \frac{\binom{n - |\delta_l| - 1}{\widetilde{k}+1}}{\binom{n - |\epsilon_l| - 1}{\widetilde{k}+1}}\geq \frac{\binom{\max\left(0,n - \left(\sum_{i=0}^{l-1}\left(d_{v_i }(n)+ 1\right)\right) - 2\right)}{\widetilde{k}+1}}{\binom{n - 1}{\widetilde{k}+1}} \allowdisplaybreaks\\
	&\qquad\geq \left(\max\left(0,\frac{n - l - \sum_{i=0}^{l}d_{v_i }(n)- 2}{n}\right) \right)^{d_{v_l}(n)+1}.
	\end{align}
	Recall that $\widetilde{k} \leq k=d_{v_l}(n)$. Finally, $\sum_{i=0}^{l}d_{v_i }(n)< M$ with arbitrary high probability for a large enough constant $M$ since the unique elements of the sequence $(d_{v_i }(n))_{i=0}^l$ are chosen uniformly at random (without replacement) from $\bs{d_n}$ and empirical distribution of $\bs{d_n}$ converges to $P$; hence,
	\begin{align}
		&\expect\left[{\onefunc}\{l\leq\tau\}\prob\left( \{|J_l|\neq 0\}\left|\mathcal{F}_{l-1} \right.\right)\right] \allowdisplaybreaks\\
		&\myquad[5]= \expect\left[{\onefunc}\{l\leq\tau\}\left(1 - \prob\left( \{|J_l|= 0\}\left|\mathcal{F}_{l-1} \right.\right)\right)\right] \allowdisplaybreaks\\
		&\myquad[5]\leq 1 - \expect\left[\left(\max\left(0,1 - \frac{ l+\sum_{i=0}^{l}d_{v_i }(n)+2}{n}\right) \right)^{d_{v_l}(n)+1}\right] \xrightarrow{n\to\infty} 0,
	\end{align}
	using the law of total probability. Now, the result follows from the fact that the summation in \cref{eq:slocaltree} has only $t$ summands, each of which converges to zero as $n$ goes to $\infty$.
\end{proof}
$~$\smallskip\\
{\bf Step 3: Convergence of the Exploration}\\
In the third step, we study the local structure of the rooted graph induced by $\Ct_{t\wedge\tau}$ for any fixed $t$. The goal is to analyze the joint distribution of the sequence $(X_0^{(n)},X_1^{(n)},X_2^{(n)},\dots,X_{t\wedge\tau}^{(n)})$ as $n$ goes to infinity, where
\begin{align}
X_{0}^{(n)}\coloneqq\big(d_{v_0}(n),\thresh_0,C_n(\{v_0,j_1\}),C_n(\{v_0,j_2\}),C_n(\{v_0,j_3\}),\dots,C_n(\{v_0,j_{d_{v_0}(n)}\})\big)
\end{align}
such that $C_n(\{v_0,j_s\}) < \thresh_0$ for all $s\in[d_{v_0}(n)]$ and $\phi(j_1)\prec \phi(j_2) \prec \cdots \phi(j_{d_{v_0}(n)})$, and for all $l\in[t\wedge\tau]$ the random vector $X_l^{(n)}$ is given by
\begin{align}
X_{l}^{(n)}\coloneqq\big(d_{v_l}(n),\overline{\thresh}_l,C_n(\{v_l,j_1\}),C_n(\{v_l,j_2\}),C_n(\{v_l,j_3\}),\dots,C_n(\{v_l,j_{d_{v_l}(n)-1}\})\big)
\end{align}
such that $\phi(j_1)\prec \phi(j_2) \prec \cdots \phi(j_{d_{v_l}(n)-1})$ and for all $s\in[d_{v_l}(n)-1]$ we have $C_n(\{v_l,j_s\}) < \overline{\thresh}_l$;
the term $\overline{\thresh}_l$ is the ${d_{v_l}(n)}^{\mathrm{th}}$ smallest value in the set $\left\{C_{n}(\{v_{l},j\}) : j\in[n]\text{ and } \{v_{l},j\}\neq e_l \right\}$. Notice that the second component of $X_0^{(n)}$ equals the threshold of the vertex $v_0$ and the remaining components correspond to the cost of connections between $v_0$ and its potential neighbors. Recall that $d_{v_l}(n)$ is the potential degree of vertex $v_l$, and that the edge $e_l$ is picked according to the exploration process.

An important observation is that for each $l\in[t\wedge\tau]$, $\thresh_l = \overline{\thresh}_l$ if $C_n(e_l) < \overline{\thresh}_l$; moreover, if $C_n(e_l) > \overline{\thresh}_l$, then the edge $e_l$ does not survive (notice that by Remark~\ref{rem:naivevalues}, we have $C_n(e_l) \neq \overline{\thresh}_l$.). Hence, the first two components of $X_{l}^{(n)}$ together are the type of the vertex $v_l$ if and only if the edge $e_l$ survives. Notice that the value of $X_l^{(n)}$ depends on the number of vertices.

Let us extend the sequence to $(X_0^{(n)},X_1^{(n)},X_2^{(n)},\dots,X_t^{(n)})$: for each $l > t\wedge\tau$, the first component of $X_l^{(n)}$ is defined to be $d_v(n)$ where the vertex $v$ is chosen uniformly at random such that $v\notin \{v_0,v_1,\dots,v_{l-1}\}$, the second component is set to be ${d_v(n)}^{\mathrm{th}}$ smallest value in $\St_l^{(n)} = \{s_1,s_2,\dots,s_n:s_i \widesim[2.5]{i.i.d.} \expdist(\frac{1}{n})\}$, and, the remaining components are defined to be $(s_{l_1},s_{l_2},\dots,s_{l_{d_v(n)-1}})$ such that $l_1<l_2<\dots<l_{d_v(n)-1}$ and $s_{l_i}$ is among the $d_v(n)-1$ smallest values in $\St_l^{(n)}$.

The following Lemma states that the sequence $(X_0^{(n)},X_1^{(n)},X_2^{(n)},\dots,X_t^{(n)})$ has the same distribution as the corresponding sequence $(X_0,X_1,X_2,\dots,X_t)$ generated by $\ertreedist(P)$ and extended up to time $t$. The proof is given by using a coupling argument.
\begin{lemma}{\bf Convergence of the Exploration Process} \label{lem:convergexplor}
	The sequence $(X_0^{(n)},X_1^{(n)},\dots,\allowbreak X_t^{(n)})$ converges to the sequence $(X_0,X_1,\dots,X_t)$,
	\begin{align}
		X_0 &\coloneqq (\mathrm{D}_0,\mathrm{T}_0,\mathrm{C}^{(0)}_1,\mathrm{C}^{(0)}_2,\dots,\mathrm{C}^{(0)}_{\mathrm{D}_l})\allowdisplaybreaks\\
		X_l &\coloneqq (\mathrm{D}_l,\mathrm{T}_l,\mathrm{C}^{(l)}_1,\mathrm{C}^{(l)}_2,\dots,\mathrm{C}^{(l)}_{\mathrm{D}_l-1})\qquad\forall l>0,
	\end{align}
	in distribution where $\mathrm{D}_l$ is distributed as $P(\cdot)$ for all $l\in\mathbb{Z}_+$, $\mathrm{T}_l$ is distributed as $\erlangdist(\mathrm{D}_l)$ for all $l\in[t]$ and $\mathrm{T}_0$ is distributed as $\erlangdist(\mathrm{D}_0+1)$, $\{\mathrm{C}^{(l)}_i\}_{i}$ are {\em i.i.d.} random variables uniformly distributed on $[0,\mathrm{T}_l]$ for all $l\in\mathbb{Z}_+$, and $X_i$s are independent.
\end{lemma}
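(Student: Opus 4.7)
The plan is to establish convergence by a conditional coupling argument built around the locally tree-like property of Lemma~\ref{lem:localtree}. The key observation is that on the event $A_t = \{|J_l| = 0 \text{ for all } l \in [t \wedge \tau]\}$, every potential neighbor of $v_l$ that contributes to $X_l^{(n)}$ is being touched for the first time at step $l$, so the cost random variables determining $X_l^{(n)}$ are \emph{fresh} (unconditional) $Exp(1/n)$ samples, independent of $\mathcal{F}_{l-1}$. Since $\prob(A_t^c) \to 0$ by Lemma~\ref{lem:localtree}, it suffices to prove convergence on $A_t$.

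\textbf{Base case.} For $l=0$, the preparation step draws $\thresh_0 = C_n(\{v_0,z_0\})$ as the $(k{+}1)$-st order statistic of $n-1$ i.i.d.\ $Exp(1/n)$ variables with $k = d_{v_0}(n)$, and conditionally on $\thresh_0$ the $k$ smaller costs form an i.i.d.\ sample truncated to $[0,\thresh_0]$. By Corollary~\ref{cor:exprrderstat}, as $n \to \infty$ we have $\thresh_0 \Rightarrow Erlang(k{+}1)$ and the truncated exponentials converge to $Unif(0,\thresh_0)$. Since $d_{v_0}(n)$ is drawn from $P_{\bs{d}_n} \Rightarrow P$, we get $X_0^{(n)} \Rightarrow X_0$.

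\textbf{Inductive step.} For $l \in [t \wedge \tau]$, condition on $\mathcal{F}_{l-1}$ and on $A_l$. In subcase I ($v_l$ unexplored at step $l-1$), the exploration draws $k' = \min(d_{v_l}(n), n{-}m{-}2)$ fresh costs for $\Bt_l$, one for $z_0$, and shifted exponentials $\widehat{\thresh}_v + Exp(1/n)$ for each explored vertex $v$ with $\{v_l,v\} \in \Ut_{l-1}$. For $n$ large the truncation $k' = d_{v_l}(n)$ holds, and on $A_l$ none of the shifted-exponential costs fall below $\thresh_l$, so they do not alter the set of potential neighbors determining $X_l^{(n)}$. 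Exactly one potential edge ($e_l = \{\mathrm{par}(v_l), v_l\}$) is excluded from the reported components, reducing the count by one, which is precisely why $\mathrm{T}_l \sim Erlang(\mathrm{D}_l)$ for $l > 0$ rather than $Erlang(\mathrm{D}_l{+}1)$. Another application of Corollary~\ref{cor:exprrderstat} then gives $X_l^{(n)} \Rightarrow (\mathrm{D}_l, \mathrm{T}_l, \mathrm{C}_1^{(l)}, \ldots, \mathrm{C}_{\mathrm{D}_l - 1}^{(l)})$ with the stated distributions. In subcase II both endpoints of $e_l$ have already been explored, so $X_l^{(n)}$ is measurable with respect to $\mathcal{F}_{l-1}$ and was produced by the same mechanism at the earlier step when $v_l$ was first touched, so no additional argument is needed.

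\textbf{Joint convergence and independence.} Each exploration step introduces a new potential degree (drawn essentially uniformly without replacement from $\bs{d}_n$, which converges to i.i.d.\ $P$ in the $n \to \infty$ limit for any fixed $t$) together with a fresh batch of $Exp(1/n)$ cost variables, all conditionally independent of $\mathcal{F}_{l-1}$ given the identity of $v_l$ and the tree-like event $A_l$. Taking $n \to \infty$ under this coupling makes the conditioning on $\mathcal{F}_{l-1}$ trivial in the limit, yielding unconditionally independent $X_l$'s. The extension to $l > t \wedge \tau$ is immediate since those $X_l^{(n)}$ are defined directly from independent exponential samples $\St_l^{(n)}$, and Corollary~\ref{cor:exprrderstat} applies without further work.

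\textbf{Main obstacle.} The delicate point is the shifted-exponential costs $\widehat{\thresh}_v + Exp(1/n)$ produced in subcase I when $v_l$ has edges into the previously explored portion of the graph: these could in principle sit below $\thresh_l$ and distort the identity and costs of the potential neighbors of $v_l$, destroying both the Erlang/uniform structure and the independence across $l$. The locally tree-like property (Lemma~\ref{lem:localtree}) is precisely the tool that handles this: it shows that uniformly for $l \in [t \wedge \tau]$, with probability tending to $1$ no already-touched vertex appears among the potential neighbors of $v_l$, so the contaminating shifted exponentials can be safely ignored in the limit.
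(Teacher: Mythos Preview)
Your plan matches the paper's overall strategy—reduce to Corollary~\ref{cor:exprrderstat} via Lemma~\ref{lem:localtree}—but the execution has a real gap. The assertion that on $A_t$ the edge costs are ``fresh unconditional $Exp(1/n)$ samples, independent of $\mathcal{F}_{l-1}$'' is not justified: the event $A_t$ involves $\thresh_l$, which is a function of \emph{all} edge costs out of $v_l$, so conditioning on $A_t$ alters their joint law and Corollary~\ref{cor:exprrderstat} no longer applies directly to the conditional distribution. The paper resolves this not by conditioning but by an explicit coupling. For each $l$ it builds an auxiliary set $\St_l^{(n)}$ of exactly $n-2$ variables that are i.i.d.\ $Exp(1/n)$ given $\mathcal{F}_{l-1}$, obtained by (i) keeping $C_n(\{v_l,j\})$ for unexplored $j$ with unrealized edge, (ii) replacing $C_n(\{v_l,j\})$ by the \emph{unshifted} value $C_n(\{v_l,j\})-\widehat{\thresh}_j$ for explored $j$ with unrealized edge, and (iii) inserting a fresh independent exponential for each already-realized edge other than $e_l$. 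The vector $Y_l^{(n)}$ built from $\St_l^{(n)}$ then has unconditionally the order-statistic structure of Corollary~\ref{cor:exprrderstat}, and one shows $\prob(X_l^{(n)}\neq Y_l^{(n)})\to 0$. This last step requires bounding \emph{two} bad events: (a) $v_l$ was touched more than once (contained in $A_t^c$), and (b) some unshifted value $C_n(\{v_l,j\})-\widehat{\thresh}_j$ falls below $\overline{\thresh}_l$. Event (b) is not implied by $\{J_l\neq\emptyset\}$—on $A_t$ the true cost $C_n(\{v_l,j\})$ stays above $\thresh_l$, but its unshifted version need not—and the paper controls it by reusing the combinatorial bound from the \emph{proof} of Lemma~\ref{lem:localtree} (the inequality for $H_1\cup\widehat{H}_2$), not merely its statement.

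Your treatment of Subcase~II is also incorrect. If $v_l$ coincides with an earlier $v_{l'}$, then $X_l^{(n)}$ and $X_{l'}^{(n)}$ share the same potential degree and most of the same costs (they differ only in which edge $e_l$ versus $e_{l'}$ is excluded), so they are strongly dependent and cannot converge to independent limits. The right resolution is that Subcase~II has vanishing probability: tracing the mechanism, it forces some vertex $z\in\potdeg_{l'}$ to be touched both at step $l'$ and at the step where $\phi(z)$ was assigned, which places $z$ in some $J_{l''}$. Hence Subcase~II is absorbed into $\prob(A_t^c)\to 0$ rather than handled as you describe.
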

\begin{proof}
Fix the value of $n$. Let $l>0$ and consider the random vector
\begin{align}
\widetilde{X}_l^{(n)} = (\widetilde{\mathrm{d}}_{(l)}(n),\widetilde{\mathrm{T}}_l,\widetilde{\mathrm{C}}_1,\widetilde{\mathrm{C}}_2,\dots,\widetilde{\mathrm{C}}_{\widetilde{\mathrm{d}}_{(l)}(n)-1})
\end{align}
where $\big(\widetilde{\mathrm{d}}_{(i)}(n)\big)_{i=0}^{n-1}$ is a random reordering of $\big(d_i(n)\big)_{i=1}^n$, $\widetilde{\mathrm{T}}_l$ is the ${\widetilde{\mathrm{d}}_{(l)}(n)}^{\mathrm{th}}$ smallest value in $\St_l^{(n)} = \{s_1,s_2,\dots,s_{n-2}: s_i\widesim[2.25]{i.i.d.} \expdist(\frac{1}{n}) \}$ and $\widetilde{\mathrm{C}}_i$ equals to $s_{l_i}$ where $l_1<l_2<\dots<l_{\widetilde{\mathrm{d}}_{(l)}(n)-1}$ and $s_{l_i}<\widetilde{\mathrm{T}}_l$. Using Corollary~\ref{cor:exprrderstat}, it is easy to see that for any fixed $l>0$, $\widetilde{X}_l^{(n)}$ converges in distribution to $(\mathrm{D}_l,\mathrm{T}_l,\mathrm{C}^{(l)}_1,\mathrm{C}^{(l)}_2,\dots,\mathrm{C}^{(l)}_{\mathrm{D}_l-1})$ as $n$ goes to infinity. Similarly, for a proper definition of $\widetilde{X}_0^{(n)}$, the same property holds. Notice that the distribution of $\widetilde{X}_l^{(n)}$ depends on $n$.

The idea of the proof is to first construct a coupling between $(X_l^{(n)})_{l=0}^{t}$ and $(Y_l^{(n)})_{l=0}^{t}$ where conditioned on $\bigcap_{l\in[t\wedge\tau]}\{J_l = \emptyset\}$, $(Y_l^{(n)})_{l=0}^{t}$ has the same distribution as $(\widetilde{X}_l^{(n)})_{l=0}^{t}$, and then show that
\begin{align}
\lim_{n\to\infty}\prob\left((X_0^{(n)},X_1^{(n)},X_2^{(n)},\dots,X_{t}^{(n)})\neq (Y_{0}^{(n)},Y_1^{(n)},Y_2^{(n)},\dots,Y_{t}^{(n)})\right)=0.
\end{align}
For all $l>t\wedge \tau$, let $Y_l^{(n)} = X_l^{(n)}$. Moreover, let $Y_{0}^{(n)} = X_0^{(n)}$. For all $l\in[t\wedge\tau]$, let the first component of $Y_l^{(n)}$ to be equal to the first component of $X_l^{(n)}$. Conditioned on $\mathcal{F}_{l-1}$, construct the set $\St_l^{(n)}$ as follows,
\begin{itemize}
	\item For each vertex $j$ such that the vertex $j$ has not been explored and the value of $C_n(\{v_l,j\})$ has not been realized by time step $l-1$, include $C_n(\{v_l,j\})$ in $\St_l^{(n)}$.
	\item For each vertex $j$ such that the vertex $j$ has been explored, but the value of $C_n(\{v_l,j\})$ has not been realized by time step $l-1$, include $C_n(\{v_l,j\}) - \widehat{\thresh}_j$ in $\St_l^{(n)}$, where $\widehat{\thresh}_j$ is defined before Remark~\ref{rem:T_hat}.
	\item For each vertex $j$ such that the value of $C_n(\{v_l,j\})$ has been realized by time step $l-1$ and $\{v_l,j\}\neq e_l$, add an exponentially distributed random variable with mean $n$ to $\St_l^{(n)}$.
\end{itemize}
Now define the second component of $Y_l^{(n)}$ be the ${Y_l^{(n)}(1)}^{\mathrm{th}}$ smallest value in $\St_l^{(n)}$ and let the remaining $Y_l^{(n)}(1)-1$ components of $Y_l^{(n)}$ to be the $Y_l^{(n)}(1)-1$ smallest values in $\St_l^{(n)}$ (randomly ordered). Clearly conditioned on $\bigcap_{l\in[t\wedge\tau]}\{J_l = \emptyset\}$, $(Y_l^{(n)})_{l=0}^{t}$ and $(\widetilde{X}_l^{(n)})_{l=0}^{t}$ are equidistributed.

The event $X_l^{(n)}\neq Y_l^{(n)}$ for some $l\in[t\wedge\tau]$ may happen if $1)$ the vertex $v_l$ has been touched twice during the exploration process up to time step $l-1$ or if $2)$ the value of $C_n(\{v_l,j\})-\widehat{\thresh}_j$ for an explored vertex $j$ is smaller than $\overline{\thresh}_l$. Recall that in the proof of the Lemma~\ref{lem:localtree}, we replaced the set $H_2$ with the set $\widehat{H}_2$ where each value in $\widehat{H}_2$ corresponds to $C_n(\{v_l,j\})-\widehat{\thresh}_j$ for an explored vertex $j$ such that $\{v_l,j\}\in \Ut_{l-1}$. We also proved the following inequality:
\begin{align}
&\prob\left(\left.\left\{
\begin{minipage}{0.5\textwidth}
The $\widetilde{k}+1$ smallest values in $H_1\cup \widehat{H}_2$ corresponds to the vertices that are touched for the first time at time step $l$
\end{minipage}
\right\}
\right\vert \mathcal{F}_{l-1}\right) \geq \\
&\myquad[15]
 \left(\max\left(0,\frac{n - l - \sum_{i=0}^{l}d_{v_i }(n)- 2}{n}\right) \right)^{d_{v_l}(n)+1}
\end{align}
Hence, using the above inequality and the union bound, for all $l\in[t\wedge\tau]$ we have
\begin{align}
	&\prob(X_l^{(n)}\neq Y_l^{(n)}|\mathcal{F}_{l-1}) \allowdisplaybreaks\\
	&\myquad[2]\leq {\onefunc}{\{\text{$v_l$ has been touched at least twice}\}} +\allowdisplaybreaks\\
	&\myquad[10] 1 - \left(\max\left(0,\frac{n - l - \sum_{i=0}^{l}d_{v_i }(n)- 2}{n}\right) \right)^{d_{v_l}(n)+1} \allowdisplaybreaks\\
	& \myquad[2]\leq {\onefunc}\left\{\bigcup_{i=1}^{l-1}\{|J_i|\neq 0 \} \right\} + 1- \left(\max\left(0,\frac{n - l - \sum_{i=0}^{l}d_{v_i }(n)- 2}{n}\right) \right)^{d_{v_l}(n)+1}
\end{align}
Using Lemma $\ref{lem:localtree}$ and the same reasoning as in its proof, we get
\begin{align}
	&\prob((X_0^{(n)},X_1^{(n)},\dots,X_{t}^{(n)})\neq (Y_0^{(n)},Y_1^{(n)},\dots,Y_{t}^{(n)})) \leq \\
	&\myquad[10]\sum_{l=1}^{t} \expect\left[{\onefunc}\{l\leq\tau\}\prob\left( \{X_l^{(n)}\neq Y_l^{(n)}\}\left|\mathcal{F}_{l-1} \right.\right)\right] \xrightarrow{n\to\infty} 0 .
\end{align}
\end{proof}
\noindent{\bf Step 4: Portmanteau Theorem}\\
The final step is to prove the weak convergence of $\expect U(N_n)$ to $\ertreedist(P)$ by using the Portmanteau theorem.
Let $\overline{\rho}_n = \expect U(N_n)$ and $\rho = \ertreedist(P)$. The goal is to prove $\overline{\rho}_n\xrightarrow{w}\rho$. For a finite rooted tree $[\rtreealt]\in G_*$ of depth $R$, define the set $A_{\rtreealt}$ as follows,
\begin{align}
	A_{\rtreealt} = \left\{[\N]\in G_*: d_{G_*}([\N],[\rtreealt])< \frac{1}{1+R} \right\}
\end{align}
Notice that if $[\N]\in A_{\rtreealt}$, then the rooted subgraph $(\G)_R$ obtained by removing the marks as well as all the vertices of depth more than $R$ from $\N$ is homeomorphic to the graph structure of ${\rtreealt}$. Moreover, the first component of the mark of each vertex in $\N$ up to depth $R$ is equal to the one in ${\rtreealt}$. Recall that a non-root vertex $\boldsymbol{i}$ with the mark $(n_{\boldsymbol{i}}+1,v_{\boldsymbol{i}})$ is referred to as a vertex of \textit{type} $(n_{\boldsymbol{i}},v_{\boldsymbol{i}})$ where $n_{\boldsymbol{i}}$ denotes the number of potential descendants of the vertex $\boldsymbol{i}$.

The first step is to prove that the measure assigned to $A_{\rtreealt}$ by $\overline{\rho}_n$ converges to the measure assigned by $\rho$. Let $l<\infty$ denote the sum of the first component of the type of the vertices in ${\rtreealt}$. To see whether the rooted network generated by $\overline{\rho}_n$ is in $A_{\rtreealt}$ or not, we need to look at the first $l$ steps of the exploration process; however, by Lemma~\ref{lem:convergexplor} the sequence corresponds to the first $l$ steps of the exploration process converges to the one generated by $\rho$ in distribution. Therefore we have
\begin{align}
&\left|\overline{\rho}_n (A_{\rtreealt}) - \rho(A_{\rtreealt})\right| = \\
&\qquad\left|\prob\left((X_0^{(n)},X_1^{(n)},X_2^{(n)},\dots,X_l^{(n)})\in\mathcal{K}\right) - \prob\left((X_0,X_1,X_2,\dots,X_l)\in\mathcal{K}\right) \right| \xrightarrow{n\to\infty}0,
\end{align}
where $\mathcal{K}$ is defined such that $(X_0^{(n)},X_1^{(n)},X_2^{(n)},\dots,X_l^{(n)})\in\mathcal{K}$ if and only if the rooted network induced by $\Ct_{l\wedge\tau}$ belongs to the set $A_{\rtreealt}$.

The second step is to prove that for any bounded uniformly continuous function $f$,
\begin{align}
\left|\int f d\overline{\rho}_n - \int f d\rho\right| \xrightarrow{n\to\infty}0.
\end{align}
Fix the value of $\varepsilon > 0$. Since $f$ is continuous, there exists a $\delta>0$ such that for every $\N$ and $\N^\prime$ in $G_*$, $d_{G^*}(\N,\N^\prime) < \delta$ implies $\left|f(\N)-f(\N^\prime)\right| < \varepsilon$. Let $t>0$ to be large enough such that $(t+1)^{-1} < \delta$.

Notice that the space $G_*$ is separable; hence the restriction of $G_*$ to the rooted trees is also separable. Moreover, $\rho$ assigns zero measure to the set of rooted networks in $G_*$ that are not rooted trees. Hence, there exists a finite set $\mathcal{S}$ of rooted trees of depth less than or equal to $t$ in $G_*$ such that,
\begin{align}
\sum_{{\rtreealt}\in\mathcal{S}} \rho(A_{\rtreealt}) > 1-\varepsilon
\end{align}
Moreover, since $\overline{\rho}_n (A_{\rtreealt})$ converges to $\rho(A_{\rtreealt})$, for large enough $n$ we have $\sum_{{\rtreealt}\in\mathcal{S}} \overline{\rho}_n(A_{\rtreealt}) \allowbreak > 1-2\varepsilon$. Using all of these points, we have
\begin{align}
\left|\int f d\overline{\rho}_n - \int f d\rho\right| \leq 3\varepsilon\norm{f}_\infty + \sum_{{\rtreealt}\in\mathcal{S}} f({\rtreealt}) \left|\overline{\rho}_n (A_{\rtreealt}) - \rho(A_{\rtreealt})\right| + 2\varepsilon
\end{align}
Finally, let $n$ go to infinity and then $\varepsilon$ to zero, and the apply the Portmanteau Theorem to complete the proof.
\section{Unimodularity of EWT, Proof of Corollary~\ref{cor:unimod}} \label{appendix:pfunimod}
Using the involution invariance property (Lemma~\ref{lem:involution}), we need to prove for all Borel measurable non-negative functions $f:G_{**}\to\mathbb{R}_+ $,
\begin{align}
\expect\left(\sum_{v\thicksim\root}f(G,\root,v)\right) = \expect\left(\sum_{v\thicksim\root}f(G,v,\root)\right)\label{proof:Unimod},
\end{align}
where the expectation is with respect to ${\ertreedist(P)}$.
Let us expand the left-hand side of \cref{proof:Unimod} by conditioning on the potential degree of the root vertex. By linearity of expectation, we have,
\begin{align}
\expect\left(\sum_{v\thicksim\root}f(G,\root,v)\right) &= \sum_{m=1}^\infty P(m) \expect\left(\sum_{i\thicksim\root} f(G,\root,i)\,\vert\, n_{\root} = m\right) \allowdisplaybreaks\\
&=\sum_{m=1}^\infty P(m) \expect\left(\sum_{i=1}^m f(G,\root,i){\onefunc}_{i\thicksim\root}\,\vert\, n_{\root} = m\right) \allowdisplaybreaks\\
&=\sum_{m=1}^\infty P(m) \sum_{i=1}^m \expect\left( f(G,\root,i){\onefunc}_{i\thicksim\root}\,\vert\, n_{\root} = m\right) \allowdisplaybreaks\\
&=\sum_{m=1}^\infty mP(m) \expect\left( f(G,\root,1){\onefunc}_{1\thicksim\root}\,\vert\, n_{\root} = m\right).
\end{align}
where the last equality is based on the symmetric and conditionally independent structure of $\{\zeta_{j}\}_{j=1}^{n_{\root}}$ and $\{(n_j,v_j)\}_{j=1}^{n_{\root}}$ conditioned on $n_{\root}$.
We now expand the term $\expect( f(G,\root,1){\onefunc}_{1\thicksim\root}\,\vert\, n_{\root} = m)$ by realizing the values of $v_{\root}$, $\zeta_1$, $n_1$, and $v_1$:
\begin{align}
&\expect( f(G,\root,1){\onefunc}_{1\thicksim\root}\,\vert\, n_{\root} = m) \allowdisplaybreaks\\
&\qquad= \sum_{k=1}^\infty \widehat{P}(k-1) \int_{x=0}^{\infty}\frac{ \eexp^{- x}( x)^m}{m!}\int_{y=0}^{x}\frac{1}{x}\int_{z=y}^{\infty}\frac{ \eexp^{- z}( z)^{k-1}}{(k-1)!} \times \allowdisplaybreaks\\
&\qquad\qquad\qquad\qquad\expect( f(G,\root,1)\,\vert\, n_{\root} = m, v_{\root} = x, \zeta_1 = y, n_1 = k-1, v_1 = z )\,dz\,dy\,dx \allowdisplaybreaks\\
&\qquad= \sum_{k=1}^\infty P(k) \int_{x=0}^{\infty}\int_{z=0}^{\infty}\int_{y=0}^{\min(x,z)}\frac{ \eexp^{- (x+z)}( x)^{m-1}( z)^{k-1}}{m!(k-1)!} \times \allowdisplaybreaks\\
&\qquad\qquad\qquad\qquad\expect( f(G,\root,1)\,\vert\, n_{\root} = m, v_{\root} = x, \zeta_1 = y, n_1 = k-1, v_1 = z )\,dy\,dz\,dx,
\end{align}
where the last equality is obtained by changing the order of the integration and replacing $\widehat{P}(k-1)$ by $P(k)$. Putting it all together, we have
\begin{align}
&\expect\left(\sum_{v\thicksim\root}f(G,\root,v)\right)=\\
&\qquad \sum_{m=1}^\infty\sum_{k=1}^\infty \frac{P(m)P(k)}{(m-1)!(k-1)!} \int_{x=0}^{\infty}\int_{z=0}^{\infty}\int_{y=0}^{\min(x,z)} \eexp^{- (x+z)} x^{m-1} z^{k-1} \times \allowdisplaybreaks\\
&\qquad\qquad\expect( f(G,\root,1)\,\vert\, n_{\root} = m, v_{\root} = x, \zeta_1 = y, n_1 = k-1, v_1 = z )\,dy\,dz\,dx.\numberthis\label{eq:unimod_left}
\end{align}
Similarly,
\begin{align}
&\expect\left(\sum_{v\thicksim\root}f(G,v,\root)\right)= \allowdisplaybreaks\\
&\qquad \sum_{m=1}^\infty\sum_{k=1}^\infty \frac{P(m)P(k)}{(m-1)!(k-1)!} \int_{x=0}^{\infty}\int_{z=0}^{\infty}\int_{y=0}^{\min(x,z)} \eexp^{- (x+z)}x^{m-1}z^{k-1} \times \allowdisplaybreaks\\
&\qquad\qquad\expect( f(G,1,\root)\,\vert\, n_{\root} = m, v_{\root} = x, \zeta_1 = y, n_1 = k-1, v_1 = z )\,dy\,dz\,dx.\numberthis \label{eq:unimod_right}
\end{align}
In order to complete the proof, the following observation is crucial. Let $(G,\root)$ be a realization of $\ertreedist(P)$; conditioned on $n_{\root} = m$, $v_{\root} = x$, $\zeta_1 = y$, $n_1 = k-1$ and $v_1 = z$ such that $\min(x,z)>y$, the structure and distribution of the doubly rooted graph $(G,\root,1)$ is the same as the structure and distribution of the doubly rooted graph $(G,1,\root)$ conditioned on $n_{\root} = k$, $v_{\root} = z$, $\zeta_1 = y$, $n_1 = m-1$ and $v_1 = x$. This symmetry property is evident from Figure~\ref{fig:Unimod}.
\begin{figure*}[t!]
	\centering
	\begin{subfigure}{.5\textwidth}
\centering
	\includegraphics[width=1.1\linewidth]{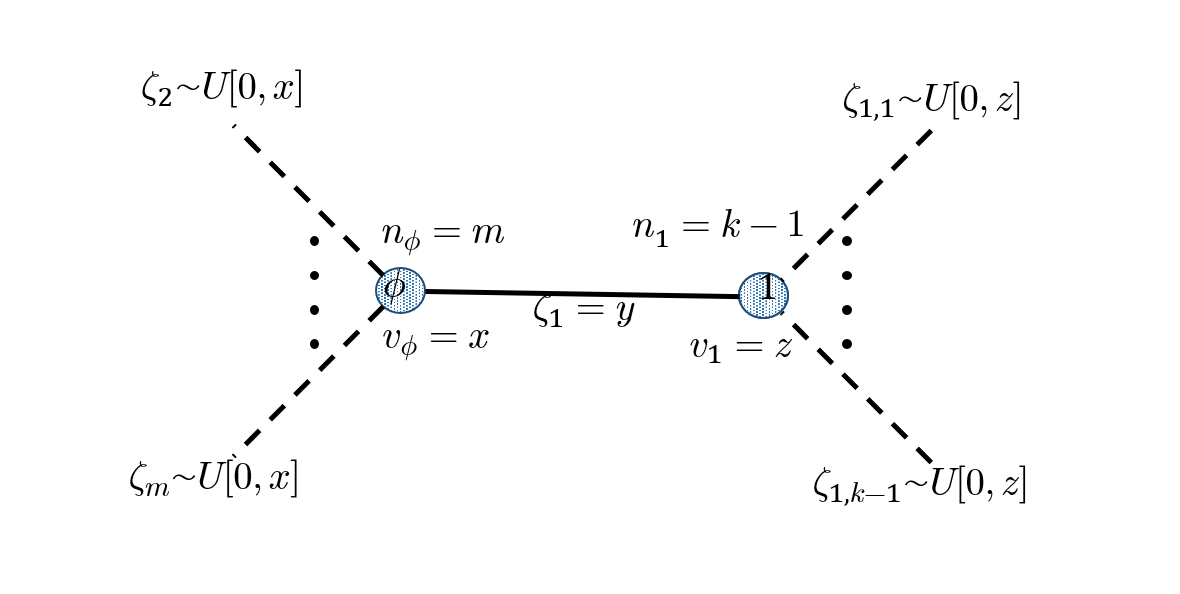}
	\caption{Structure of $N_{\circ\circ}(\root,1)$}
	\end{subfigure}%
	\begin{subfigure}{.5\textwidth}
		\centering
		\includegraphics[width=1.1\linewidth]{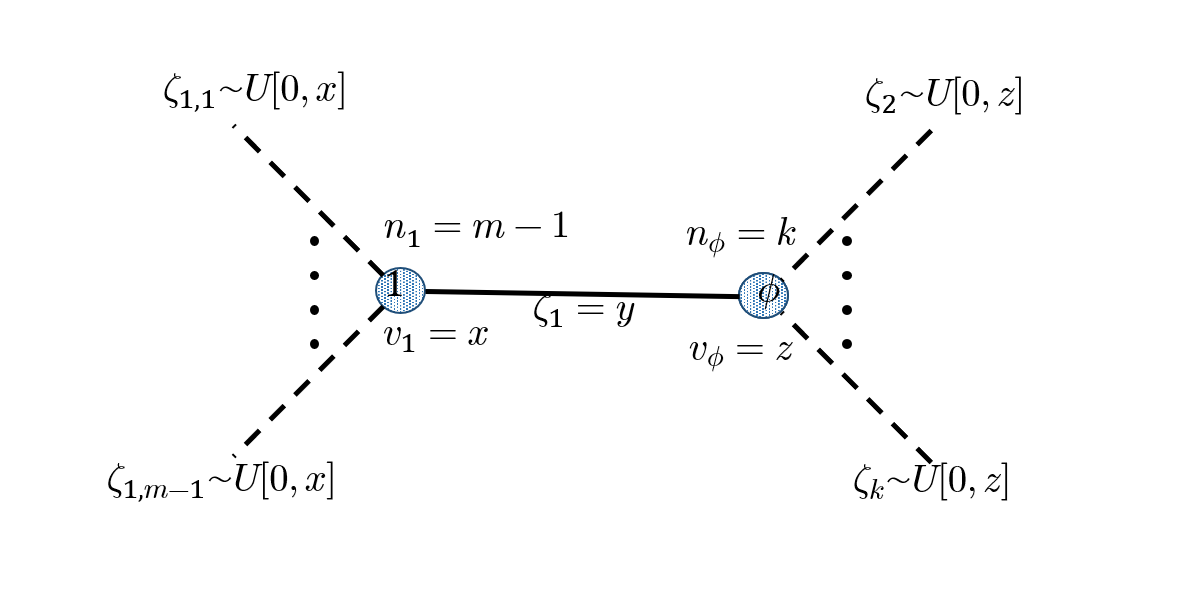}
		\caption{Structure of $N_{\circ\circ}(1,\root)$}
	\end{subfigure}
	\caption{Structure of doubly rooted graphs $(G,1,\root)$ and $(G,\root,1)$ conditioned on a realization of $n_{\root}$, $v_{\root}$, $\zeta_1$, $n_1$ and $v_1$ such that $\zeta_1 < v_1$, where $(G,\root)$ is distributed as $\ertreedist(P)$}
	\label{fig:Unimod}
\end{figure*}
Based on the above discussion, we have
\begin{align}
	\MoveEqLeft \expect( f(G,\root,1)\,\vert\, n_{\root} = m, v_{\root} = x, \zeta_1 = y, n_1 = k-1, v_1 = z ) \allowdisplaybreaks\\
	& = \expect( f(G,1,\root)\,\vert\, n_{\root} = k, v_{\root} = z, \zeta_1 = y, n_1 = m-1, v_1 = x ).
\end{align}
which implies \cref{eq:unimod_left} and \cref{eq:unimod_right} are equal. This completes the proof.

\section{Other Technical Proofs}
\subsection{Proof of Lemma~\ref{lem:propt}}\label{appendix:lemextinc}
\begin{enumerate}[label=(\roman*)]
	\item As the first step, we want to show the range of $T(f)(\cdot)$ is $[0,1]$. The inequality $T(f)(x) \geq 0$ is trivial. For the other side of the inequality, notice that for all $x\in\mathbb{R}_+$, we have $f(x)\leq 1$; hence,
	\begin{align}
	T(f)(x) &\leq \frac{1}{x}\sum_{k=1}^{\infty}P(k)\int_{y=0}^{x} \left(\int_{z=0}^{y}\frac{\eexp^{-z}z^{k-1}}{(k-1)!}\,dz + \int_{z=y}^{\infty}\frac{\eexp^{-z}z^{k-1}}{(k-1)!}\,dz\right)dy \numberthis \label{eq:Tbound}\allowdisplaybreaks\\
	&=\frac{1}{x}\sum_{k=1}^{\infty}P(k)\int_{y=0}^{x} \,dy = 1.
	\end{align}
	The equality holds if and only if $f(x) =1$ for almost every $x\in\mathbb{R}_+$. To see $T(f)(\cdot)$ is non-decreasing, we show that it has a continuous non-negative derivative. Let $x > 0$. We then have
	\begin{align}
	\dv{T(f)(x)}{x} &=	-\frac{1}{x^2}\sum_{k=1}^{\infty}P(k)\int_{y=0}^{x} \left(\int_{z=0}^{y}\frac{\eexp^{-z}z^{k-1}}{(k-1)!}\,dz + \int_{z=y}^{\infty}\frac{\eexp^{-z}z^{k-1}}{(k-1)!}\left(f(z)\right)^{k-1}\,dz\right)\,dy \allowdisplaybreaks\\
	&~~ +\frac{1}{x}\sum_{k=1}^{\infty}P(k) \left(\int_{z=0}^{x}\frac{\eexp^{-z}z^{k-1}}{(k-1)!}\,dz + \int_{z=x}^{\infty}\frac{\eexp^{-z}z^{k-1}}{(k-1)!}\left(f(z)\right)^{k-1}\,dz \right) \allowdisplaybreaks\\
	&=	-\frac{1}{x^2}\sum_{k=1}^{\infty}P(k)\int_{y=0}^{x} \left(\int_{z=0}^{x}\frac{\eexp^{-z}z^{k-1}}{(k-1)!}\,dz + \int_{z=x}^{\infty}\frac{\eexp^{-z}z^{k-1}}{(k-1)!}\left(f(z)\right)^{k-1}\,dz\right)\,dy \allowdisplaybreaks\\
	&~~	-\frac{1}{x^2}\sum_{k=1}^{\infty}P(k)\int_{y=0}^{x} \left(\!-\int_{z=y}^{x}\frac{\eexp^{-z}z^{k-1}}{(k-1)!}\,dz + \int_{z=y}^{x}\frac{\eexp^{-z}z^{k-1}}{(k-1)!}\left(f(z)\right)^{k-1}\,dz\!\right)\!\,dy \allowdisplaybreaks\\
	&~~ +\frac{1}{x}\sum_{k=1}^{\infty}P(k) \left(\int_{z=0}^{x}\frac{\eexp^{-z}z^{k-1}}{(k-1)!}\,dz + \int_{z=x}^{\infty}\frac{\eexp^{-z}z^{k-1}}{(k-1)!}\left(f(z)\right)^{k-1}\,dz \right) \allowdisplaybreaks\\
	&= \frac{1}{x^2}\sum_{k=1}^{\infty}P(k)\int_{y=0}^{x} \int_{z=y}^{x}\frac{\eexp^{-z}z^{k-1}}{(k-1)!}(1 - \left(f(z)\right)^{k-1})\,dz\,dy
	\allowdisplaybreaks\\
	&= \frac{1}{x^2}\sum_{k=1}^{\infty}P(k)\int_{z=0}^{x} \frac{\eexp^{-z}z^{k}}{(k-1)!}(1 - \left(f(z)\right)^{k-1})\,dz
\geq 0 .
	\end{align}
	Observe that the derivative exists and is continuous for all $x > 0$. Taking the limit $x \downarrow 0$, we have
	\begin{align}
	\lim_{x \downarrow 0}\dv{T(f)(x)}{x} = \lim_{x \downarrow 0} \frac{1}{2x}\sum_{k=1}^{\infty}P(k)\frac{\eexp^{-x}x^{k}}{(k-1)!}(1 - f(x)^{k-1}) = 0.
	\end{align}
	Since $f(0)\coloneqq \lim_{x\to 0}f(x)$, the have right-hand derivative of $T(f)(\cdot)$ at $x=0$ is zero.	Hence, $T(f) \in C^1(\mathbb{R}_+;[0,1])$ is non-decreasing which completes the proof of part~\ref{part:propt_i}.
	\item It is easy to see that $\boldsymbol{1}(\cdot)$ is the largest fixed point of $T$. Moreover, for any other fixed point of $T(\cdot)$ say $f(\cdot)\in C^1(\mathbb{R}_+;[0,1])$, from \cref{eq:Tbound} the function $T(f)(\cdot)$ is strictly less than $1$; hence, for all $x\in\mathbb{R}_+$, we have $f(x) < 1$. Using the proof of part~\ref{part:propt_i}, it is easy to see that the derivative of $T(f)$ is strictly positive; hence, the fixed point $f(\cdot)$ is strictly increasing.
	\item The proof is straightforward.
	\item Using part~\ref{part:propt_iii}, since for all $x > 0$, $\boldsymbol{0}(x) < T(\boldsymbol{0})(x) < 1$, we have
	\begin{align}
	\forall x\in\mathbb{R}_+,l\in\mathbb{N},\qquad 0 \leq T^l(\boldsymbol{0})(x) < T^{l+1}(\boldsymbol{0})(x)<1.
	\end{align}
	Let $f_l(x) = T^l(\boldsymbol{0})(x)$. Since, for every fixed value of $x\in\mathbb{R}_+$, the sequence $\{f_l(x)\}_{l=0}^{\infty}$ is strictly increasing, it converges. For all $x\in \mathbb{R}_+$, define $q(x) \coloneqq \lim_{l\to\infty} f_l(x)$. We then have
	\begin{align}
	q(x) &= \lim_{l\to\infty} \frac{1}{x}\sum_{k=1}^{\infty}P(k)\int_{y=0}^{x} \left(\int_{z=0}^{y}\frac{\eexp^{-z}z^{k-1}}{(k-1)!}\,dz + \int_{z=y}^{\infty}\frac{\eexp^{-z}z^{k-1}}{(k-1)!}\left(f_l(z)\right)^{k-1}\,dz\right)\,dy\allowdisplaybreaks\\
	&= \frac{1}{x}\sum_{k=1}^{\infty}P(k)\int_{y=0}^{x} \left(\int_{z=0}^{y}\frac{\eexp^{-z}z^{k-1}}{(k-1)!}\,dz + \int_{z=y}^{\infty}\frac{\eexp^{-z}z^{k-1}}{(k-1)!} \lim_{l\to\infty} \left(f_l(z)\right)^{k-1}\,dz\right)\,dy\allowdisplaybreaks\\
	&=T(q)(x).
	\end{align}
	The second equality follows from monotone convergence theorem, which allows interchanging the order of the summation, the integration, and the limit.

	To show that $q(\cdot)$ is the smallest fixed point of $T$, consider any other fixed pint of $T$, $\tilde{q} = T(\tilde{q})$. Notice that for all $x\in\mathbb{R}_+$, we have $\boldsymbol{0}(x) < \tilde{q}(x)$; hence, for all $x\in\mathbb{R}_+$ and $l\in\mathbb{N}$, we have $\tilde{q}(x) = T(\tilde{q})(x) > f_l(x)$. Passing to the limit as $l\to\infty$, we get $q(x) \leq \tilde{q}(x)$.
\end{enumerate}
\subsection{Proof of Theorem~\ref{thm:ConDegree}}\label{appendix:degreedist}
Conditioned on $n_{\boldsymbol{i}} = m$ and $v_{\boldsymbol{i}}=x$, the probability of the event $\left\{\zeta_{(\boldsymbol{i},j)} < v_{(\boldsymbol{i},j)}\right\}$ is given as follows,
\begin{align}
\prob\left(\left\{\zeta_{(\boldsymbol{i},j)} < v_{(\boldsymbol{i},j)}\right\}\,\vert\,n_{\boldsymbol{i}} = m, v_{\boldsymbol{i}}=x\right) &=\int_{y=0}^{x} \frac{1}{x}\left(\sum_{k=1}^{\infty}\widehat{P}(k-1)\int_{y}^{\infty} \frac{ \eexp^{- z}z^{k-1}}{(k-1)!}\,dz\right)\,dy\allowdisplaybreaks\\
&=\int_{y=0}^{x} \frac{1}{x}\sum_{k=1}^{\infty}P(k)\bar{F}_{k}(y)\,dy.
\end{align}
The symmetric and conditionally independent structure of the EWT implies that the random variable $D_{\boldsymbol{i}}$ conditioned on $n_{\boldsymbol{i}} = m$ and $v_{\boldsymbol{i}}=x$ has the binomial distribution. Hence,
\begin{align}
\prob\left(D_{\boldsymbol{i}}=d\,\vert\,n_{\boldsymbol{i}} = m, v_{\boldsymbol{i}}=x\right) &= \prob\left(\sum_{j=1}^{n_{\boldsymbol{i}}} {\onefunc}\left\{\zeta_{(\boldsymbol{i},j)} < v_{(\boldsymbol{i},j)}\right\} = d \,\Big\vert\, n_{\boldsymbol{i}} = m, v_{\boldsymbol{i}}=x \right) \allowdisplaybreaks\\
&=Bi\left(d;m,\int_{0}^{x} \frac{1}{x}\sum_{k=1}^{\infty}P(k)\bar{F}_{k}(y)\,dy\right).
\end{align}
The degree distribution of the root follows immediately by integrating/summing over all possible values of $v_{\root}$ and $n_{\root}$. The mean of $D_{\root}$ is obtained as follows:
\begin{align}
\expect[D_{\root}] &= \sum_{d=1}^\infty d \times \prob(D_{\root}=d) \allowdisplaybreaks\\
&= \sum_{m=1}^\infty P(m) \int_{0}^{\infty} \frac{ \eexp^{- x}x^m}{m!} \sum_{d=1}^m d \times Bi\left(d;m,\int_{0}^{x} \frac{1}{x}\sum_{k=1}^{\infty}P(k)\bar{F}_{k}(y)\,dy\right) \,dx \allowdisplaybreaks\\
&= \sum_{m=1}^\infty P(m) \int_{0}^{\infty} \frac{ \eexp^{- x}x^m}{m!} \times m \int_{0}^{x} \frac{1}{x}\sum_{k=1}^{\infty}P(k)\bar{F}_{k}(y)\,dy \,dx \allowdisplaybreaks\\
&= \sum_{m=1}^\infty P(m) \sum_{k=1}^{\infty}P(k) \int_{0}^{\infty} \bar{F}_{k}(y) \int_{y}^{\infty} \frac{ \eexp^{- x}x^{m-1}}{(m-1)!} \,dx \,dy \allowdisplaybreaks\\
&= \sum_{m=1}^\infty\sum_{k=1}^{\infty} P(m)P(k) \int_{0}^{\infty} \bar{F}_{k}(y) \bar{F}_{m}(y) \,dy.
\end{align}
Then, the series expansion follows using
\begin{align}
 \int_{0}^{\infty} \bar{F}_{k}(y) \bar{F}_{m}(y) \,dy = \sum_{n=0}^{k-1} \sum_{l=0}^{m-1} {n+l \choose n} 2^{-n-l-1}.
\end{align}

\subsection{Proof of Theorem~\ref{thm:expectedz_l}}\label{appendix:EZ}
Notice that the backbone tree is a Galton-Watson tree in which the degree distribution of the root vertex is given by $P(\cdot)$, and the degree distribution of the descendants are given by the shifted distribution $\widehat{P}(\cdot)$. Hence, $\expect[W_l] = \expect[n_{\root}]\times \left(\expect[(n_{\root}-1)]\right)^{l-1} \geq 0$. Recall that the support of $P(\cdot)$ is $\mathbb{N}$, and in particular, $n_{\root} \geq 1$.

For the expected number of vertices at depth $l$, rewrite $Z_l$ as the sum of indicator functions of survival over the potential vertices at depth $l$. A vertex at depth $l$ survives if and only if all the potential edges on its path to the root survive. Writing $\boldsymbol{t}^j = (t_1,t_2,\dots,t_j)$ and $\boldsymbol{t}^0 = \root$ by convention and following the notation introduced in Section~\ref{sec:intro}, we have
\begin{align}
\expect[Z_l] &= \expect\left[ \sum_{\substack{(t_1,t_2,\dots,t_l) \\ \textit{s.t. } t_j \in [n_{\boldsymbol{t}^{j-1}}]}} \onefunc\left(\bigcap_{j=1}^{l}\left\{\zeta_{\boldsymbol{t}^j} < v_{\boldsymbol{t}^j} \right\} \right) \right] \allowdisplaybreaks\\
&= \sum_{m=1}^{\infty} P(m) \times \expect\left[ \sum_{\substack{(t_1,t_2,\dots,t_l) \\ \textit{s.t. } t_j \in [n_{\boldsymbol{t}^{j-1}}]}} \onefunc\left(\bigcap_{j=1}^{l}\left\{\zeta_{\boldsymbol{t}^j} < v_{\boldsymbol{t}^j} \right\} \right) \,\Bigg\vert\, n_{\root} = m \right].
\end{align}
Using the symmetric structure of the EWT, we have
\begin{align}
\expect[Z_l] &= \sum_{m=1}^{\infty} mP(m) \times \expect\left[ 	\sum_{\substack{(t_1=1,t_2,\dots,t_l) \\ \textit{s.t. } t_j \in [n_{\boldsymbol{t}^{j-1}}]}} \onefunc\left(\bigcap_{j=1}^{l}\left\{\zeta_{\boldsymbol{t}^j} < v_{\boldsymbol{t}^j} \right\} \right) \,\Bigg\vert\, n_{\root} = m \right] \allowdisplaybreaks\\
&~~\vdots\allowdisplaybreaks\\
&= \sum_{m=1}^\infty m P(m) \sum_{k_1=2}^{\infty} (k_1-1) P(k_1)\dots \sum_{k_{l-1}=2}^\infty (k_{l-1}-1) P(k_{l-1}) \sum_{k_{l}=1}^\infty P(k_{l}) \times \allowdisplaybreaks\\
&\qquad \expect\left[ \onefunc \left(\bigcap_{j=1}^{l}\left\{\zeta_{\boldsymbol{1}^j} < v_{\boldsymbol{1}^j} \right\} \right) \,\Bigg\vert\, n_{\root} = m, \bigcap_{j=1}^{l} \left\{n_{\boldsymbol{1}^j} = k_j-1 \right\}\right] \allowdisplaybreaks\\
&= \sum_{m=1}^\infty m P(m) \sum_{k_1=2}^{\infty} (k_1-1) P(k_1)\dots \sum_{k_{l-1}=2}^\infty (k_{l-1}-1) P(k_{l-1}) \sum_{k_{l}=1}^\infty P(k_{l}) \times \allowdisplaybreaks\\
&\qquad\int_{x=0}^{\infty}f_{m+1}(x) \int_{y_1 = 0}^{x} \frac{1}{x} \int_{z_1=y_1}^{\infty} f_{k_1}(z_1)\int_{y_2=0}^{z_1} \frac{1}{z_1} \int_{z_2=y_2}^{\infty} f_{k_2}(z_2) \int_{y_3=0}^{z_2} \frac{1}{z_2} \\
&\qquad \dots \int_{y_l=0}^{z_{l-1}} \frac{1}{z_{l-1}}\int_{z_l=y_l}^{\infty} f_{k_l}(z_l) \,dz_l\,dy_l\dots \,dz_1\,dy_1\,dx
\end{align}
where $f_{l}(\cdot)$ is the probability density function of $\erlangdist(l)$ and $\boldsymbol{1}^j\in \mathbb{N}^f$ is a sequence of all ones of length $j$. Using the equality $f_{k}(x)\times (k-1)/x = f_{k-1}(x)$, interchanging order of integration in pairs, e.g., $z_l$ and $y_{l-1}$, and using the complementary cumulative distribution functions to simplify the integrals involving the $z_l$s, we have
\begin{align}
\expect[Z_l] = &\sum_{m=1}^\infty P(m) \sum_{k_1=2}^{\infty} P(k_1)\dots \sum_{k_{l-1}=2}^\infty P(k_{l-1})\sum_{k_l=1}^{\infty}P(k_l)\allowdisplaybreaks\\
&\qquad \int_{y_l=0}^{\infty}\int_{y_{l-1}=0}^{\infty}\dots \int_{y_1=0}^{\infty} \bar{F}_{m}(y_1) \bar{F}_{k_1-1}(\max(y_1,y_2)) \dots \allowdisplaybreaks\\
&\qquad\qquad\qquad\qquad\qquad\qquad\qquad \bar{F}_{k_{l-1}-1}(\max(y_{l-1},y_l)) \bar{F}_{k_l}(y_l) \,dy_1\,dy_2 \dots \,dy_l.
\end{align}

\subsection{Proof of Proposition~\ref{prop:geodist} part~\ref{prop:geodist_1}, part~\ref{prop:geodist_2}, and part~\ref{prop:geodist_4}}\label{appendix:geodist}
\begin{enumerate}[label=(\roman*)]
	\item Let $\{X_i\}_{i=1}^\infty$ denote a set of independent and exponentially distributed random variables with mean $1$. Let $N\sim \geomdist(p)$ be independent of $\{X_i\}_{i=1}^n$. Let $\bar{F}_{k}(\cdot)$ denote the complementary cumulative distribution function of $\erlangdist(k)$. It is easy to see that
	\begin{align}
	\prob\left(\sum_{i=1}^N X_i > y\right) = \sum_{k=1}^{\infty}P(k)\bar{F}_{k}(y),
	\end{align}
	since $\erlangdist(k)$ is the distribution of a sum of $k$ independent exponential variables with mean $1$. On the other hand,
	\begin{align}
	\expect\left[\eexp^{t\sum_{i=1}^N X_i }\right] = \expect\left[ \expect\left[\eexp^{t\sum_{i=1}^N X_i }\,\vert\,N\right]\right] = \expect\left[\left(\frac{1}{1-t}\right)^N\right] = \frac{p}{p-t},
	\end{align}
	which is the moment generating function of an exponentially distributed random variable with rate parameter $p$. Hence,
	\begin{align}
	\sum_{k=1}^{\infty}P(k)\bar{F}_{k}(y) = \prob\left(\sum_{i=1}^N X_i > y\right) = \eexp^{-py}.
	\end{align}
	We treat the case $d\geq 1$ and $d=0$ separately. Assume $d\geq 1$. Using Theorem~\ref{thm:ConDegree}, we have
	\begin{align}
	\prob(D_{\root}=d) &= \sum_{m=1}^\infty P(m) \int_{0}^{\infty} \frac{ \eexp^{- x}x^m}{m!} Bi\left(d;m,\int_{0}^{x} \frac{1}{x}\sum_{k=1}^{\infty}P(k)\bar{F}_{k}(y)\,dy\right) \,dx\allowdisplaybreaks\\
	&= \sum_{m=1}^\infty p(1-p)^{m-1} \int_{0}^{\infty} \frac{ \eexp^{- x}x^m}{m!} Bi\left(d;m,\frac{1 - \eexp^{-px}}{px}\right) \,dx\allowdisplaybreaks\\
	&= \int_{0}^{\infty} \sum_{m=d}^\infty p(1-p)^{m-1} \frac{ \eexp^{- x}}{d!\,(m-d)!} \frac{\left(1 - \eexp^{-px}\right)^d\,\left(px -1 + \eexp^{-px}\right)^{m-d}}{p^m} \,dx\allowdisplaybreaks\\
	&=\int_{0}^{\infty} \frac{\eexp^{- x}}{d!} \left(\frac{1-p}{p}\right)^{d-1} \left(1 - \eexp^{-px}\right)^d \times \allowdisplaybreaks\\
	&\myquad[10]\sum_{m=d}^\infty \left(\frac{1-p}{p}\right)^{m-d} \frac{\left(px -1 + \eexp^{-px}\right)^{m-d}}{(m-d)!} \,dx\allowdisplaybreaks\\
	&= \int_{0}^{\infty} \frac{\eexp^{- px}}{d!} \left(\frac{1-p}{p}\right)^{d-1} \left(1 - \eexp^{-px}\right)^d \exp\left(-\frac{1-p}{p}\left(1 - \eexp^{-px}\right)\right)
	\,dx\allowdisplaybreaks\\
	&= \frac{p}{(1-p)^2} \int_{0}^{1} \frac{ \left(\frac{1-p}{p}\right)^{d+1} z^d \exp\left(-\frac{1-p}{p}z\right)}{d!}
	\,dz\allowdisplaybreaks\\
	&= \frac{p}{(1-p)^2} \left(1 - \sum_{m=0}^d \frac{\left(\frac{1-p}{p}\right)^m \eexp^{-\frac{1-p}{p}}}{m!}\right),
	\end{align}
	where the penultimate equality follows by a change of variable, and the last equality follows by the fact that the integrand is the probability density function of Erlang distribution with parameters $d+1\in\mathbb{N}$ and $\frac{1-p}{p} > 0$. Notice that the third equality does not hold for the case $d=0$.

	Next, consider the case $d=0$. Using Theorem~\ref{thm:ConDegree} and similar to the above, we have
	\begin{align}
	\prob(D_{\root}=0) &= \sum_{m=1}^\infty P(m) \int_{0}^{\infty} \frac{ \eexp^{- x}x^m}{m!} Bi\left(d;m,\int_{0}^{x} \frac{1}{x}\sum_{k=1}^{\infty}P(k)\bar{F}_{k}(y)\,dy\right) \,dx\allowdisplaybreaks\\
	&= \int_{0}^{\infty} \sum_{m=1}^\infty p(1-p)^{m-1} \frac{\eexp^{- x}\,\left(px -1 + \eexp^{-px}\right)^{m}}{p^m\,m!} \,dx\allowdisplaybreaks\\
	&= \frac{p}{1-p} \int_{0}^{\infty}\eexp^{-x} \left(\exp\left(\frac{1-p}{p}\left(px - 1 + \eexp^{-px}\right)\right) - 1\right) \,dx\allowdisplaybreaks\\
	&= \frac{p}{(1-p)^2} \left(1 - \eexp^{-\frac{1-p}{p}}\right) - \frac{p}{1-p}.
	\end{align}
	\item Let us consider the case $x>0$. We have,
	\begin{align}
	T(f)(x) &= \frac{1}{x}\sum_{k=1}^{\infty}P(k)\int_{y=0}^{x} \left(\int_{z=0}^{y}\frac{\eexp^{-z}z^{k-1}}{(k-1)!}\,dz + \int_{z=y}^{\infty}\frac{\eexp^{-z}z^{k-1}}{(k-1)!}\left(f(z)\right)^{k-1}\,dz\right)\,dy\allowdisplaybreaks\\
	&= \frac{p}{x}\int_{y=0}^{x} \Bigg(\int_{z=0}^{y}\eexp^{-z}\sum_{k=1}^{\infty}\frac{(1-p)^{k-1}z^{k-1}}{(k-1)!}\,dz \\ &\myquad[10]+\int_{z=y}^{\infty}\eexp^{-z}\sum_{k=1}^{\infty}\frac{(1-p)^{k-1}z^{k-1}}{(k-1)!}\left(f(z)\right)^{k-1}\,dz\Bigg)\,dy\allowdisplaybreaks\\
	&=\frac{px-1+\eexp^{-px}}{px} + \frac{p}{x}\int_{z=0}^{\infty} \min(x,z) \exp\left(-z\left(1 - (1-p)f(z) \right)\right)\,dz.
	\end{align}
	The derivation for $x=0$ is similar and is omitted.
    \item[(iv)] Using Theorem~\ref{thm:asymdegdist}, and part~\ref{prop:geodist_3} of Proposition~\ref{prop:geodist}, we have
    \begin{align}
        &\lim_{l \to\infty} \prob(D_l = d\,\vert\,Z_l > 0)\allowdisplaybreaks\\
        &\myquad[2]=\frac{1}{\textrm{C}_{\mathrm{asmp}}}\sum_{k=1}^\infty\int_0^\infty P(k)\frac{\eexp^{-z}z^{k-1}}{(k-1)!}\, J_0\left({r_0}\eexp^{-\frac{p}{2}z}\right) Bi\left(d;k-1,\int_{0}^{z} \frac{1}{z}\sum_{k'=1}^{\infty}P(k')\bar{F}_{k'}(y)\,dy\right) \,dz
    \end{align}
    where $\textrm{C}_{\mathrm{asmp}}$ is the normalization factor. Following the same analysis as in the proof of part~\ref{prop:geodist_1}, we have
    \begin{align}
        &\lim_{l \to\infty} \prob(D_l = d\,\vert\,Z_l > 0)\allowdisplaybreaks\\
        &\myquad[4]= \frac{1}{\textrm{C}_{\mathrm{asmp}}}\sum_{k=1}^\infty\int_0^\infty P(k)\frac{\eexp^{-z}z^{k-1}}{(k-1)!}\, J_0\left({r_0}\eexp^{-\frac{p}{2}z}\right) Bi\left(d;k-1,\frac{1 - \eexp^{-pz}}{pz}\right) \,dz\allowdisplaybreaks\\
        &\myquad[4]= \frac{p}{\textrm{C}_{\mathrm{asmp}}} \int_{0}^\infty \frac{\eexp^{-pz}}{d!} \left(\frac{1-p}{p}\right)^{d} (1-\eexp^{-pz})^d \exp\left(-\frac{1-p}{p} (1-\eexp^{-pz})\right)J_0\left({r_0}\eexp^{-\frac{p}{2}z}\right)\,dz\allowdisplaybreaks\\
        &\myquad[4]= \frac{1}{\textrm{C}_{\mathrm{asmp}}} \int_{0}^{1} \frac{ \left(\frac{1-p}{p}\right)^{d} w^d \exp\left(-\frac{1-p}{p}w\right) J_0\left({r_0}\sqrt{1-w}\right))}{d!}\,dw\label{eq:asympdegdist_integralform}
    \end{align}
    Notice that
    \begin{align}
        \textrm{C}_{\mathrm{asmp}} &= \sum_{d=0}^\infty \int_{0}^{1} \frac{ \left(\frac{1-p}{p}\right)^{d} w^d \exp\left(-\frac{1-p}{p}w\right) J_0\left({r_0}\sqrt{1-w}\right))}{d!}\,dw\allowdisplaybreaks\\
        &= \int_{0}^{1} J_0\left({r_0}\sqrt{1-w}\right)\,dw = \frac{2}{r_0} J_1(r_0).
    \end{align}
    Expanding the Bessel function in terms of series in \cref{eq:asympdegdist_integralform}, we have
    \begin{align}
        &\lim_{l \to\infty} \prob(D_l = d\,\vert\,Z_l > 0)\allowdisplaybreaks\\
        &\myquad[4]= \frac{r_0}{2J_1(r_0)}  \int_{0}^{1} \frac{ \left(\frac{1-p}{p}\right)^{d} w^d \exp\left(-\frac{1-p}{p}w\right) }{d!} \times \sum_{i=0}^\infty \left(\frac{r_0^2(1-w)}{4}\right)^i \frac{(-1)^i}{i!i!} \,dw \allowdisplaybreaks\\
        &\myquad[4]= \frac{r_0}{2J_1(r_0)} \sum_{i=0}^\infty \left(\frac{r_0}{2}\right)^{2i} \frac{(-1)^i}{i!i!} \left(\frac{1-p}{p}\right)^{d} \int_{0}^{1} \frac{ w^d (1-w)^i \exp\left(-\frac{1-p}{p}w\right) }{d!}   \,dw \allowdisplaybreaks\\
        &\myquad[4]= \frac{r_0}{2J_1(r_0)}\! \sum_{i=0}^\infty \left(\frac{r_0}{2}\right)^{2i} \frac{(-1)^i}{i!(d+i+1)!} \left(\frac{1-p}{p}\right)^{d}\allowdisplaybreaks\\
        &\myquad[8]\times \left(1 + \sum_{k=1}^\infty \left(\prod_{j=0}^{k-1} \frac{d+j+1}{(d+i+j+2)}\right) \frac{1}{k!}\left(-\frac{1-p}{p}\right)^{k} \right)
    \end{align}
    where the last equality follows by the fact that $\int_{0}^{1} w^d (1-w)^i \exp\left(-\frac{1-p}{p}w\right) dw$ is related to the moment generating function of the beta distribution with parameters $d+1$ and $i+1$. We can further rewrite the right-hand side of the above equation as follows:
    \begin{align}
        &\lim_{l \to\infty} \prob(D_l = d\,\vert\,Z_l > 0)\allowdisplaybreaks\\
        &\myquad[4]= \frac{r_0}{2J_1(r_0)}\! \sum_{i=0}^\infty \left(\frac{r_0}{2}\right)^{2i} \frac{(-1)^i}{i!(d+i+1)!} \left(\frac{1-p}{p}\right)^{d} \sum_{k=0}^\infty \frac{(d+k)!(d+i+1)!}{k!d!(d+i+k+1)!} \times \left(-\frac{1-p}{p}\right)^{k}\allowdisplaybreaks\\
        &\myquad[4]= \frac{1}{J_1(r_0)} \left(\frac{2(1-p)}{r_0p}\right)^{d} \sum_{k=0}^\infty \left(-\frac{2(1-p)}{r_0p}\right)^{k}{ d+k \choose k} J_{d+k+1}(r_0). \label{eq:asympdegdist_seriesform}
    \end{align}
    Notice that we have derived a Neumann series expansion of $\lim_{l \to\infty} \prob(D_l = d\,\vert\,Z_l > 0)$: see \cite[Chapter XVI]{watson1944treatise} for details. The results in \cite[Chapter XVI]{watson1944treatise} also show that the expression in \eqref{eq:asympdegdist_seriesform} is absolutely summable if and only if the series obtained by replacing each $J_{d+k+1}(r_0)$ by its approximation around $0$, namely $\tfrac{1}{(d+k+1)!}\left(\tfrac{r_0}{2}\right)^{d+k+1}$ (which is accurate when $r_0 \ll \sqrt{d+k+2}$), is also absolutely summable. Using this approximation in \eqref{eq:asympdegdist_seriesform}, we have
    \begin{align}
    \tilde{f}(d) & = \frac{2}{r_0\textrm{C}_{\mathrm{asmp}}} \left(\frac{2(1-p)}{r_0p}\right)^{d} \sum_{k=0}^\infty \left(-\frac{2(1-p)}{r_0p}\right)^{k}{ d+k \choose k} \frac{1}{(d+k+1)!}\left(\frac{r_0}{2}\right)^{d+k+1}\allowdisplaybreaks\\
    & = \frac{1}{J_1(r_0)} \left(\frac{2(1-p)}{r_0p}\right)^{d} \frac{1}{d!} \sum_{k=0}^\infty \left(-\frac{2(1-p)}{r_0p}\right)^{k} \frac{1}{k!} \frac{1}{(d+k+1)}\left(\frac{r_0}{2}\right)^{d+k+1}\allowdisplaybreaks\\
    & = \frac{r_0}{2J_1(r_0)} \left(\frac{1-p}{p}\right)^{d} \frac{1}{d!} \sum_{k=0}^\infty \left(-\frac{1-p}{p}\right)^{k} \frac{1}{k!} \frac{1}{(d+k+1)}\allowdisplaybreaks\\
    & = \frac{r_0}{2J_1(r_0)} \frac{p}{(1-p)} \left(1-\exp(-\frac{1-p}{p}) \sum_{k=0}^d \frac{1}{k!} \left(\frac{1-p}{p}\right)^k \right)\allowdisplaybreaks\\
    & = \frac{r_0}{2J_1(r_0)} \frac{p}{(1-p)} F_{d+1}\left(\frac{1-p}{p}\right),
    \end{align}
    which, given the association with the CDF of the Erlang distribution, is finite and decreases to $0$ geometrically fast as $d\rightarrow\infty$. Notice that this is also a good approximation to the asymptotic degree distribution when $d$ is large.

    Finally, using \eqref{eq:asympdegdist_seriesform} the probability generating function of the asymptotic degree distribution is given by
    \begin{align}
    \bar{\mathfrak{F}}(z) & = \frac{1}{J_1(r_0)}\sum_{d=0}^\infty z^d \left(\frac{2(1-p)}{r_0p}\right)^{d} \sum_{k=0}^\infty \left(-\frac{2(1-p)}{r_0p}\right)^{k}{ d+k \choose k} J_{d+k+1}(r_0)\allowdisplaybreaks\\
    & = \frac{1}{J_1(r_0)} \left(\sum_{l=0}^\infty J_{l+1}(r_0) \sum_{d=0}^l { l \choose d} z^d \left(\frac{2(1-p)}{r_0p}\right)^{d} \left(-\frac{2(1-p)}{r_0p}\right)^{l-d}\right)\allowdisplaybreaks\\
    & = \frac{1}{J_1(r_0)}\left(\sum_{l=0}^\infty \left(z  \left(\frac{2(1-p)}{r_0p}\right) -  \left(\frac{2(1-p)}{r_0p}\right)\right)^l J_{l+1}(r_0)\right).
    \end{align}
\end{enumerate}
\end{document}